\DeclareRobustCommand{\SkipTocEntry}[5]{} 
\let\mathcal \undefined
\def\mathcal{\mathscr}
\renewcommand{\le}{\leq}
\renewcommand{\ge}{\geq}
\newcommand{\eps}{\varepsilon}
\theoremstyle{plain}
\newtheorem{theorem}{Theorem} [section]
\theoremstyle{remark}
\newtheorem{remark}[theorem]{Remark}
\newtheorem{example}[theorem]{Example}
\theoremstyle{plain}
\newtheorem{corollary}[theorem]{Corollary}
\newtheorem{lemma}[theorem]{Lemma}
\newtheorem{proposition}[theorem]{Proposition}
\newtheorem{definition}[theorem]{Definition}
\numberwithin{equation}{section}
\font\Bbbten=msbm10
\font\Bbbseven=msbm7
\font\Bbbfive=msbm5
\def\R{{\mathbb R}}
\def\N{{\mathbb N}}
\def\E{{\mathbb E}}
\def\P{{\mathbb P}}
\newcommand{\maxsym}{\vee}
\newcommand{\minsym}{\wedge}
\newcommand{\F}{\mathscr{F}}
\newcommand{\e}{\varepsilon}
\newcommand{\g}{\gamma}
\renewcommand{\O}{\Omega}
\renewcommand{\l}{\lambda}
\newcommand{\n}{\Vert}
\newcommand{\embed}{\hookrightarrow}
\newcommand{\lb}{\langle}
\newcommand{\rb}{\rangle}
\renewcommand{\a}{\alpha}
\renewcommand{\b}{\beta}
\renewcommand{\d}{\delta}
\newcommand{\calF}{\mathcal{F}}
\newcommand{\calL}{\mathcal{L}}
\newcommand{\calR}{\mathcal{R}}
\renewcommand{\H}{\mathscr{H}}
\newcommand{\inv}[1]{\frac{1}{#1}}
\newcommand{\tinv}[1]{\tfrac{1}{#1}}
\newcommand{\un}[1]{\underline{#1}}
\newcommand{\ov}[1]{\overline{#1}}
\newcommand{\beq}{\begin{equation}}
\newcommand{\eeq}{\end{equation}}
\newcommand{\bal}{\begin{aligned}}
\newcommand{\eal}{\end{aligned}}
\newcommand{\ben}{\begin{enumerate}}
\newcommand{\een}{\end{enumerate}}
\newcommand{\bit}{\begin{itemize}}
\newcommand{\eit}{\end{itemize}}
\newcommand{\Umod}{U^{(n)}}
\newcommand{\Ucla}{\tilde U^{(n)}}
\newcommand{\Vapc}{V^{\alpha,p}_{\rm c}}
\newcommand{\Vpinf}[1]{V^{#1,p}_{\infty}}
\newcommand{\Winfp}[1]{\mathcal{V}^{#1,p}_{\infty}}
\newcommand{\munj}{\mu_n^{*j}}
\newcommand{\En}[1]{E(#1)}
\newcommand{\Enj}{E(t_{j}^{(n)})}
\newcommand{\MI}{{\bf (M2)}}
\newcommand{\MII}{{\bf (M1)}}
\newcommand{\MIII}{{\bf (M3)}}
\newcommand{\MA}{{\bf (A)}}
\newcommand{\MF}{{\bf (F)}}
\newcommand{\MG}{{\bf (G)}}
\newcommand{\MFII}{{\bf (F$'$)}}
\newcommand{\MGII}{{\bf (G$'$)}}
\newcommand{\MFloc}{{\bf (F$_\textrm{loc}$)}}
\newcommand{\MGloc}{{\bf (G$_\textrm{loc}$)}}
\DeclareMathOperator*{\esssup}{ess\,sup}
\begin{document}

\title[H\"older convergence of the Euler scheme for SPDEs]
{Pathwise H\"older convergence of the implicit Euler scheme for semi-linear
{\large SPDE}s with multiplicative noise}
\date{\today}

\author{Sonja Cox}
\author{Jan van Neerven}
\address{Delft Institute of Applied Mathematics\\
Delft University of Technology \\ P.O. Box 5031\\ 2600 GA Delft\\The
Netherlands}
\email{S.G.Cox@tudelft.nl, J.M.A.M.vanNeerven@tudelft.nl}
\subjclass[2000]{Primary: 65C30; Secondary: 60H15, 60H35, 65J08}

\thanks{Part of this research was conducted while the
first-named author was visiting the University of New South Wales in Sydney, Australia. She would
like to
thank the university, and Ben Goldys in particular, for their hospitality. 
The second-named author gratefully acknowledges support by VICI subsidy
639.033.604
of the Netherlands Organization for Scientific Research (NWO).
The authors thank Markus Haase and Mark Veraar for their helpful comments.}

\begin{abstract} In this article we prove pathwise H\"older convergence with
optimal rates of the implicit Euler
scheme for the abstract stochastic Cauchy problem
\begin{equation}
\left\{ \begin{aligned} dU(t) & = AU(t)\,dt + F(t,U(t))\,dt +
G(t,U(t))\,dW_H(t);\quad t\in [0,T],\\
U(0)&=x_0. \end{aligned}\right.
\end{equation}
Here $A$ is the generator of an analytic $C_0$-semigroup 
on a \textsc{umd} Banach space $X$,
$W_H$ is a cylindrical Brownian motion in a Hilbert space $H$, and the
functions 
$F:[0,T]\times X\rightarrow X_{\theta_F}$ and 
$G:[0,T]\times X\rightarrow \calL(H,X_{\theta_G})$ 
satisfy appropriate (local) Lipschitz conditions.
The results are applied to a class of 
second order parabolic SPDEs driven by multiplicative
space-time white noise. 
\end{abstract}

\maketitle


\section{Introduction}\label{sec:intro}
In this article we prove pathwise H\"older convergence with optimal rates for
various numerical 
schemes, including the implicit Euler
scheme and the so-called splitting scheme,
associated with parabolic stochastic partial differential equations (SPDE)
driven by multiplicative Gaussian noise. Such 
equations can be written as abstract Cauchy problems of the form
\begin{equation}\label{SDE}
\left\{ 
\begin{aligned} dU(t) & = AU(t)\,dt +F(t,U(t))\,dt +
G(t,U(t))\,dW_H(t), \quad t\in [0,T];\\ U(0)&=x_0,
\end{aligned}\right.
\end{equation}
where $A$ is the generator of an analytic $C_0$-semigroup on a \textsc{umd}
Banach 
space $X$ and $W_H$ is a cylindrical Brownian motion in a Hilbert space 
$H$ with respect to some given filtration $(\F_t)_{t\in [0,T]}$. 

The functions $F: [0,T] \times X\rightarrow X_{\theta_F}$ and 
$G: [0,T]\times X\rightarrow \calL(H,X_{\theta_G})$ are assumed to satisfy
appropriate (local) 
Lipschitz and linear growth 
conditions that will be specified in Section \ref{ss:setting}. Here,
$X_\theta$ denotes the fractional domain space (if $\theta\ge 0$) 
or extrapolation space (if $\theta
\le 0$) of $A$ of order $\theta$.
The initial value $x_0$ is an $\F_0$-measurable random variable 
taking values in $X_\eta$ for some $\eta\ge 0$.

Under the conditions on the exponents $\theta_F$ and $\theta_G$
stated in Theorem \ref{t:euler_concrete_intro}, the problem 
\eqref{SDE} admits a unique mild solution in a suitable Banach space of `weighted' 
stochastically integrable $X$-valued processes (see Subsection 
\ref{subsec:stoch_evol_eq} for the details).
Our strategy is to first prove convergence of the various approximation schemes
in this space for globally Lipschitz coefficients $F$ and $G$. These results imply uniform
convergence of all $p^{\textrm{th}}$ moments. A Kolmogorov argument then allows us to 
deduce convergence in certain H\"older norms (Theorems
\ref{t:euler_concrete_intro} and \ref{t:split_intro}). 
A standard Borel-Cantelli argument then gives almost sure H\"older convergence
of the paths, and a localization 
argument allows us to extend the results to locally Lipschitz coefficients $F$ and $G$.

To the best of our knowledge, this is the first article proving convergence 
with respect to H\"older norms. A detailed comparison with known results in the
literature is given 
below (see page \pageref{subsec:related_work}).
 
At the end of the paper we apply our abstract results 
to a class of second order parabolic SPDEs driven by multiplicative space-time
white noise. 
Further examples of SPDEs that fit into this framework can be found in
\cite[Section 10]{NVW08}. 
In these applications, typically one takes
$X = L^p(D)$ (or a fractional domain space derived from it) with $D\subseteq \R^d$
an open domain; the choice 
$H = L^2(D)$ then corresponds to the case of space-time white noise on $D$.

\addtocontents{toc}{\SkipTocEntry}\subsection*{The implicit-linear Euler scheme}
The main result of this article gives optimal pathwise H\"older convergence
rates for the 
implicit-linear Euler scheme associated with \eqref{SDE}. The scheme is defined as follows.
Fixing a finite time horizon $0<T<\infty$ and an integer $n\in \N =
\{1,2,3,\dots\}$, we set 
$V^{(n)}_0 := x_0$
 and, for $j=1,\dots,n$,  define the random variables $V_j^{(n)}$ implicitly by
the identity
\begin{align*}
V_j^{(n)} & =  V_{j-1}^{(n)} + \tfrac{T}{n} \big[A V_j^{(n)} +
F\big(t_{j-1}^{(n)},V_{j-1}^{(n)}\big)\big] +
G\big(t_{j-1}^{(n)},V_{j-1}^{(n)}\big)\Delta W_j^{(n)}.
\end{align*}
Here $$t_j^{(n)} =\tfrac{jT}{n} \  \hbox{ and, formally, } \ 
\Delta W_j^{(n)} = W_H(t_j^{(n)}) - W_H(t_{j-1}^{(n)}).$$
The rigorous interpretation of the term
$G\big(t_{j-1}^{(n)},V_{j-1}^{(n)}\big)\Delta W_j^{(n)}$ is explained in Section
\ref{sec:absEuler}.
Note that this scheme is implicit only in its linear part. As a consequence of
this, 
and noting that for large enough $n\in\N$ we have $\frac{n}{T}\in\varrho(A)$,
the resolvent set of $A$, 
this identity may be rewritten in
the explicit format
\begin{equation}\label{EulerDefIntro}
\begin{aligned}
V_j^{(n)} = (1-\tfrac{T}{n}A)^{-1}
\big[V_{j-1}^{(n)}+ \tfrac{T}{n}F\big(t_{j-1}^{(n)},V_{j-1}^{(n)}\big) +
G\big(t_{j-1}^{(n)},V_{j-1}^{(n)}\big)\Delta W_j^{(n)}\big].
\end{aligned}
\end{equation}\par

For a sequence $x=(x_j)_{j=0}^n$ in $X$ (we think of
the $x_j$ as
the values $f(t_j^{(n)})$ of some function $f:[0,T]\to X$) and $0\le \gamma\le
1$ we define the discrete H\"older norm $\n \cdot \n_{c^{(n)}_{\gamma}}$ as follows:
$$\n x\n_{c_\gamma^{(n)}([0,T];X)} := 
\sup_{0\le j\le n} \n x_j\n_X + 
\sup_{0\le i < j\le n} \frac{\n x_j - x_i\n_X}{|t_j^{(n)} -
t_i^{(n)}|^{\gamma}}.
$$\par
Set $u = (U(t_j^{(n)}))_{j=0}^n$, where $(U(t))_{t\in [0,T]}$
is the mild solution of \eqref{SDE}, and $v^{(n)} = (V_j^{(n)})_{j=0}^n$. 
Under appropriate Lipschitz and linear growth
conditions on the  functions $F: [0,T] \times X\rightarrow X_{\theta_F}$ and 
$G: [0,T]\times X\rightarrow \calL(H,X_{\theta_G})$ (conditions \MF{}, \MG{} in Section
\ref{ss:setting} and conditions \MFII{}, \MGII{} in Section \ref{sec:absEuler}), 
the following result is obtained in Section \ref{sec:pathwise}.

\begin{theorem}[H\"older convergence of the Euler
scheme]\label{t:euler_concrete_intro} 
Let $X$ be a \textsc{umd} Banach space with Pisier's property $(\alpha)$, and 
let $\tau\in (1,2]$ be
the type of $X$. 
Let $\theta_F>  -1 + (\inv{\tau}-\frac12)$, $\theta_G>-\inv{2}$. Suppose
$p\in (2,\infty)$
and $\gamma,\delta \in [0,\inv{2})$ and $\eta>0$ 
satisfy 
$$\gamma+ \delta+\tfrac1p<  \min\{1 - (\tinv{\tau}-\tfrac12)
+(\theta_F\wedge 0) ,\,\tfrac12+(\theta_G\wedge 0), \,\eta \},$$
and suppose that $x_0\in L^p(\Omega,\calF_0;X_{\eta})$. 
There is a constant $C$, independent of $x_0$, such that for all 
large enough $n\in \N$,
\begin{equation}\label{eq:moments}
\begin{aligned}
\big(\E \n u-v^{(n)} \n_{c_\gamma^{(n)}([0,T];X)}^p\big)^{\inv{p}} &\le C
n^{-\delta}(1+\n
x_0\n_{L^p(\Omega;X_{\eta})}).
\end{aligned}
\end{equation}
\end{theorem}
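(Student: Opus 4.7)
The plan is to lift the $L^p$-convergence in the weighted path space $\Vapc$ (established earlier in the paper) into pathwise H\"older convergence by a Kolmogorov-type argument, and then read off \eqref{eq:moments} by restricting the resulting H\"older estimate to the grid. Thanks to the localisation strategy sketched in the introduction, I may assume the global Lipschitz conditions \MFII{}, \MGII{} throughout.

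I would begin by introducing a continuous-time interpolant $\Ucla$ of the Euler scheme that coincides with $V_j^{(n)}$ at the grid points and satisfies a mild-type identity
\[
\Ucla(t) = S^{(n)}(t)x_0 + \int_0^t S^{(n)}(t-s) F\bigl(\un{s},\Ucla(\un{s})\bigr)\, ds + \int_0^t S^{(n)}(t-s) G\bigl(\un{s},\Ucla(\un{s})\bigr)\, dW_H(s),
\]
where $\un{s}$ is the left grid point of $s$ and $S^{(n)}$ is the piecewise-constant rational approximation of $S$ built from $(1-\tfrac{T}{n}A)^{-1}$. Comparing with the mild formula for $U$, the error $e^{(n)}(t):=U(t)-\Ucla(t)$ splits into three contributions: one in $x_0$ controlled by $S(t)-S^{(n)}(t)$, one Bochner integral involving $F$, and one stochastic integral involving $G$. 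The earlier $\Vapc$-convergence theorem controls each of these in the weighted norm with rate $n^{-\delta}$.

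The central step is to upgrade this $\Vapc$-bound into joint moment bounds on the increments of $e^{(n)}$. Using the analyticity of $S$, fractional powers of $A$, and the sharp $L^p$-stochastic integration inequalities in \textsc{umd} spaces with property $(\alpha)$ (which is precisely what forces the type threshold $\theta_F>-1+(\tinv{\tau}-\tfrac12)$ and $\theta_G>-\tfrac12$), I would prove that, for admissible $\gamma,\delta$,
\[
\bigl(\E \n e^{(n)}(t)-e^{(n)}(s)\n_X^p\bigr)^{\inv{p}} \le C\, n^{-\delta}\, |t-s|^{\gamma+\inv{p}}\bigl(1+\n x_0\n_{L^p(\Omega;X_\eta)}\bigr),
\]
together with the analogous uniform estimate without the factor $|t-s|^{\gamma+1/p}$. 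The budget $\gamma+\delta+\tinv{p}<\min\{1-(\tinv{\tau}-\tfrac12)+(\theta_F\wedge 0),\tfrac12+(\theta_G\wedge 0),\eta\}$ then distributes the available regularity of the three terms between the convergence rate $\delta$, the Kolmogorov loss $\tinv{p}$, and the final H\"older exponent $\gamma$.

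Once these increment bounds are in hand, the quantitative Kolmogorov continuity theorem produces a modification of $e^{(n)}$ satisfying $(\E\n e^{(n)}\n_{C^\gamma([0,T];X)}^p)^{\inv p}\le C n^{-\delta}(1+\n x_0\n_{L^p(\Omega;X_\eta)})$; restricting to the grid and using that $\n\cdot\n_{c^{(n)}_\gamma([0,T];X)}$ is dominated by the continuous H\"older norm yields \eqref{eq:moments}. The main obstacle is the second step: one must track, term by term, how the discrepancies $S(t)-S^{(n)}(t)$, $F(s,U(s))-F(\un{s},\Ucla(\un{s}))$, and the analogous mismatch for $G$ interact with fractional powers of $A$ \emph{simultaneously} in the rate variable $\delta$ and the H\"older variable $\gamma$, within the \textsc{umd} stochastic integration framework. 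Closing this book-keeping uniformly in $n$ so that $\gamma+\delta+\tinv p$ actually approaches the stated threshold is the bulk of the technical work.
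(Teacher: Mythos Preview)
Your outline is plausible in spirit but takes a substantially harder route than the paper, and the hard step you identify is essentially the whole theorem, not a lemma on the way to it.

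The paper's proof (Section \ref{sec:pathwise}) avoids continuous-time interpolants and genuine H\"older increment estimates altogether. The key observation is that once Theorem \ref{t:euler} gives uniform-in-time $L^p$-convergence of the error at rate $n^{-\eta}$ (in the space $\Winfp{\alpha}$, \emph{not} $\Vapc$; see Remark \ref{r:diffNVW} for why this matters for the $\tfrac1p$ budget), the discrete H\"older seminorm comes \emph{for free} from the trivial inequality $|t_j^{(n)}-t_i^{(n)}|\ge T/n$:
\[
\sup_{i<j}\frac{\n d_j^{(n)}-d_i^{(n)}\n_{L^p(\Omega;X)}}{|t_j^{(n)}-t_i^{(n)}|^{\eta-\delta}}
\le \Big(\tfrac{n}{T}\Big)^{\eta-\delta}\cdot 2\sup_j\n d_j^{(n)}\n_{L^p(\Omega;X)}
\lesssim n^{\eta-\delta}\cdot n^{-\eta}=n^{-\delta}.
\]
This places the sequence $(d_j^{(n)})$ in $c_{\eta-\delta}^{(2^k)}([0,T];L^p(\Omega;X))$ with the right rate (after a harmless embedding into a dyadic grid), and a \emph{discrete} Kolmogorov criterion then trades $\eta-\delta$ down to $\gamma$ at the cost of $\tfrac1p$. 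No continuous interpolation, no term-by-term increment bookkeeping, no simultaneous tracking of $\gamma$ and $\delta$ in the stochastic integrals.

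By contrast, your central step---proving
\[
\bigl(\E\n e^{(n)}(t)-e^{(n)}(s)\n_X^p\bigr)^{1/p}\le C\,n^{-\delta}\,|t-s|^{\gamma+1/p}
\]
for \emph{continuous} $t,s$ with $C$ independent of $n$---is not something the paper supplies, and it is not obviously true at the full threshold. The piecewise-constant structure of $V^{(n)}$ (or of your interpolant near grid points) makes such a uniform-in-$n$ H\"older bound delicate; the discrete argument sidesteps this entirely by never asking for regularity between grid points. Also, a minor correction: property $(\alpha)$ is needed for the $\gamma$-Fubini isomorphism \eqref{ggiso} used in the proof of Theorem \ref{t:euler}, not for the type thresholds on $\theta_F,\theta_G$, which come from analyticity and the Besov embedding.
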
 
Examples of \textsc{umd} 
spaces with property $(\alpha)$ are the Hilbert spaces and the Lebesgue spaces
$L^p(\mu)$ with and $1<p<\infty$ and $\mu$ a $\sigma$-finite measure
(see the introductions of Sections \ref{sec:prelim} and \ref{sec:absEuler} for more details).
\par

By a Borel-Cantelli argument, \eqref{eq:moments} implies that for 
almost all $\omega\in\Omega$ there exists a constant $C$ depending on $\omega$
but independent of $n$, 
such that:
\begin{equation}\label{eq:local} \sup_{j\in \{0,\ldots,n\}} \n
u(\omega)-v^{(n)}(\omega) \n_{c_\gamma^{(n)}([0,T];X)} 
\le C n^{-\d}.
\end{equation}
By a localization argument, we can now obtain almost sure pathwise convergence,
with the same rates,
for the case that $x_0:\Omega\to X_\eta$ is merely $\F_0$-measurable and the
coefficients $F$ and $G$ are
locally Lipschitz continuous with linear growth
(see Section \ref{sec:local}).

\addtocontents{toc}{\SkipTocEntry}\subsection*{The splitting scheme}
Theorem \ref{t:euler_concrete_intro} will be deduced from the corresponding 
convergence result for a splitting scheme, essentially by using the
classical Trotter-Kato formula
\begin{equation}\label{eq:TK} S(t)x = \lim_{n\to\infty} (I - \tfrac{t}{n}A)^{-n}x
\end{equation}
to pass from the resolvent of $A$ to the semigroup $S(t)$ generated by $A$. 

We shall use a scheme which  
is an appropriate modification of the `classical' splitting scheme, which allows us to cover the case of negative fractional indices $\theta_F$ and
$\theta_G$.
It is defined by setting
$U^{(n)}_0(0) := x_0 $ 
and then successively solving, for $j=1,\dots,n$, the problem
\begin{equation}\label{ModSplitIntro}
\left\{
\begin{aligned} 
dU_j^{(n)}(t) & = S(\tfrac{T}{n})\big[F(t,U_j^{(n)}(t))\, dt
+ G(t,U_j^{(n)}(t))\, dW_H(t)\big], \quad t\in [t_{j-1}^{(n)},t_j^{(n)}], \\
 U_j^{(n)}(t_{j-1}^{(n)}) & = S(\tfrac{T}{n})U^{(n)}_{j-1}(t_{j-1}^{(n)}) .
\end{aligned} \right.
\end{equation}
Set $u = (U(t_j^{(n)}))_{j=0}^n$ and $u^{(n)} =
(U^{(n)}_j(t_j^{(n)}))_{j=0}^n$. In Section \ref{sec:pathwise} the following 
theorem is obtained from
Theorem \ref{thm:convV}, assuming that $F$ and $G$ satisfy the Lipschitz and linear growth
conditions \MF{} and \MG{} as stated in Section \ref{ss:setting}.

\begin{theorem}[H\"older convergence of the splitting
scheme]\label{t:split_intro}
Let $X$ be a \textsc{umd} Banach space and let $\tau\in (1,2]$ be the type
of $X$. 
Let $\theta_F>  -1 + (\inv{\tau}-\frac12)$, 
$\theta_G>-\inv{2}$. Suppose $p\in [2,\infty)$
and $\gamma,\delta\in [0,1)$ and $\eta>0$ satisfy 
$$\gamma+ \delta+\tfrac1p < \min\{1 -
(\tinv{\tau}-\tfrac12) + \theta_F ,\, \tfrac12+\theta_G,\,\eta,\,1\},$$
and suppose that $x_0\in L^p(\Omega,\calF_0;X_{\eta})$. 
There is a constant $C$, independent of $x_0$, such that 
for all  $n\in \N$,
\begin{align*}
\begin{aligned}
\big(\E \n u - u^{(n)}\n_{c_\g^{(n)}([0,T];X)}^p\big)^{\inv{p}} &\le C
n^{-\d}(1+\n
x_0\n_{L^p(\Omega;X_{\eta})}).
\end{aligned}
\end{align*}
\end{theorem}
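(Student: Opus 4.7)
The plan is to deduce Theorem~\ref{t:split_intro} from Theorem~\ref{thm:convV} by means of a Kolmogorov-type continuous embedding. The scheme \eqref{ModSplitIntro} is already defined in continuous time on each subinterval, so setting $U^{(n)}(t) := U_j^{(n)}(t)$ for $t\in[t^{(n)}_{j-1},t^{(n)}_j]$ gives a continuous process on $[0,T]$ with $U^{(n)}(t_j^{(n)}) = u_j^{(n)}$; consequently
\[
\|u-u^{(n)}\|_{c_\gamma^{(n)}([0,T];X)} \le \|U - U^{(n)}\|_{C^\gamma([0,T];X)},
\]
so it suffices to control the $p$-th moment of the continuous H\"older norm.

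First, I would apply Theorem~\ref{thm:convV} with auxiliary parameters $\tilde\alpha$, $\tilde\delta$ satisfying $\tilde\alpha > \gamma + \tfrac1p$ and $\tilde\delta > \delta$ while still fulfilling the hypotheses of that theorem; the strict inequality
\[
\gamma + \delta + \tfrac1p < \min\bigl\{1-(\tinv{\tau}-\tfrac12)+\theta_F,\;\tfrac12+\theta_G,\;\eta,\;1\bigr\}
\]
is exactly the margin needed to produce such a pair $(\tilde\alpha,\tilde\delta)$. Theorem~\ref{thm:convV} then yields
\[
\bigl(\E\|U - U^{(n)}\|_{\Vpinf{\tilde\alpha}}^p\bigr)^{1/p} \le C n^{-\tilde\delta}\bigl(1+ \|x_0\|_{L^p(\Omega; X_\eta)}\bigr).
\]

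Second, I would invoke the Kolmogorov/Sobolev-type continuous embedding $\Vpinf{\tilde\alpha} \hookrightarrow C^{\tilde\alpha - 1/p}([0,T]; X)$, a standard property of the weighted spaces of stochastically integrable processes introduced in Section~\ref{sec:prelim}. Because $\tilde\alpha - \tfrac1p > \gamma$ and $\tilde\delta > \delta$, this converts the previous display into the desired bound on $(\E\|U - U^{(n)}\|_{C^\gamma([0,T];X)}^p)^{1/p}$, which dominates the discrete H\"older norm via the preliminary observation.

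The main obstacle is the parameter bookkeeping in the first step: one has to verify that the regularity index $\tilde\alpha$ and rate $\tilde\delta$ supplied by Theorem~\ref{thm:convV} can be chosen simultaneously with $\tilde\alpha - \tfrac1p > \gamma$ and $\tilde\delta > \delta$ under the constraints of that theorem. This is precisely where the $+\tfrac1p$ term in the hypothesis originates: it represents the regularity loss inherent in the Kolmogorov embedding, and the strict inequality leaves exactly that amount of slack.
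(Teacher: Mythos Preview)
Your proposal has two genuine gaps.

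First, the process $U^{(n)}$ is \emph{not} continuous on $[0,T]$. By \eqref{SDE_noAmod} the initial value on each subinterval is $U_j^{(n)}(t_{j-1}^{(n)}) = S(\tfrac{T}{n})\,U_{j-1}^{(n)}(t_{j-1}^{(n)})$, so at every grid point there is a jump of size $(S(\tfrac{T}{n})-I)U_{j-1}^{(n)}(t_{j-1}^{(n)})$. Hence $\|U-U^{(n)}\|_{C^\gamma([0,T];X)}$ is infinite and cannot dominate the discrete H\"older norm. (This is noted explicitly in Remark~\ref{r:diffNVW}; obtaining continuous-in-time pathwise convergence requires the separate work mentioned in Remark~\ref{r:pathwise}.)

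Second, the embedding $\Winfp{\tilde\alpha}\hookrightarrow L^p(\Omega;C^{\tilde\alpha-1/p}([0,T];X))$ that you invoke does not hold. The first part of the $\Winfp{\alpha}$-norm is $L^\infty(0,T;L^p(\Omega;X))$, with $L^\infty$ \emph{outside} the probabilistic $L^p$, and the weighted $\gamma$-part gives no pathwise H\"older information either; elements of $\Winfp{\alpha}$ need not have continuous paths at all. Moreover, Theorem~\ref{thm:convV} only admits $\tilde\alpha\in[0,\tfrac12)$, so even granting the embedding your argument could never reach $\gamma$ close to $1$ as the statement allows.

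The paper's route avoids both problems by working purely at the grid points. From Theorem~\ref{thm:convV} one uses only the $L^\infty(0,T;L^p(\Omega;X))$ bound, which gives $\sup_j\|d_j^{(n)}\|_{L^p(\Omega;X)}\lesssim n^{-\eta}$ for $d_j^{(n)}:=U(t_j^{(n)})-u_j^{(n)}$. Since grid points are separated by at least $T/n$, one has the \emph{trivial} discrete H\"older estimate
\[
\sup_{i<j}\frac{\|d_j^{(n)}-d_i^{(n)}\|_{L^p(\Omega;X)}}{|t_j^{(n)}-t_i^{(n)}|^{\eta-\delta}}
\lesssim n^{\eta-\delta}\cdot n^{-\eta}=n^{-\delta},
\]
i.e.\ control of $\|d^{(n)}\|_{c_{\eta-\delta}^{(n)}([0,T];L^p(\Omega;X))}$. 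A discrete Kolmogorov criterion (stated in Section~\ref{sec:pathwise}) then converts this into a bound on $(\E\|d^{(n)}\|_{c_\gamma^{(n)}([0,T];X)}^p)^{1/p}$ whenever $\gamma<\eta-\delta-\tfrac1p$, which is exactly the hypothesis. The $\tfrac1p$ loss thus comes from Kolmogorov applied to the discrete increments, not from an embedding of $\Winfp{\alpha}$.
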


No property $(\alpha)$ assumption is needed for this result. In contrast to our 
result for the Euler scheme, 
the convergence rate does improve as $\theta_F$ and $\theta_G$ increase above $0$.
As a consequence of Theorem \ref{t:split_intro}
it is possible to obtain almost sure H\"older convergence of the paths for the case 
that $F$ and $G$ are locally Lipschitz. However, this requires some tedious extra 
arguments that will be presented in \cite{cox:thesis} 
(see also Remark \ref{r:split_local}).
   
In the special case of fractional indices $\theta_F\ge 0$ and $\theta_G\ge 0$ 
we can compare the above scheme with the `classical'
splitting scheme defined by solving the problems
\begin{align}\label{ClaSplitIntro}
\left\{ \begin{aligned} dU_j^{(n)}(t) & = F(t,U_j^{(n)}(t))\,dt +
G(t,U_j^{(n)}(t))\,dW_H(t), \quad t\in [t_{j-1}^{(n)},t_j^{(n)}],\\
U_j^{(n)}(t_{j-1}^{(n)})&=S(\tfrac{T}{n})U_{j-1}^{(n)}(t_{j-1}^{(n)}).
\end{aligned}\right.
\end{align}
The conditions $\theta_F\ge 0$ and $\theta_G\ge 0$ guarantee 
the existence of unique $X$-valued mild solutions to these problems;
in the modified splitting scheme the additional term $S(\tfrac{T}{n})$ provides
the smoothing needed in the case of negative fractional indices.
We shall prove that for  $\theta_F\ge 0$ and $\theta_G\ge 0$ one obtains the same 
convergence rates as in Theorem \ref{t:split_intro}.

\addtocontents{toc}{\SkipTocEntry}\subsection*{An example: the 1D stochastic
heat equation}

In Section \ref{sec:example} we present a detailed application of our results to
the stochastic heat equation
with multiplicative noise in space dimension one.
We consider the problem 
\begin{equation*}
\left\{ 
\begin{aligned}
\frac{\partial u}{\partial t}(\xi,t) &= a_2(\xi)\frac{\partial^2 u}{\partial
\xi^2}(\xi,t)+ a_1(\xi)\frac{\partial u}{\partial \xi}(\xi,t)
\\
&\quad\quad  +f(t,\xi,u(\xi,t))+g(t,\xi,u(\xi,t))\frac{\partial w}{\partial
t}(\xi,t); &&
 \xi\in (0,1), \ t\in (0,T],\\
u(0,\xi)  & = u_0(\xi); && \xi\in [0,1],
\end{aligned}
\right.
\end{equation*}
with Dirichlet or Neumann boundary conditions. 
Assuming $a_2\in C[0,1]$ to be bounded away from $0$ and $a_1\in C[0,1]$, 
and assuming standard Lipschitz and linear growth assumptions on the nonlinearities
$f$ and $g$, it is shown that if $p>4$, $q>2$, $\a>0$,
$\b\in (\inv{2q},\inv{4})$, and $\g,\,\d\ge 0$ satisfy $$\b+\g+\d+\tfrac1p< \min\{\tfrac14,\a\},$$ then for initial
values $u_0\in L^{p}(\Omega, \F_0;H^{2\a,q}(0,1))$ the modified
splitting scheme $u^{(n)}=(U^{(n)}_j(t_j^{(n)}))_{j=0}^{n}$, defined by \eqref{ModSplitIntro}, 
and the implicit Euler scheme $v^{(n)}=(V_j^{(n)})_{j=0}^{n}$, defined by
\eqref{EulerDefIntro}, satisfy:
\begin{align*}
\begin{aligned}
\big(\E \n u-u^{(n)}
\n_{c_{\gamma}^{(n)}([0,T];H^{2\b,q}(0,1))}^p\big)^{\inv{p}} &\lesssim n^{-\delta}(1+\n
u_0\n_{L^p(\Omega;H^{2\a,q}(0,1))}),\\
\big(\E \n u-v^{(n)}
\n_{c_{\gamma}^{(n)}([0,T];H^{2\b,q}(0,1))}^p\big)^{\inv{p}} &\lesssim n^{-\delta}(1+\n
u_0\n_{L^p(\Omega;H^{2\a,q}(0,1))}).
\end{aligned}
\end{align*}\par
Let us take $\g=0$. By a Borel-Cantelli argument and the Sobolev
embedding theorem, among other things we shall deduce that for large enough $p$ and $q$ and
initial values in $L^{p}(\Omega;H^{\inv{2},q}(0,1))$ we obtain
\begin{align*}
 \sup_{0\le j\le n}\n u(t_j^{(n)})-u_j^{(n)}\n_{C^\lambda[0,1]}
&\lesssim n^{-\delta}(1+\n u_0\n_{L^{\infty}(\Omega;H^{\inv{2},q}(0,1))}), \\
 \sup_{0\le j\le n}\n
u(t_j^{(n)})-v_j^{(n)}\n_{C^\lambda[0,1]}
& \lesssim n^{-\delta}(1+\n u_0\n_{L^{p}(\Omega;H^{\inv{2},q}(0,1))}).
\end{align*} 
whenever $\lambda\ge 0$ and $\d\ge 0$ satisfy $\lambda + 2\d < \tfrac12.$

\addtocontents{toc}{\SkipTocEntry}\subsection*{Concerning the optimality of the
rate}
For end-point estimates (i.e., a weaker type of estimates than the type we
consider, which are pathwise) 
it is proven in \cite{DaGai:00} that the optimal convergence rate of a time
discretization for 
the heat equation in one dimension with additive space-time white noise based on
$n$ equidistant 
time steps is $n^{-\inv{4}}$ (see Remark \ref{r:optimalDG}). In that sense our
results on the 
convergence for the heat equation are optimal.

For `trace class noise' (which corresponds to taking $\theta_G=0$ in our
framework) it is known 
that the critical convergence rate $n^{-\inv{2}}$ is in a sense 
the best possible even in the simpler setting of 
ordinary stochastic differential equations with globally Lipschitz
coefficients. 
To be precise, it is shown in \cite{ClarkCam:80} that there exist examples of 
equations of the form 
$$
\left\{\begin{aligned}
dX(t) & = f(X(t))\,dt + g(X(t))\,dW(t), &&  t\in [0,T],\\
X(0) & = x_0,
\end{aligned}\right.
$$
with $x_0\in \R^d$ and $W$ a Brownian motion in $\R^d$, whose solution $X$
satisfies the endpoint estimate
$$\big(\E |X(T)-\E(X(T)|\mathcal{P}_n)|^2\big)^\frac12 = n^{-\frac12}\sqrt{\tfrac12T}.
$$
Here $\mathcal{P}_n=\sigma\{ W(t_j^{(n)})\, : \, j=1,\ldots,n\}$.  
Thus, if $X$ is approximated by a sequence of processes $X^{(n)}$ whose
definition only depends on 
knowing $\{ W(t_j^{(n)})\, 
: \, j=1,\ldots,n\}$ (such is the case for the implicit Euler scheme), the
convergence rate cannot 
be better than $n^{-\frac12}$. 

In the special case $\theta_G = 0$, Theorems \ref{t:euler_concrete_intro}
and \ref{t:split_intro} can be applied with any $\g,\d\ge 0$ such that $\g+\d<\frac12$,
provided $\theta_F \ge  - \frac1\tau$, 
$x_0$ takes values in $X_\eta$ with $\eta\ge \frac12$, and $p$ is taken large enough. For bounded $\F_0$-measurable initial values $x_0$ in $X_{\frac12}$ and taking $\gamma = 0$, this 
leads to pathwise uniform convergence of order $n^{-\d}$ for arbitrary
$\d\in [0,\frac12)$.

Our methods do not produce the critical convergence rate $n^{-\frac12}$. In the
two examples
in the literature that we know of where this rate is obtained (\cite{Gyo:99,
Kruse:11}), 
the operator $A$ has the property of 
`stochastic maximal regularity' (see \cite{NVW10})
and the underlying space has type $2$, and in neither of
these results the convergence is pathwise.   
In the present work, we do obtain pathwise convergence under 
the weaker assumption that $A$ generates an analytic semigroup,
but in this more general framework we do not expect to attain the critical rate. 
We plan to study the maximally regular case in the type $2$ setting in a 
forthcoming publication.

\addtocontents{toc}{\SkipTocEntry}\subsection*{Related work on pathwise
convergence}\label{subsec:related_work}
The literature on convergence rates for numerical schemes for stochastic
evolution equations and 
SPDEs is extensive. For an overview we refer the reader to the excellent review
paper \cite{JenKloe:09b}. 

To the best of our knowledge, this is the first article proving pathwise
convergence 
with respect to H\"older norms for numerical schemes for stochastic evolution
equations
with (locally) Lipschitz coefficients.
The works
\cite{GreKloe:96,  Gyo:99, GyoKry:03, 
GyoNu:95, Hau:02, Hau:03, Jen:09, Kruse:11,
MillMor:05, Pri:01, Yan:05} consider frameworks which are amenable to a
comparison
with our Theorems \ref{t:euler_concrete_intro} and \ref{t:split_intro}; 
quite likely this list is far from complete. All these papers exclusively 
consider Hilbert spaces $X$, the only exception being \cite{Hau:03} where
$X$ is taken to be of martingale type $2$. In particular, in all these papers
$X$ has type $2$.
Let us also mention the paper \cite{GyoMil:09}, where convergence is proved, for
$p=2$ and $X$ Hilbertian, 
for the implicit Euler scheme under monotonicity assumptions on the operator
$A$. 

Most of these papers cited above give endpoint convergence rates only. The first 
pathwise uniform convergence result of the implicit Euler scheme seems to be due to
Gy\"ongy \cite{Gyo:99}, who
obtains convergence rate $n^{-\inv{8}}$ for the 1D stochastic heat equation with
multiplicative space-time white noise. Pathwise uniform convergence of the
implicit Euler schemes 
(with convergence in probability) has been obtained by Printems \cite{Pri:01}
for the Burgers equation with rate $n^{-\gamma}$ for any $\gamma<\inv{4}$.\par

Pathwise uniform convergence of the splitting scheme, with rate $n^{-1}$ (which
is the rate corresponding to exponents $\theta_F\ge \frac1\tau -\frac12$ and
$\theta_G \ge \frac12$ in our framework, provided we take $\eta\ge 1$), 
has been obtained by Gy\"ongy and Krylov \cite{GyoKry:03} for again a different 
splitting scheme, namely one for which the solution of \eqref{ModSplitIntro} is 
guaranteed by adding (roughly speaking) a term $\eps A U(t)\,dt$ to the right-hand 
side of \eqref{ModSplitIntro}. 

\addtocontents{toc}{\SkipTocEntry}\subsection*{Outline of the paper}

Section \ref{sec:prelim}
contains the preliminaries on stochastic analysis in  \textsc{umd} Banach spaces 
and its applications to stochastic evolution equations in such spaces. 
The precise assumptions on the operator $A$ and the nonlinearities $F$ and $G$ 
in \eqref{SDE}, which are assumed to hold
throughout the article, are stated in Subsection \ref{subsec:stoch_evol_eq}.
\par

The first main result of this article, Theorem \ref{thm:convV}, concerns the
convergence of the so-called modified splitting scheme. Section \ref{sec:split} 
deals entirely with the proof of this result and a comparison with the classical
splitting scheme.  
Section \ref{sec:gammaHP} is dedicated to obtaining quantitative bounds for the 
rate of convergence in the Trotter-Kato formula \eqref{eq:TK}. Our second main result, 
Theorem \ref{t:euler}, is presented in  Section \ref{sec:absEuler}.
It provides optimal convergence rates for certain abstract 
time discretization schemes including the implicit-linear Euler
scheme.\par

The theorems presented in the Sections \ref{sec:split} and \ref{sec:absEuler}
concerning convergence rates do not provide rates in the H\"older norm, but in a
class of spaces $\Winfp{\alpha}([0,T]\times\Omega;X)$, $p\in (2,\infty)$,
introduced in Section \ref{sec:prelim}. A Kolmogorov type argument then allows us to 
obtain pathwise H\"older convergence rates. This is demonstrated in Section
\ref{sec:pathwise}, thereby completing the proofs of Theorems \ref{t:euler_concrete_intro} 
and \ref{t:split_intro}. By a Borel-Cantelli argument, we also give an almost sure
version of the main theorems.
In Section \ref{sec:local} we show how to extend the almost sure
pathwise convergence of Section \ref{sec:pathwise} to the case that $F$ and $G$ are locally
Lipschitz continuous.

The final section \ref{sec:example} contains the application of our results to a class of 
SPDEs driven by multiplication space-time white noise.
\par

In the set-up of this paper, the convergence
of the Euler scheme is deduced from the convergence of the splitting scheme.
A more streamlined proof for the Euler scheme would be possible, but we have chosen 
the present indirect route for the following reason.
In the splitting scheme, the semigroup is discretized, but not the noise.
In the Euler scheme, both the semigroup and the noise are discretized.
Because of this, it is not possible to derive the correct rates for the
 splitting scheme from those of the Euler scheme. The present arrangement
gives the optimal rates for both schemes.
 
In Sections \ref{app:1} and \ref{app:1b} 
of the Appendix, 
we prove some technical lemmas whose proofs would 
interrupt the flow of the main text. In Section \ref{app:2}
we state and prove an existence and uniqueness result for mild solutions to the 
problem \eqref{SDE}.

\section{Preliminaries}\label{sec:prelim}
Throughout this paper, we use $H$ denotes a real Hilbert space 
with inner product $[\cdot,\cdot]$ and $X$ a real Banach space with duality
 $\lb\cdot,\cdot\rb$ between $X$ and its dual $X^*$.
 
Our work relies on the theory of stochastic integration in \textsc{umd} Banach
spaces developed in \cite{NVW07a}. 
For an overview of the theory of \textsc{umd} spaces we refer to 
\cite{Bur01} and the references given therein. Examples of \textsc{umd} spaces
are Hilbert spaces
and the spaces $L^p(\mu)$ with $1<p<\infty$ and $\mu$ a $\sigma$-finite measure.
We shall frequently
use the following well-known facts:
\begin{itemize}
 \item Banach spaces isomorphic to a closed subspace of a \textsc{umd} space are
\textsc{umd};
 \item If $X$ is \textsc{umd}, $1<p<\infty$, and $\mu$ is a $\sigma$-finite
measure,
       then $L^p(\mu;X)$ is \textsc{umd};
 \item Every \textsc{umd} space is $K$-convex. Hence, by a theorem of Pisier
\cite{Pis:75}, 
       every \textsc{umd} space has non-trivial type $\tau\in (1,2]$. 
\end{itemize}
For a thorough treatment of the notion of {\em type} and the dual notion of {\em cotype}
we refer to the monograph of Albiac and Kalton \cite{AlbKal}. 

The results of  \cite{NVW07a} make use of the concept of $\gamma$-radonifying operators
in an essential
way. We refer to \cite{Nee-survey} for a survey on this topic and 
shall use the notations and the results of this paper freely.

Let $\H$ be a Hilbert space. The Banach space $\gamma(\H,X)$ is defined as the completion of $\H\otimes X$ with
respect to the norm
$$ \Big\n \sum_{n=1}^N h_n\otimes x_n \Big\n_{\gamma(\H,X)}^2 := \E \Big\n
\sum_{n=1}^N \gamma_n\otimes x_n \Big\n^2.$$
Here we assume that $(h_n)_{n=1}^N$ is an orthonormal sequence in $\H$, 
$(x_n)_{n=1}^N$ is a sequence in $X$, and  $(\gamma_n)_{n=1}^N$ is a standard
Gaussian sequence on some
probability space. The space $\gamma(H,X)$ embeds continuously into $\calL(\H,X)$
and it elements
are referred to as the {\em $\g$-radonifying} operators from $\H$ to $X$. In the
special cases where
$\H = L^2(S)$ and $\H = L^2(S;H)$ we write 
$$ \g(L^2(S);X) = \g(S;X), \qquad \g(L^2(S;H),X) = \g(S;H,X).$$
If $X$ is a space with type $2$, then we have a continuous embedding
\begin{equation}\label{L2Embed}
L^2(S,\g(H,X)) \hookrightarrow \gamma(S;H,X)
\end{equation}
with norm depending only on the type $2$ constant of $X$.\par
We shall frequently need the so-called {\em ideal property}. It states that if 
$\H_1$ and $\H_2$ are Hilbert spaces and $X_1$ and $X_2$
are Banach spaces, then for all $V\in\calL(\H_2,\H_1)$, $T\in\g(\H_1,X_1)$,  
and $U\in\calL(X_1,X_2)$ we have $UTV\in\g(\H_2,X_2)$ and
\begin{equation}\label{eq:ideal} 
\n U TV \n_{\g(\H_2,X_2)} \le \n U\n \, \n V\n_{\g(\H_1,X_1)} \n R\n.
\end{equation}

By \cite[Proposition 2.6]{NVW07a}, the mapping
$J:L^p(R;\gamma(\mathcal{H},X))\rightarrow
\calL(\mathcal{H},L^p(R;X))$, where $p\in [1,\infty)$, defined by
\begin{align*}
((Jf)h)(r):=f(r)h,\quad r \in R, \ h\in \mathcal{H},
\end{align*}
defines an isomorphism of Banach spaces
\begin{align}\label{eq:gFub}
L^p(R;\gamma(\mathcal{H},X))\simeq \gamma(\mathcal{H},L^p(R,X)).
\end{align}

\addtocontents{toc}{\SkipTocEntry}\subsection{Stochastic
integration}\label{ss:stochint}
An {\em $H$-cylindrical Brownian motion} with respect to
a filtration $(\mathcal{F}_t)_{t\in
[0,T]}$ is a linear mapping
$W_{H}:L^2(0,T;H)\rightarrow L^2(\Omega)$ with the following properties: 
\begin{enumerate}
\item for all $f\in L^2(0,T;H)$,  $W_{H}(f)$ is Gaussian;
\item for all $f_1,f_2\in L^2(0,T;H)$ we have $\E (W_{H}(f_1)W_{H}(f_2))=
[f_1,f_2]$;
\item for all $h\in H$ and $t\in [0,T]$, $W_H(1_{(0,t]}\otimes
h)$ is $\mathcal{F}_t$-measurable;
\item for all $h\in H$ and $0\le s\leq t<\infty$, 
$W_H(1_{(s,t]}\otimes h)$ is independent of $\mathcal{F}_s$.
\end{enumerate}

For all $f_1,\ldots,f_n \in L^2(0,T;H)$ the
random variables $W_{H}(f_1),\ldots,W_{H}(f_n)$ are jointly Gaussian. As a
consequence,
these random variables are independent if and only if $f_1,\ldots,f_n$ are
orthogonal
in $L^2(0,T;H)$. For further details on cylindrical Brownian motions see
\cite[Section
3]{Nee-survey}.\par

The stochastic integral with respect to an $H$-cylindrical Brownian motion
$W_H$ 
of a {\em finite rank adapted step process}
$\Phi: (0,T)\times \Omega \to H\otimes X$ 
of the form
\begin{align*}
\Phi(t,\omega)= \sum_{n=1}^{N}1_{(t_{n-1},t_n]}(t)
\sum_{m=1}^{M} 1_{A_{nm}}(\omega) \sum_{k=1}^{K}h_k \otimes x_{nmk},
\end{align*}
where $0\le t_0<t_1<...<t_N< T$, $A_{nm}\in \F_{t_{n-1}}$, $x_{nmk}\in X$, and
the vectors $h_k$ are orthonormal in $H$,
is defined by
\begin{align*}
\int_0^{t_N} \Phi \,dW_H := \sum_{n=1}^{N} \sum_{m=1}^{M} 1_{A_{nm}}
\sum_{k=1}^{K}
W_H(1_{(t_{n-1},t_n]}\otimes h_k)\otimes x_{nmk}.
\end{align*}
In the above, for $h\in H$, $\phi\in L^2(\Omega)$, and $x\in X$, we write 
$h\otimes x$ for the rank one operator from $H$ to $X$ given by $h'\mapsto
[h,h']x$ and
$\phi\otimes x$ for the random variable
$\omega\mapsto \phi(\omega)x$.

\begin{theorem}[Burkholder-Davis-Gundy estimates (\hbox{\cite{NVW07a}} for $p\in (1,\infty)$,  
\hbox{\cite{CoxVer:10}} for $p\in (0,\infty)$)]\label{t:stochint}
Let $X$ be a \textsc{umd} Banach space and let $p\in (1,\infty)$ be fixed. 
For every finite rank adapted step process $\Phi: (0,T)\times \Omega \to H\otimes
X$
we have 
\begin{equation}\label{BDG}
\E\, \sup_{0\leq t\leq T} \Big\n \int_{0}^t \Phi \,dW_H \Big\n_X^p \eqsim_p
\E\, \n \Phi \n_{\g(0,T;H,X)}^{p},
\end{equation}
the implied constants being independent of $\Phi$.
\end{theorem}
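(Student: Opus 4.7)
The plan is to establish the two-sided estimate at the fixed endpoint $t=T$ first, and then upgrade to the maximal estimate by Doob's inequality in $X$, which is legitimate for $p>1$ since the indefinite integral is an $X$-valued $L^p$-martingale and every \textsc{umd} space has the requisite martingale cotype structure needed for the vector-valued Doob inequality.

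The heart of the argument is the fixed-time estimate, and the approach I would take is the standard decoupling scheme due to Garling (and refined by McConnell and Hitczenko). Let $(\tilde W_H)$ be an independent copy of $W_H$ defined on an enlarged probability space, and for a finite rank adapted step process $\Phi$ as in the theorem define the \emph{decoupled} integral by
\begin{equation*}
\int_0^T \Phi\,d\tilde W_H := \sum_{n,m,k} 1_{A_{nm}} \tilde W_H(1_{(t_{n-1},t_n]}\otimes h_k)\otimes x_{nmk}.
\end{equation*}
The first step is to invoke the decoupling inequality available on \textsc{umd} spaces,
\begin{equation*}
\E \Big\| \int_0^T \Phi\,dW_H \Big\|_X^p \eqsim_p \E \Big\| \int_0^T \Phi\,d\tilde W_H \Big\|_X^p,
\end{equation*}
which is precisely the point where the \textsc{umd} hypothesis enters. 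The second step is to condition on the $\sigma$-algebra generated by $\Phi$ (equivalently, by the indicators $1_{A_{nm}}$): conditionally, the decoupled integral is a mean-zero Gaussian random variable in $X$, and by Kahane's inequality all its conditional $L^p$-moments are equivalent. The conditional second moment is by construction the square of the $\gamma$-radonifying norm of the operator $\Phi(\omega)\in \gamma(L^2(0,T;H),X)$, namely $\|\Phi(\omega)\|_{\gamma(0,T;H,X)}^2$. Taking the outer expectation and applying Kahane once more yields
\begin{equation*}
\E \Big\| \int_0^T \Phi\,d\tilde W_H \Big\|_X^p \eqsim_p \E\, \|\Phi\|_{\gamma(0,T;H,X)}^p.
\end{equation*}
Chaining these equivalences gives the fixed-time version of \eqref{BDG}.

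To pass to the supremum over $t\in[0,T]$, I would apply this fixed-time estimate to the truncated process $\Phi 1_{(0,t]}$ and note that the indefinite integral $M_t := \int_0^t \Phi\,dW_H$ is an $X$-valued martingale in $L^p(\Omega;X)$. Doob's maximal inequality in $X$ then gives $\E \sup_{t\le T}\|M_t\|^p \le C_p \, \E \|M_T\|^p$, which combined with the already established two-sided equivalence at time $T$ produces the asserted maximal inequality with constants depending only on $p$. The reverse direction (lower bound for the supremum by the $\gamma$-norm) is immediate since the supremum dominates $\|M_T\|$.

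The main obstacle is undoubtedly the first decoupling step: it is the only place where the geometric structure of $X$ intervenes, and no such inequality can hold without the \textsc{umd} property. Once decoupling is available, the rest is a clean computation because conditional Gaussianity reduces everything to computing a $\gamma$-norm. A secondary technical point is checking that Kahane's inequality applies conditionally with constants uniform in the conditioning, which follows from the fact that the Kahane constants in Banach spaces depend only on $p$ and not on the underlying Gaussian sequence or the target space.
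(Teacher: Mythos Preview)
The paper does not actually prove this theorem; it is stated as a cited result from \cite{NVW07a} (for $p\in(1,\infty)$) and \cite{CoxVer:10} (for $p\in(0,\infty)$), and is used as a black box throughout. Your outline is essentially the argument of \cite{NVW07a}: decoupling via an independent copy of the noise (this is where \textsc{umd} enters), followed by conditional Gaussianity and Kahane to identify the $\gamma$-norm, and finally Doob to pass from the terminal time to the running supremum.

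One small correction: you do not need any geometric property of $X$ for Doob's maximal inequality. For any Banach space $X$ and any $X$-valued martingale $(M_t)$, the scalar process $\|M_t\|_X$ is a non-negative submartingale (by Jensen applied to the convex function $\|\cdot\|_X$), and the ordinary scalar Doob $L^p$-inequality for $p>1$ applies directly. The \textsc{umd} hypothesis is used \emph{only} in the decoupling step; the remark about ``martingale cotype structure needed for the vector-valued Doob inequality'' is unnecessary and slightly misleading.
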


In what follows we denote by $$L_\F^p(\O;\g(0,T;H,X))$$ 
the closure in $L^p(\O;\g(0,T;H,X))$ of the adapted finite rank step processes. 
Due to estimate \eqref{BDG}, an adapted measurable process $\Phi:[0,T]\times \Omega\rightarrow X$ 
is \emph{$L^p$-stochastically integrable with respect to $W_H$} if and only if 
it defines an element of $L_\F^p(\O;\g(0,T;H,X))$.

We shall frequently use the following H\"older norm
estimate for adapted processes $\Phi\in L^p_{\calF}(\Omega;\gamma(0,T;H,X))$, 
which follows from Theorem \ref{t:stochint} and a domination argument.
For all $\a\in [0,\frac12)$ and $T_0\in (0,T]$: 
\begin{equation}
\begin{aligned}\label{stochIntCont}
& \Big\n s\mapsto \int_{0}^{s} \Phi(u)\,dW_H(u)
\Big\n_{C^{\alpha}([0,T_0];L^p(\Omega;X))} 
\\ & \qquad \eqsim 
\sup_{0\leq t\leq T_0}\n \Phi \n_{L^p(\Omega;\gamma(0,t;H,X))} +
\sup_{0\leq s<t\leq T_0} {(t-s)^{-\alpha}}{\n
\Phi \n_{L^p(\Omega;\gamma(s,t;H,X))}}\\
& \qquad \leq 
\n \Phi \n_{L^p(\Omega;\gamma(0,T_0;H,X))} +
\sup_{0\leq t\leq T_0}\n s\mapsto (t-s)^{-\alpha}\Phi(s)
\n_{L^p(\Omega;\gamma(0,t;H,X))}\\
& \qquad \le (T^{\a}+1)
\sup_{0\leq t\leq T_0}\n s\mapsto (t-s)^{-\alpha}\Phi(s)
\n_{L^p(\Omega;\gamma(0,t;H,X))}
\end{aligned}
\end{equation}
with implied constant independent of $\Phi$ and $T_0$.

\addtocontents{toc}{\SkipTocEntry}\subsection{Besov spaces}\label{sec:besov}

An important tool for estimating the $\gamma(0,T;X)$-norm is the Besov embedding \eqref{BesovEmbed}
below. \par

Fix an interval $I=(a,b)$ with
$-\infty\leq a < b\leq
\infty$ and let $X$ be a Banach space. For $q,r\in [1,\infty]$ and $s\in (0,1)$
the Besov space
$B_{q,r}^{s}(I;X)$ is defined by:
\begin{align*}
B_{q,r}^{s}(I;X) & = \{ f\in L^q(I;X)\,:\,\n f\n_{B_{q,r}^{s}(I;X)}<\infty\},
\end{align*}
where
\begin{align*}
\n f\n_{B_{q,r}^s(I;X)} &:= \n f\n_{L^q(I;X)} + \Big( \int_{0}^{1} \rho^{-sr}
\sup_{|h|<\rho}\n T_h^I f
-f \n_{L^q(I;X)}^r
\frac{d\rho}{\rho} \Big)^{\inv{r}},
\end{align*}
with, for $h\in \R$,
\begin{align*}
T_h^I f(s)=\left\{
\begin{array}{ll} 
f(s+h); &s+h\in I,\\
0;& s+h\notin I.          
\end{array}\right.
\end{align*}

Observe that if $I'\subseteq I$ are nested intervals, then we have a natural
contractive 
restriction mapping from $B_{q,r}^s(I;X)$ into $B_{q,r}^s(I';X)$

If (and only if) a Banach space $X$ has type $\tau\in [1,2)$,
by \cite{NVW07a} we have a continuous embedding
\begin{equation}\label{BesovEmbed}
B_{\tau,\tau}^{\inv{\tau}-\inv{2}}(I;\gamma(H,X)) \hookrightarrow \gamma(I;H,X),
\end{equation}
where the constant of the embedding depends on $|I|$ and the type $\tau$
constant of $X$.\par

\addtocontents{toc}{\SkipTocEntry}\subsection{Randomized boundedness for
analytic semigroups}\label{sec:analytic}

Let $(\gamma_k)_{k\ge 1}$ denote a sequence of real-valued
independent standard Gaussian random variables.
A fam\-ily of operators $\mathscr{R}\subseteq \calL(X_1,X_2)$ is called {\em
$\g$-bounded} if there exists a finite constant $C\ge 0$ such that for all
finite choices $R_1,\dots,R_n \in  \mathscr{R}$ and vectors $x_1,\dots,x_n\in
X_1$
we have
$$ \E \Big\n \sum_{k=1}^n \g_k R_k x_k\Big\n_{X_2}^2 \le C^2
\E \Big\n \sum_{n=1}^n \g_k x_k\Big\n_{X_1}^2.$$
The least admissible constant $C$ is called the {\em $\g$-bound} of
$\mathscr{R}$, notation $\g(\mathscr{R})$. 
When we want to emphasize the domain and range spaces we shall write
$\g_{[X_1,X_2]}(\mathscr{R})$.
Replacing the role of the Gaussian sequence by a Rademacher sequence we arrive 
at the related notion of {\em $R$-boundedness}. Every $R$-bounded set is
$\gamma$-bounded,
and the converse holds if $X_1$ has finite cotype.
We refer to \cite{CPSW, DHP, KunWei, Wei} for examples and more information
$\g$-boundedness and $R$-boundedness.\par

The following $\g$-multiplier result, due to Kalton
and Weis \cite{KalWei07} (see also \cite{Nee-survey}),
establishes a relation between stochastic integrability
and $\g$-bounded\-ness. 

\begin{theorem}[$\gamma$-Multiplier theorem]\label{t:KW}
Suppose $X_1$ does not contain a closed subspace
isomorphic to $c_0$.
Suppose $M:(0,T)\to \calL(X_1,X_2)$ is a
strongly measurable function
with $\gamma$-bounded range $\mathcal{M}=\{M(t): \ t\in (0,T)\}$.
If $\Phi\in  \g(0,T;H,X_1)$ then $M\Phi\in \g(0,T;H,X_2)$ and:
$$ \n M\Phi\n_{\g(0,T;H,X_2)} \leq \g_{[X_1,X_2]}(\mathcal{M})\,
\n \Phi\n_{\g(0,T;H,X_1)}.$$
\end{theorem}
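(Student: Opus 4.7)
The plan is to prove the theorem in two steps: first for simple multipliers $M$ by a direct computation exploiting the definition of $\gamma$-boundedness, and then extend to general $M$ by approximation.

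First, suppose $M = \sum_{j=1}^J 1_{I_j} M_j$ for a measurable partition $I_1,\ldots,I_J$ of $(0,T)$ and operators $M_j\in\mathcal{M}$. The orthogonal decomposition $L^2(0,T;H) = \bigoplus_{j=1}^J L^2(I_j;H)$ allows the choice of an orthonormal basis $(e_n)_{n\ge 1}$ of $L^2(0,T;H)$ with the property that every $e_n$ is supported in a single interval $I_{j(n)}$. For such a basis $(M\Phi)(e_n) = M_{j(n)}\Phi(e_n)$, and the definition of $\gamma$-boundedness applied to the operators $M_{j(1)},M_{j(2)},\ldots\in\mathcal{M}$ yields, for every $N\ge 1$,
$$
\E\Big\n\sum_{n=1}^N \gamma_n (M\Phi)(e_n)\Big\n_{X_2}^2
= \E\Big\n\sum_{n=1}^N \gamma_n M_{j(n)}\Phi(e_n)\Big\n_{X_2}^2
\le \gamma(\mathcal{M})^2\,\E\Big\n\sum_{n=1}^N \gamma_n \Phi(e_n)\Big\n_{X_1}^2,
$$
and passing to the supremum over $N$ bounds the right-hand side by $\gamma(\mathcal{M})^2 \n\Phi\n_{\gamma(0,T;H,X_1)}^2$. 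This establishes the theorem for simple multipliers.

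Second, I would approximate a general strongly measurable $M$ with $\gamma$-bounded range by a sequence $M^{(k)}$ of simple multipliers whose values remain in $\mathcal{M}$ and satisfy $M^{(k)}(t) \to M(t)$ strongly a.e.; strong measurability of $M$ makes this possible via a standard Pettis-type argument (using the fact that $\Phi$, as an element of $\gamma(0,T;H,X_1)$, has essentially separable range and hence tests $M$ against only countably many vectors). The simple case yields the uniform bound $\n M^{(k)}\Phi\n_{\gamma(0,T;H,X_2)} \le \gamma(\mathcal{M})\n\Phi\n_{\gamma(0,T;H,X_1)}$, and a dominated-convergence argument at the level of the Gaussian series representation then promotes this to $M^{(k)}\Phi\to M\Phi$ in the $\gamma$-norm.

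The main technical obstacle is the last step: showing that the limit operator $M\Phi$ truly lies in $\gamma(0,T;H,X_2)$ rather than only in $\mathcal{L}(L^2(0,T;H),X_2)$. This is precisely where the hypothesis that $X_1$ contains no closed copy of $c_0$ enters: it provides the structural feature of $\gamma(0,T;H,X_1)$—sufficiency of $L^2$-bounds on Gaussian partial sums for almost sure convergence, via the Hoffmann--J\o rgensen--Kwapie\'n theorem—needed to transfer the estimate cleanly from $X_1$ through the $\gamma$-bounded multiplier to produce a bona fide $\gamma$-radonifying operator into $X_2$.
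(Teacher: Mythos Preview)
The paper does not prove this theorem; it is quoted as a result of Kalton and Weis \cite{KalWei07} (with reference also to the survey \cite{Nee-survey}) and used throughout as a tool. There is therefore no proof in the paper against which to compare your attempt.

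Your outline follows the standard route and is broadly correct, but two points deserve sharpening. In Step~1, the supremum-over-$N$ bound shows only that the Gaussian partial sums $\sum_{n\le N}\gamma_n(M\Phi)(e_n)$ are bounded in $L^2(\Omega;X_2)$; this does not by itself place $M\Phi$ in $\gamma(0,T;H,X_2)$ (the completion of the finite-rank operators) rather than merely in the larger space of $\gamma$-summing operators. For simple $M$ the gap is easily closed: $M\Phi=\sum_{j=1}^J M_j(\Phi\circ P_{I_j})$ is a finite sum of $\gamma$-radonifying operators by the left ideal property, hence lies in $\gamma(0,T;H,X_2)$. In Step~2, the phrase ``dominated convergence at the level of the Gaussian series'' does not quite carry the argument: strong convergence $M^{(k)}\Phi\to M\Phi$ together with the uniform bound yields only $\gamma$-summing via a Fatou-type lower-semicontinuity argument, and the Hoffmann--J{\o}rgensen--Kwapie\'n theorem for $X_1$ concerns convergence of $X_1$-valued Gaussian series, not the $X_2$-valued series $\sum_n\gamma_n(M\Phi)(e_n)$ that you actually need to control. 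In the Kalton--Weis argument the no-$c_0$ hypothesis on $X_1$ is exploited on the $\Phi$ side---through the coincidence of $\gamma$-radonifying and $\gamma$-summing operators into $X_1$---to run an approximation that lands $M\Phi$ in $\gamma(0,T;H,X_2)$ directly; see \cite{KalWei07} or \cite{Nee-survey} for the details.
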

Due to Theorem \ref{t:stochint}, the theorem above implies that if
$\Phi\in  L^p_{\calF}(\Omega;\g(0,T;H,X_1))$ for some  $p\in (1,\infty)$, 
then the function $M\Phi:(0,T)\times\Omega\to \g(H,X_2)$ is
$L^p$-stochastically integrable and 
$$
\Big \n \int_0^T M\Phi\,dW_H \Big\n_{L^p(\O;X_2)} \lesssim
\g_{[X_1,X_2]}(\mathcal{M})\,\Big\n
\int_0^T \Phi\,dW_H\Big\n_{L^p(\O;X_1)}.
$$

The $\g$-multiplier theorem will frequently by applied in conjunction with the following
basic result due to Kaiser and Weis \cite[Corollary 3.6]{KaiWei08}:

\begin{theorem}\label{t.KaiWei} Let $X$ be a Banach space with finite cotype.
Define, for every $h\in H$, the operator $U_h: X\to \g(H,X)$
by
$$ U_h x := h\otimes x, \quad x\in X.$$
Then the family $\{U_h: \ \n h\n\le 1\}$ is $\g$-bounded.
\end{theorem}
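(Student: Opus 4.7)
By the definition of $\g$-boundedness, I need to produce a finite constant $C$ such that for every $n\in \N$, every $h_1,\dots,h_n\in H$ with $\n h_k\n\le 1$, and every $x_1,\dots,x_n\in X$,
\[
\E\,\Big\n\sum_{k=1}^n \g_k\, U_{h_k}x_k\Big\n_{\g(H,X)}^2 \;\le\; C^2\,\E\,\Big\n\sum_{k=1}^n \g_k x_k\Big\n_X^2.
\]
The first step is to apply the $\g$-Fubini isomorphism \eqref{eq:gFub}: the random element $\sum_k\g_k(h_k\otimes x_k)\in L^2(\O;\g(H,X))$ corresponds under that isomorphism to the (deterministic) operator $T:=\sum_k h_k\otimes z_k\in\g(H,L^2(\O;X))$, where $z_k := \g_k x_k$ is regarded as an element of $L^2(\O;X)$, and the left-hand side above equals $\n T\n_{\g(H,L^2(\O;X))}^2$.

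The second step is to expand the $\g$-norm of $T$ by choosing an orthonormal basis $(e_j)_{j=1}^m$ of $\mathrm{span}(h_1,\dots,h_n)\subseteq H$ and writing $h_k=\sum_j[h_k,e_j]e_j$; noting $\sum_j[h_k,e_j]^2=\n h_k\n^2\le 1$ and using the standard covariance formula for the $\g$-norm, I obtain
\[
\n T\n_{\g(H,L^2(\O;X))}^2 \;=\; \E_\xi\,\Big\n\sum_k\xi_k z_k\Big\n_{L^2(\O;X)}^2 \;=\; \E_\g\E_\xi\,\Big\n\sum_k\g_k\xi_k x_k\Big\n_X^2,
\]
where $(\xi_k)_{k=1}^n$ is a centred Gaussian vector, independent of $(\g_k)$, with covariance matrix $([h_k,h_l])_{k,l}$; in particular $\E\xi_k^2 = \n h_k\n^2\le 1$.

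The remaining and hardest step is to bound $\E_{\g,\xi}\n\sum_k\g_k\xi_k x_k\n^2$ by a constant multiple of $\E\n\sum_k\g_k x_k\n^2$ uniformly in $n$. The products $Z_k := \g_k\xi_k$ form a centred second-order Gaussian chaos whose (unconditional) covariance is the diagonal matrix $\mathrm{diag}(\n h_k\n^2)\le I$, so $\sum_k Z_k x_k$ is, in a weak covariance-based sense, no more spread out than an ordinary standard Gaussian sum of the $x_k$. Here the finite cotype of $X$ enters: I would introduce an auxiliary Rademacher sequence $(\e_k)$ independent of everything and use the distributional identity $\g_k\xi_k \stackrel{d}{=} \e_k\g_k\xi_k$ together with Kahane's contraction principle to peel off the scalar factor $\xi_k$, and then invoke the equivalence of Gaussian and Rademacher averages valid in spaces of finite cotype (recalled at the start of Subsection~\ref{sec:analytic}) to close the estimate. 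The main obstacle is this last step: a naive contraction argument would introduce an $\E_\xi\max_k\xi_k^2\lesssim \log n$ factor rather than a uniform constant, and finite cotype of $X$ is precisely what is needed to absorb this logarithm into a constant depending only on $X$.
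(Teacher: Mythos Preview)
The paper does not prove this theorem; it is quoted from Kaiser--Weis \cite{KaiWei08} (their Corollary~3.6), so there is no in-paper argument to compare your proposal against.

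Your Steps 1 and 2 are correct and yield the clean reduction
\[
\E_\g\Big\|\sum_{k=1}^n\g_k\,U_{h_k}x_k\Big\|_{\g(H,X)}^2
\;=\;
\E_{\g,\xi}\Big\|\sum_{k=1}^n\g_k\xi_k x_k\Big\|_X^2,
\]
where $(\xi_k)$ is centred Gaussian with covariance $([h_k,h_l])_{k,l}$, so $\E\xi_k^2\le 1$ but the $\xi_k$ are in general correlated. This is a good starting point.

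Step 3, however, is not a proof but a description of the obstacle. The route you sketch --- symmetrise, apply Kahane's contraction principle to strip off $\xi_k$, then use the Gaussian/Rademacher equivalence --- produces, exactly as you say, the unwanted factor $\E_\xi\max_k\xi_k^2\lesssim\log n$. Your final sentence then asserts that ``finite cotype of $X$ is precisely what is needed to absorb this logarithm into a constant depending only on $X$'', but you give no mechanism for this absorption, and none is apparent from the tools you have introduced. The Gaussian/Rademacher equivalence is the only consequence of finite cotype you invoke, and it merely lets you switch between $(\g_k)$ and $(\e_k)$; it does not remove growth in $n$. Moreover, since the $\xi_k$ are correlated, the products $\g_k\xi_k$ are not independent, so off-the-shelf results about independent symmetric multipliers with bounded second moments do not apply either. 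In short, your argument stops precisely at the point where the substantive use of finite cotype has to enter; as written it is a reduction to the hard core of the theorem together with a promissory note, not a proof.
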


We proceed with a useful $\g$-boundedness results which will
be used frequently below. It is a minor variation of \cite[Lemma 4.1]{NVW08},
which is a key ingredient in
the proof of the existence and uniqueness result for stochastic evolution
equations
proved there. The proof
is entirely analogous and is left to the reader; it is based on the fact that if
$A$ generates an analytic
$C_0$-semigroup on $X$, then for all $\alpha>0$ and any $T>0$ there exists a
constant $C$ such that
\begin{equation}\label{analyticDiff}
\n S(t) \n_{\calL(X, X_{\alpha})} \leq Ct^{-\alpha}; \quad \textrm{for all }
t\in (0,T];
\end{equation}
and
\begin{equation}\label{analyticSGIdiff}
\n S(t) - I \n_{\calL(X_{\alpha}, X)} \leq Ct^{\alpha\minsym 1}; \quad \textrm{for all }
t\in (0,T].
\end{equation}
In the typical application of the lemma, it is combined with the Kalton-Weis
$\gamma$-multiplier theorem to estimate $\g$-radonifying norms
of certain vector-valued or operator-valued functions.

\begin{lemma}\label{lem:analyticRbound} 
Let $A$ generate an analytic $C_0$-semigroup $S$ on a Banach space $X$.
\begin{enumerate}\item[\rm(1)] For all $0\leq \alpha<\beta$ and $t\in (0,T]$
the set
$\mathscr{S}_{\beta,t} = \{s^\beta S(s): \ s\in [0,t]\}$ is $\g$-bounded
in $\calL(X,X_\a)$ and we have
$$\gamma_{[X,X_{\alpha}]}(\mathscr{S}_{\beta,t})\lesssim t^{\beta-\a},$$
with implied constant independent of $t\in (0,T]$.
\item[\rm(2)] For all $0<\alpha\le 1$ and $t\in (0,T]$ the set
$\mathscr{S}_{t} = \mathscr{S}_{0,t}  = \{S(s): \ s\in [0,t]\}$ is $\g$-bounded
in $\calL(X_\a,X)$ and we have $$\gamma_{[X_{\a},X]}(\mathscr{S}_{t})\lesssim
t^{\a},$$
with implied constant independent of $t\in (0,T]$.
\item[\rm(3)] For all $0<\alpha\le 1$ and $t\in (0,T]$ 
the set $\mathscr{T}_{t} = \{S(s)-I: \ s\in [0,t]\}$ is $\g$-bounded
in $\calL(X_\a,X)$ and we have $$\gamma_{[X_\a,X]}(\mathscr{T}_{t})\lesssim
t^{\a},$$
with implied constant independent of $t\in (0,T]$.
\end{enumerate}\end{lemma}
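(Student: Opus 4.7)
The plan is to apply the standard Weis-type criterion for $R$-boundedness (hence $\gamma$-boundedness) of smoothly parameterized operator families: if $f\colon [0,t]\to \calL(Y_1,Y_2)$ is continuous and differentiable on $(0,t]$ with $\|f'(\cdot)\|_{\calL(Y_1,Y_2)}$ integrable on $(0,t]$, then $\{f(s): s\in [0,t]\}$ is $\gamma$-bounded in $\calL(Y_1,Y_2)$ with $\gamma$-bound dominated by $\|f(0)\|_{\calL(Y_1,Y_2)} + \int_0^t \|f'(s)\|_{\calL(Y_1,Y_2)}\,ds$. The proof is elementary: writing $f(s) = f(0) + \int_0^s f'(r)\,dr$, every operator in the family has operator norm bounded by the right-hand side, and any family of operators uniformly bounded by $C$ in operator norm is automatically $\gamma$-bounded with $\gamma$-bound $\le C$.

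For (1), I would apply this to $f(s):=s^\beta S(s)$ viewed as a function into $\calL(X,X_\alpha)$. By \eqref{analyticDiff}, $\|f(s)\|_{\calL(X,X_\alpha)}\lesssim s^{\beta-\alpha}$, so $f(0)=0$. Differentiating yields $f'(s) = \beta s^{\beta-1}S(s) + s^\beta AS(s)$, and combining \eqref{analyticDiff} with the standard moment inequality $\|A^\sigma S(s)\|_{\calL(X)}\lesssim s^{-\sigma}$ (valid for $\sigma\ge 0$ by analyticity) gives $\|f'(s)\|_{\calL(X,X_\alpha)}\lesssim s^{\beta-\alpha-1}$. Since $\beta>\alpha$, $\int_0^t s^{\beta-\alpha-1}\,ds\lesssim t^{\beta-\alpha}$, and the criterion delivers the claimed bound.

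For (3), I would apply the criterion to $f(s):=S(s)-I$ taking values in $\calL(X_\alpha,X)$. By \eqref{analyticSGIdiff}, $f(0)=0$, and $f'(s)=AS(s)$ has $\calL(X_\alpha,X)$-norm comparable to the $\calL(X)$-norm of $A^{1-\alpha}S(s)$, hence $\lesssim s^{\alpha-1}$. Since $\alpha>0$, integration gives $\int_0^t \|f'(s)\|_{\calL(X_\alpha,X)}\,ds\lesssim t^{\alpha}$. Statement (2) then follows from (3) via the decomposition $S(s)=I+(S(s)-I)$ together with the triangle inequality for $\gamma$-bounds applied to the trivially $\gamma$-bounded singleton $\{I\}\subset\calL(X_\alpha,X)$ (whose $\gamma$-bound is controlled by the continuous embedding $X_\alpha\hookrightarrow X$).

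The only mild obstacle is keeping track of the precise $s$-dependence of the mixed-space operator norms $\|AS(s)\|_{\calL(X,X_\alpha)}$ and $\|AS(s)\|_{\calL(X_\alpha,X)}$ appearing in (1) and (3); both reduce cleanly to the inequality $\|A^\sigma S(s)\|_{\calL(X)}\lesssim s^{-\sigma}$ for $\sigma\ge 0$ together with the definition of $X_\alpha=D(A^\alpha)$ equipped with the graph norm. Once these are in hand, the three parts follow by routine integration.
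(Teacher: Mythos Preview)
Your overall strategy---applying the Weis integral criterion to $f(s)=s^\beta S(s)$ and $f(s)=S(s)-I$---is exactly the approach the paper has in mind (it defers to \cite[Lemma~4.1]{NVW08}, whose proof is precisely this).  However, your \emph{justification} of the criterion contains a genuine error.

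You write: ``any family of operators uniformly bounded by $C$ in operator norm is automatically $\gamma$-bounded with $\gamma$-bound $\le C$.''  This is false outside Hilbert spaces.  A standard counterexample: on $\ell^p$, $p\neq 2$, the diagonal sign operators $\{D_\varepsilon:\varepsilon\in\{-1,1\}^{\N}\}$ all have norm $1$ but are not $R$-bounded (hence, as $\ell^p$ has finite cotype, not $\gamma$-bounded).  If uniform boundedness alone sufficed, the entire notion of $\gamma$-boundedness would be vacuous.

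The criterion \emph{is} true, but for a different reason (this is \cite[Proposition~2.5]{Wei}, also \cite[Corollary~2.14]{KunWei}).  Write $f(s_j)=f(0)+\int_0^t 1_{[0,s_j]}(r)f'(r)\,dr$; then for any $x_1,\dots,x_n$,
\[
\sum_j \gamma_j f(s_j)x_j \;=\; f(0)\sum_j \gamma_j x_j \;+\; \int_0^t f'(r)\Bigl(\sum_j 1_{[0,s_j]}(r)\,\gamma_j x_j\Bigr)\,dr.
\]
Take $L^2(\Omega;Y_2)$-norms, pull $\|f'(r)\|$ out of the integral, and use the Kahane contraction principle (the scalars $1_{[0,s_j]}(r)\in\{0,1\}$) to bound $\bigl(\E\|\sum_j 1_{[0,s_j]}(r)\gamma_j x_j\|^2\bigr)^{1/2}\le \bigl(\E\|\sum_j \gamma_j x_j\|^2\bigr)^{1/2}$ uniformly in $r$.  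This yields the bound $\|f(0)\|+\int_0^t\|f'(r)\|\,dr$.

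With this correction your arguments for (1) and (3) go through verbatim.  For (2), your decomposition $S(s)=I+(S(s)-I)$ gives only $\gamma_{[X_\alpha,X]}(\mathscr{S}_t)\lesssim 1+t^\alpha\lesssim 1$, not $\lesssim t^\alpha$.  In fact the bound $\lesssim t^\alpha$ as literally stated cannot hold, since $I=S(0)\in\mathscr{S}_t$ forces the $\gamma$-bound to be at least the embedding constant of $X_\alpha\hookrightarrow X$; the applications in the paper only ever use (2) to obtain a uniform (in $t$) bound, so $\lesssim 1$ is what is actually needed.
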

Let us emphasize that the constants in Lemma \ref{lem:analyticRbound} may depend
on the final time $T$: 
all we are asserting is that, given $T$, the constants are independent of $t\in
[0,T]$.

\addtocontents{toc}{\SkipTocEntry}\subsection{Stochastic evolution
equations}\label{subsec:stoch_evol_eq}

\subsubsection{The equation}\label{ss:setting} We are interested in the 
convergence rate of various numerical schemes
associated with stochastic evolution equations of the form:
\begin{equation}\label{SEE}\tag{SEE}
\left\{ \begin{aligned} dU(t) & = AU(t)\,dt + F(t,U(t))\,dt +
G(t,U(t))\,dW_H(t);\quad t\in
[0,T],\\
U(0)&=x_0. \end{aligned}\right.
\end{equation}
Here, $0<T<\infty$ is fixed and $W_H$ is an $H$-cylindrical $(\F_t)_{t\in
[0,T]}$-Brownian motion on a probability space 
$(\Omega,\F,\P)$. We make the following standing assumptions on the Banach space
$X$, the operator $A$,
 and the functions $F$ and $G$:

\begin{itemize}
\item[\MA{}] $A$ generates an analytic $C_0$-semigroup on a \textsc{umd} Banach
space $X$.
\item[\MF{}] For some $\theta_F>  -1 + (\inv{\tau}-\frac12)$, where $\tau$ is
the type of $X$, the function 
$F:[0,T]\times X\rightarrow X_{\theta_F}$ is measurable in the sense that for
all $x\in X$ the mapping $F(\cdot,x):[0,T]\rightarrow X_{\theta_F}$ is strongly
measurable. Moreover, $F$ is uniformly Lipschitz continuous and uniformly of
linear growth in its second variable.
That is to say, 
there exist constants $C_0$ and
$C_1$ such that for all $t\in [0,T]$ and all $x,x_1,x_2\in X$:
\begin{align*}
\n F(t,x_1) - F(t,x_2) \n_{X_{\theta_F}} & \leq C_0 \n x_1-x_2\n_{X}, \\ 
\n F(t,x)\n_{X_{\theta_F}} &\leq 
C_1(1+\n x\n_{X}).
\end{align*}
The least constant $C_0$ such that the above holds is denoted by Lip$(F)$, and
the least constant $C_1$ such that the
above holds is denoted by $M(F)$.\par

\item[\MG{}] For some $\theta_G>-\inv{2}$, the function $G : [0,T]\times
X\rightarrow \calL(H,X_{\theta_G})$ 
is measurable in the sense that for all $h\in H$ and $x\in X$ the mapping
$G(\cdot,x)h:[0,T]\rightarrow X_{\theta_G}$ is strongly measurable. Moreover,
$G$ is uniformly $L^2_{\gamma}$-Lipschitz continuous and uniformly of linear
growth in its second variable. That is to say, 
there exist constants $C_0$ and 
$C_{1}$ such that for 
all $\alpha\in [0,\inv{2})$, all $t\in [0,T]$, and all simple functions
$\phi_1$, $\phi_2$, $\phi: [0,T]\to X$ one has:
\begin{align*}
& \qquad \n s\mapsto (t-s)^{-\alpha}[G(s,\phi_1(s)) -G(s,\phi_2(s))]
\n_{\gamma(0,t;H,X_{\theta_G})} \\
& \qquad \qquad \qquad \le  C_0 \n s\mapsto (t-s)^{-\alpha}[\phi_1 -\phi_2] 
\n_{L^2(0,t;X)\,\cap \,\gamma(0,t;X)}; \\
& \qquad \n s\mapsto (t-s)^{-\alpha}
G(s,\phi(s))\n_{\gamma(0,t;H,X_{\theta_G})}\\
& \qquad \qquad \qquad  \leq C_1\big(1+ \n s\mapsto
(t-s)^{-\alpha}\phi(s)\n_{L^2(0,t;X)\,\cap\, \gamma(0,t;X)}\big).
\end{align*}
The least constant $C_0$ such that the above holds is denoted by
$\textrm{Lip}_{\gamma}(G)$, and the 
least constant $C_1$ such that the
above holds is denoted by $M_\gamma(G)$.
\end{itemize}\par
Our definition of $L^2_{\gamma}$-Lipschitz continuity is a slight adaptation of
the definition 
given in \cite{NVW08}. Examples of $L^2_{\gamma}$-Lipschitz continuous operators
can be found in
that article. In particular:
\begin{itemize}
 \item If $G$ is defined by an Nemytskii map on $[0,T]\times L^p(R)$, where
$p\in [1,\infty)$ and $(R,\calR,\mu)$ a
$\sigma$-finite
measure space, then $G$ is $L^2_{\gamma}$-Lipschitz continuous (see
\cite[Example 5.5]{NVW08}).
 \item if $G:[0,T]\times X_1\rightarrow \gamma(H,X_2)$ is Lipschitz continuous,
uniformly in $[0,T]$, and $X_2$ is a
type 2 space, then $G$ is $L^2_{\gamma}$-Lipschitz continuous (see \cite[Lemma
5.2]{NVW08}).
\end{itemize}

\subsubsection{Existence and uniqueness}

In \cite[Theorem 6.2]{NVW08} an existence and uniqueness result is presented for solutions to \eqref{SEE}
under the conditions \MA{}, \MF{}, \MG. In this section we give a variation to these results (see
Theorem \ref{thm:NVW08} and Remark \ref{r:diffNVW} below). We shall prove existence and uniqueness
of a solution to \eqref{SEE} in a class of spaces which turns out to be the most suitable for
proving convergence of the various numerical schemes under consideration.\par

\begin{definition}\label{d:Wspace}
For $\alpha \geq 0$ and $0\leq a<b<\infty$ we define $\Winfp{\alpha}([a,b]\times
\Omega;X)$ to be the space containing all adapted processes $\Phi\in 
L^p_{\calF}(\Omega;\gamma(a,b;X))$ for which the 
following norm is finite:
\begin{align*}
\n \Phi\n_{\Winfp{\alpha}([a,b]\times \Omega;X)}&= \n \Phi \n_{L^{\infty}(a,b;L^p(\Omega;X))} 
+ \sup_{a\leq t\leq b}\n
s\mapsto
(t-s)^{-\alpha}\Phi(s)\n_{L^p(\Omega;\gamma(a,t;X))}.
\end{align*}\par
\end{definition}

For $0\le\beta\le\alpha<\frac12$ the $\g$-multiplier theorem (Theorem
\ref{t:KW}) implies:
\begin{equation}\label{Vchange-of-alpha}
\n\Phi\n_{\Winfp{\beta} ([a,b]\times \Omega;X)} \leq (b-a)^{\beta-\alpha}\n
\Phi\n_{\Winfp{\alpha}([a,b]\times \Omega;X)}.
\end{equation}
One also checks that for $0\leq a \leq b \leq T$,
\begin{equation}\label{Vtransinv}
\n \Phi 1_{[a,b]} \n_{ \Winfp{\alpha}([0,T]\times\Omega;X)} = \n \Phi|_{[a,b]} \n_{
\Winfp{\a} ([a,b]\times \Omega;X)}.
\end{equation}
Finally, if $G:[0,T]\times X\rightarrow \calL(H,X_{\theta_G})$ satisfies \MG{} and
$\Phi_1,\Phi_2 \in \Winfp{\alpha}([0,T]\times \Omega;X)$ 
for some $p\geq 2$, then:
\begin{equation}
\begin{aligned}\label{GLipschitzV2}
& \sup_{0\leq t\leq T}\n s\mapsto
(t-s)^{-\alpha}[G(s,\Phi_1(s))-G(s,\Phi_2(s))]\n_{L^p(\Omega;\gamma(0,t;X_{
\theta_G}))}\\
& \quad  \leq \textrm{Lip}_{\gamma}(G) \sup_{0\leq t\leq T}\n s\mapsto
(t-s)^{-\alpha}[\Phi_1(s)-\Phi_2(s)]\n_{L^p(\Omega;L^2(0,t;X))\cap
L^p(\Omega;\gamma(0,t;X))} \\
& \quad  \leq \textrm{Lip}_{\gamma}(G) \sup_{0\leq t\leq T}\n s\mapsto
(t-s)^{-\alpha}[\Phi_1(s)-\Phi_2(s)]\n_{L^2(0,t;L^p(\Omega;X))\cap
L^p(\Omega;\gamma(0,t;X))} \\
&\quad  \leq (1+T^{\inv{2}-\alpha})\textrm{Lip}_{\gamma}(G)\n
\Phi_1-\Phi_2\n_{\Winfp{\alpha}([0,T]\times \Omega;X)},
\end{aligned}
\end{equation}
and, by a similar argument one has, for $\Phi \in \Winfp{\alpha}([0,T]\times \Omega;X)$:
\begin{equation}
\begin{aligned}\label{GLipschitzV1}
&\sup_{0\leq t\leq T}\n s\mapsto
(t-s)^{-\alpha}G(s,\Phi(s))\n_{L^p(\Omega;\gamma(0,t;X_{\theta_G}))} \\
&\qquad \qquad  \qquad  \leq (1+T^{\inv{2}-\alpha})M_\gamma(G)\big( 1+\n \Phi
\n_{\Winfp{\alpha} ([0,T]\times \Omega;X)}\big).
\end{aligned}
\end{equation}
\par

\begin{definition} An adapted, strongly measurable process $U: [0,T]\times\Omega \to X$ 
is called a {\em mild solution}
of \eqref{SEE} if, for all $t\in[0,T]$,
\begin{enumerate}
\item \label{detConvInt} $s\mapsto S(t-s)F(S,U(s))\in L^{0}(\Omega,L^1(0,T;X))$,
\item \label{stochConvInt} $s\mapsto S(t-s)G(s,U(s))$ is $H$-strongly measurable, 
adapted and almost
surely in $\gamma(0,t;H,X)$,
\end{enumerate}
and moreover $U$ satisfies:
\begin{equation}\label{SDE_sol}
U(t)=S(t)x_0 + \int_{0}^{t} S(t-s)F(s,U(s))\,ds + \int_{0}^{t}
S(t-s)G(s,U(s))\,dW_H(s)
\end{equation}
almost surely for all $t\in [0,T]$. 
\end{definition}

A rigorous definition of the stochastic
integral in \eqref{SDE_sol} can be given if condition \eqref{stochConvInt} in the definition above
is satisfied, but this is beyond the theory discussed in \ref{ss:stochint}. We refer to \cite{NVW08}
for the details. With exception of Section \ref{sec:local}, throughout this article the process
$s\mapsto S(t-s)G(s,U(S))1_{[0,t]}$ will always be $L^p$-stochastically integrable for some $p\in
(1,\infty)$.

The proof of the following theorem, which is entirely 
analogous to the proof given for \cite[Theorem 6.2]{NVW08}, is presented in
Appendix \ref{app:2}.

We set \begin{align}\label{eq:eta-max}\eta_{\rm max} := 
\min\{1 -(\tinv{\tau} - \tfrac{1}{2})+\theta_F,\,\tinv{2}+\theta_G\}.
\end{align}
\begin{theorem}\label{thm:NVW08} 
Let  $0\le \eta < \eta_{\max}$
and $p\in [2,\infty)$. 
For all initial values $ x_0\in
L^p(\Omega;\mathcal{F}_0;X_{\eta})$ and all $\alpha\in
[0,\inv{2})$, the problem \eqref{SEE} has 
a unique mild solution $U$ in $\Winfp{\alpha}([0,t]\times\Omega;X_\eta)$. It satisfies 
\begin{align}\label{UinVestimate}
\n U \n_{ \Winfp{\alpha}([0,T]\times \Omega;X_\eta)}  & \lesssim 1+\n
x_0\n_{L^p(\Omega;X_{\eta})}.
\end{align}
\end{theorem}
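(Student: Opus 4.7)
The plan is to apply Banach's fixed point theorem to the operator
\begin{equation*}
(LU)(t) := S(t)x_0 + \int_0^t S(t-s)F(s,U(s))\,ds + \int_0^t S(t-s)G(s,U(s))\,dW_H(s)
\end{equation*}
on $\Winfp{\alpha}([0,T_0]\times\Omega;X_\eta)$ for some sufficiently small $T_0\in (0,T]$, and then to extend the resulting local solution to the whole of $[0,T]$ by iterating on consecutive subintervals of length $T_0$, using the translation-invariance identity \eqref{Vtransinv} together with the fact that the endpoint values $U(kT_0)$ are $\F_{kT_0}$-measurable with values in $X_\eta$.

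The three terms defining $L$ are to be bounded in the $\Winfp{\alpha}$-norm separately. The orbit term $S(\cdot)x_0$ is controlled via the boundedness of $S$ on $X_\eta$ together with the identity $\n f\otimes x\n_{\gamma(0,t;X_\eta)} = \n f\n_{L^2(0,t)}\n x\n_{X_\eta}$, which yields a finite weighted $\gamma$-norm because $(t-s)^{-\alpha}\in L^2(0,t)$ whenever $\alpha<\tfrac{1}{2}$. The deterministic convolution is estimated pointwise by $\n S(t-s)F(s,U(s))\n_{X_\eta}\lesssim (t-s)^{\theta_F-\eta}\n F(s,U(s))\n_{X_{\theta_F}}$ via \eqref{analyticDiff}, which combined with the linear-growth bound of \MF{} yields an integrable singularity at $s=t$ provided $\eta-\theta_F<1$, a condition that is part of $\eta<\eta_{\rm max}$. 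For the stochastic convolution I would first invoke Theorem \ref{t:stochint} to reduce the $L^p(\Omega;X_\eta)$-norm to the $\gamma(0,t;H,X_\eta)$-norm of the integrand $s\mapsto S(t-s)G(s,U(s))$, then apply the Kalton--Weis multiplier theorem (Theorem \ref{t:KW}) with the $\gamma$-bounded family $\{(t-s)^{\eta-\theta_G}S(t-s):s\in[0,t]\}$ furnished by Lemma \ref{lem:analyticRbound}(1), transferring the smoothing onto the integrand. This reduces the problem to bounding $(t-s)^{-\alpha-(\eta-\theta_G)}G(s,U(s))$ in $\gamma(0,t;H,X_{\theta_G})$, which is precisely what \MG{} controls through \eqref{GLipschitzV1}. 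The requirement $\alpha+(\eta-\theta_G)<\tfrac{1}{2}$ needed here follows from $\alpha<\tfrac{1}{2}$ together with the second half of $\eta<\eta_{\rm max}$.

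To obtain a strict contraction I would rerun the above estimates with $U$ replaced by a difference $U_1-U_2$, using the Lipschitz constants $\textrm{Lip}(F)$ and $\textrm{Lip}_\gamma(G)$ from \MF{} and \MG{} (the latter via \eqref{GLipschitzV2}) in place of the linear-growth constants. All of the resulting operator-norm bounds contain a nontrivial positive power of $T_0$, so choosing $T_0$ small enough makes $L$ a strict contraction on $\Winfp{\alpha}([0,T_0]\times\Omega;X_\eta)$. The a priori bound \eqref{UinVestimate} is then obtained by substituting $U=LU$ into the fixed point identity and applying exactly the same estimates to $U$ itself, exploiting the fact that the constant from the orbit term is linear in $\n x_0\n_{L^p(\Omega;X_\eta)}$.

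The main obstacle is the careful bookkeeping of the fractional exponents $\alpha,\eta,\theta_F,\theta_G$: the weight $(t-s)^{-\alpha}$ entering the $\Winfp{\alpha}$-norm composes with the smoothing singularities $(t-s)^{\theta_F-\eta}$ and $(t-s)^{\theta_G-\eta}$ produced by the analytic semigroup, and one must check that the combined exponents remain within the $L^1$- respectively $L^2$-integrability regimes on $(0,t)$. The condition $\eta<\eta_{\rm max}$ together with $\alpha<\tfrac{1}{2}$ is precisely what guarantees this. With this bookkeeping in place, the remaining arguments are a direct transcription of the proof of \cite[Theorem 6.2]{NVW08} into the $\Winfp{\alpha}$-setting.
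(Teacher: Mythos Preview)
Your overall strategy (fixed point on a short interval, then iterate) matches the paper, but the stochastic convolution step contains a genuine error. You claim that the requirement $\alpha+(\eta-\theta_G)<\tfrac12$ ``follows from $\alpha<\tfrac12$ together with the second half of $\eta<\eta_{\rm max}$''. It does not: the condition $\eta<\tfrac12+\theta_G$ only gives $\eta-\theta_G<\tfrac12$, so for $\alpha$ close to $\tfrac12$ and $\eta-\theta_G$ close to $\tfrac12$ the sum can be close to $1$. In that regime the exponent in $(t-s)^{-\alpha-(\eta-\theta_G)}$ falls outside $[0,\tfrac12)$ and neither \MG{} nor the $L^2$-integrability it relies on is available. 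Relatedly, your BDG-plus-multiplier argument only addresses the $L^\infty(0,T_0;L^p(\Omega;X_\eta))$ part of the $\Winfp{\alpha}$-norm; for the weighted $\gamma$-part one has to estimate a $\gamma$-norm \emph{in $s$} of a stochastic integral that itself depends on $s$, and Theorem \ref{t:stochint} does not reduce this directly. The paper handles this via Lemma \ref{lem:stochConv} (whose proof goes through Lemma \ref{lem:h1} and the stochastic Fubini theorem), and part (ii) of that lemma imposes the constraint $\alpha>\eta-\theta_G$ --- the \emph{opposite} end of the range from what your argument would need.

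The paper resolves this by a two-step argument that your proposal omits. First one runs the fixed-point argument only for $\alpha\in[0,\tfrac12)$ large enough that $\alpha+\theta_G>\eta$; such $\alpha$ exist precisely because $\eta<\tfrac12+\theta_G$, and in this regime Lemmas \ref{lem:detConv} and \ref{lem:stochConv} provide the needed $\Winfp{\alpha}$-bounds. Existence for smaller $\alpha$ then follows immediately from the embedding \eqref{Vchange-of-alpha}. Uniqueness for small $\alpha$ requires a separate bootstrap: one shows that any mild solution in $\Winfp{\alpha}$ automatically lies in $\Winfp{\alpha+\beta}$ for suitable $\beta>0$ (using Lemma \ref{lem:stochConv}(i) for the weighted $\gamma$-part), and after finitely many steps one reaches the large-$\alpha$ regime where uniqueness is already known. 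Without this splitting into large and small $\alpha$, the fixed-point argument as you sketched it cannot be closed for all $\alpha\in[0,\tfrac12)$.
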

\begin{remark}\label{r:diffNVW}
In \cite{NVW08} the authors prove existence in the space $\Vapc ([0,T]\times \Omega;X_\eta)$ of all 
continuous adapted 
processes $\Phi\in L_{\calF}^{p}(\Omega;\gamma(0,T;X))$ 
for which the norm
\begin{equation}\label{eq:Vapc}
\n \Phi\n_{\Vapc ([0,T]\times \Omega;X_\eta)} \!:= \!\n \Phi \n_{L^p(\Omega;C([0,T];X_\eta))}
+ \sup_{0\leq t\leq T}\n
s\mapsto (t-s)^{-\alpha}\Phi(s)\n_{L^p(\Omega;\gamma(0,t;X_\eta))}
\end{equation}
is finite,
under the assumption that $\inv{p}<\inv{2}+(\theta_G\minsym 0)$ and $0\leq \eta <
\min\{1- (\frac1\tau-\frac12)+(\theta_F\minsym 0),\,\inv{2}+(\theta_G\minsym 0)-\inv{p} 
\}$. 
The extra factor $\frac1p$ arises from a Kolmogorov-type estimate on the
stochastic integral.\par
Note that the approximations obtained by the splitting scheme and the 
Euler scheme are not continuous. Accordingly we first prove convergence of 
the various schemes in $\Winfp{\alpha}([0,T]\times\Omega;X)$ for arbitrarily 
large $p\in (2,\infty)$. Pathwise convergence results (in the grid points) 
can then be obtained by a Kolmogorov argument. However, if we were to use the 
existence and uniqueness results of \cite{NVW08}, we would lose a factor $\inv{p}$ 
twice. Instead, we shall use Theorem \ref{thm:NVW08}.
\end{remark}

\section{Convergence of the splitting-up method}\label{sec:split}
We consider the stochastic differential equation \eqref{SEE} under the
assumptions \MA{}, \MF{}, \MG{} and with initial value $x_0\in
L^p(\Omega,\F_0;X_\eta)$ with $0\le \eta<\eta_{\rm max}$.

In order to define a scheme, which we shall call the {\em modified
splitting scheme} (for reasons to be explained shortly), we fix an initial value 
$y_0 \in L^p(\Omega,\mathcal{F}_0;X)$,
possibly different from $x_0$,
and fix an integer $n\in \N$. For $j=1,\ldots,n$ we define the process
$U^{(n)}_{j}:[t_{j-1}^{(n)},t_{j}^{(n)}]\times \Omega\rightarrow X$ as the 
mild solution to the problem 
\begin{equation}\label{SDE_noAmod}
\left\{ \begin{aligned} dU^{(n)}_{j}(t) & =
S(\tfrac{T}{n})\big[F(t,U^{(n)}_{j}(t))\,dt +
G(t,U^{(n)}_{j}(t))\,dW_H(t)\big], \quad   t\in
[t_{j-1}^{(n)},t_{j}^{(n)}];\\
U^{(n)}_{j}(t_{j-1}^{(n)})&=S(\tfrac{T}{n})U^{(n)}_{j-1}(t_{j-1}^{(n)}),
\end{aligned}\right.
\end{equation}
where we set $U^{(n)}_{0}(0):=y_0$; 
recall that $t_j^{(n)}:=\tfrac{jT}{n}.$

The existence of a unique mild solution to \eqref{SDE_noA} in
$\Winfp{\alpha}([t_{j-1}^{(n)},t_{j}^{(n)}]\times
\Omega;X)$, for $\alpha\in [0,\inv{2})$ and $p\in [2,\infty)$, 
is guaranteed by Theorem
\ref{thm:NVW08}. Here we use that $S(\tfrac{T}{n})F: [0,T] \times
X\rightarrow X$
satisfies \MF{} and that $S(\tfrac{T}{n})G: [0,T] \times X\rightarrow
\gamma(H,X)$ satisfies \MG{}.\par
For $j=1,\ldots,n$ we define $I_j^{(n)} :=  [t_{j-1}^{(n)},t_{j}^{(n)})$. 
Observe that the adapted process
$U^{(n)}: [0,T)\times \Omega\to X$ defined by 
\begin{align}\label{splitdef} 
U^{(n)} := \sum_{j=1}^n 1_{I_j^{(n)}}(t)\, U_j^{(n)}(t),  \quad t\in [0,T), 
\end{align}
defines an element of $\Winfp{\alpha}([0,T]\times \Omega;X)$. 
In the next subsection we prove convergence of $U^{(n)}$ against $U$ in this space.

There is a subtle difference between the modified splitting scheme and the 
 \emph{classical splitting scheme}, which is defined by
\begin{equation}\label{SDE_noA}
\left\{ \begin{aligned} d\tilde U^{(n)}_{j}(t) & = F(t,\tilde
U^{(n)}_{j}(t))\,dt +
G(t,\tilde U^{(n)}_{j}(t))\,dW_H(t), \quad
t\in
[t_{j-1}^{(n)},t_{j}^{(n)}];\\
\tilde U^{(n)}_{j}(t_{j-1}^{(n)})&=S(\tfrac{T}{n})\tilde
U^{(n)}_{j-1}(t_{j-1}^{(n)})
\end{aligned}\right.
\end{equation}
with  $\tilde U^{(n)}_{0}(0):=y_0$.
The existence of a unique mild solution $\tilde U^{(n)}_{j}$ to \eqref{SDE_noA}
in
$\Winfp{\alpha}([t_{j-1}^{(n)},t_{j}^{(n)}]\times \Omega;X)$, for every
$\alpha\in [0,\inv{2})$ and $p\in [2,\infty)$, is again guaranteed by Theorem \ref{thm:NVW08} 
{\em provided that} $\theta_F\geq 0$ and $\theta_G\geq 0$. However, if
$\theta_F<0$ or $\theta_G
<0$, then we have
no means to define a solution to \eqref{SDE_noA} in $X$, since we cannot guarantee that
the integrals corresponding to $F$ and $G$ in the definition of a mild solution
take values in $X$. 
 In the modified splitting scheme,
this problem is overcome by extra operator $S(\frac{T}{n})$ with provides the
required additional smoothing. \par

Once convergence of the modified splitting scheme has been established, 
convergence of the classical splitting 
scheme is derived from it under the additional assumptions $\theta_F\geq 0$ 
and $\theta_G\geq 0$ (Theorem \ref{thm:convVclassical}).\par

\subsection{Convergence of the modified splitting scheme}

For $t\in I_j^{(n)}$ we define
\begin{align*}
\un{t}:=t_{j-1}^{(n)}, \quad  
\ov{t}:=t_{j}^{(n)}.  
\end{align*}
In particular,
$\ov{t_{j-1}^{(n)}}=t_{j}^{(n)}$. It should be kept in mind that 
in the notation $\un{x}$ and $\ov{x}$ 
we suppress the dependence on $n$ and
$T$. 

The key idea of our approach is the following observation.

\addtocontents{toc}{\SkipTocEntry}\subsection*{Claim.} 
{\em Let $U^{(n)}$ be defined by \eqref{splitdef}. Almost surely, for all $ t\in
[0,T)$ we have:}
\begin{equation}\label{splitConv}
\begin{aligned}
U^{(n)}(t)  = S(\ov{t})y_0 & + \int_{0}^{t}
S(\ov{t}-\un{s})F\big(s,U^{(n)}(s)\big)\,ds\\
& + \int_{0}^{t}S(\ov{t}-\un{s})G\big(s,U^{(n)}(s)\big)\,dW_H(s).
\end{aligned}
\end{equation}
\begin{proof}[Proof of Claim.]
It suffices to prove that for any $j\in\{1,\ldots,n\}$, almost surely the following identity
holds for all $t\in I_j^{(n)}$:
\begin{equation}\begin{aligned}\label{splitConvHelp}
U_{j}^{(n)}(t) = S(t_{j}^{(n)})y_0 & +
\int_{t_{j-1}^{(n)}}^{t}S(\tfrac{T}{n})F(s,U_{j}^{(n)}(s))\,ds \\
& + \int_{t_{j-1}^{(n)}}^{t}S(\tfrac{T}{n})G(s,U_{j}^{(n)}(s))\,dW_H(s)\\
& + \sum_{k=1}^{j-1} \int_{I_{k}^{(n)}}
S(t_{j-k+1}^{(n)})F(s,U^{(n)}_{k}(s))\,ds\\
& + \sum_{k=1}^{j-1} \int_{I_{k}^{(n)}} 
S(t_{j-k+1}^{(n)})G(s,U^{(n)}_{k}(s))\,dW_H(s).
\end{aligned}\end{equation}
By definition, the process $U^{(n)}_{j}$, being a mild solution to \eqref{SDE_noAmod}, satisfies:
\begin{equation}\begin{aligned}\label{varconsUn}
U^{(n)}_{j}(t) = S(\tfrac{T}{n})U_{j-1}^{(n)}(t_{j-1}^{(n)}) & +
\int_{t_{j-1}^{(n)}}^{t}S(\tfrac{T}{n})F(s,U^{(n)}_{j}(s))\,ds\\
&   + \int_{t_{j-1}^{(n)}}^{t}
S(\tfrac{T}{n})G\big(s,U^{(n)}_{j}(s)\big)\,dW_H(s)
\end{aligned}\end{equation}
almost surely for all $t\in I_j^{(n)}$. For $j=1$ \eqref{splitConvHelp} follows 
directly from \eqref{varconsUn}, and for $j\in \{2,\ldots,n\}$
it follows by induction.
\end{proof}

As always we assume that
\MA{}, \MF{}, \MG{} hold, and we denote by
$U$ the mild solution of the problem \eqref{SEE} with initial value $x_0$. 
We define $U^{(n)}$ as above with initial value $y_0$.
The proof of the following theorem uses the strong resemblance between identity
\eqref{splitConv} for $U^{(n)}$ and the identity  \eqref{SDE_sol} satisfied by $U$.

\begin{theorem}\label{thm:convV}
Let $ 0\le \eta \le 1$ satisfy $\eta< \eta_{\rm max}$, let $p \in [2,\infty)$,
and assume that $x_0\in L^p(\calF_0,X_{\eta})$ and $ y_0\in L^p(\calF_0,X)$. 
Then for all $\alpha\in [0,\inv{2})$ one has:
\begin{equation}\label{Wconvrate}
\begin{aligned}
\n U - U^{(n)} \n_{\Winfp{\alpha}([0,T]\times \Omega;X)} &\lesssim \n x_0-y_0
\n_{L^p(\Omega;X)}
+  n^{-\eta} \big(1+\n x_0\n_{L^p(\Omega;X_{\eta})}\big), 
\end{aligned}
\end{equation}
with implied constants independent of $n$, $x_0$ and $y_0$.
\end{theorem}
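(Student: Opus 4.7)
The plan is to use the Claim's representation \eqref{splitConv} of $U^{(n)}$ together with the mild formula \eqref{SDE_sol} for $U$, subtract them, and estimate the difference directly in $\Winfp{\alpha}([0,T]\times\Omega;X)$. Subtraction yields the natural decomposition
\[
U(t)-U^{(n)}(t) \,=\, E_0(t) + E_F(t) + E_G(t) + L_F(t) + L_G(t),
\]
where $E_0(t) = S(t)x_0-S(\ov t)y_0$ is the initial error, $E_F$ and $E_G$ are the deterministic and stochastic \emph{consistency} errors obtained by replacing $S(t-s)$ with $S(\ov t - \un s)$ inside the $F$- and $G$-integrals, and $L_F$, $L_G$ are the standard Lipschitz terms involving $F(s,U)-F(s,U^{(n)})$ and $G(s,U)-G(s,U^{(n)})$. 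The terms $L_F$ and $L_G$ will be absorbed into the left-hand side by a contraction argument of the same type that underlies the proof of Theorem \ref{thm:NVW08}, so the main content is to bound $E_0+E_F+E_G$ in $\Winfp{\alpha}$ by $\n x_0-y_0\n_{L^p(\O;X)}+n^{-\eta}(1+\n x_0\n_{L^p(\O;X_\eta)})$.

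For the initial error, write $S(t)x_0 - S(\ov t)y_0 = S(\ov t)(x_0-y_0) + [I-S(\ov t - t)]\,S(t)x_0$. The first summand is bounded by $\n x_0-y_0\n_{L^p(\O;X)}$ in $L^\infty(0,T;L^p(\O;X))$, and in the weighted $\gamma$-norm via the $\gamma$-multiplier theorem (Theorem \ref{t:KW}) applied to the $\gamma$-bounded family $\{S(\ov t):t\in[0,T]\}\subseteq\calL(X)$ (Lemma \ref{lem:analyticRbound}(2)). Since $\ov t - t\le T/n$, Lemma \ref{lem:analyticRbound}(3) with smoothing index $\eta\wedge 1 = \eta$ produces the desired $n^{-\eta}\n x_0\n_{L^p(\O;X_\eta)}$ bound for the second summand. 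For the consistency terms, use the factorisation
\[
S(t-s)-S(\ov t-\un s) \,=\, S(t-s)\bigl[I-S(r^{(n)}_{s,t})\bigr],\qquad r^{(n)}_{s,t} := (\ov t - t)+(s-\un s)\in[0,2T/n],
\]
and combine Lemma \ref{lem:analyticRbound}(1)-(3) so that $[I-S(r^{(n)}_{s,t})]$ supplies an $n^{-\eta}$ factor while $S(t-s)$ absorbs the (possibly negative) fractional indices $\theta_F,\theta_G$ at the cost of an integrable singular factor $(t-s)^{-\kappa}$. The hypothesis $\eta<\eta_{\max}$ is precisely what keeps $\kappa$ below the integrability threshold used in the deterministic integral, respectively below the Besov/type-$\tau$ threshold entering through Theorem \ref{t:stochint} and the embedding \eqref{BesovEmbed} used in the stochastic integral; the linear-growth bounds in \MF{}, \MG{} together with the a priori estimate \eqref{UinVestimate} for $U$ then turn these into the required bound on $E_F+E_G$.

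The remaining terms $L_F$ and $L_G$ are handled via \MF{}, the $L^2_\gamma$-Lipschitz estimate \eqref{GLipschitzV2}, and the $\gamma$-multiplier theorem applied to $\{S(\ov t-\un s):0\le s\le t\le T_0\}$, yielding
\[
\n L_F+L_G\n_{\Winfp{\alpha}([0,T_0]\times\O;X)} \,\le\, c(T_0)\,\n U-U^{(n)}\n_{\Winfp{\alpha}([0,T_0]\times\O;X)}
\]
with $c(T_0)\downarrow 0$ as $T_0\downarrow 0$, \emph{uniformly in $n$}. Choosing $T_0$ so that $c(T_0)\le \tfrac12$, absorbing into the left-hand side, and iterating over consecutive subintervals via \eqref{Vtransinv} (each step inheriting an admissible bound on $U(kT_0)-U^{(n)}(kT_0)$ in $L^p(\O;X)$ from the previous step) delivers \eqref{Wconvrate} on $[0,T]$. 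The principal obstacle I anticipate is the bookkeeping in the consistency estimates in the weighted $\gamma$-norm: one has to balance simultaneously the singular weight $(t-s)^{-\alpha}$ coming from the $\Winfp\alpha$-norm, the Besov/type-$\tau$ factor $(t-s)^{-(1/\tau-1/2)}$ entering through \eqref{BesovEmbed}, and the extra smoothing needed to cover negative $\theta_F,\theta_G$, while still extracting a clean $n^{-\eta}$ factor — the defining upper bound for $\eta_{\max}$ is tuned precisely to make this possible.
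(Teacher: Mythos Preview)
Your overall strategy matches the paper's: subtract \eqref{splitConv} from \eqref{SDE_sol}, separate into initial, consistency, and Lipschitz pieces, absorb the Lipschitz pieces by a short-time contraction, and iterate over subintervals. The paper's Parts~1--4 carry this out exactly as you describe; the only structural difference is that the paper iterates at grid points $\un{T}_0$ (Part~4), which you will also need so that $U^{(n)}$ restarts cleanly.

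There is one technical point you should be aware of before executing. Your single factorisation
\[
S(t-s)-S(\ov t-\un s)\;=\;S(t-s)\bigl[I-S(r^{(n)}_{s,t})\bigr],\qquad r^{(n)}_{s,t}=(\ov t-t)+(s-\un s),
\]
is correct, but the multiplier $[I-S(r^{(n)}_{s,t})]$ depends on \emph{both} the outer variable and the integration variable. For the $L^\infty(0,T_0;L^p(\Omega;X))$-part of the $\Winfp{\alpha}$-norm this is harmless (freeze $t$ and apply Theorem~\ref{t:KW} in the $s$-variable), but for the weighted $\gamma$-part you must estimate
$\|s\mapsto (t-s)^{-\alpha}E_G(s)\|_{\gamma(0,t;X)}$, and here the $s$-dependence of $r^{(n)}_{u,s}$ prevents $E_G$ from being a genuine stochastic convolution $s\mapsto\int_0^s S(s-u)\Phi(u)\,dW_H(u)$, so Lemmas~\ref{lem:detConv} and~\ref{lem:stochConv} do not apply directly. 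The paper resolves this by expanding $I-S(a+b)=(I-S(a))+S(a)(I-S(b))$, which yields the two-term splitting \eqref{Sdiffdecomp}: one term has an $s$-only prefactor $(I-S(\ov s-s))$ that pulls outside the convolution, the other has a $u$-only factor $(I-S(u-\un u))$ absorbed into the integrand. Each piece is then handled by Lemma~\ref{lem:detConv} (for $E_F$) and Lemma~\ref{lem:stochConv} (for $E_G$); the Besov embedding~\eqref{BesovEmbed} is \emph{not} used here --- it enters only later, in the proof of Theorem~\ref{t:euler}. Once you make this splitting, your sketch goes through with the bookkeeping exactly as in Parts~1c and~1e of the paper.
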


\begin{proof}
Let $\e>0$ be such that $$\eps< \min\{\tfrac1{2}, 1-2\a, \eta_{\rm max}-\eta\}.$$ 
In particular we have $\eps<\inv{2}+\theta_G$ and thus, by replacing $\a\in [0,\frac12)$ 
by some larger value if necessary, we may assume that
$$ \max\{1-\tfrac43\eps, \e - 2\theta_G\} < 2\alpha < 1-\eps.$$ 

We split the proof of \eqref{Wconvrate} into several parts.
In each part, constants will be allowed to 
depend on the final time $T$. Thus, the statement
`$A(t)\lesssim B$ with a constant independent of $t\in [0,T]$' is to 
be interpreted as short-hand 
for `there is a constant $C$, possibly depending on $T$, such that 
$\sup_{t\in [0,T]}A(t)\le CB$'.\par

\addtocontents{toc}{\SkipTocEntry}\subsection*{Part 1.} Fix $n\in \N$. Let $x,y\in
L^p(\Omega;X_{\eta})$.
By the identities \eqref{SDE_sol} and
\eqref{splitConv}, 
for all $s\in [0,T]$ we have:
\begin{equation}
\begin{aligned}\label{aph1}
 U(s) -U^{(n)}(s)  
& =  (S(s)-S(\ov{s}))x_0 \\ 
& \qquad + S(\ov{s})(x_0-y_0) \\
& \qquad + \int_{0}^{s}[S(s-u)-S(\ov{s}-\un{u})]F(u,U(u))\,du \\
& \qquad + \int_{0}^{s}S(\ov{s}-\un{u})[F(u,U(u))-F(u,U^{(n)}(u))]\,du \\ 
& \qquad + \int_{0}^{s}[S(s-u)-S(\ov{s}-\un{u})]G(u,U(u))\,dW_H(u) \\
& \qquad + \int_{0}^{s}S(\ov{s}-\un{u})[G(u,U(u))-G(u,U^{(n)}(u))]\,dW_H(u).
\end{aligned}
\end{equation}
We shall estimate the $\Winfp{\alpha}([0,T_0]\times \Omega;X)$-norm of each 
of the six terms separately for arbitrary $T_0\in [0,T]$. 
In the fourth and sixth term (Part 1d and 1f below) it will be necessary to keep track of the
dependence on $T_0$.
\addtocontents{toc}{\SkipTocEntry}\subsection*{Part 1a.} 
We start with the first term in \eqref{aph1}. 
Fix an arbitrary $\b\in (0,\frac12)$. Writing $S(s)-S(\ov{s}) = (I-S(\ov{s}-s))S(s)$ and 
$S(s) = s^{-\beta} s^\beta S(s)$,
from Lemma \ref{lem:analyticRbound} (1) and (3) and Theorem \ref{t:KW}
we obtain, almost surely for all $t\in [0,T]$:
\begin{align*}
\begin{aligned}
& \n s\mapsto (t-s)^{-\alpha}(S(s)-S(\ov{s}))x_0\n_{\gamma(0,t;X)} \\
& \qquad \lesssim n^{-\eta} \n s\mapsto (t-s)^{-\alpha} S(s) x_0\n_{\gamma(0,t;X)} \\
& \qquad \lesssim n^{-\eta} \n s\mapsto (t-s)^{-\alpha} s^{-\beta} x_0\n_{\gamma(0,t;X)} \\
& \qquad = n^{-\eta} \n s\mapsto (t-s)^{-\alpha} s^{-\beta}\n_{L^2(0,t)}\n x_0\n_{X} \\
& \qquad \lesssim n^{-\eta} \n x_0\n_{X},
\end{aligned}
\end{align*}
with implied constants independent of $n$, $t$, and $x_0$.
Also, by \eqref{analyticSGIdiff},
\begin{align*}
& \n s\mapsto (S(s)-S(\ov{s}))x_0\n_{L^{\infty}(0,T_0;X)} \\
& \qquad \leq \sup_{s\in [0,T_0]} \n S(s)\n_{\calL(X)} \n
I - S(\ov{s}-s)\n_{\calL(X_{\eta},X)} \n x_0\n_{X_{\eta}}  \lesssim n^{-\eta} \n x_0\n_{X_{\eta}}.
\end{align*}
By taking $p^{\textrm{th}}$ moments it follows that
that for every $T_0\in [0,T]$ we have:
\begin{align}\label{Ves2}
\n s\mapsto (S(s)-S(\un{s}))x_0\n_{\Winfp{\alpha}([0,T_0]\times \Omega;X)} &
\lesssim  n^{-\eta} \n x_0\n_{L^p(\Omega;X_{\eta})}
\end{align}
with implied constant independent of $n$, $T_0$ and $x_0$.
\addtocontents{toc}{\SkipTocEntry}\subsection*{Part 1b.} 
Concerning the second term on the right-hand side in \eqref{aph1} we note that, almost surely:
\begin{align*}
\n s\mapsto S(\ov{s})(x_0-y_0) \n_{L^{\infty}(0,T;X)} & \lesssim \n x_0-y_0\n_{X}, 
\end{align*} 
with implied constant independent of $n$, $x_0$ and $y_0$.
Also, by Lemma
\ref{lem:analyticRbound} (1) and Theorem \ref{t:KW},
 almost surely we have, for all $t\in [0,T]$:
\begin{align*}
& \n s\mapsto (t-s)^{-\alpha}S(\ov{s})(x_0-y_0)\n_{\gamma(0,t;X)}  \\ 
& \qquad \lesssim  \n s\mapsto
(t-s)^{-\alpha}(\ov{s})^{-\eps}(x_0-y_0)\n_{\gamma(0,t;X)}\\
& \qquad = \n s\mapsto (t-s)^{-\alpha}(\ov{s})^{-\eps}
\n_{L^2(0,t)}\n x_0-y_0\n_{X} \\
& \qquad \lesssim \n x_0-y_0\n_{X}
\end{align*}
with implied constants are independent of $n$, $t$, $x_0$ and $y_0$.\par
Combining these estimates we obtain, for all $T_0 \in [0,T]$:
\begin{align}\label{Ves1}
\n s\mapsto S(\un{s})(x_0-y_0) \n_{\Winfp{\alpha}([0,T_0]\times \Omega;X)} &
\lesssim \n x_0-y_0\n_{L^p(\O;X)}
\end{align}
with implied constants independent of $n$, $T_0$, $x_0$ and $y_0$.
\addtocontents{toc}{\SkipTocEntry}\subsection*{Part 1c.}
Concerning the third term on the right-hand side in \eqref{aph1} we observe that:
\begin{equation}\label{Sdiffdecomp}
S(s-u)-S(\ov{s}-\un{u})= (I-S(\ov{s}-s))S(s-u) +
S(\ov{s}-s)S(s-u)(I-S(u-\un{u})),
\end{equation}
and hence
\begin{align*}
& \int_{0}^{s}[S(s-u)-S(\ov{s}-\un{u})]F(u,U(u))\,du \\
& \qquad \qquad = (I-S(\ov{s}-s))\int_{0}^{s}S(s-u)F(u,U(u))\,du \\
& \qquad \qquad \qquad +
S(\ov{s}-s)\int_{0}^{s}S(s-u)(I-S(u-\un{u}))F(u,U(u))\,du.
\end{align*}
Let $T_0\in [0,T]$. It follows from Lemma
\ref{lem:analyticRbound}, part (2) (with exponent $\frac12\eps$) and part  
(3) (with exponent $\frac{3}{2}+\theta_F-\inv{\tau}-\eps)$ and
and the $\g$-multiplier theorem (Theorem \ref{t:KW})
that:
\begin{equation}\label{h1ca}
\begin{aligned}
&\Big\n  s\mapsto \int_{0}^{s}[S(s-u)-S(\ov{s}-\un{u})]F(u,U(u))\,du
\Big\n_{\Winfp{\alpha}([0,T_0]\times \Omega;X)}\\
& \ \lesssim  n^{-\min\{\frac{3}{2}+\theta_F-\inv{\tau}-\eps,1\}}\Big\n s\mapsto
\int_{0}^{s}S(s-u)F(u,U(u))\,du\Big\n_{\Winfp{\alpha}([0,T_0]\times
\Omega;X_{\frac{3}{2}+\theta_F-\inv{\tau}-\eps})}\\
& \  \quad + \Big\n s\mapsto
\int_{0}^{s}S(s-u)(I-S(u-\un{u}))F(u,U(u))\,du\Big\n_{
\Winfp{\alpha}([0,T_0]\times
\Omega;X_{\frac12\eps })},
\end{aligned}
\end{equation}
with implied constants independent of $n$ and $T_0$.
We shall estimate the two terms on the right-hand side of 
\eqref{h1ca} separately. 

We begin with the first term. 
Recall that $U\in \Winfp{\alpha}([0,T_0]\times
\Omega;X)$ and therefore, by  \MF{}, we have $F(\cdot,U(\cdot))
\in L^{\infty}(0,T_0;L^p(\Omega;X_{\theta_F}))$.
By Lemma \ref{lem:detConv} (applied
with $Y=X_{\frac{3}{2}+\theta_F-\inv{\tau}-\eps}$, $\Phi(u)=F(u,U(u))$, and 
$\delta=
-\frac{3}{2}+\inv{\tau}+\eps$) we obtain, for all $t\in [0,T_0]$:
\begin{align*}
&\Big\n s\mapsto
\int_{0}^{s}S(s-u)F(u,U(u))\,du\Big\n_{\Winfp{\alpha}([0,T_0]\times
\Omega;X_{\frac{3}{2}+\theta_F-\inv{\tau}-\eps})} \\
& \qquad \lesssim \n u\mapsto
F(u,U(u))\n_{L^{\infty}(0,T_0;L^p(\Omega;X_{\theta_F}))}\\
& \qquad \lesssim (1+\n
U\n_{L^{\infty}(0,T_0;L^p(\Omega;X))}),
\end{align*}
with implied constants independent of $n$, $x_0$ and $T_0$. \par

For the second term in the right-hand side of \eqref{h1ca} we apply Lemma \ref{lem:detConv} 
(with $Y=X_{\frac12\eps }$,
$\delta=-\frac{3}{2}+\inv{\tau}+\frac12\eps $ and
$\Phi(u)=(I-S(u-\un{u}))F(u,U(u))$). Note that $\Phi \in
L^{\infty}(0,T;L^p(\Omega;X_{-\frac{3}{2}+\inv{\tau}+\eps}))$
by
the boundedness of $u\mapsto (I-S(u-\un{u}))$ in
$\calL(X_{\theta_{F}},X_{-\frac{3}{2}+\inv{\tau}+\eps})$, the linear growth
condition in \MF{} and the fact that $U\in \Winfp{\alpha}([0,T_0]\times
\Omega;X)$. We obtain:
\begin{align*}
&\Big\n s\mapsto
\int_{0}^{s}S(s-u)(I-S(u-\un{u}))F(u,U(u))\,du\Big\n_{
\Winfp{\alpha}([0,T_0]\times
\Omega;X_{\frac12\eps })} \\
& \quad \lesssim \n u\mapsto (I-S(u-\un{u}))F(u,U(u))
\n_{L^{\infty}(0,T_0;L^p(\Omega;X_{-\frac{3}{2}+\inv{\tau}+\eps}))}\\
& \quad \lesssim n^{-\min\{\frac{3}{2}+\theta_F-\inv{\tau}-\eps,1\}}\n
u\mapsto F(u,U(u))\n_{L^{\infty}(0,T_0;L^p(\Omega;X_{\theta_F}))}\\
& \quad \lesssim n^{-\min\{\frac{3}{2}+\theta_F-\inv{\tau}-\eps,1\}}
(1+\n U\n_{L^{\infty}(0,T_0;L^p(\Omega;X))}),
\end{align*}
with implied constants independent of $n$, $x_0$ and $T_0$. For the penultimate
estimate we used \eqref{analyticSGIdiff}.\par
Combining these estimates, applying \eqref{UinVestimate}, and recalling the assumptions  
$\eta\le 1$ and $\eta < \frac32-\frac1\tau -\e + \theta_F$,  we obtain:
\begin{equation}\label{Ves3}
\begin{aligned}
&\Big\n  s\mapsto \int_{0}^{s}[S(s-u)-S(\ov{s}-\un{u})]F(u,U(u))\,du
\Big\n_{\Winfp{\alpha}([0,T_0]\times \Omega;X)}\\ 
& \qquad \qquad \lesssim n^{-\min\{\frac{3}{2}+
\theta_F-\inv{\tau}-\eps,1\}}(1+\n U\n_{L^{\infty}(0,T_0;L^p(\Omega;X))})\\
& \qquad \qquad \lesssim  n^{-\eta}(1+\n x_0\n_{L^p(\Omega;X)}),
\end{aligned}
\end{equation}
with implied constants independent of $n$, $x_0$ and $T_0$.
\addtocontents{toc}{\SkipTocEntry}\subsection*{Part 1d.}
Concerning the fourth term on the right-hand side in \eqref{aph1} we first apply Theorem
\ref{t:KW} and Lemma \ref{lem:analyticRbound}
(2) (with exponent $\frac12\eps$)
and then apply Lemma \ref{lem:detConv} (with $Y= X_{\frac12\eps }$,
$\delta= \theta_F-\eps$ and
$\Phi(u)=S(u-\un{u})[F(u,U(u))-F(u,U^{(n)}(u))]$).
Observe that $\Phi \in
L^{\infty}(0,T;L^p(\Omega;X))$ by  the
fact that both $U$ and 
$U^{(n)}$ belong to $\Winfp{\alpha}([0,T]\times\Omega;X)$, \MF{}, 
and the uniform boundedness of $u\mapsto S(u-\un{u})$
in $\calL(X_{\theta_F}, X_{\theta_F-\frac12\e})$. 
We obtain: 
\begin{equation}\label{Ves4}
\begin{aligned}
&\Big\n  s\mapsto
S(\ov{s}-s)\int_{0}^{s}S(s-\un{u})[F(u,U(u))-F(u,U^{(n)}(u))]\,du
\Big\n_{\Winfp{\alpha}([0,T_0]\times
\Omega;X)}\\
&\quad  \lesssim \Big\n  s\mapsto
\int_{0}^{s}S(s-\un{u})[F(u,U(u))-F(u,U^{(n)}(u))]\,du
\Big\n_{\Winfp{\alpha}([0,T_0]\times
\Omega;X_{\frac12\eps })}\\
&\quad  \lesssim (T_0^{1-(\theta_F-\eps)^-}+T_0^{\inv{2}-\alpha})\\
&\quad \quad \qquad \times\n u\mapsto
S(u-\un{u})[F(u,U(u))-F(u,U^{(n)}(u))]\n_{L^{\infty}(0,T_0;L^p(\Omega;X_{
\theta_F-\frac12\eps }))}&\\
& \quad  \lesssim (T_0^{1-(\theta_F-\eps)^-}+T_0^{\inv{2}-\alpha})\n U -
U^{(n)}\n_{L^{\infty}(0,T_0;L^p(\Omega;X))},
\end{aligned}
\end{equation}
with implied constants independent of $n$ and $T_0$.
\addtocontents{toc}{\SkipTocEntry}\subsection*{Part 1e.}
For the fifth term on the right-hand side in \eqref{aph1} we proceed as in Part 1c.
Using \eqref{Sdiffdecomp}, Lemma
\ref{lem:analyticRbound}, part (2) (with exponent $\inv{3}\eps$) and part (3) (with
exponent $\inv{2}+\theta_G-\frac23\eps$), and
Theorem \ref{t:KW}, we obtain:
\begin{equation}\label{h1ea}
\begin{aligned}
&\Big\n  s\mapsto \int_{0}^{s}[S(s-u)-S(\ov{s}-\un{u})]G(u,U(u))\,dW_H(u)
\Big\n_{\Winfp{\alpha}([0,T_0]\times
\Omega;X)}\\
& \lesssim   
n^{-\min\{\inv{2}+\theta_G-\frac23\eps,1\}} \Big\n s\mapsto\! 
\int_{0}^{s}\!S(s-u)G(u,U(u))\,dW_H(u) 
\Big\n_{\Winfp{\alpha}([0,T_0]\times \Omega;X_{\inv{2}+\theta_G-\frac23\eps })}\\
& \qquad + \Big\n s\mapsto \int_{0}^{s}S(s-u)(I-S(u-\un{u}))G(u,U(u))\,dW_H(u)
\Big\n_{\Winfp{\alpha}([0,T_0]\times
\Omega;X_{\frac13\eps})}.
\end{aligned}
\end{equation}\par
Now we apply Lemma \ref{lem:stochConv} to the two terms on the 
right-hand side of \eqref{h1ea}. For the
first term we apply Lemma
\ref{lem:stochConv} with $Y= X_{\inv{2}+\theta_G-\frac23\eps }$,
$\delta=-\inv{2}+\frac23\eps $
and $\Phi(u)=G(u,U(u))$, noting that
$\alpha>\inv{2}-\frac23\eps = -\delta$. Assumption \eqref{lem_stochConv_cond} is 
satisfied due to \eqref{GLipschitzV1} and the fact that 
$U\in \Winfp{\alpha}([0,T]\times \Omega;X)$. 
By Lemma \ref{lem:stochConv} and \eqref{GLipschitzV1} we obtain:
\begin{align*}
&\Big\n s\mapsto \int_{0}^{s}S(s-u)G(u,U(u))\,dW_H(u)
\Big\n_{\Winfp{\a}([0,T_0]\times
\Omega;X_{\inv{2}+\theta_G-\frac23\eps })} \\
& \qquad \lesssim \sup_{s\in [0,T_0]} \n u\mapsto (s-u)^{-\a} 
G(u,U(u))\n_{L^p(\Omega;\gamma(0,s;X_{\theta_G}))}\\
& \qquad \lesssim 1+\n U\n_{\Winfp{\alpha}([0,T_0]\times \Omega;X)},
\end{align*}
with implied constants independent of $n$, $x_0$ and $T_0$.\par

For the second term we take $Y=X_{\frac13\eps}$, $\delta= -\inv{2}+\frac23\e$ and
$\Phi(u)=
(I-S(u-\un{u}))G(u,U(u))$ in Lemma \ref{lem:stochConv}, noting that
$\alpha>\inv{2}-\frac23\eps =-\d$; 
assumption  \eqref{lem_stochConv_cond} is satisfied
because of $U\in \Winfp{\alpha}([0,T_0]\times \Omega;X)$, \eqref{GLipschitzV1}, 
and the fact that the operators $I-S(u-\un{u})$ are $\g$-bounded from $X_{\theta_G}$
to $X_{-\frac12+\e}$ by Lemma
\ref{lem:analyticRbound} (3). 

By Lemma \ref{lem:stochConv}, Lemma \ref{lem:analyticRbound} (3) (applied
with exponent $\frac12+\theta_G -\eps$), and \eqref{GLipschitzV1} we obtain:
\begin{align*}
& \Big\n s\mapsto \int_{0}^{s}S(s-u)(I-S(u-\un{u}))G(u,U(u))\,dW_H(u)
\Big\n_{\Winfp{\alpha}([0,T_0]\times
\Omega;X_{\frac13\eps})} \\
& \qquad \lesssim \sup_{s\in [0,T_0]} \n u\mapsto(s-u)^{-\a} 
(I-S(u-\un{u}))G(u,U(u))\n_{L^p(\Omega;\gamma(0,s;X_{-\inv{2}+\eps}))} \\
& \qquad\lesssim n^{-\min\{\inv{2}+\theta_G-\eps,1\}} \sup_{s\in [0,T_0]}\n u\mapsto
(s-u)^{-\a}G(u,U(u))\n_{L^p(\Omega;\gamma(0,s;X_{\theta_G}))} \\
& \qquad \lesssim n^{-\eta} (1+\n U\n_{\Winfp{\alpha}([0,T_0]\times \Omega;X)}),
\end{align*}
with implied constants independent of $n$, $x_0$ and $T_0$.\par
Combining these estimates and applying \eqref{UinVestimate} we obtain:
\begin{equation}\label{Ves5}
\begin{aligned}
&\Big\n  s\mapsto \int_{0}^{s}[S(s-u)-S(\ov{s}-\un{u})]G(u,U(u))\,dW_H(u)
\Big\n_{\Winfp{\alpha}([0,T_0]\times
\Omega;X)}\\
& \qquad \lesssim n^{-\eta} (1+\n U\n_{\Winfp{\alpha}([0,T_0]\times \Omega;X)}) \\
& \qquad \lesssim n^{-\eta} (1+\n x_0\n_{L^p(\Omega;X)}),
\end{aligned}
\end{equation}
with implied constants independent of $n$, $x_0$ and $T_0$.\par
\addtocontents{toc}{\SkipTocEntry}\subsection*{Part 1f.} 
For the final term in 
\eqref{aph1} we proceed as in Part 1d. 
First we apply Theorem \ref{t:KW} in
combination with Lemma \ref{lem:analyticRbound} (2) (with exponent $\frac14\eps$) to get rid
of the term $S(\ov{s}-s)$. Then we
apply Lemma \ref{lem:stochConv} (with $Y=X_{\frac14\eps}$, $\delta=\theta_G-\inv{2}\eps$,
and $\Phi=
S(u-\un{u})G(u,U(u))-G(u,U^{(n)}(u))$). Note that $\a> \inv{2}\eps - \theta_G = -\delta$.
Assumption \eqref{lem_stochConv_cond} is satisfied because $U$ and $U^{(n)}$ are in 
$\Winfp{\alpha}([0,T_0]\times \Omega;X)$, condition \MG{} holds,
and the operators $S(u-\un{u})$ are $\g$-bounded from $X_{\theta_G}$ to
$X_{\theta_G-\frac14\e}$. Finally, we apply Theorem
\ref{t:KW} again in combination with
Lemma
\ref{lem:analyticRbound} (2) (with exponent $\frac14\eps$) to get rid of the
term $S(u-\un{u})$. We obtain that
there exists an $\epsilon>0$, independent of $T_0\in[0,T]$, such that:
\begin{equation}\label{Ves6}
\begin{aligned}
&\Big\n  s\mapsto
\int_{0}^{s}S(\ov{s}-\un{u})[G(u,U(u))-G(u,U^{(n)}(u))]\,dW_H(u)
\Big\n_{\Winfp{\alpha}([0,T_0]\times
\Omega;X)}\\
& \qquad \lesssim \Big\n  s\mapsto
\int_{0}^{s}S(s-\un{u})[G(u,U(u))-G(u,U^{(n)}(u))]\,dW_H(u)
\Big\n_{\Winfp{\alpha}([0,T_0]\times \Omega;X_{\frac14\eps})}\\
& \qquad \lesssim T_0^{\epsilon}\sup_{0\leq s\leq T_0}\n  u\mapsto
(s-u)^{-\alpha}S(u-\un{u})\\
& \qquad \qquad \qquad \qquad \times [G(u,U(u))-G(u,U^{(n)}(u))]
\n_{L^p(\Omega;\gamma(0,s;H,X_{\theta_G-\frac14\eps}))}\\
& \qquad \lesssim T_0^{\epsilon}\sup_{0\leq s\leq T_0}\n  s\mapsto
(s-u)^{-\alpha} [G(u,U(u))-G(u,U^{(n)}(u))]
\n_{L^p(\Omega;\gamma(0,s;H,X_{\theta_G}))}\\
& \qquad \lesssim T_0^{\epsilon} \n U-U^{(n)} \n_{\Winfp{\alpha}([0,T_0]\times
\Omega;X)},
\end{aligned}
\end{equation}
where the last step used \eqref{GLipschitzV2}; 
the implied constants are independent of $n$ and $T_0$.
\addtocontents{toc}{\SkipTocEntry}\subsection*{Part 2.} Substituting
\eqref{Ves2}, \eqref{Ves1}, \eqref{Ves3}, \eqref{Ves4}, \eqref{Ves5},
\eqref{Ves6}
into \eqref{aph1} we obtain that there exists an exponent
$\epsilon_0>0$ and a constant $C>0$, both of which are independent of $n$, $x_0$, and $y_0$, such
that for all $T_0\in [0,T]$ we have:
\begin{equation}\label{comb}\begin{aligned}
\n U -U^{(n)}\n_{\Winfp{\alpha}([0,T_0]\times \Omega;X)} & \leq C \big(\n
x_0-y_0\n_{L^p(\Omega;X)}
 +  n^{-\eta}(1+\n x_0\n_{L^p(\Omega;X_{\eta})})\big) \\
& + C T_0^{\epsilon_0} \n U - U^{(n)} \n_{\Winfp{\alpha}([0,T_0]\times
\Omega;X)}. \end{aligned}
\end{equation}\par
From now on we fix $T_0:=\min\{(2C)^{1/\epsilon_0},T\}$. Note that $T_0$ is
independent of $n$, $x_0$, $y_0$, and we have: 
\begin{equation}\label{Vest}\begin{aligned}
&\n U -U^{(n)}\n_{\Winfp{\alpha}([0,T_0]\times \Omega;X)}  \\
& \qquad \leq 2C\n x_0-y_0\n_{L^p(\Omega;X)} +   2Cn^{-\eta} (1+\n
x_0\n_{L^p(\Omega;X_{\eta})}).\end{aligned}
\end{equation}
\addtocontents{toc}{\SkipTocEntry}\subsection*{Part 3.}
Let us fix $n\in \N$ and pick $t_0\in \{t_{j}^{(n)}\,:\,j=0,1,\ldots,n \}$. 
For $x\in L^p(\Omega,\calF_{t_0};X)$ we denote by $U(x,t_0,\cdot)$ the process in
$\Winfp{\alpha}([t_0,t_0+T]\times\Omega;X)$ satisfying, almost surely for all $s\in
[t_0,t_0+T]$:
\begin{align*}
U(x,t_0,s) = S(t-t_0)x
& + \int_{t_0}^{t}S(t-t_0-s)F\big(s,U(x,t_0,s)\big)\,ds \\
& + \int_{t_0}^{t}S(t-t_0-s)G\big(s,U(x,t_0,s)\big)\,dW_H(s).
\end{align*}
By $U^{(n)}(x,t_0,\cdot)$ we denote the process obtained from the modified
splitting scheme initiated in $t_0$ with
initial value $x\in L^p(\Omega,\calF_{t_0};X)$. Thus, almost surely for $t\in [t_0,t_0+T]$: 
\begin{align*}
U^{(n)}(x,t_0,t) = S(\ov{t}-t_0)x & + \int_{t_0}^{t}S(\ov{t}-t_0-\un{s})F\big(s,U^{(n)}(x,
t_0,s)\big)\,ds \\
& + \int_{t_0}^{t}S(\ov{t}-t_0-\un{s})G\big(s,U^{(n)}(x,t_0,s)\big)\,dW_H(s).
\end{align*}
From the proof of \eqref{Vest} it follows that for any $x\in
L^p(\Omega,\calF_{t_0};X_{\eta})$ and $y\in
L^p(\Omega,\calF_{t_0};X)$ we have:
\begin{equation}\label{Vest.shift}
\begin{aligned}
&\n U(x,t_0,\cdot) - U^{(n)}(y,t_0,\cdot)\n_{\Winfp{\alpha}([t_0,t_0+T_0]\times
\Omega;X)}  \\
& \qquad \leq 2C\n x-y\n_{L^p(\Omega;X)} +   2Cn^{-\eta} (1+\n
x\n_{L^p(\Omega;X_{\eta})}),
\end{aligned}
\end{equation}
with $C$ as in \eqref{Vest}.
\addtocontents{toc}{\SkipTocEntry}\subsection*{Part 4.} 
Let $T_0$ be as in Part 2 and fix $N\in \N$ large enough such that $\frac{T}{N}\le T_0$.
Let $M = \lceil 2T/T_0 \rceil$. Then $M\ge 2$ and $2T\le MT_0\le 2T+T_0 \le 3T$.  

Let us now fix $n\ge N$. Then $\frac12 T_0\leq \max\{T_0-\frac{T}{n},\frac{T}{n}\}$ 
and therefore $ \un{T}_0 \ge \frac12 T_0$. Hence, 
$T\le  M\un{T}_0 \le 3T$.

From now on we fix an integer $n\geq N$. By the uniqueness of the mild solution to \eqref{SDE} and by 
the definition of $U^{(n)}$ we have, for any $s_0,t_0\in \{
t_{j}^{(n)}\,:\,j=0,1,\ldots,M\}$, any $x\in L^p(\Omega,\mathcal{F}_{s_0};X)$ and
any $t\in [t_0,t_0+T_0]$ that:
\begin{align*}
U(x,s_0,t)& = U\big(U(x,s_0,t_0),t_0,t \big);\\
U^{(n)}(x,s_0,t)& = U^{(n)}\big(U^{(n)}(x,s_0,t_0),t_0,t \big).
\end{align*}
For $j\in\{ 1,\dots,M\}$, from 
\eqref{Vest.shift} (with $x=U(x_0,(j-1)\un{T}_0)$ and $y=U^{(n)}(y_0,(j-1)\un{T}_0)$)
we obtain: 
\begin{equation}\label{recur2}\begin{aligned}
& \n U(x_0,0,j\un{T}_0)-U^{(n)}(y_0,0,j\un{T}_0)\n_{L^p(\Omega;X)} \\
& \qquad = \big\n U\big(U(x_0,0,(j-1)\un{T}_0),(j-1)\un{T}_0,\un{T}_0\big)\\
& \qquad \qquad \qquad
-U^{(n)}\big(U^{(n)}(y_0,0,(j-1)\un{T}_0),(j-1)\un{T}_0,\un{T}_0\big)
\big\n_{L^p(\Omega;X)}\\
& \qquad \lesssim \n
U(x_0,0,(j-1)\un{T}_0)-U^{(n)}(y_0,0,(j-1)\un{T}_0)\n_{L^p(\Omega;X)}\\
& \qquad \qquad + n^{-\eta} (1+\n
U(x_0,(j-1)\un{T}_0)\n_{L^p(\Omega;X_{\eta})}),\end{aligned}
\end{equation}
with implied constants independent of $j$, $n$, $x_0$, $y_0$.\par

By \eqref{UinVestimate} we have
\begin{equation}\label{Vdeltaest}
\begin{aligned}
\sup_{1\leq j\leq M} \n U(x_0,0,j\un{T}_0) \n_{L^p(\Omega;X_{\eta})} & \leq
\sup_{s\in [0,3T]}\n U(x_0,0,s)
\n_{L^p(\Omega;X_{\eta})}\\
 &  \lesssim 1+\n x_0 \n_{L^p(\Omega;X_{\eta})},
\end{aligned}
\end{equation}
and therefore, by \eqref{recur2}:
\begin{align*}
& \n U(x_0,0,j\un{T}_0)-U^{(n)}(y_0,0,j\un{T}_0)\n_{L^p(\Omega;X)} \\
& \qquad \qquad  \lesssim \n
U(x_0,0,(j-1)\un{T}_0)-U^{(n)}(y_0,0,(j-1)\un{T}_0)\n_{L^p(\Omega;X)}\\
& \qquad \qquad \qquad + n^{-\eta} (1+\n x_0 \n_{L^p(\Omega;X_{\eta})}),
\end{align*}
with implied constants independent of $j$ and $n$. By induction we obtain:
\begin{equation}\label{recur3}\begin{aligned}
&\sup_{1\leq j\leq M} \n
U(x_0,0,j\un{T}_0)-U^{(n)}(y_0,0,j\un{T}_0)\n_{L^p(\Omega;X)} \\
& \qquad\qquad  \lesssim \n x_0 - y_0 \n_{L^p(\Omega;X)} + n^{-\eta} (1+\n x_0
\n_{L^p(\Omega;X_{\eta})}),\end{aligned}
\end{equation}
with implied constants independent of $j$ and $n$ as $M$ is independent of
$n$.\par

The estimate \eqref{recur3} is precisely what we need to extend \eqref{Vest} to the
interval $[0,T]$. To do so, we once
again fix $j\in \{ 1,\ldots,M\}$. Set $$x=U(x_0,0,(j-1)\un{T}_0)\quad
\textrm{and} \quad
y=U^{(n)}(y_0,0,(j-1)\un{T}_0)$$ in \eqref{Vest.shift} to obtain, using
\eqref{Vdeltaest} and \eqref{recur3}: 
\begin{align*}
&\big\n U(x_0,0,\cdot)-U^{(n)}(y_0,0,\cdot)\big\n_{\Winfp{\alpha}([(j-1)\un{T}_0,j\un{T}_0]\times \Omega;X)} \\
& \qquad = \big\n U\big(U(x_0,0,(j-1)\un{T}_0),(j-1)\un{T}_0,\cdot\big) \\
& \qquad \qquad 
-U^{(n)}\big(U^{(n)}(y_0,0,(j-1)\un{T}_0),(j-1)\un{T}_0,\cdot
\big)\big\n_{\Winfp{\alpha}([(j-1)\un{T}_0,j\un{T}_0]\times \Omega;X)} \\
& \qquad \lesssim   \n
U(x_0,0,(j-1)\un{T}_0)-U^{(n)}(y_0,0,(j-1)\un{T}_0)\n_{L^p(\Omega;X)} \\
&\qquad \qquad + n^{-\eta} (1+\n U(x_0,0,(j-1)\un{T}_0)
\n_{L^p(\Omega;X_{\eta})})\\
& \qquad \lesssim   \n x_0 - y_0 \n_{L^p(\Omega;X)} +n^{-\eta} (1+\n x_0
\n_{L^p(\Omega;X_{\eta})}),
\end{align*}
with implied constants independent of $j$ and $n$.\par

Due to inequality \eqref{Vtransinv} we thus obtain: 
\begin{align*}
&\n U - U^{(n)} \n_{\Winfp{\alpha}([0,T]\times \Omega;X)} \\
& \qquad \leq  \sum_{j=1}^{M}  \big\n
U\big(U(x_0,0,(j-1)\un{T}_0),(j-1)\un{T}_0,\cdot\big) \\
& \qquad \qquad -U^{(n)}\big(U^{(n)}(y_0,0,(j-1)\un{T}_0),(j-1)\un{T}_0,\cdot
\big)
\big\n_{\Winfp{\alpha}([(j-1)\un{T}_0,j\un{T}_0]\times \Omega;X)}\\
& \qquad \lesssim \sum_{j=1}^{M}  \n x_0 - y_0 \n_{L^p(\Omega;X)} +n^{-\eta}
(1+\n x_0
\n_{L^p(\Omega;X_{\eta})})\\
& \qquad \lesssim  \n x_0 - y_0 \n_{L^p(\Omega;X)} +n^{-\eta} (1+\n x_0
\n_{L^p(\Omega;X_{\eta})})
\end{align*}
since $M$ is independent of $n$. This
proves estimate \eqref{Wconvrate}.
\end{proof}\par

In the next subsection we shall need the following corollary of Theorem
\ref{thm:convV}:
\begin{corollary}
\label{cor:Lpest}
Let the setting be as in Theorem \ref{thm:convV}. Let $0\le \delta <\eta_{\rm max}$  and
$p\in[2,\infty)$,
and assume that $y_0\in L^p(\Omega;X_{\delta})$. Then for all $\alpha\in [0,\inv{2})$ one
has:
\begin{align*}
& \sup_{n\in \N}\n U^{(n)} \n_{\Winfp{\alpha}([0,T]\times \Omega;X_{\delta})}
\lesssim 1 + \n y_0 \n_{L^p(\Omega;X_{\delta})}.
\end{align*}
\end{corollary}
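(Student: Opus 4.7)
The plan is to reduce the corollary to Theorem \ref{thm:convV} by rerunning it with the base space $X$ replaced by $X_\delta$ and with both initial values equal to $y_0$. First I would apply Theorem \ref{thm:NVW08} with regularity exponent $\delta$ and initial value $y_0\in L^p(\Omega,\calF_0;X_\delta)$ (legitimate because $\delta<\eta_{\rm max}$) to produce the mild solution $U$ of \eqref{SEE} with initial value $y_0$, together with the bound $\n U\n_{\Winfp{\alpha}([0,T]\times \Omega;X_\delta)} \lesssim 1 + \n y_0\n_{L^p(\Omega;X_\delta)}$.

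Next, I would observe that $X_\delta$ is again a \textsc{umd} Banach space of type $\tau$, that $A$ restricts to the generator of an analytic $C_0$-semigroup on $X_\delta$, and that its fractional scale is given by $(X_\delta)_\theta=X_{\delta+\theta}$. Under this reformulation the effective indices of $F$ and $G$ become $\theta_F-\delta$ and $\theta_G-\delta$; the assumption $\delta<\eta_{\rm max}$ is precisely what is needed to preserve the bounds $\theta_F-\delta>-1+(\tinv{\tau}-\tfrac12)$ and $\theta_G-\delta>-\tinv{2}$. The Lipschitz and linear growth conditions \MF{} and \MG{} also carry over, since the $X$-norms appearing on their right-hand sides are dominated by the stronger $X_\delta$-norms for $\delta\ge 0$. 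With these verifications in hand, I would apply Theorem \ref{thm:convV} in $X_\delta$ with both initial values equal to $y_0$ and the new regularity exponent $\eta':=0$ (which satisfies $0\le 1$ and $0<\eta_{\rm max}-\delta$). The resulting rate $n^{-\eta'}=1$ is trivial, but the conclusion provides the uniform estimate $\n U-U^{(n)}\n_{\Winfp{\alpha}([0,T]\times \Omega;X_\delta)} \lesssim 1 + \n y_0\n_{L^p(\Omega;X_\delta)}$, which together with the bound on $\n U\n_{\Winfp{\alpha}([0,T]\times \Omega;X_\delta)}$ yields the claim by the triangle inequality.

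The main subtlety is justifying that the entire proof machinery of Theorem \ref{thm:convV} transfers to the reformulated problem in $X_\delta$. This requires that all the auxiliary tools (Lemma \ref{lem:analyticRbound}, the $\gamma$-multiplier Theorem \ref{t:KW}, and the convolution Lemmas in Appendix~\ref{app:1}) continue to apply after the substitution $X\leadsto X_\delta$; since they depend only on $X$ being \textsc{umd} and on the analyticity of $S$, both of which survive the passage to $X_\delta$, this is a matter of careful bookkeeping rather than a genuine additional argument.
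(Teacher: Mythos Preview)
Your approach is correct and essentially identical to the paper's: both shift the state space to $X_\delta$, verify that \MF{} and \MG{} persist with adjusted fractional indices (you use $\theta_F-\delta$ and $\theta_G-\delta$, the paper embeds a step further to $\tilde\theta_F=-1+(\tinv{\tau}-\tfrac12)+\eps$ and $\tilde\theta_G=-\tfrac12+\eps$, but this is cosmetic), apply Theorem~\ref{thm:convV} with $x_0=y_0$ and $\eta=0$, and combine with the a~priori bound from Theorem~\ref{thm:NVW08} via the triangle inequality.
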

\begin{proof}
By assumption one can pick $\eps>0$ such that $\delta+ \eps <
\eta_{\rm max}$. Because
$\theta_F>\d-1+(\frac1\tau -\frac{1}{2})+\eps$, the restriction of 
$F: [0,T] \times X\to  X_{\theta_F}$  to $[0,T]\times X_\d$ induces a mapping
 $F: [0,T] \times
X_{\delta}\rightarrow
X_{\d-1+(\frac1\tau -\frac{1}{2})+\eps}$ which satisfies \MF{} with
$\tilde\theta_F=-1+(\frac1\tau -\frac{1}{2})+\eps$.
Similarly, from $\theta_G>\d-\inv{2}+\eps$ we obtain a mapping
$G: [0,T] \times X_{\delta}\rightarrow
X_{\d-\inv{2}+\eps}$ which satisfies \MG{} with $\tilde \theta_G=-\inv{2}+\eps$. The
desired result is now obtained by combining
Theorem \ref{thm:convV} (with state space $X_{\delta}$, initial conditions $x_0=y_0$, 
 and exponent $\eta=0$) and Theorem \ref{thm:NVW08}.
\end{proof}
\subsection{Convergence of the classical splitting scheme}
We consider the stochastic differential equation \eqref{SEE} under the assumptions
\MA{}, \MF{}, \MG{}, with initial value $x_0$, under the
additional assumption that $\theta_F,\theta_G \geq 0$. 

For $n\in\N$ let $(\tilde U_j^{(n)})_{j=0}^n$ be defined by \eqref{SDE_noA}
and set 
\begin{align*} 
\tilde U^{(n)}(t) := \sum_{j=1}^n 1_{I_j^{(n)}}(t)\, \tilde U_j^{(n)}(t), \quad t\in [0,T). 
\end{align*}
As before, $U$ denotes the mild solution of \eqref{SEE} with initial value 
$x_0$.
\begin{theorem}\label{thm:convVclassical}
Let $0\leq \eta<\eta_{\max}$, and suppose $x_0\in L^p(\calF_0,X_{\eta})$ and 
$\tilde{y}_0\in L^p(\calF_0,X)$ for some $p\in [2,\infty)$. Then for all $\alpha\in [0,\inv{2})$ we
have:
\begin{equation}\label{Wconvrate_mod}\begin{aligned}
\n U - \tilde U^{(n)} \n_{\Winfp{\alpha}([0,T]\times \Omega;X)} &\lesssim \n x_0-\tilde{y}_0
\n_{L^p(\Omega;X)} +  n^{-\eta} \big(1+\n x_0\n_{L^p(\Omega;X_{\eta})}\big), 
\end{aligned}
\end{equation}
with implied constants independent of $n$, $x_0$ and $\tilde{y}_0$.
\end{theorem}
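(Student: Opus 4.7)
The plan is to reduce to Theorem \ref{thm:convV} by the triangle inequality
\begin{equation*}
\n U - \tilde U^{(n)}\n_{\Winfp{\alpha}([0,T]\times\Omega;X)}
\le \n U - U^{(n)}\n_{\Winfp{\alpha}([0,T]\times\Omega;X)}
+ \n U^{(n)} - \tilde U^{(n)}\n_{\Winfp{\alpha}([0,T]\times\Omega;X)},
\end{equation*}
where $U^{(n)}$ denotes the modified splitting scheme driven by the initial value $y_0 := \tilde y_0$. Theorem \ref{thm:convV} immediately bounds the first term by the right-hand side of \eqref{Wconvrate_mod}, so only the second term requires work. By iterating the defining recursion for $\tilde U^{(n)}$ exactly as in the Claim preceding Theorem \ref{thm:convV}, one verifies the representation
\begin{equation*}
\tilde U^{(n)}(t) = S(\ov{t})\tilde y_0
+ \int_{0}^{t} S(\ov{t}-\ov{s})F(s,\tilde U^{(n)}(s))\,ds
+ \int_{0}^{t} S(\ov{t}-\ov{s})G(s,\tilde U^{(n)}(s))\,dW_H(s),
\end{equation*}
the only difference with \eqref{splitConv} being that $\ov{s}$ replaces $\un{s}$ in the semigroup argument. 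Using the identity $S(\ov{t}-\un{s}) = S(\ov{t}-\ov{s})S(\frac{T}{n})$, subtracting the two representations yields a decomposition of $U^{(n)} - \tilde U^{(n)}$ into four terms:
\begin{align*}
\text{(A)} &\ \int_{0}^{t} S(\ov{t}-\ov{s})(S(\tfrac{T}{n})-I)F(s,U^{(n)}(s))\,ds, \\
\text{(B)} &\ \int_{0}^{t} S(\ov{t}-\ov{s})\bigl[F(s,U^{(n)}(s))-F(s,\tilde U^{(n)}(s))\bigr]\,ds,\\
\text{(C)} &\ \int_{0}^{t} S(\ov{t}-\ov{s})(S(\tfrac{T}{n})-I)G(s,U^{(n)}(s))\,dW_H(s),\\
\text{(D)} &\ \int_{0}^{t} S(\ov{t}-\ov{s})\bigl[G(s,U^{(n)}(s))-G(s,\tilde U^{(n)}(s))\bigr]\,dW_H(s).
\end{align*}

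I would estimate the four terms by mimicking Parts 1c--1f of the proof of Theorem \ref{thm:convV}, working on an arbitrary subinterval $[0,T_0]\subseteq[0,T]$. For the source term (A), the factor $(S(\tfrac{T}{n})-I)$ plays exactly the role of $(I-S(u-\un{u}))$ in the second half of Part 1c. Here the assumption $\theta_F\ge 0$ is crucial: it lets me invoke Lemma \ref{lem:analyticRbound}(3) to conclude that $\{S(\tfrac{T}{n})-I\}$ is $\gamma$-bounded from $X_{\theta_F+\beta}$ into $X_{\theta_F}$ with $\gamma$-bound $\lesssim n^{-\beta}$ for any $\beta\in(0,1]$, and then combine this with $\gamma$-boundedness of the remaining semigroup factor in $\calL(X_{\theta_F-\frac12\e},X)$ via Lemma \ref{lem:analyticRbound}(2) and Theorem \ref{t:KW}; an application of Lemma \ref{lem:detConv} followed by the linear growth in \MF{} produces an estimate of size $n^{-\eta}(1+\n U^{(n)}\n_{L^{\infty}(0,T_0;L^p(\Omega;X))})$. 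Term (C) is treated analogously using the stochastic version Lemma \ref{lem:stochConv} and \eqref{GLipschitzV1}, with $\theta_G\ge 0$ playing the same role. Terms (B) and (D) are routine contraction-type terms, handled exactly as in Parts 1d and 1f, producing a factor $T_0^{\epsilon_0}\n U^{(n)}-\tilde U^{(n)}\n_{\Winfp{\alpha}([0,T_0]\times\Omega;X)}$ via \MF{} and \eqref{GLipschitzV2}.

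Combining the four estimates and using Corollary \ref{cor:Lpest} (with $\delta=0$) to bound $\sup_n\n U^{(n)}\n_{\Winfp{\alpha}}\lesssim 1+\n\tilde y_0\n_{L^p(\Omega;X)}$, I obtain an inequality of the form
\begin{equation*}
\n U^{(n)}-\tilde U^{(n)}\n_{\Winfp{\alpha}([0,T_0]\times\Omega;X)}
\le C\,n^{-\eta}(1+\n\tilde y_0\n_{L^p(\Omega;X)})
+ CT_0^{\epsilon_0}\n U^{(n)}-\tilde U^{(n)}\n_{\Winfp{\alpha}([0,T_0]\times\Omega;X)},
\end{equation*}
valid for all $T_0\in[0,T]$, with $C$ independent of $n$, $x_0$, $\tilde y_0$, and $T_0$. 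Choosing $T_0$ so that $CT_0^{\epsilon_0}\le\frac12$ absorbs the last term and the usual step-by-step iteration (Parts 3--4 of the proof of Theorem \ref{thm:convV}), applied with shifted initial times, propagates the bound to the full interval $[0,T]$. Adding the two pieces and using the trivial inequality $\n\tilde y_0\n_{L^p(\Omega;X)}\le\n x_0-\tilde y_0\n_{L^p(\Omega;X)}+\n x_0\n_{L^p(\Omega;X_\eta)}$ yields \eqref{Wconvrate_mod}.

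The main obstacle I anticipate is the source term (A), and in particular making the rate $n^{-\eta}$ work when $\eta$ exceeds $\theta_F$: one cannot simply apply $(S(\tfrac{T}{n})-I)\colon X_{\theta_F}\to X$, because this only yields rate $n^{-\theta_F\wedge 1}$. Instead one has to trade some smoothing between $(S(\tfrac{T}{n})-I)$ and the outer factor $S(\ov{t}-\ov{s})$ at the cost of an integrable singularity in $s$, exactly as is done in the second half of Part 1c for the modified scheme. The assumption $\theta_F\ge 0$ is exactly what makes this trade legitimate, and similarly $\theta_G\ge 0$ is what allows the stochastic analogue for (C) via Lemma \ref{lem:stochConv}.
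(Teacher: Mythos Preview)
Your overall strategy and four-term decomposition coincide with the paper's (note that $S(\ov{t}-\ov{s})=S(\un{t}-\un{s})$, so your representation of $\tilde U^{(n)}$ agrees with the paper's \eqref{splitConvCla}). The gap is in the estimation of the individual terms. You assert that Lemmas \ref{lem:detConv} and \ref{lem:stochConv}, together with the manipulations of Parts~1c--1f of Theorem~\ref{thm:convV}, apply ``exactly'' to (A)--(D). They do not. Those arguments rely on the factorization $S(\ov{t}-\un{s})=S(\ov{t}-t)S(t-s)S(s-\un{s})$, which reduces everything to a genuine semigroup convolution $S(t-\cdot)*\Phi$ flanked by $\gamma$-bounded families. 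For your kernel $S(\ov{t}-\ov{s})$ there is no such uniform factorization: when $s\in[\un{t},t)$ one has $\ov{s}=\ov{t}$ and hence $S(\ov{t}-\ov{s})=I$, so no smoothing is available on that piece. The paper makes this difficulty explicit (see the paragraph following \eqref{Sconv2}) and resolves it by splitting each integral at $(t-\tfrac{T}{n})^{+}$: on $[0,(t-\tfrac{T}{n})^{+}]$ one uses \eqref{Sconv2} to recover a shifted convolution, and the remainder over $I_t=[(t-\tfrac{T}{n})^{+},t]$ is estimated directly via two separate claims (Claims~1 and~2 in the paper's proof) that use Besov embeddings and the smallness $|I_t|\le T/n$.

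The obstacle you flag at the end --- that $(S(\tfrac{T}{n})-I)\colon X_{\theta_F}\to X$ alone only yields $n^{-\theta_F\wedge 1}$ --- is real but secondary; your proposed remedy of ``trading smoothing'' to the factor $S(\ov{t}-\ov{s})$ runs into precisely the same problem, since that factor equals $I$ on part of the domain of integration. Your proposed treatment of (B) and (D) as ``routine'' suffers identically: to extract a factor $T_0^{\epsilon_0}$ in the weighted $\gamma$-part one again needs either a genuine convolution (unavailable) or the remainder analysis. In short, the missing ingredient is the remainder estimate over the short interval near $t$, which is the technical heart of the paper's proof and occupies its Parts~3--4.
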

\begin{proof}
Fix $T>0$, $n\in \N$. For $\Ucla$ the following relation holds (see also \eqref{splitConv}):
\begin{align}\label{splitConvCla}
\Ucla(s) = S(\ov{s})\tilde{y}_0 &+ \int_{0}^{s} S(\un{s}-\un{u})F(u,\Ucla(u))\,du\\
& + \int_0^{s}S(\un{s}-\un{u})G(u,\Ucla(u))\,dW_H(u).
\end{align}
At first sight the processes $\Ucla$ and $\Umod$ are very similar, and one would expect 
the proof of Theorem \ref{thm:convVclassical} to be entirely analogous to the proof of 
Theorem \ref{thm:convV}. However, there is a subtle difficulty when considering $\Ucla$: 
for the proof of Theorem \ref{thm:convV} we make use of the fact that $\ov{s}-\un{u}\geq s-u$ 
for all $0\leq u\leq s$, $s\in [0,T]$. This allows us to write
\begin{align}\label{Sconv}
S(\ov{s}-\un{u}) & = S(\ov{s}-s)S(s-u)S(u-\un{u})
\end{align}
and (see \eqref{Sdiffdecomp}):
\begin{equation}\label{SdiffdecompRepeat}
S(s-u)-S(\ov{s}-\un{u})= (I-S(\ov{s}-s))S(s-u) +
S(\ov{s}-s)S(s-u)(I-S(u-\un{u})),
\end{equation}
As a result, we can interpret the (deterministic and stochastic) integral terms in 
\eqref{aph1} as (stochastic) convolutions and use Lemmas \ref{lem:detConv} and 
\ref{lem:stochConv} to obtain estimates for these terms.\par
For $\Ucla$ one of the difficulties lies in the fact that for 
$s\in [0,T]\setminus\{t_j^{(n)}\,:\,j=1,\ldots,n\}$ fixed we have $\un{s}-\un{u}>s-u$ 
for some values of $u\in [0,s]$, but $\un{s}-\un{u}< s-u$ for other values of $u\in[0,s]$. 
Instead of \eqref{Sconv} we have, for $s-u\geq \frac{T}{n}$:
\begin{align}\label{Sconv2}
S(\un{s}-\un{u}) & = S(\ov{s}-s)S(s-\tfrac{T}{n}-u)S(u-\un{u}).
\end{align}
Roughly speaking, this allows one to apply Lemmas \ref{lem:detConv} and \ref{lem:stochConv} 
on the interval $[0,(s-\frac{T}{n})^+]$. However, an extra argument is needed for the 
remainder of the interval.\par
Another difficulty in dealing with $\Ucla$ is that for 
$s\in [0,T]\setminus\{t_j^{(n)}\,:\,j=1,\ldots,n\}$ and $u\in [\un{s},s]$ we have 
$S(\un{s}-\un{u})=I$, and thus we cannot use the smoothing property of the semigroup there. 
Note that this occurs precisely in the aforementioned remainder.
\addtocontents{toc}{\SkipTocEntry}\subsection*{Part 1.}
It is easier to deal with the remainder if we compare $\Ucla$ with $\Umod$ instead 
of comparing $\Ucla$ with $U$: by Theorem \ref{thm:convV} suffices to prove that
\begin{equation}\label{UmodUcla}\begin{aligned}
\n \Umod - \Ucla \n_{\Winfp{\alpha}([0,T]\times \Omega;X)} 
&\lesssim \n y_0-\tilde{y}_0 \n_{L^p(\Omega;X)}+  n^{-\eta} 
\big(1+\n y_0\n_{L^p(\Omega;X_{\eta})}\big), \end{aligned}
\end{equation}
with implied constants independent of $n$, $y_0$ and $\tilde{y}_0$, where $\Umod$ denotes 
the process obtained by applying the modified splitting scheme to \eqref{SDE} with initial 
value $y_0\in L^{p}(\Omega,\F_0;X_{\eta})$.\par
By \eqref{splitConv} and \eqref{splitConvCla} we have:
\begin{equation}\begin{aligned}\label{splitclas}
\Umod(s)-\Ucla(s)  & = S(\ov{s})(y_0-\tilde{y}_0)\\
&\quad +  (S(\tfrac{T}{n})-I)\int_{0}^{s} S(\un{s}-\un{u})F(u,\Umod(u))\,du \\
& \quad + \int_{0}^{s} S(\un{s}-\un{u})\big[F(u,\Umod(u))-F(u,\Ucla(u))\big]\,du\\
&\quad + (S(\tfrac{T}{n})-I)\int_{0}^{s}S (\un{s}-\un{u})G(u,\Umod(u))\,dW_H(u)  \\
& \quad + \int_{0}^{s} S(\un{s}-\un{u})\big[G(u,\Umod(u))-G(u,\Ucla(u))\big]\,dW_H(u).
\end{aligned}\end{equation}\par
As mentioned above, we can rewrite each of the (deterministic and stochastic) 
integrals above as a (deterministic or stochastic) convolution and a remainder term. 
Below, we will demonstrate this for the first deterministic integral term in 
\eqref{splitclas}. 
The convolutions can be dealt with in the same manner as in the proof of Theorem 
\ref{thm:convV}, and in Part 2 of this proof we will demonstrate how to deal with 
the remainder.\par

For the first deterministic integral in \eqref{splitclas} we have, by \eqref{Sconv2}:
\begin{align*}
& (S(\tfrac{T}{n})-I)\int_{0}^{s} S(\un{s}-\un{u})F(u,\Umod(u))\,du \\
& \qquad = (S(\tfrac{T}{n})-I)S(\ov{s}-s)\int_0^{(s-\frac{T}{n})^+}S((s-\tfrac{T}{n})^+-u)
S(u-\un{u})F(u,\Umod(u))\,du\\
& \qquad \quad + (S(\tfrac{T}{n})-I)\int_{(s-\frac{T}{n})^+}^{s} S(\un{s}-\un{u}) F(u,\Umod(u))\,du
\end{align*}
Note that the first term on the right-hand side above involves the convolution of the 
process $$u\mapsto S(u-\un{u})F(u,\Umod(u))$$ with the semigroup $S$, evaluated in
 $(s-\frac{T}{n})^+$. By arguments analogous to Part 1c in the proof of Theorem 
\ref{thm:convV} we can estimate this term, using Corollary \ref{cor:Lpest} where 
in Part 1c the estimate of Theorem \ref{thm:NVW08} is applied:
\begin{align*}
& \Big\n s\mapsto (S(\tfrac{T}{n})-I)S(\ov{s}-s)\\
& \qquad \qquad \times \int_0^{(s-\frac{T}{n})^+}S((s-\tfrac{T}{n})^+-u)
S(u-\un{u})F(u,\Umod(u))\,du \Big\n_{\Winfp{\alpha}([0,T_0]\times\Omega;X)} \\
& \qquad \qquad \qquad \lesssim n^{-\eta}(1+\n y_0\n_{L^p(\Omega;X_{\eta})}.
\end{align*}
\par
For the remainder term 
we apply, for the time being, only the ideal property for $\gamma$-radonifying 
operators \eqref{eq:ideal} to get rid of the term $S(\frac{T}{n})-I$. We thus obtain,
 for all $T_0\in [0,T]$:
\begin{align*}
& \Big\n s\mapsto (S(\tfrac{T}{n})-I) \int_{0}^{s} S(\un{s}-\un{u})F(u,\Umod(u))\,du 
\Big\n_{\Winfp{\alpha}([0,T_0]\times
\Omega;X)}
\\ & \quad \lesssim n^{-\eta} (1 + \n y_0\n_{L^p(\Omega;X)} ) \\
& \quad \quad +  n^{-(\theta_F \wedge 1)}\Big\n s\mapsto \int_{(s-\frac{T}{n})^+}^{s} S(\un{s}-\un{u}) 
F(u,\Umod(u))\,du \Big\n_{\Winfp{\alpha}([0,T_0]\times \Omega;X_{\theta_F})}.
\end{align*}\par

By applying similar arguments to the other three integral terms in \eqref{splitclas} and by 
applying the argument of Part 1b in the proof of Theorem \ref{thm:convV} to the first 
term in \eqref{splitclas}, one obtains that there exists an $\eps_0\in (0,\inv{2})$ 
such that for $T_0\in [0,T]$ and $\alpha$ sufficiently large we have, setting $I_s:=[(s-\frac{T}{n})^+,s]$:
\begin{align}
& \n \Umod - \Ucla \n_{\Winfp{\alpha}([0,T_0]\times \Omega;X)} \notag\\
&\  \lesssim \n y_0-\tilde{y}_0 \n_{L^p(\Omega;X)}  +  n^{-\eta} \big(1+\n y_0\n_{L^p(\Omega;X_{\eta})}\big) + T_0^{\eps_0} \n \Umod - \Ucla \n_{\Winfp{\alpha}([0,T_0]\times \Omega;X)} \notag\\
& \  \quad+ n^{-(\theta_F\wedge 1)}\Big\n s\mapsto \int_{I_s} S(\un{s}-\un{u})F(u,\Umod(u))\,du \Big\n_{\Winfp{\alpha}([0,T_0]\times \Omega;X_{\theta_F})} \tag{i}\label{MC2}\\
& \  \quad + \Big\n s\mapsto\int_{I_s} S(\un{s}-\un{u})\big[F(u,\Umod(u))-F(u,\Ucla(u))\big]\,du\Big\n_{\Winfp{\alpha}([0,T_0]\times \Omega;X)}
 \tag{ii}\label{MC4}\\
&\  \quad + n^{-(\theta_G\wedge 1)}\Big\n s\mapsto\int_{I_s} S(\un{s}-\un{u})G(u,\Umod(u))\,dW_H(u) \Big\n_{\Winfp{\alpha}([0,T_0]\times \Omega;X_{\theta_G})} \tag{iii}\label{MC6}\\
& \  \quad + \Big\n s\mapsto\int_{I_s} S(\un{s}-\un{u})\big[G(u,\Umod(u))-G(u,\Ucla(u))\big]\,dW_H(u)\Big\n_{\Winfp{\alpha}([0,T_0]\times \Omega;X)}.
 \tag{iv}\label{MC8}
\end{align}
\addtocontents{toc}{\SkipTocEntry}\subsection*{Part 2.}
We now demonstrate how \eqref{MC2} - \eqref{MC8} can be estimated using the following 
two claims. The proofs of the claims are postponed to Parts 3 and 4.\par
\addtocontents{toc}{\SkipTocEntry}\subsection*{Claim 1.}
Let $\delta\in\R$, $\alpha\in [0,\inv{2})$ and 
$\Phi\in \Winfp{\alpha}([0,T]\times \Omega;X_{\delta})$. Then for all $\eps>0$ and all $T_0\in [0, T]$ we have:
\begin{align*}
\Big\n s\mapsto \int_{I_s} S(\un{s}-\un{u})\Phi(u)\, du 
\Big\n_{\Winfp{\alpha}([0,T_0]\times \Omega;X_{\delta})} 
\lesssim \big(\tfrac{T}{n}\minsym T_0\big)^{\frac32-\inv{\tau}-\eps}
\n \Phi\n_{L^{\infty}(0,T_0;L^p(\Omega,X_{\delta}))},
\end{align*}
with implied constant independent of $n$ and $T_0$.
\addtocontents{toc}{\SkipTocEntry}\subsection*{Claim 2.}
Let $\delta\in\R$, $\alpha\in [0,\inv{2})$, and 
$\Phi\in \Winfp{\alpha}([0,T]\times \Omega;X_{\delta})$. Then for all $T_0\in [0,T]$ we have:
\begin{align*}
&\Big\n s\mapsto \int_{I_s} S(\un{s}-\un{u})\Phi(u) \,dW_H(u) 
\Big\n_{\Winfp{\alpha}([0,T_0]\times \Omega;X_{\delta})}\\
&\qquad \lesssim \big(\tfrac{T}{n}\minsym T_0\big)^{\alpha}\sup_{0\leq t\leq T_0} 
\n s\mapsto (t-s)^{-\alpha}\Phi(s) \n_{L^{\infty}(0,t;L^p(\Omega,X_{\delta}))},
\end{align*}
with implied constant independent of $n$ and $T_0$.\par
Pick $\eps>0$ such that
$$ \eps< \tfrac32-\tinv{\tau}+\theta_F -\eta.$$ We shall apply Claim 1 with this choice of $\eps$. To be precise, for \eqref{MC2} we apply Claim 1 with $\Phi= F(\cdot,\Umod(\cdot))$ and $\delta=\theta_F$. For \eqref{MC4} we apply Claim 1 with $\Phi= F(\cdot,\Umod(\cdot))-F(\cdot,\Ucla(\cdot))$ 
and $\delta=0$.\par
Replacing $\alpha\in [0,\inv{2})$ by a larger value if necessary, we may assume 
$\eta-\theta_G<\alpha<\inv{2}$. For \eqref{MC6} we apply Claim 2 with $\Phi= G(\cdot,\Umod(\cdot))$ and 
$\delta=\theta_G$. Finally, for \eqref{MC8} we apply Claim 2 with 
$\Phi= G(\cdot,\Umod(\cdot))-G(\cdot,\Ucla(\cdot))$ and $\delta=0$. This gives:
\begin{align*}
& \n \Umod - \Ucla \n_{\Winfp{\alpha}([0,T_0]\times \Omega;X)} \\
&\  \lesssim \n y_0-\tilde{y}_0 \n_{L^p(\Omega;X)}  +  n^{-\eta} 
\big(1+\n y_0\n_{L^p(\Omega;X_{\eta})}\big)  + T_0^{\eps_0} \n \Umod - \Ucla 
\n_{\Winfp{\alpha}([0,T_0]\times \Omega;X)}\\
& \  \quad  + n^{-\eta}\n F(\cdot,\Umod(\cdot))
\n_{L^{\infty}(0,T_0;L^p(\Omega;X_{\theta_F}))}\\
& \ \quad  + T_0^{\frac32-\inv\tau-\eps} \n F(\cdot,\Umod(\cdot))- F(\cdot,\Ucla(\cdot))\n_{L^{\infty}(0,T_0;L^p(\Omega;X))}\\
& \ \quad  + n^{-\eta}\sup_{0\leq t\leq T_0} \n s\mapsto 
(t-s)^{-\alpha}G(u,\Umod(u))\n_{L^p(\Omega,\gamma(0,t;X_{\theta_G}))} \\
& \ \quad  + T_0^{\alpha-\inv{p}-\eps}\sup_{0\leq t\leq T_0} \n s\mapsto 
(t-s)^{-\alpha}[G(u,\Umod(u))-G(u,\Ucla(u))]\n_{L^p(\Omega,\gamma(0,t;X))}.
\end{align*}
Note that, as $\theta_F,\theta_G\geq 0$, we have continuous inclusions
$X_{\theta_F}\embed X$ and $X_{\theta_G}\embed X$, so that the 
norms in $L^p(\Omega;X)$ and $\gamma(0,t;X)$ may be estimated by the norms in 
$L^p(\Omega;X_{\theta_F})$ and $\gamma(0,t;X_{\theta_G})$
in the third and fifth line, respectively.\par
Applying assumption \MF{} and the estimates \eqref{GLipschitzV1} and \eqref{GLipschitzV2}, 
then Corollary \ref{cor:Lpest} (with $\delta=0$), 
we obtain that there exists an $\tilde{\eps}_0>0$ such that:
\begin{align*}
& \n \Umod - \Ucla \n_{\Winfp{\alpha}([0,T_0]\times \Omega;X)} \\
&\  \lesssim \n y_0-\tilde{y}_0 \n_{L^p(\Omega;X)}  +  n^{-\eta} 
\big(1+\n y_0\n_{L^p(\Omega;X_{\eta})}\big)  + T_0^{\tilde{\eps}_0} 
\n \Umod - \Ucla \n_{\Winfp{\alpha}([0,T_0]\times \Omega;X)}.
\end{align*}
The remainder of the proof is entirely analogous to Parts 3 and 4 in 
the proof of Theorem \ref{thm:convV}.
\addtocontents{toc}{\SkipTocEntry}\subsection*{Part 3: Proof of Claim 1}
Fix $\eps>0$. Recall that $I_s=[(s-\frac{T}{n})^+,s]$. Observe that for $s\in [0,T_0]$ we have:
\begin{align}\label{remainder_split}
\int_{I_s} S(\un{s}-\un{u})\Phi(u) \,du & = S(\tfrac{T}{n})\int_{(s-\frac{T}{n})^+}^{\un{s}} \Phi(u) \,du
 + \int_{\un{s}}^{s} \Phi(u) \,du.
\end{align}
Let $a,b:[0,T]\rightarrow \R$ be measurable and satisfy $a\leq b$. We shall prove that:
\begin{equation}\label{remainder_est_1}
\begin{aligned}
& \Big\n s\mapsto \int_{I_s} 1_{\{a(s)\leq u \leq b(s)\}}\Phi(u) \,du 
\Big\n_{\Winfp{\alpha}([0,T_0]\times \Omega;X_{\delta})} \\
& \qquad \qquad \lesssim 
\big(\tfrac{T}{n}\minsym T_0\big)^{\frac{3}{2}-\inv{\tau}-\eps}\n \Phi\n_{L^{\infty}(0,T_0;L^{p}(\Omega;X_{\delta}))}.
\end{aligned}\end{equation}\par
The claim follows by applying the above estimate with $a(s)= (s-\frac{T}{n})^+;\, b(s)=\un{s}$ 
to the first term in \eqref{remainder_split}, and with 
$a(s)=\un{s};\, b(s)=s$ to the second term. (The term $S(\frac{T}{n})$ can be estimated 
away by \eqref{eq:ideal}.)\par
For $s\in [0,T_0]$ we have $|I_s|=s-(s-\frac{T}{n})^+\leq \frac{T}{n}\minsym T_0$, and thus:
\begin{equation}\label{Cl1:Linfty}
\Big\n s\mapsto \int_{I_s} 1_{\{a(s)\leq u \leq b(s)\}}\Phi(u)\, du 
\Big\n_{L^{\infty}(0,T_0;L^{p}(\Omega;X_{\delta}))} \lesssim \big(\tfrac{T}{n}\minsym T_0\big)
\n \Phi\n_{L^{\infty}(0,T_0;L^{p}(\Omega;X_{\delta}))}.
\end{equation}\par

For the estimate in the $\gamma$-radonifying norm we shall use the 
Besov embedding of Section \ref{sec:besov} and Lemma \ref{lem:wBesovEst} in the Appendix. 
We begin with a simple observation. If $\Psi\in L^{\infty}(0,T_0;Y)$ 
for some Banach space $Y$ and $\alpha\in (0,1]$, then:
\begin{align*}
&\Big\n s\mapsto \int_{I_s} \Psi(u)\,du\Big\n_{C^{\alpha}(0,T_0;Y)}\\
& \quad \leq \sup_{s\in [0,T_0]}|I_s|\n \Psi\n_{L^{\infty}(0,T_0;Y)} 
+ \sup_{0\leq s < t\leq T_0}(t-s)^{-\alpha}
\Big\n \int_{I_t}\Psi(u)\,du -\int_{I_s} \Psi(u)\,du \Big\n_{Y}\\
& \quad \leq (\tfrac{T}{n}\minsym T_0)\n \Psi\n_{L^{\infty}(0,T_0;Y)} 
+ \sup_{0\leq s < t\leq T_0}(t-s)^{-\alpha}
\Big\n \int_{I_t}\Psi(u)\,du - \int_{I_s} \Psi(u)\,du \Big\n_{Y}.
\end{align*}\par

If $t-s\geq \frac{T}{n}$, then 
\begin{align*}
(t-s)^{-\alpha}\Big\n \int_{I_t}\Psi(u)\,du - \int_{I_s} \Psi(u)\,du \Big\n_{Y} 
&  \leq 2(t-s)^{-\alpha}\sup_{s\in [0,T_0]}|I_s|\n \Psi\n_{L^{\infty}(0,T_0;Y)}\\
& \leq 2(\tfrac{T}{n}\minsym T_0)^{1-\alpha}\n \Psi\n_{L^{\infty}(0,T_0;Y)}.
\end{align*}
On the other hand, if $t-s\leq \frac{T}{n}$, then:
\begin{align*}
& (t-s)^{-\alpha}\Big(\Big\n \int_{I_t}\Psi(u)\,du - \int_{I_s} \Psi(u)\,du \Big\n_{Y}\Big) \\
& \qquad \leq (t-s)^{-\alpha}\Big(\Big\n \int_{s}^{t}\Psi(u)\,du \Big\n_{Y}  +  
\Big\n \int_{(s-\frac{T}{n})^+}^{(t-\frac{T}{n})^+} \Psi(u)\,du \Big\n\Big)\\
& \qquad \leq 2(t-s)^{1-\alpha}\n \Psi\n_{L^{\infty}(0,T_0;Y)} 
\leq 2(\tfrac{T}{n}\minsym T_0)^{1-\alpha}\n \Psi\n_{L^{\infty}(0,T_0;Y)}.
\end{align*}
It follows that:
\begin{equation}
\begin{aligned}\label{HoelderIsEst}
\Big\n s\mapsto \int_{I_s} \Psi(u)\,du\Big\n_{C^{\alpha}(0,T_0;Y)}
&\leq 3(\tfrac{T}{n}\minsym T_0)^{1-\alpha}\n \Psi\n_{L^{\infty}(0,T_0;Y)}.
\end{aligned}
\end{equation}\par

Note that as $p\geq 2$ the type of $L^p(\Omega,X)$ is the same as the type $\tau$ of $X$. 
Without loss of generality we may assume that $\tau<2$. Fix $q\geq 2$ such that 
$\inv{q}<\inv{\tau}-\alpha$. By isomorphism \eqref{eq:gFub}, 
the Besov embedding \eqref{BesovEmbed}, and Lemma \ref{lem:wBesovEst} 
there exists an $\epsilon_0>0$ such that we have: 
\begin{align*}
& \sup_{t\in [0,T_0]}\Big\n s\mapsto (t-s)^{-\alpha}\int_{I_s} 1_{\{a(s)
\leq u \leq b(s)\}}\Phi(u)\, du 
\Big\n_{L^p(\Omega;\gamma(0,t;X_{\delta}))}\\
& \qquad \eqsim \sup_{t\in [0,T_0]}\Big\n s\mapsto (t-s)^{-\alpha}\int_{I_s} 
1_{\{a(s)\leq u \leq b(s)\}}\Phi(u)\, du 
\Big\n_{\gamma(0,t;L^p(\Omega;X_{\delta}))}\\
& \qquad \lesssim \sup_{t\in [0,T_0]}\Big\n s\mapsto (t-s)^{-\alpha} \int_{I_s} 
1_{\{a(s)\leq u \leq b(s)\}}\Phi(u)\, du 
\Big\n_{B_{\tau,\tau}^{\inv{\tau}-\inv{2}}(0,T_0;L^p(\Omega;X_{\delta}))}\\
& \qquad \lesssim T_0^{\epsilon_0} \Big\n s\mapsto \int_{I_s} 1_{\{a(s)\leq u 
\leq b(s)\}}\Phi(u)\, du 
\Big\n_{L^{\infty}(0,T_0;L^p(\Omega;X_{\delta}))}\\
& \qquad \quad + T_0^{\epsilon_0} \Big\n s\mapsto \int_{I_s} 1_{\{a(s)\leq u 
\leq b(s)\}}\Phi(u)\, du 
\Big\n_{B_{q,\tau}^{\inv{\tau}-\inv{2}}(0,T_0;L^p(\Omega;X_{\delta}))}\\
& \qquad \lesssim T_0^{\epsilon_0} \Big\n s\mapsto \int_{I_s} 1_{\{a(s)\leq u 
\leq b(s)\}}\Phi(u)\, du 
\Big\n_{C^{\inv{\tau}-\inv{2}+\eps}(0,T_0;L^p(\Omega;X_{\delta}))}\\
& \qquad \lesssim \big(\tfrac{T}{n} \wedge T_0\big)^{\frac32-\inv{\tau}-\eps}
\n \Phi\n_{L^{\infty}(0,T_0;X_{\delta})},
\end{align*}
where in the final estimate we used \eqref{HoelderIsEst}.
\addtocontents{toc}{\SkipTocEntry}\subsection*{Part 4: Proof of Claim 2}
As before let $a,b:[0,T]\rightarrow \R$ be measurable and satisfy $a\leq b$. 
It suffices to prove that:
\begin{equation}\label{remainder_est_2}
\begin{aligned} & \Big\n s\mapsto \int_{I_s} 1_{\{a(s)\leq u \leq b(s)\}}\Phi(u)\, dW_H(u) 
\Big\n_{\Winfp{\alpha}([0,T_0]\times \Omega;X_{\delta})}
 \\ & \hskip4cm
 \lesssim \big(\tfrac{T}{n}\minsym T_0\big)^{\alpha}
\n \Phi\n_{L^{\infty}(0,T_0;L^{p}(\Omega;X_{\delta}))}.
\end{aligned}\end{equation}\par

Note that for any $s\in [0,T_0]$ we have:
\begin{align*}
& \Big\n \int_{I_s} 1_{\{a(s)\leq u \leq b(s)\}}\Phi(u)dW_H(u) 
\Big\n_{L^p(\Omega;X_{\delta})} \leq \Big\n \int_{I_s} \Phi(u)dW_H(u) 
\Big\n_{L^p(\Omega;X_{\delta})}\\
&\quad  = \Big\n\int_{0}^{s} \Phi(u)dW_H(u) - 
\int_{0}^{(s-\frac{T}{n})^+} \Phi(u)dW_H(u) \Big\n_{L^p(\Omega;X_{\delta})} \\
&\quad \leq (s-(s-\tfrac{T}{n})^+)^{\alpha} \Big\n s \mapsto \int_{0}^{s} 
\Phi(u)dW_H(u) \Big\n_{C^{\alpha}(0,T_0;L^p(\Omega;X_{\delta}))}.
\end{align*}
Thus by \eqref{stochIntCont} we have:
\begin{equation}\label{Cl2:Linfty}
\begin{aligned}
& \Big\n s\mapsto \int_{I_s} 1_{\{a(s)\leq u \leq b(s)\}}\Phi(u)\, dW_H(u) 
\Big\n_{L^{\infty}(0,T_0;L^{p}(\Omega;X_{\delta}))} \\
& \qquad  \lesssim \sup_{0\leq r\leq T_0}|I_r|^{\alpha}  \Big\n s\mapsto
\int_{0}^{s} \Phi(u)\,dW_H(u)
\Big\n_{C^{\alpha}(0,T_0;L^p(\Omega;X_{\delta}))}\\
& \qquad \lesssim \big(\tfrac{T}{n}\minsym T_0\big)^{\alpha}
\sup_{0\leq t\leq T_0}\n   s  \mapsto (t-  s  )^{-\alpha} 
\Phi(s)\n_{L^p(\Omega,\gamma(0,t;H,X_{\delta}))}.
\end{aligned}
\end{equation}\par
Let $t\in [0,T_0]$. We wish to apply Lemma \ref{lem:h1} with $$f(r,u)(s) =
(t-s)^{-\alpha}(t-u)^{\alpha}1_{\{(s-\frac{T}{n})^+\leq u\leq s
\}}1_{\{a(s)\leq u \leq b(s)\}},$$ $R=[0,1]$, $S=[0,t]$ (both with the Lebesgue measure), 
$X_1=X_2=X_{\delta}$, $\Phi_2\equiv I$ and
$\Phi_1(u)=(t-u)^{-\a}\Phi(u)$. Note that:
\begin{align*}
\sup_{(r,u)\in [0,1]\times [0,t]} \n f(r,u)\n_{L^2(0,t)}^2 
&  \leq \sup_{u\in  [0,t]} (t-u)^{\alpha} 
\big\n s\mapsto (t-s)^{-\alpha}1_{\{(s-\frac{T}{n})^+\leq u\leq s\}} \big\n_{L^2(0,t)}^2\\
& \leq \sup_{u\in  [0,t]} (t-u)^{2\alpha} 
\int_u^{(u+\frac{T}{n})\wedge t} (t-s)^{-2\alpha}\,ds.
\end{align*}
If $t -u \ge \frac{2T}{n}$, then $t-(u+\frac{T}{n})\geq \inv{2}(t-u)$ 
and
$$ \int_u^{(u+\frac{T}{n})\wedge t} (t-s)^{-2\alpha}\,ds
\leq \tfrac{T}{n}(t-(u+\tfrac{T}{n}))^{-2\alpha} \leq  2^{2\alpha}\tfrac{T}{n}(t - u)^{-2\a},
$$
while if $t - u \le \frac{2T}{n}$, then
$$ \int_u^{(u+\frac{T}{n})\wedge t} (t-s)^{-2\alpha}\,ds
 \le \int_u^t (t-s)^{-2\alpha}\,ds = \tfrac{1}{1-2\alpha}(t-u)^{1-2\alpha}
\leq \tfrac{1}{1-2\alpha}\tfrac{2T}{n}(t-u)^{-2\alpha}. 
$$
In both cases, we also have the estimate 
$$\int_u^{(u+\frac{T}{n})\wedge t} (t-s)^{-2\alpha}\,ds
 \le \int_u^t (t-s)^{-2\alpha}\,ds \eqsim (t-u)^{1-2\a} \le T_0(t-u)^{-2\a}.
$$ 
Combining this with the previous estimates we find:
\begin{equation}\label{2cases} \sup_{(r,u)\in [0,1]\times [0,t]} \n f(r,u)\n_{L^2(0,t)}
\lesssim \big(\tfrac{T}{n} \wedge T_0\big)^{\frac12}.
\end{equation}
Thus Lemma \ref{lem:h1} gives:
\begin{equation}\label{combinewith2cases}
\begin{aligned}
&
\Big\n s\mapsto
(t-s)^{-\alpha}\int_{I_s}1_{\{a(s)\leq u \leq b(s)\}}\Phi(u)\,dW_H(u)
\Big\n_{L^p(\Omega;\gamma(0,t;X_{\delta}))}\\
& \quad \lesssim \big(\tfrac{T}{n}\minsym T_0\big)^{\a}
\n u\mapsto (t-u)^{-\alpha} \Phi(u)\n_{L^p(\Omega,\gamma(0,t;H,X_{\delta}))}.
\end{aligned}
\end{equation}
Taking the supremum over $t\in [0,T_0]$ above and combining the result 
with \eqref{Cl2:Linfty} we obtain  \eqref{remainder_est_2}.
This completes the proof of the claim.
\end{proof}
\section{Approximating semigroup operators}\label{sec:gammaHP}
In this section we prove a $\gamma$-boundedness result for families of operators
defined in terms of the so-called Hille-Phillips functional calculus of an operator $A$
that generates an analytic $C_0$-semigroup on a Banach space $X$. This result
will be used in the next section, where we prove an abstract convergence result
for time discretization schemes of \eqref{SEE}.\par
Let $(\mu_{n})_{n\ge N}$ be a sequence of non-negative finite measures on
$[0,\infty)$ and let $R\geq 0$ be given. For $j\in \N$ let $\munj = \mu_n *
\dots * \mu_n$ denote the $j$-fold convolution product of $\mu_n$ with itself.
Consider the following properties: 
\begin{itemize}
\item[\MII{}] For all $n\ge 1$ we have:
\begin{align*}
& \int_{0}^{\infty}t\,d\mu_{n}(t) = \tfrac{1}{n};
\end{align*}
\item[\MI{}] There exists an $N\geq 1$ such that for all $n\ge N$, all
$j=1,\ldots,n$, and every $\a\in (-1,1]$ we have:
\begin{align*}
& \int_{0}^{\infty} t^\a e^{R t}\,d\munj (t) < \infty;
\end{align*}
\item[\MIII{}] For every $\a\in (-1,1]$ we have:
\begin{align*}
 \sup_{n\geq N} \sup_{1\le j\le n} & \Big| j\int_{0}^{\infty} \big[ 1 -
\big(\tfrac{tn}{j}\big)^{\alpha}\big]e^{R t}\,d\munj (t)\Big| < \infty.
\end{align*}
\end{itemize}
\par
Let $A$ be the generator of an analytic $C_0$-semigroup $S$ on $X$ and let
$\omega\geq 0$ 
be such that $(e^{-\omega t}S(t))_{t\geq 0}$ is uniformly bounded. Fix $T>0$.
Let $(\mu_{n})_{n\ge N}$ 
be a sequence of non-negative $\sigma$-finite measures on $[0,\infty)$ that
satisfy \MII, \MI, \MIII{}
for $R=\omega T$. Let $N$ be as in \MI. For $n\ge N$ define $E(\tfrac{T}{n}) \in \calL(X)$ by
\begin{equation}\label{eq:def-E}
E(\tfrac{T}{n}) x:=\int_{0}^{\infty} S(t)x \,d\mu_{n}(t/T), \quad x\in X,
\end{equation}
where for $n\in \N$, $j\in \N$ we define:
\begin{align*}
\int_{0}^{\infty} f(t) \,d\munj(t/T):=\int_{0}^{\infty} f(tT) \,d\munj(t).
\end{align*}
By \MI, the right-hand side of \eqref{eq:def-E} is well defined as a Bochner integral in $X$.
It is an easy consequence of the semigroup property that, for all $j\ge 1$,
\begin{equation}\label{defEn}
\Enj x:= [E(\tfrac{T}{n})]^j x = \int_{0}^{\infty} S(t)x \,d\munj (t/T), \quad
x\in X.
\end{equation}
We supplement these definitions by putting $E(0):= I$.\par

\begin{example}
In Section \ref{ss:MEuler} below we will demonstrate that the family of measures
$$d\mu_{n}(t)=
n e^{-n t}\,dt$$ satisfy \MII, \MI, \MIII{}. For these measures we have 
$$\Enj x =(I-\tfrac{T}{n}A)^{-j}x,$$ which means that
$\Enj$ is the $j^{\textrm{th}}$ Euler approximation of $S(t_{j}^{(n)})$.
\end{example}\par
The following proposition and corollary give the $\gamma$-boundedness estimates 
for the differences $\Enj -S(t_{j}^{(n)})$
that play the same role in the proof of
Theorem \ref{t:euler} as the estimates of Lemma \ref{lem:analyticRbound} played
in the proof of Theorem \ref{thm:convV}.
\begin{proposition}\label{p:Eulergbdd} Let the setting be as described above.
\begin{enumerate}
 \item 
For all $\delta\in (-1,1]$ there exists a constant $C$ such that for all $n\ge
N$:
 $$\sup_{j=1,\ldots,n} \Big\n (t_j^{(n)})^{{1-\delta}}[\Enj -S(t_{j}^{(n)})]
\Big\n_{\calL(X_{\delta};X)} \le Cn^{-1}.$$
 \item 
For all $\delta\in (-1,1]$, $0\le \beta\leq 1-\delta$, and $\epsilon>0$ there
exists a constant $C$ such that for all
$n\ge N$: 
$$\gamma_{[X_{\delta},X]}\Big\{(t_j^{(n)})^{\beta}[\Enj
-S(t_{j}^{(n)})]:j=1,\ldots,n\Big\} 
\leq Cn^{-\beta-\delta+\epsilon}.$$
\end{enumerate}
\end{proposition}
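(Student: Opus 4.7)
The plan rests on the integral representation
\[
[\Enj - S(t_j^{(n)})]x = \int_0^\infty [S(sT) - S(t_j^{(n)})]x\,d\munj(s),\quad x\in X,
\]
which follows directly from \eqref{defEn}. By \MII{} together with the additivity of first moments under convolution, $\int_0^\infty s\,d\munj(s) = j/n = t_j^{(n)}/T$, and hence $\int_0^\infty(sT - t_j^{(n)})\,d\munj(s) = 0$. This centered first-moment cancellation allows me to rewrite
\[
[\Enj - S(t_j^{(n)})]x = \int_0^\infty\!\bigl(S(sT) - S(t_j^{(n)}) - (sT-t_j^{(n)})AS(t_j^{(n)})\bigr)x\,d\munj(s),
\]
and by the analyticity of $S$ the integrand equals the second-order Taylor remainder $\int_{t_j^{(n)}}^{sT}(sT-u)A^2 S(u)x\,du$ (with signed integration when $sT<t_j^{(n)}$).

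For part (1), I combine this representation with the standard analytic-semigroup bound $\|A^2 S(u)\|_{\calL(X_\delta,X)}\lesssim u^{\delta-2}e^{\omega u}$, valid for $\delta\in(-1,1]$, and split the $s$-integration into a region where $sT$ is comparable to $t_j^{(n)}$ and one where $sT$ is small relative to $t_j^{(n)}$. In the comparable region the inner integral is of order $(sT-t_j^{(n)})^2 (t_j^{(n)})^{\delta-2}$, and its integral against $\munj$ is controlled by a second centered moment estimate drawn from \MIII{}, producing a factor of order $n^{-1}$. In the small-$s$ region the inner integral is dominated by $(t_j^{(n)})^{\delta}$, while the $\munj$-mass of the region is controlled via \MIII{} applied with a suitable negative exponent $\alpha$. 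Combining the two contributions and multiplying by $(t_j^{(n)})^{1-\delta}$ yields the claimed $n^{-1}$ bound.

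For part (2), the same Taylor representation is used, now in conjunction with Lemma~\ref{lem:analyticRbound} (which provides $\gamma$-boundedness of the family $\{u^{2-\delta}A^2 S(u):u\in(0,T]\}$ in $\calL(X_\delta,X)$) and the Kalton--Weis $\gamma$-multiplier theorem (Theorem~\ref{t:KW}). The outer integral against $\munj$ together with the scalar weight $(t_j^{(n)})^\beta$ reduces the desired $\gamma$-bound over $j\in\{1,\dots,n\}$ to a weighted $\ell^2$-type estimate, and matching the moment bounds from \MIII{} against this weighted norm produces the rate $n^{-\beta-\delta+\eps}$, the arbitrarily small $\eps$-loss absorbing a borderline logarithmic factor. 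The central technical obstacle throughout is the interaction between the singularity of $A^2 S(u)$ at $u=0$ (of order $u^{\delta-2}$, which fails to be integrable for $\delta\le 1$) and the $\munj$-mass concentrated near $s=0$. Property \MIII{} is designed precisely to control this: it bounds the weighted moments $\int_0^\infty t^\alpha e^{Rt}\,d\munj(t)$ uniformly in $n$ and $j\le n$ for all $\alpha\in(-1,1]$, and matching this admissible range against the needed singularity exponent $\delta-2$ is what forces the restriction $\delta>-1$ in the statement.
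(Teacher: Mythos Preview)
Your starting point is correct and matches the paper: the second--order Taylor representation \eqref{funcalAppr} reduces everything to estimates on the family $\{r^{2-\delta+\epsilon}e^{-\omega r}A^2 S(r):r>0\}$, which is $\gamma$-bounded in $\calL(X_\delta,X)$ by analyticity. From there, however, your plan for part~(2) invokes the wrong tool and the wrong scalar reduction.

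Theorem~\ref{t:KW} (the Kalton--Weis $\gamma$-multiplier theorem) bounds $\gamma$-radonifying norms $\|M\Phi\|_{\gamma(S;H,X_2)}$; it does not produce $\gamma$-bounds on a \emph{discrete} family $\{R_j\}_{j=1}^n$ of operators. What the paper actually uses is \cite[Proposition~2.5]{Wei}: if $\mathscr{M}\subset\calL(X_\delta,X)$ is $\gamma$-bounded and each $R_j$ is an $L^1$-average $R_j=\iint\phi_j(s,r)M(r)\,dr\,d\munj(s/T)$ of elements $M(r)\in\mathscr{M}$, then $\{R_j\}$ is $\gamma$-bounded with bound at most $\gamma(\mathscr{M})\cdot\sup_j\|\phi_j\|_{L^1}$. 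So the reduction is to a \emph{uniform-in-$j$ $L^1$ estimate} on the scalar kernel $\phi_j(s,r)=(t_j^{(n)})^\beta r^{-2+\delta-\epsilon}(s-r)e^{\omega r}\,\mathrm{sgn}(s-t_j^{(n)})$, not to any ``weighted $\ell^2$-type estimate''. Once this is set up, the inner integral $\int_{t_j^{(n)}}^s r^{\delta-\epsilon-2}(s-r)\,dr$ is computed \emph{exactly} (see \eqref{simple_integration}), and the outer integral against $\munj$ is handled by two applications of \MIII{}, once with $\alpha=\delta-\epsilon$ and once with $\alpha=1$. The key feature of \MIII{} you have not identified is the prefactor $j$: it is the bound $|j\int[1-(tn/j)^\alpha]e^{Rt}\,d\munj|\lesssim 1$ that delivers the factor $j^{-1}$, and hence (after the weight $(t_j^{(n)})^{\beta+\delta-\epsilon}$) the rate $n^{-\beta-\delta+\epsilon}$. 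Your description of \MIII{} as merely bounding weighted moments $\int t^\alpha e^{Rt}\,d\munj$ is actually \MI{}, and misses this cancellation structure entirely.

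For part~(1) no region splitting is needed; the same exact integration of $\int_{t_j^{(n)}}^s r^{\delta-2}(s-r)\,dr$ combined with the norm bound $\|A^{2-\delta}S(r)\|_{\calL(X)}\lesssim r^{\delta-2}e^{\omega r}$ and \MIII{} gives the result directly.
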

\begin{remark}
Stronger $\gamma$-bounded\-ness estimates can be obtained by imposing strong\-er
conditions on the 
measures $\mu_n$. 
Such conditions would correspond
to using higher-order numerical approximation schemes. However, this will not
improve the overall convergence rates as
proven in Theorem \ref{t:euler} because the bottle-neck for convergence rate is
the noise approximation.
\end{remark}\par 
\begin{remark}
The first part of Proposition \ref{p:Eulergbdd}, concerning the uniform boundedness, 
has been known since the 1970's for the case that $\delta=0$ and $\Enj$ is the 
Euler approximation. Generally such results are proven by functional calculus
methods. Our proof may be read as an
extension of the approach taken by Bentkus and Paulauskas \cite{BentPaul04},
which is of more probabilistic nature and
seems the most suitable for
our needs.
\end{remark}\par 
Before turning to the proof of Proposition \ref{p:Eulergbdd}, we state a simple
corollary. 
\begin{corollary}\label{c:Eulergbdd} Let the setting be as described above.
\begin{enumerate}
 \item 
For all $\delta\in (-1,0]$
there exists a constant $C$ such that for all $n\ge N$:
 $$ \sup_{j=1,\ldots,n}
\Big\n (t_j^{(n)})^{-\delta} \Enj \Big\n_{\calL(X_{\delta};X)} \le C .$$
 \item 
For all $\delta\in (-1,0]$, $-\delta < \beta\leq 1-\delta$, and
$0<\epsilon< \b+\d$ there exists a constant $C$ such that for all $n\ge N$ and
all
$k=1,\dots, n$ we have
$$\gamma_{[X_{\delta},X]} \Big\{(t_j^{(n)})^{\beta} \Enj :j=1,\ldots,k\Big\}
\leq C(t_{k}^{(n)})^{\beta+\delta-\epsilon}.$$
\end{enumerate}
\end{corollary}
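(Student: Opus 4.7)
The plan is to reduce both parts of the corollary to a simple additive decomposition
$$\Enj = [\Enj-S(t_j^{(n)})] + S(t_j^{(n)}),$$
handling the first summand via Proposition \ref{p:Eulergbdd} and the second summand via the known (uniform and $\gamma$-) boundedness properties of the analytic semigroup $S$ on the extrapolation scale. Throughout, I would exploit two standard facts for analytic $C_0$-semigroups on the fractional/extrapolation scale: (i) $S$ extends to an analytic $C_0$-semigroup on $X_\delta$ for every $\delta\in\R$, with $(X_\delta)_\alpha = X_{\delta+\alpha}$; and (ii) $\|S(t)\|_{\calL(X_\delta,X)} \lesssim t^{\delta}$ for $\delta\le 0$ (so $-\delta\ge0$), which is just \eqref{analyticDiff} reread via this identification.

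For part (1), I would write
$$(t_j^{(n)})^{-\delta}\Enj = (t_j^{(n)})^{-\delta}[\Enj-S(t_j^{(n)})] + (t_j^{(n)})^{-\delta}S(t_j^{(n)}).$$
Proposition \ref{p:Eulergbdd}(1) gives $\|(t_j^{(n)})^{1-\delta}[\Enj-S(t_j^{(n)})]\|_{\calL(X_\delta,X)}\lesssim n^{-1}$; pulling out one factor of $t_j^{(n)}$ yields
$$\|(t_j^{(n)})^{-\delta}[\Enj-S(t_j^{(n)})]\|_{\calL(X_\delta,X)} \lesssim n^{-1}(t_j^{(n)})^{-1} = \tfrac{1}{jT} \le \tfrac{1}{T}.$$
For the second summand, fact (ii) gives $(t_j^{(n)})^{-\delta}\|S(t_j^{(n)})\|_{\calL(X_\delta,X)}\lesssim 1$. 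Summing these bounds proves (1).

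For part (2), the same decomposition yields two families whose $\gamma$-bounds are to be controlled separately. Proposition \ref{p:Eulergbdd}(2) bounds the $\gamma$-norm of $\{(t_j^{(n)})^\beta[\Enj-S(t_j^{(n)})]:j=1,\dots,k\}$ in $\calL(X_\delta,X)$ by $Cn^{-\beta-\delta+\epsilon}$; since $k\ge1$, one has $n^{-1}\le t_k^{(n)}/T$, and because $\beta+\delta-\epsilon>0$ this is $\lesssim (t_k^{(n)})^{\beta+\delta-\epsilon}$. For the semigroup family $\{(t_j^{(n)})^\beta S(t_j^{(n)}):j=1,\dots,k\}$ I would apply Lemma \ref{lem:analyticRbound}(1) to the analytic semigroup $S$ viewed on the Banach space $X_\delta$, with exponent $\alpha':=-\delta\in[0,1)$ and $\beta':=\beta$; the assumption $\beta>-\delta$ ensures $\alpha'<\beta'$, so $\{s^\beta S(s):s\in[0,t_k^{(n)}]\}$ is $\gamma$-bounded in $\calL(X_\delta,(X_\delta)_{-\delta})=\calL(X_\delta,X)$ with bound $\lesssim (t_k^{(n)})^{\beta+\delta}$. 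Since $t_k^{(n)}\le T$ and $\epsilon>0$, this absorbs into $(t_k^{(n)})^{\beta+\delta-\epsilon}$ up to a constant $T^{\epsilon}$, and summing finishes (2).

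There is no genuine obstacle; the only point requiring care is the bookkeeping of exponents and the verification that Lemma \ref{lem:analyticRbound}(1) is applicable on $X_\delta$ via the extrapolated semigroup, together with the harmless replacement of $(t_k^{(n)})^{\beta+\delta}$ by $(t_k^{(n)})^{\beta+\delta-\epsilon}$ that arises from bounding $t_k^{(n)}$ by $T$. The strict inequality $\epsilon < \beta+\delta$ is used exactly to ensure the power $n^{-\beta-\delta+\epsilon}$ converts into a positive power of $t_k^{(n)}$ when using $n^{-1}\le t_k^{(n)}/T$.
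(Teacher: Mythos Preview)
Your proof is correct and follows essentially the same approach as the paper: the additive decomposition $\Enj = [\Enj - S(t_j^{(n)})] + S(t_j^{(n)})$, with Proposition~\ref{p:Eulergbdd} handling the difference and the analytic semigroup estimates (equation~\eqref{analyticDiff} for part~(1), Lemma~\ref{lem:analyticRbound}(1) for part~(2)) handling the semigroup term. Your explicit remark that Lemma~\ref{lem:analyticRbound}(1) is applied to the extrapolated semigroup on $X_\delta$ with $(X_\delta)_{-\delta}=X$ is a welcome clarification of a step the paper leaves implicit.
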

\begin{proof}
By the first part of Proposition \ref{p:Eulergbdd},
for all $1\le j\le n$
\begin{align*}
\big\n (t_j^{(n)})^{-\delta}\big(\Enj - S(t_{j}^{(n)})\big)\big
\n_{\calL(X_{\delta};X)} 
& \leq Cn^{-1}(t_j^{(n)})^{-1} \lesssim 1.
\end{align*}
Moreover, by the analyticity of the semigroup $S$ (i.e., estimate \eqref{analyticDiff}),
 and the fact that $\delta \leq 0$, 
\begin{align*}
\sup_{0\leq j\leq n} (t_j^{(n)})^{-\delta}\n
S(t_{j}^{(n)})\n_{\calL(X_{\delta};X)}
& \lesssim 1
\end{align*}
This proves the first part of the corollary.\par

By Part (1) of Lemma \ref{lem:analyticRbound} and Part (2) of Proposition
\ref{p:Eulergbdd}, 
observing that $\beta>-\delta\geq0$, we have:
\begin{align*}
\ & 
\gamma_{[X_{\delta},X]}\big\{(t_j^{(n)})^{\beta}E(t_{j}^{(n)})\,:\,j=1,\ldots,
k\big\} \\
& \qquad \lesssim
\gamma_{[X_{\delta},X]}\big\{(t_j^{(n)})^{\beta}S(t_{j}^{(n)})\,:\,j=1,\ldots,
k\big\}
\\ & \qquad\qquad + 
\gamma_{[X_{\delta},X]}\big\{(t_j^{(n)})^{\beta}\big(E(t_{j}^{(n)})-S(t_{j}^{(n)
})\big)\,:\,j=1,\ldots,k\big\}\\
& \qquad \lesssim  (t_j^{(n)})^{\beta+\delta} + n^{-\b -\d+\epsilon}
\\ & \qquad \lesssim (t_{k}^{(n)})^{\beta+\delta-\epsilon} ,
\end{align*}
with implied constants independent of $k$ and $n$ (although they may depend on
$T$). 
\end{proof}

In order to prove Proposition \ref{p:Eulergbdd} we shall make use the following simple 
observation. Suppose $\mu$ is a
probability measure on $[0,\infty)$, $t=\int_0^{\infty} s \,d\mu(s)$, 
and $f:[0,\infty)\rightarrow X$ is twice continuously
differentiable. By integration by parts one has:
\begin{equation}\label{Taylor}
\int_0^{\infty} f(s) \,d\mu(s)-f(t) = \int_{0}^{\infty} \int_{t}^{s}
(s-r)f''(r)\,dr\,d\mu(s).
\end{equation}\par
We substitute $f(s)=S(s)x$, $x\in X$, and $\mu=\munj $ for
$n>N$ and $j\in
\{ 1,\ldots,n\}$ in the above. From \MII{} we have that
$\int_0^{\infty} s \,d\munj (s/T) = t_{j}^{(n)}$. Thus by setting
$t=t_{j}^{(n)}$ in \eqref{Taylor} we obtain, for $x\in X$:
\begin{equation}
\begin{aligned}\label{funcalAppr}
\Enj x  - S(t_{j}^{(n)})x & = \int_{0}^{\infty} S(s)x \,d\munj (s/T) -
S(t_{j}^{(n)})x \\
&= \int_0^{\infty}
\int_{t_{j}^{(n)}}^{s} (s-r)A^2 S(r)x \,dr\,d\munj (s/T).
\end{aligned}
\end{equation}\par
\begin{proof}[Proof of Proposition \ref{p:Eulergbdd}.]
We first prove the statement concerning $\gamma$-boundedness.
Let $N$ be as in assumption \MI{}. Let $n>N$. Without loss of generality we may
assume $\delta-\epsilon \neq 0$ and $\delta-\epsilon\neq -1$. For
$j=1,2,\ldots,n$ define $\phi_{j}:[0,\infty)\times [0,\infty)\rightarrow \R$ by
\begin{align*}
\phi_j(s,r):=  (t_j^{(n)})^{\beta}r^{-2+\delta-\epsilon}(s-r) e^{\omega r}
(1_{\{t_{j}^{(n)}\leq r \leq
s\}}-1_{\{s\leq r\leq t_{j}^{(n)}\}}).
\end{align*}
By equality \eqref{funcalAppr} we have, for $x\in X$:
\begin{equation}
\begin{aligned}\label{Strans}
& (t_j^{(n)})^{\beta} [\Enj x -S(t_{j}^{(n)})x]\\
& \qquad =\int_0^{\infty} \int_{0}^{\infty} \phi_j(s,r)r^{2-\delta+\epsilon}
e^{-\omega r}A^2 S(r)x 
 \,dr\,d\munj (s/T),
\end{aligned}
\end{equation}
$j=1,2,\ldots,n.$\par

In a similar fashion as used for Lemma \ref{lem:analyticRbound} in Section
\ref{sec:analytic} one may prove that for $\delta\leq 2$ the set
\begin{align*}
\{ r^{2-\delta+\epsilon}e^{-\omega r}A^2S(r): r\in [0,\infty)\}
\end{align*}
is $\g$-bounded in $\calL(X_{\delta};X)$. By \cite[Proposition 2.5]{Wei} and
equation \eqref{Strans} it follows that
\begin{equation}\label{hest}
\begin{aligned}
& \gamma_{[X_{\delta},X]}\Big\{(t_j^{(n)})^{\beta}[\Enj
-S(t_{j}^{(n)})]\,:\,j=1,\ldots,n\Big\} \\ &\qquad \qquad \lesssim \sup_{1\leq j\leq n} \n
\phi_j\n_{L^1([0,\infty)\times
[0,\infty),\,\munj (\cdot/T)\times \lambda)},
\end{aligned}
\end{equation}
with implied constant independent of $n$, where $\lambda$ is the Lebesgue
measure on $[0,\infty)$.\par

Observe that for all $j=1,2,\ldots,n$ one has, because $\omega\geq 0$;
\begin{align*}
&\n \phi_j\n_{L^1([0,\infty)\times [0,\infty),\,\munj (\cdot/T)\times \lambda)} 
= (t_j^{(n)})^{\beta} \int_{0}^{\infty} \int_{t_{j}^{(n)}}^{s}
r^{-2+\delta-\epsilon}(s-r) e^{\omega r} 
\,dr\, d\munj (s/T)\\
&\qquad \qquad  \leq (t_j^{(n)})^{\beta} \int_{0}^{\infty} e^{(s\maxsym
T)\omega}\int_{t_{j}^{(n)}}^{s}  r^{-2+\delta-\epsilon}(s-r) \,dr\, d\munj (s/T).
\end{align*}
As $\delta-\epsilon\neq 0$ and $\delta-\epsilon\neq -1$, basic calculus gives:
\begin{equation}\label{simple_integration}
\begin{aligned}
\int_{t_{j}^{(n)}}^{s} r^{-2+\delta-\epsilon}(s-r) \,dr 
& = \frac{(t_{j}^{(n)})^{\delta-\epsilon} }{1-\delta+\epsilon} \bigg[
\inv{\delta-\epsilon}
\Big(1-\Big(\frac{s}{t_j^{(n)}}\Big)^{\delta-\epsilon}\Big) + 
\frac{s}{t_j^{(n)}}-1 \bigg].
\end{aligned}
\end{equation}\par

In the final estimate below we apply assumption \MIII{} (recall that we have
$R=\omega T$). Due to that assumption there exists a constant
$C$ independent of $n$ and $j\in \{1,\ldots,n\}$ such that:
\begin{equation}
\begin{aligned}\label{phiest2}
&\n \phi_j\n_{L^1([0,\infty)\times [0,\infty),\,\munj (\cdot/T)\times
\lambda)}\\
& \  \leq \frac{T^{\delta-\epsilon}}{1-\delta+\epsilon}
(t_j^{(n)})^{\beta+\delta-\epsilon} \int_{0}^{\infty}
e^{(s\maxsym T)\omega}\big[
\tinv{\delta-\epsilon}
\big(1-\big(\tfrac{s}{t_j^{(n)}}\big)^{\delta-\epsilon}\big) + 
\tfrac{s}{t_j^{(n)}}-1 \big]
\,d\munj (s/T)\\
& \  =  \frac{T^{\delta-\epsilon}}{1-\delta+\epsilon} 
(t_j^{(n)})^{\beta+\delta-\epsilon}\int_{0}^{\infty} e^{(s\maxsym
1)\omega T}[
\tinv{\delta-\epsilon} ((\tfrac{sn}{j})^{\delta-\epsilon}-1) +  \tfrac{sn}{j}-1
\big]
\,d\munj (s)\\
& \  \lesssim (t_j^{(n)})^{\beta+\delta-\epsilon}j^{-1}\lesssim 
n^{-\beta-\delta+\epsilon}.
\end{aligned}
\end{equation}
This in combination with estimate \eqref{hest} completes the proof, as
$\beta+\delta-\epsilon<1$.\par

As for the statement concerning uniform boundedness, by \eqref{funcalAppr} and
analyticity (estimate \eqref{analyticDiff}) we have, for $\delta\neq 0$:
\begin{align*}
& \big( t_{j}^{(n)}\big)^{1-\delta} \n \Enj
-S(t_{j}^{(n)})\n_{\calL(X_{\delta};X)} \\
& \quad \lesssim \big( t_{j}^{(n)}\big)^{1-\delta}\int_0^{\infty}
\int_{t_j^{(n)}}^{s} (s-r) \n A^{2-\delta}
S(r)\n_{\calL(X)} \,dr\,d\munj (s/T)\\
&\quad \lesssim \big( t_{j}^{(n)}\big)^{1-\delta}\int_0^{\infty}
\int_{t_j^{(n)}}^{s} r^{-2+\delta}(s-r)e^{\omega r}
\,dr\,d\munj (s/T)\\
& \quad \lesssim n^{-1},
\end{align*}
where the final estimate follows by similar arguments as used to 
obtain \eqref{phiest2}. In the case that $\delta =0$ or $\delta=1$ the
evaluation of the 
integral in \eqref{simple_integration} will contain a
logarithmic term, which can be estimated in a suitable manner by observing that
$\ln x \leq x-1$ 
for all $x>0$. We leave the details to the reader.\par
\end{proof}
\subsection{Examples}\label{ss:MEuler}
We have two main examples in mind, which lead to a splitting scheme with discretized noise and
the implicit Euler scheme, respectively.

\begin{example}[Splitting with discretized noise]\label{ex:discr-splitting}
The simplest example obtained by taking $$\mu_n:=\delta_{\inv{n}},$$
which correspond to the trivial choice $$\En{\tfrac{T}{n}}:=S(\tfrac{T}{n}).$$
The conditions \MII, \MI, \MIII\ are trivially fulfilled for any $R\geq 0$. 
\end{example}

\begin{example}[Implicit Euler]\label{ex:Euler}
We will show that the measures  $$d\mu_{n}(t)=
n e^{-n t}\,dt$$
fulfill assumptions \MII, \MI, \MIII\  for any $R\geq 0$. These measures give rise to the operators
$$ \En{\tfrac{T}{n}} = (I - \tfrac{T}{n}A)^{-1}.$$

To start the proof, first note that by induction,
$$d\munj (t)=\frac{(n t)^{j-1}}{(j-1)!}n 
e^{-n t}\,dt,$$ and thus, for all $\alpha> -j$ and all $n> \omega T$, one has:
\begin{equation}\label{expmoments}
\begin{aligned}
\int_{0}^{\infty} t^\alpha e^{\omega T t} \,d\munj (t) & = 
\frac{n^{j}}{(j-1)!(n-\omega T)^{j+\alpha}} \int_{0}^{\infty} u^{j+\alpha-1}e^{-u}\,du
\\ & =
\frac{n^{j}}{(n-\omega T)^{j+\alpha}}\frac{\Gamma(j+\alpha)}{\Gamma(j)}.
\end{aligned}
\end{equation}
This proves that \MII{} and \MI{} are satisfied with $N> \omega T$. 

As for \MIII{}, by \eqref{expmoments} we have for $\alpha\in (-1,1]$ and $n\geq N\geq 2\omega T$:
\begin{align}\label{MIIIcheck}
\begin{aligned}
& \int_{0}^{\infty}\big[ 1 - \big(\tfrac{tn}{j}\big)^{\alpha}\big]e^{\omega T t}\,d\munj (t)\\
& \qquad  = \Big(\frac{n}{n-\omega T}\Big)^{j}\Big(1 -\Big(\frac{n}{n-\omega T}\Big)^{\alpha} \Big) 
+
\Big(\frac{n}{n-\omega T}\Big)^{j+\alpha} 
\Big[ 1 - \frac{\Gamma(j+\alpha)}{j^{\alpha}\Gamma(j)}\Big].
\end{aligned}
\end{align}
As $(\tfrac{n}{n-\omega T})^{n+1} \rightarrow e^{\omega T}$ as $n\rightarrow \infty$, 
there exists a constant
$M$ such that $$\sup_{n\in\N}\sup_{s\in [0,n+1]}\Big|\Big(\frac{n}{n-\omega T}\Big)^{s}\Big|
=\sup_{n\in\N}\Big(\frac{n}{n-\omega T}\Big)^{n+1} \leq M.$$
Moreover, for $n\geq 2\omega T$ we have:
\begin{align*}
\Big| 1 -\Big(\frac{n}{n-\omega T}\Big)^{\alpha} \Big|
&= |\alpha|\int_{1}^{1+\frac{\omega T}{n-\omega T}} 
s^{\alpha-1} \,ds
 \leq |\alpha|\frac{\omega T}{n-\omega T} \leq 2|\alpha| \frac{\omega T}{n}.
\end{align*}\par

From \eqref{MIIIcheck} and the above estimates we thus obtain:
\begin{align}\label{MIIIcheck2}
\begin{aligned}
\Big| \int_{0}^{\infty}\big[ 1 - \big(\tfrac{tn}{j}\big)^{\alpha}\big]e^{\omega T t}
\,d\munj (t)\Big| 
& \leq  2M|\alpha| \frac{\omega T}{n} +
M \Big[ 1 - \frac{\Gamma(j+\alpha)}{j^{\alpha}\Gamma(j)}\Big].
\end{aligned}
\end{align}

For $j\geq 2$ define $g_j:[-1,1]\rightarrow \R$ by:
$$
g_j(x) = \left\{
\begin{aligned}
\displaystyle \frac1{x}\Big(\displaystyle 1-\frac{\Gamma(j+x)}{j^x \Gamma(j)}\Big); 
& \qquad x\neq 0,\\
\ln j -\Psi(j); & \qquad x=0,
\end{aligned}
\right.
$$
where $\Psi=(\ln(\Gamma))'$ is the di-gamma function.\par
Assumption \MIII{} follows from \eqref{MIIIcheck2} for $N\geq 2\omega T$ once
the following claim is established:
\subsubsection*{Claim} For $j\geq 2$ we have $g_j(x)\in [0,\tfrac{1}{j-1}]$ for
all $x\in [-1,1]$.
\subsubsection*{Proof of Claim}
As $g_j(-1)=\inv{j-1}$ and $g_j(1)=0$ for all $j\geq 2$, it suffices
to show that $g_j$ is non-increasing on $[-1,1]$. 

For $j\geq 2$ define the
function
$h_j:[-1,1]\rightarrow \R$ by $h_j(x):=1-j^{-x}\frac{\Gamma(j+x)}{\Gamma(j)}$.
For $x\in [-1,1]$ and $j\geq 2$ we have:
\begin{align*}
h_j'(x)&= \frac{\Gamma(j+x)}{j^{x}\Gamma(j)}\big[\ln
j-\Psi(j+x)\big]=(1-h_j(x))(\ln j -\Psi(j+x));\\
h_j''(x)&= \frac{-\Gamma(j+x)}{j^{x}\Gamma(j)}\big[\big(\Psi(j+x)-\ln
j\big)^2+\Psi'(j+x)\big].
\end{align*}
As the $\Gamma$-function is log-convex on $(0,\infty)$, we have that $\Psi'$ is
positive on
that interval. As $j\geq 2$ and $x\in [-1,1]$ we have that $j+x>0$ and thus
$h_j''(x)\leq 0$ for $x \in [-1,1]$.\par

One may check that $g_j$ is continuously differentiable and
$$
g_j'(x) = \left\{
\begin{aligned}
\frac{1}{x^2}(x h_j'(x) - h_j(x)); & \qquad x\neq 0,\\        
-\tfrac12\big[(\Psi(j)-\ln j )^2+\Psi'(j)\big]; & \qquad x=0.
\end{aligned}\right.
$$
To prove that $g_j$ is non-increasing on $[-1,1]$ it suffices to prove that
$$xh_j'(x)-h_j(x)\leq 0, \textrm{ for all }
x\in [-1,1].$$ Observe that $g_j'(0) \leq 0$, hence it suffices to prove that
$x\mapsto xh_j'(x)-h_j(x)$ is
non-decreasing on $[-1,0]$ and non-increasing on $[0,1]$. This follows from the
fact that
$\frac{d}{dx}[xh_j'(x)-h_j(x)]=xh_j''(x)$ and $h_j''\leq 0$ on $[-1,1]$.
\end{example}
\section{An abstract time discretization}\label{sec:absEuler}
In this section we prove a convergence result for a general class of approximation schemes
for \eqref{SEE} involving the operators $\Enj$ as defined
in \eqref{eq:def-E} and discretized noise. In particular, 
the convergence result contains the implicit Euler scheme as the special  
case that $\Enj = (I - \frac{T}{n}A)^{-j}$ (see Example 
\ref{ex:Euler}).

Throughout this section we consider the problem \eqref{SEE} under the
assumptions \MA{}, \MF{}, \MG. 
On the part of $X$ we shall assume that it is an \textsc{umd} space with
Pisier's property $(\alpha)$
introduced in \cite{Pis}. For an extensive discussion of this property and its
use in the theory
of stochastic evolution equations we refer to \cite{KalWei07, NeeWei08}.
Examples of Banach spaces 
with property $(\a)$ are the Hilbert spaces and the spaces $L^p(\mu)$ with $1\le
p<\infty$ and $\mu$ $\sigma$-finite. 
Here we need the fact (see \cite{KalWei07, NeeWei08} for the proof and
generalizations)
that if
$X$ has property $(\alpha)$, then for any two measure spaces 
$(R_1,\mathcal{R}_1,\mu_1)$ and $(R_2,\mathcal{R}_2,\mu_2)$ and any Hilbert
space $H$ we have a natural isomorphism
\begin{equation}\label{ggiso}
\gamma(R_1;\gamma(R_2;H,X)) \simeq \gamma(R_1\times R_2;H,X),
\end{equation}
which is given by  the mapping $f_1\otimes ((f_2\otimes h)\otimes x) \mapsto
((f_1\otimes f_2)\otimes h)\otimes x)$.

Set:
\begin{equation}
\begin{aligned}\label{eq:zeta-max}
\zeta_{\max}& := \min\{1 - (\tinv{\tau} - \tfrac12) +(\theta_F\wedge 0),\tinv{2}+(\theta_G\wedge
0)\},
\end{aligned}
\end{equation}
where $\tau\in (1,2]$ is the type of $X$. In addition to the assumptions \MF{},
\MG{}
we shall assume:
\begin{itemize}
\item[\MFII{}] There exists a constant $C$ such that for all $x\in X$ and $s,t\in [0,T]$ we
have: 
\begin{align*}
\n F(t,x)-F(s,x)\n_{X_{\theta_F\wedge 0}} & \leq
C|t-s|^{\zeta_{\max}}(1+\n x\n_{X}).
\end{align*}
\item[\MGII{}] 
There exists a constant $C$ such that for all $x\in X$ and $s,t\in [0,T]$ we
have: 
\begin{align*}
\n G(t,x) - G(s,x)\n_{\gamma(H,X_{\theta_G\wedge 0}))} & \leq
C|t-s|^{\zeta_{\max}+\inv{\tau}-\inv{2}}(1+\n x\n_{X}).
\end{align*}
Moreover, for every $t\in [0,T]$ there exists a constant $C_t$ 
such that:
\begin{align*}
\n G(t,x)-G(t,y) \n_{\gamma(H,X_{\theta_G\wedge 0})} & \leq C_t \n x-y\n_{X}.
\end{align*}
\end{itemize}\par
\begin{remark}
Clearly, condition \MFII{} is automatically satisfied if $F$ is not
time-dependent and satisfies \MF.\par
Condition \MGII{} is also automatically satisfied if $G$ is not time-dependent and satisfies \MG,
provided we assume 
$G(0)\in \g(H,X_{\theta_G\wedge 0})$. Indeed, in that case, from \cite[Lemma 5.3]{NVW08}
it follows that 
 $G$ takes values in $\g(H,X_{\theta_G\wedge 0})$ and the conditions of \MGII{}
concerning H\"older 
continuity in the first variable and Lipschitz continuity the second are
satisfied.
\end{remark}\par
The reader will have noticed that the above assumptions are phrased in terms 
of $\theta_F\wedge 0$ and $\theta_G\wedge 0$.
The reason for this is explained in Remark \ref{r:thetaNeg} below. Because of this, 
{\em for the rest of this section, without loss of generality we shall 
assume that $\theta_F,\theta_G\ge 0$}.
The other assumptions on $\theta_F$ and $\theta_G$ remain in force. Explicitly, we 
assume 
$$-1 + (\tfrac1\tau-\tfrac12) < \theta_F\le 0,\quad -\tfrac12< \theta_G \le 0.$$
Once this convention is in force, of course one has $\zeta_{\max}=\eta_{\max}$. In order to
remind the reader of the convention,
we shall continue the use of $\zeta_{\max}$.

Let us now introduce the discrete-time approximation scheme that will be studied
in this section. Fix $T>0$ and let $(\mu_n)_{n=1}^{\infty}$ be a family of measures 
satisfying 
\MII, \MI, \MIII{} for $R=\omega T$, where $\omega\geq 0$ is such that 
$e^{-\omega t}S(t)$ is uniformly bounded in $t\in [0,\infty)$. Let $\Enj$ be 
defined by \eqref{defEn}.
We fix $p>2$ and let $U$ be the mild solution to \eqref{SEE} with initial value
$x_0\in L^p(\Omega,\calF_0;X)$. 
We fix another initial value $y_0\in L^p(\Omega,\calF_0;X)$ (in the applications
below, the typical situation is that 
$y_0$ is a close approximation to $x_0$). Let $n\geq N$, where $N$ is as in \MI{}. Set 
$V^{(n)}_{0}:=y_0$ and define $V_{j}^{(n)}$, $j=1,\ldots,n$, inductively as
follows:
\begin{equation}\label{appr_scheme}
\begin{aligned}
V_{j}^{(n)}&:= E(\tfrac{T}{n}) [V^{(n)}_{j-1} +
\tfrac{T}{n}F(t_{j-1}^{(n)},V^{(n)}_{j-1}) 
+ G(t_{j-1}^{(n)},V^{(n)}_{j-1})\Delta W_j^{(n)}].
\end{aligned}
\end{equation}
Here,
$$ \Delta W_j^{(n)} := W_H(t_j^{(n)}) - W_H(t_{j-1}^{(n)}).$$
The rigorous interpretation of the term $G(t_{j-1}^{(n)},V^{(n)}_{j-1})\Delta
W_j^{(n)}$ proceeds in three steps. 

{\em Step 1:} Let us first fix an operator 
$R\in \gamma(H,X_{\theta_G})$. By standard results on $\g$-radonifying operators
(see, e.g., \cite{Nee-survey})
 may write $$R=\sum_{k=1}^{\infty}h_k\otimes x_k$$  
for some orthonormal sequence $(h_k)_{k=1}^{\infty}$ in $H$ and a 
sequence $(x_k)_{k=1}^{\infty}$ in $X_{\theta_G}$ 
(the convergence of the sum being in $\gamma(H,X_{\theta_G})$). 
For sets $B\in \F_{j-1}^{(n)}:=\F_{t_j^{(n)}}$
we now define
\begin{equation}\label{DeltaWsimple}
(1_{B}\otimes R)\Delta W_j^{(n)} := 1_{B} \sum_{k=1}^{\infty} W_{H}(h_k \otimes
1_{(t_{j-1}^{(n)},t_{j}^{(n)}]})\otimes x_k.
\end{equation}
The sum on the right-hand side above converges in $L^p(\Omega;X_{\theta_G})$ 
 since $W_H$ extends to a bounded operator from
$\gamma(L^2(0,T;H);X_{\theta_G})$ into $L^p(\O;X_{\theta_G})$
(see \cite{Nee-survey}). By
the independence of $W_{H}(h_k \otimes 1_{(t_{j-1}^{(n)},t_{j}^{(n)}]})$ and
$\F_{j-1}^{(n)}$, the product of
$1_B$ and this sum converges in $L^p(\Omega;X_{\theta_G})$ as well.  Moreover,
by the Kahane-Khintchine inequality,
\begin{align*}
& \n (1_{B}\otimes R)\Delta W_j^{(n)}\n_{L^p(\O;X_{\theta_G})} \\
&\qquad \qquad  \lesssim (\E (1_B))^\frac1p \Big\n 
\sum_{k=1}^{\infty} (h_k \otimes 1_{(t_{j-1}^{(n)},t_{j}^{(n)}]})\otimes
x_k\Big\n_{\gamma(L^2(0,T;H);X_{\theta_G})}
\\ &\qquad \qquad  = (t_j^{(n)} - t_{j-1}^{(n)})^{\frac{1}{2}} (\E
(1_B))^\frac1p \Big\n 
\sum_{k=1}^{\infty} h_k \otimes x_k\Big\n_{\gamma(H,X_{\theta_G})} \\ 
 & \qquad \qquad = (\tfrac{T}{n})^\frac12 (\E (1_B))^\frac1p \n R
\n_{\gamma(H,X_{\theta_G})}
\end{align*}
with implied constants depending on $p$ only.

{\em Step 2:} Now fix a simple random variable
$\phi\in L^p(\Omega,\F_{j-1}^{(n)};\g(H,X_{\theta_G}))$, say
$ \phi = \sum_{j=1}^k 1_{B_j} \otimes R_j$ with the sets $B_j \in
\F_{j-1}^{(n)}$ disjoint. By the above, 
\begin{equation}\label{DeltaWsimpleNorm}
\begin{aligned}
\n \phi \Delta W_j^{(n)} \n_{L^p(\Omega;X_{\theta_G})} & \lesssim
(\tfrac{T}{n})^{\frac12}\sum_{j=1}^k  (\E (1_{B_j}))^\frac1p \n R_j
\n_{\gamma(H,X_{\theta_G})}
\\ & = (\tfrac{T}{n})^{\frac12} \n \phi \n_{L^p(\Omega;\gamma(H,X_{\theta_G}))}.
\end{aligned} 
\end{equation}

{\em Step 3}: Let $(v_m)_{m\in\N}$ be a sequence of simple $X$-valued random
variables approximating $V^{(n)}_{j-1}$ in 
$L^p(\Omega,\F_{j-1}^{(n)};X)$. By the norm estimate \eqref{DeltaWsimpleNorm}
and the Lipschitz condition in \MGII{}
it follows that the sequence 
$(G(t_{j-1}^{(n)},v_m)\Delta W_{j}^{(n)})_{m\in \N}$ is Cauchy in
$L^p(\Omega;X_{\theta_G})$.
Now we define
$$  G(t_{j-1}^{(n)},V^{(n)}_{j-1})\Delta W_j^{(n)} := \lim_{m\to\infty}
G(t_{j-1}^{(n)},v_m)\Delta W_{j}^{(n)}.$$
This completes the construction.

Returning to the abstract scheme \eqref{appr_scheme}, 
we have the following explicit expression for
$V_{j}^{(n)}$: 
\begin{align*}
V^{(n)}_{j} 
= \En{t_{j}^{(n)}} y_0 & + \tfrac{T}{n}\sum_{k=1}^{j}
\En{t_{j-k+1}^{(n)}}F(t_{k-1}^{(n)},V^{(n)}_{k-1}) \\
& + \sum_{k=0}^{j} \En{t_{j-k+1}^{(n)}}G(t_{k-1}^{(n)},V^{(n)}_{k-1})\Delta
W_{k}^{(n)}, \quad
j=0,\dots,n.
\end{align*}
We define, for $s\in [0,T]$,
\begin{align}\label{defEulerAbs}
V^{(n)}(s)&=\sum_{j=1}^{n}V^{(n)}_{j}1_{I_j}(s),
\end{align}
where, as always, $I_j = [t_{j-1}^{(n)}, t_j^{(n)})$. This process satisfies the
identity
\begin{equation}
\begin{aligned}\label{EulerConv}
V^{(n)}(s) & = \En{\ov{s}} y_0 + \int_{0}^{\ov{s}}
\En{\ov{s}-\un{u}}F(\un{u},V^{(n)}(\un{u}))\,du \\
& \phantom{ = \En{\ov{s}} y_0}\,\, + \int_{0}^{\ov{s}}
\En{\ov{s}-\un{u}}G(\un{u},V^{(n)}(\un{u}))\,dW_H(u) \\
& = \En{\ov{s}} y_0 + \int_{0}^{\ov{s}}
\En{\ov{s}-\un{u}}F(\un{u},V^{(n)}(u))\,du \\
& \phantom{ = \En{\ov{s}} y_0 }\,\, + \int_{0}^{\ov{s}}
\En{\ov{s}-\un{u}}G(\un{u},V^{(n)}(u))\,dW_H(u).
\end{aligned}
\end{equation}\par

The main result of this section reads as follows. Recall the definition of 
$\zeta_{\max}$ in \eqref{eq:zeta-max} and let $N$ be as in \MI.
\begin{theorem}\label{t:euler}
Let $\eta>0$ be such that $0\le \eta <
\zeta_{\max}$, and suppose
$0\le \a < \frac12$. Then for all $p\in (2,\infty)$ there exists a constant
$C$ such that for all 
$x_0\in L^p(\Omega,\calF_0;X_{\eta})$, $y_0\in L^p(\Omega,\calF_0;X)$, and
$n\geq N$ we have
\begin{equation}
\begin{aligned}
\n U-V^{(n)} \n_{\Winfp{\alpha}([0,T]\times\Omega;X)} &\le C\n
x_0-y_0\n_{L^p(\Omega;X)} 
+ Cn^{-\eta}(1+\n x_0\n_{L^p(\Omega;X_{\eta})}).
\end{aligned}
\end{equation}
\end{theorem}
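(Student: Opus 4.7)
The plan is to deduce this theorem from the corresponding convergence result for the modified splitting scheme established in Section~\ref{sec:split}. Let $U^{(n)}$ denote the modified splitting scheme \eqref{splitdef} initialized at $y_0$. Theorem~\ref{thm:convV} already controls $\|U - U^{(n)}\|_{\Winfp{\alpha}([0,T]\times\Omega;X)}$ by the right-hand side of the claimed bound, so by the triangle inequality it suffices to prove
\begin{equation*}
\|U^{(n)} - V^{(n)}\|_{\Winfp{\alpha}([0,T]\times\Omega;X)} \lesssim n^{-\eta}\bigl(1 + \|y_0\|_{L^p(\Omega;X)}\bigr),
\end{equation*}
since $\|y_0\|_{L^p(\Omega;X)} \le \|y_0 - x_0\|_{L^p(\Omega;X)} + \|x_0\|_{L^p(\Omega;X_\eta)}$.

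To establish this bound, subtract the integral representations \eqref{splitConv} and \eqref{EulerConv} to write $U^{(n)} - V^{(n)}$ as a sum of several terms: an initial-value term $[S(\bar s) - E(\bar s)]y_0$; two \emph{semigroup-versus-Hille--Phillips} terms of the form $\int_0^s [S(\bar s - \underline u) - E(\bar s - \underline u)]\Phi(u)\,d(\cdot)$ for $\Phi$ equal to $F(\cdot, U^{(n)})$ and $G(\cdot, U^{(n)})$; two \emph{time-discretization} terms of the form $\int_0^s E(\bar s - \underline u)[\Phi(u) - \Phi(\underline u)]\,d(\cdot)$; two \emph{spatial-Lipschitz} terms involving $F(\underline u, U^{(n)}) - F(\underline u, V^{(n)})$ and its $G$-analogue; and finally two short \emph{boundary} terms $\int_s^{\bar s} E(\bar s - \underline u)\Phi(\underline u)\,d(\cdot)$ coming from the mismatched upper limits ($s$ in \eqref{splitConv} versus $\bar s$ in \eqref{EulerConv}).

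Each term is estimated in $\Winfp{\alpha}([0,T_0]\times\Omega;X)$ for an arbitrary $T_0\in[0,T]$, paralleling Parts~1a--1f of the proof of Theorem~\ref{thm:convV}. The $\gamma$-boundedness estimates for the semigroup $S$ supplied there by Lemma~\ref{lem:analyticRbound} are now replaced by Corollary~\ref{c:Eulergbdd} (for $E(t_j^{(n)})$ on its own) and Proposition~\ref{p:Eulergbdd} (for the difference $E(t_j^{(n)}) - S(t_j^{(n)})$), the latter supplying the decisive factor $n^{-\eta}$ for the semigroup-versus-Hille--Phillips terms. The deterministic and stochastic convolution Lemmas~\ref{lem:detConv} and~\ref{lem:stochConv}, the $\g$-multiplier theorem~\ref{t:KW}, and Corollary~\ref{cor:Lpest} (applied to the splitting approximation $U^{(n)}$) then deliver the remaining estimates. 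The time-discretization terms are handled by the H\"older-in-time conditions \MFII{} and \MGII, which furnish the factor $|u-\underline u|^{\zeta_{\max}} \le (T/n)^{\zeta_{\max}} \lesssim n^{-\eta}$. The boundary terms are of the short-interval type analysed in Claims~1 and~2 of Theorem~\ref{thm:convVclassical} and produce factors of size $(T/n)^{\zeta_{\max}}$.

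The hardest step will be the stochastic-integral term with discretized noise. On each subinterval the integrand is operator-valued into $\g(H, X_{\theta_G})$ and only piecewise constant in time; estimating its $\g$-radonifying norm over $[0,t]\times H$ requires separating the time variable from the $H$-factor. This is precisely where Pisier's property $(\alpha)$ is used, via the isomorphism $\g(R_1;\g(R_2;H,X)) \simeq \g(R_1\times R_2;H,X)$ of \eqref{ggiso}, which permits us to iterate $\g$-norms and then apply the $\g$-multiplier theorem with the $\g$-bound from Proposition~\ref{p:Eulergbdd} acting in the ``time'' slot. After combining all estimates one arrives at an inequality of the schematic form
\begin{equation*}
\|U^{(n)} - V^{(n)}\|_{\Winfp{\alpha}([0,T_0]\times\Omega;X)} \le C n^{-\eta}\bigl(1+\|y_0\|_{L^p(\Omega;X)}\bigr) + C T_0^{\epsilon_0}\|U^{(n)} - V^{(n)}\|_{\Winfp{\alpha}([0,T_0]\times\Omega;X)},
\end{equation*}
which is absorbed for $T_0$ sufficiently small and then extended to $[0,T]$ by the shift-and-iterate bootstrap of Parts~3--4 of the proof of Theorem~\ref{thm:convV}.
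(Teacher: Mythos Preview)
Your high-level plan---compare $V^{(n)}$ with the modified splitting approximation $U^{(n)}$, decompose $U^{(n)}-V^{(n)}$ into initial, semigroup-versus-$E$, spatial-Lipschitz, time-discretisation and short-interval terms, then absorb and iterate---is exactly the paper's strategy. Two points need fixing.

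First, a small but genuine slip: you start $U^{(n)}$ at $y_0$, which leaves you with the term $[S(\ov{s})-\En{\ov{s}}]y_0$. Proposition~\ref{p:Eulergbdd} delivers $n^{-\eta}$ decay for this only when the datum lies in $X_\eta$; with $y_0\in X$ merely, part~(1) at $\delta=0$ gives $\sup_j\n[\En{t_j^{(n)}}-S(t_j^{(n)})]y_0\n_X\lesssim\n y_0\n_X$ with no decay in $n$. The paper therefore initializes $U^{(n)}$ at $x_0\in X_\eta$ instead, so that the decomposition produces $[S(\ov{s})-\En{\ov{s}}]x_0$ (which decays like $n^{-\eta}\n x_0\n_{X_\eta}$) together with a separate term $\En{\ov{s}}(x_0-y_0)$ (bounded by $\n x_0-y_0\n_X$).

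The more serious gap is your appeal to Lemmas~\ref{lem:detConv} and~\ref{lem:stochConv} for the $E$-integral terms. Those lemmas are about genuine convolutions with the semigroup $S$; their weighted-$\gamma$ estimates ultimately rest on smoothness of $s\mapsto S(s-u)$. The maps $s\mapsto\int_0^{\ov{s}}\En{\ov{s}-\un{u}}\Phi(u)\,du$ are \emph{step functions} in $s$ (the integrand depends on $s$ only through $\ov{s}$), so neither lemma applies. The paper handles all the deterministic $E$-terms (its Parts~2c, 2d, 2e) through the Besov embedding $B_{\tau,\tau}^{1/\tau-1/2}\hookrightarrow\gamma$ combined with Lemma~\ref{lem:wBesovEst}: one reduces the weighted $\gamma$-norm to a Besov norm of the step function $\Psi(s)=\int_0^{\ov{s}}\En{\ov{s}-\un{u}}\Phi(u)\,du$ and then estimates $\n T_h^I\Psi-\Psi\n_{L^q}$ by counting the grid intervals a shift of size $h$ crosses. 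The same Besov machinery also underlies Lemma~\ref{l:GgammaEst}, which is what actually turns the pointwise H\"older bound of \MGII{} into the weighted-$\gamma(0,t;H,X_{\theta_G})$ estimate needed for the $G$-time-discretisation term; the factor $(T/n)^{\zeta_{\max}}$ alone is not enough. Your treatment of the stochastic $E$-terms via property $(\alpha)$ and the isomorphism~\eqref{ggiso} is correct and matches Parts~2f--2h of the paper.
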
\par
\begin{remark}\label{r:thetaNeg}
Unlike the case in Theorem \ref{thm:convV}, the convergence rate of the Euler
approximations does not improve if $\theta_F$ and $\theta_G$ increase above $0$. 
This is mainly
due to the time-discretization of the noise. More precisely, the estimates on the 
sixth and
tenth term in \eqref{eulersplit} below do not improve if $\theta_F$ and $\theta_G$ 
increase above $0$. The
estimate on third term in \eqref{eulersplit} as presented here also does not improve 
if $\theta_F$
increases above $0$, but we believe this is just an artifact of our proof.
\end{remark}
\begin{remark}\label{r:MFIIMGII}
The H\"older conditions
of \MFII{} and \MGII{} can be weakened: in order to obtain 
convergence rate $\eta$ in Theorem \ref{t:euler} it suffices that 
the H\"older exponent in \MFII{} is $\eta$ instead of $\zeta_{\max}$, 
and that the exponent in \MGII{} is $\eta+\inv{\tau}-\inv{2}$ instead 
of $\zeta_{\max}+\inv{\tau}-\inv{2}$.
\end{remark}\par
\begin{proof}[Proof of Theorem \ref{t:euler}]
We begin by observing that, due to the assumption $2<p<\infty$, the spaces
$X$ and $L^p(\O;X)$ has the same type $\tau$. 
\addtocontents{toc}{\SkipTocEntry}\subsection*{Part 1.}
The main issue is to prove that
there exists $T_0\in (0,T]$ and a constant $C$ such that for all $n\in \N$ and
$j\in \{0,\dots,n\}$
we have:
\begin{align*}
 \ & \n U-V^{(n)} \n_{\Winfp{\alpha}([t_j^{(n)},t_j^{(n)}+T_0]\times\Omega;X)} 
 \\ & \qquad \lesssim C\n U(t_j^{(n)}) - V_j^{(n)}\n_{L^{p}(\Omega;X)} +
Cn^{-\eta}
(1+\n U(t_j^{(n)})\n_{L^{p}(\Omega;X_{\eta})}).
\end{align*}
This statement is entirely analogous to the result obtained in Part 3 of the
proof of Theorem \ref{thm:convV}. 
Once it has been established, the extension to the interval $[0,T]$ can be
obtained in precisely the same way 
as in Part 4 of Theorem \ref{thm:convV}.\par

Until further notice we fix $n\geq N$ and $T_0\in [0,T]$.

Let $(U^{(n)}_{j})_{j=1}^{n}$ be the modified splitting scheme as defined by
\eqref{SDE_noA} in Section
\ref{sec:split}, with initial datum $x_0$.
Let $U^{(n)}$ be the corresponding process as defined by \eqref{splitdef} in
Section \ref{sec:split}. 
By Theorem
\ref{thm:convV} we have, for all $j$: 
\begin{equation}\label{convSmallIntervalSplit}
\begin{aligned}
& \n U-V^{(n)} \n_{\Winfp{\alpha}([t_j^{(n)},t_j^{(n)}+T_0]\times\Omega;X)} \\
& \quad  \leq \n U - U^{(n)} \n_{\Winfp{\alpha}([t_j^{(n)},t_j^{(n)}+T_0]\times
\Omega;X)} + \n
U^{(n)}-V^{(n)} \n_{\Winfp{\alpha}([t_j^{(n)},t_j^{(n)}+T_0]\times \Omega;X)}\\
& \quad \lesssim n^{-\eta}(1+\n U(t_j^{(n)})\n_{L^p(\Omega;X_{\eta})}) + \n
U^{(n)}-V^{(n)}
\n_{\Winfp{\alpha}([t_j^{(n)},t_j^{(n)}+T_0]\times \Omega;X)},
\end{aligned}
\end{equation}
with implied constants independent of $n$ and $j$. Thus 
it suffices to show that 
\begin{equation}\label{Euler-Split}
\begin{aligned}
\ & \n U^{(n)}-V^{(n)}\n_{\Winfp{\alpha}([t_j^{(n)},t_j^{(n)}+T_0]\times
\Omega;X)} 
\\ & \qquad \lesssim C\n U(t_j^{(n)}) - V_j^{(n)}\n_{L^{p}(\Omega;X)} +
Cn^{-\eta}
(1+\n U(t_j^{(n)})\n_{L^{p}(\Omega;X_{\eta})}).
\end{aligned}
\end{equation}
\addtocontents{toc}{\SkipTocEntry}\subsection*{Part 2.}
For simplicity we shall prove this for $j=0$ (careful examination of the
proof reveals that the 
other $t_j^{(n)}$ do not generate extra
difficulties). In that case we have $U(t_j^{(n)}) = U(0) = x_0$ and $V^{(n)}_{j}=V_0^{(n)} =
y_0$.

From the integral representations
\eqref{splitConv} and \eqref{EulerConv} we have, for $n\geq N$:
\begin{equation}\label{eulersplit}
\begin{aligned}
U^{(n)}(t)- V^{(n)} (t) & = S(\ov{t})x_0 - \En{\ov{t}} y_0 \\
&\quad + \int_{0}^{t} S(\ov{t}-\un{s})F\big(s,U^{(n)}(s)\big)\,ds \\ 
&\quad  - \int_0^{\ov{t}} \En{\ov{t}-\un{u}}F(\un{u},V^{(n)}(u))\,du \\
&\quad  + \int_{0}^{t} S(\ov{t}-\un{s})G\big(s,U^{(n)}(s)\big)\,dW_H(s) \\ 
&\quad  - \int_{0}^{\ov{t}} \En{\ov{t}-\un{u}}G(\un{u},V^{(n)}(u))\,dW_H(u)\\
& = (S(\ov{t})-\En{\ov{t}})x_0 + \En{\ov{t}}(x_0-y_0) \\
&\quad  + \int_{0}^{\ov{t}}[S(\ov{t}-\un{s})-\En{\ov{t}-\un{s}}]F(s,U^{(n)}(s))\,ds \\
&\quad + \int_{0}^{\ov{t}}\En{\ov{t}-\un{s}}[F(s,U^{(n)}(s))-F(s,V^{(n)}(s))]\,ds \\
&\quad + \int_{0}^{\ov{t}}\En{\ov{t}-\un{s}}[F(s,V^{(n)}(s))-F(\un{s},V^{(n)}(s))]\,ds \\
&\quad + \int_{t}^{\ov{t}} S(\tfrac{T}{n})F(s,U^{(n)}(s))\,ds\\
&\quad + \int_{0}^{\ov{t}}[S(\ov{t}-\un{s})-\En{\ov{t}-\un{s}}]G(s,U^{(n)}(s))\,dW_H(s)\\
&\quad + \int_{0}^{t}\En{\ov{t}-\un{s}}[G(s,U^{(n)}(s))-G(s,V^{(n)}(s))]\,dW_H(s)\\
&\quad + \int_{0}^{t}\En{\ov{t}-\un{s}}[G(s,V^{(n)}(s))-G(\un{s},V^{(n)}(s))]\,dW_H(s)\\
&\quad + \int_{t}^{\ov{t}} S(\tfrac{T}{n})G(s,U^{(n)}(s))\,dW_H(s).
\end{aligned}
\end{equation}
We shall estimate each of the ten terms on the right-hand side above
separately. 
The implied constants in these
estimates may depend on $T$, although this will not be stated explicitly. 
However, for the fourth, fifth, eighth and ninth term
(Part 2d and 2g below) it will be necessary to keep track of the dependence upon
$T_0$.\par

Without loss of generality we may assume that
$\tau\in (1,2)$. We fix $0<\e<\frac12$ such that 
\begin{equation}\label{eq:cond-eta-eps} \eps< \max\{\zeta_{\max}-\eta,
1-2\alpha\},\end{equation}
where $\zeta_{\max}$ is defined as in \eqref{eq:zeta-max}.
As $\Winfp{\alpha}\hookrightarrow \Winfp{\beta}$ for $\alpha>\beta$,
we may also assume
that 
\begin{equation}\label{eq:extra-e}
\tfrac1{2}-\tfrac23\eps< \alpha<\tfrac1{2}-\tfrac12\eps.
\end{equation}

\addtocontents{toc}{\SkipTocEntry}\subsection*{Part 2a.}
For the first term on the right-hand side of \eqref{eulersplit} we have, 
by the uniform boundedness estimate of Proposition \ref{p:Eulergbdd} with
$\delta=\eta$, pointwise in $\omega\in \Omega$: 
\begin{equation}\label{Eu1a}
\n s\mapsto (S(\ov{s})-\En{\ov{s}})x_0\n_{L^{\infty}(0,T_0;X)}  \lesssim 
 n^{-1} \sup_{1\le j\le n} (t_j^{(n)})^{1-\eta}\n x_0\n_{X_{\eta}}
 \lesssim n^{-\eta}\n x_0\n_{X_{\eta}}.
\end{equation}

Let $t\in [0,T_0]$. By the $\gamma$-boundedness result of Proposition
\ref{p:Eulergbdd} with $\beta = \epsilon = \frac12\eps$ and
$\delta=\eta$,
the $\g$-multiplier theorem (Theorem \ref{t:KW}), and \eqref{eq:gFub}
we have, pointwise in $\omega\in\Omega$:
\begin{align*}
& \n s\mapsto (t-s)^{-\alpha}(S(\ov{s})-\En{\ov{s}})x_0\n_{\gamma(0,t;X)}\\
&\qquad \lesssim n^{-\eta} \n s\mapsto (t-s)^{-\alpha}\ov{s}^{-\frac12\eps }
x_0\n_{\gamma(0,t;X_\eta)}\\
&\qquad \eqsim n^{-\eta} \n s\mapsto (t-s)^{-\alpha}\ov{s}^{-\frac12\eps }
x_0\n_{\gamma(0,t;X_\eta)}\\
&\qquad = n^{-\eta} \n s\mapsto (t-s)^{-\alpha}\ov{s}^{-\frac12\eps } \n_{L^2(0,t)} \n
x_0\n_{X_{\eta}}\\
&\qquad \eqsim n^{-\eta} t^{\inv{2}-\alpha-\frac12\eps }\n x_0\n_{X_{\eta}},
\end{align*}
with implied constants independent of $n$, $T_0$ and $x_0$. 
As $\inv{2}-\alpha-\frac12\eps>0$, we have $t^{\inv{2}-\alpha-\frac12\eps}
\leq T^{\inv{2}-\alpha-\frac12\eps}$. By taking the supremum over 
$t\in [0,T_0]$ in the above, combining the result with \eqref{Eu1a}, 
and then taking $p^{\textrm{th}}$ moments, one obtains :
\begin{align}\label{Eu1}
\n s\mapsto (S(\ov{s})-\En{\ov{s}})x_0\n_{\Winfp{\alpha}([0,T_0]\times\Omega;X)} & \lesssim
n^{-\eta}\n
x_0\n_{L^p(\Omega;X_{\eta})},
\end{align}
with implied constants independent of $n$, $T_0$, and $x_0$.
\addtocontents{toc}{\SkipTocEntry}\subsection*{Part 2b.}
Concerning the second term on the right-hand side of \eqref{eulersplit} recall
that as $A$ generates an analytic
$C_0$-semigroup, there exists a constant $M$ such that
$$\sup_{n\geq N} \sup_{k\in \{ 1,\ldots,n\}}\n
\En{t_{k}^{(n)}}\n_{\calL(X)} \leq M.$$ Thus pointwise in $\omega\in\Omega$ we
have:
\begin{align*}
\n s\mapsto \En{\ov{s}} (x_0-y_0)\n_{L^{\infty}(0,T_0;X)} & \lesssim \n x_0-y_0
\n_{X}.
\end{align*}
Let $t\in [0,T_0]$. We apply the second part of Corollary \ref{c:Eulergbdd} with
$\beta=\eps$, $\delta =0$, $\epsilon=\frac12\eps $,
$k=n$. Arguing as in the previous estimate we obtain:
\begin{align*}
& \n s\mapsto (t-s)^{-\alpha} \En{\ov{s}}(x_0-y_0) \n_{\gamma(0,t;X)}\\
& \qquad  \lesssim \n s\mapsto (t-s)^{-\alpha}\ov{s}^{-\eps}
\n_{L^2(0,t)} \n x_0-y_0\n_{X}\\
& \qquad \lesssim \n x_0-y_0 \n_{X},
\end{align*}
with implied constants independent of $n$, $T_0$, $x_0$ and $y_0$.\par
As $t\in [0,T_0]$ was arbitrary, by taking $p^{\textrm{th}}$ moments it follows
that:
\begin{equation}\label{Eu2}
\begin{aligned}
\n s\mapsto \En{\ov{s}} (x_0-y_0)\n_{\Winfp{\alpha}([0,T_0]\times\Omega;X)} &
\lesssim \n x_0-y_0 \n_{L^p(\Omega;X)}
\end{aligned}
\end{equation}
with implied constants independent of $n$, $T_0$, $x_0$, and $y_0$.
\addtocontents{toc}{\SkipTocEntry}\subsection*{Part 2c.}
By the uniform boundedness estimate of Proposition \ref{p:Eulergbdd} with
$\delta=\theta_F$
one has, for $s\in [0,T_0]$ fixed:
\begin{equation*}
\begin{aligned}
& \Big\n \int_{0}^{\ov{s}}[S(\ov{s}-\un{u})-\En{\ov{s}-\un{u}}]F(u,U^{(n)}(u))\,du
\Big\n_{L^p(\Omega;X)}\\
& \qquad \leq \int_{0}^{\ov{s}}\n
[S(\ov{s}-\un{u})-\En{\ov{s}-\un{u}}]F(u,U^{(n)}(u))\n_{L^p(\Omega;X)} \,du \\
& \qquad \lesssim  \int_{0}^{\ov{s}}
n^{-1}(\ov{s}-\un{u})^{-1+\theta_{F}}\,du\n
F(\cdot, U^{(n)})\n_{L^{\infty}(0,\ov{T_0};L^p(\Omega;X_{\theta_F}))} \\
& \qquad \le \frac1n \sum_{j=1}^n (T/n) \cdot (jT/n)^{-1+\theta_{F}}\n F(\cdot,
U^{(n)})\n_{L^{\infty}(0,\ov{T_0};L^p(\Omega;X_{\theta_F}))}\\
& \qquad \eqsim n^{-1-\theta_F} \sum_{j=1}^n j^{-1+\theta_{F}} 
(1 + \n U^{(n)}\n_{L^{\infty}(0,\ov{T_0};L^p(\Omega;X))})\\
& \qquad \lesssim n^{-1-\theta_F+\frac12\e} (1 + \n x_0\n_{L^p(\Omega;X)}),
\end{aligned}
\end{equation*}
where we used the linear growth condition in \MF{} and 
that
$\sum_{j=1}^n j^{-1+\theta_{F}}\lesssim 1$ if $\theta_F < 0$
and 
$\sum_{j=1}^n j^{-1+\theta_{F}}\lesssim \ln n \lesssim n^{\frac{1}{2}\e}
$ if $\theta_F = 0$. In the last step we
used Corollary \ref{cor:Lpest} with $\delta = 0$. The implied
constants are independent of $n$, $T_0$, and $x_0$.\par
By taking the supremum 
over $s\in [0,T_0]$ we
obtain:
\begin{equation}\label{Linftyest1c}\begin{aligned}
& \Big\n s\mapsto \int_{0}^{\ov{s}}[S(\ov{s}-\un{u})-\En{\ov{s}-\un{u}}]F(u,U^{(n)}(u))\,du
\Big\n_{L^{\infty}(0,T_0;L^p(\Omega;X))}\\
& \qquad \qquad \lesssim n^{-1-\theta_F+\frac12\e} (1 + \n x_0\n_{L^p(\Omega;X)}).
\end{aligned}\end{equation}\par

For the estimate in the weighted $\gamma$-space we shall use 
Lemma \ref{lem:wBesovEst}. Define $\Psi:[0,T_0]\rightarrow
L^p(\Omega;X)$ by $$\Psi(s)=
\int_{0}^{\ov{s}}[S(\ov{s}-\un{u})-\En{\ov{s}-\un{u}}]F(u,U^{(n)}(u))\,du.$$ Let
$t\in
[0,T_0]$ and let $q=(\inv{\tau}-\inv{2}+\frac12\eps )^{-1}$ (so
$\inv{\tau}-\inv{2}<\inv{q}<\inv{\tau}-\alpha$). By Lemma
\ref{lem:wBesovEst} we have:
\begin{equation}\label{wGammaEst2c}
\begin{aligned}
& \sup_{t\in [0,T_0]}\n  s\mapsto (t-s)^{-\alpha} \Psi(s)
\n_{\gamma(0,t;L^p(\Omega;X))}\\
& \qquad \qquad \lesssim \n  \Psi
\n_{B_{q,\tau}^{\inv{\tau}-\inv{2}}([0,T_0];L^p(\Omega;X))} + \n \Psi
\n_{L^{\infty}(0,T_0;L^p(\Omega;X))},
\end{aligned}
\end{equation}
with implied constant independent of $T_0$.\par

Let $\rho \in [0,1]$ and let $0<|h|< \rho$. We have, with $I= [0,T_0]$,
\begin{align*}
\n T_h^I \Psi(s) - \Psi(s) \n_{L^p(\Omega;X)} & \leq \left\{ 
\begin{array}{ll} 
0, &  \ov{s+h}=\ov{s}, s+h\in [0,T_0];\\
2\n \Psi \n_{L^{\infty}(0,T_0;L^p(\Omega;X))},
& \ov{s+h}\neq \ov{s} \textrm{ or } s+h \notin [0,T_0].
\end{array}\right.
\end{align*}
Suppose $|h|<\frac{T}{n}$. Define $I_h=\{ s\in [0,T_0]: \ov{s+h}\neq \ov{s}\}$
and observe that $|I_h|\leq n|h|$. 
Thus by the definition of $q$ and by
\eqref{Linftyest1c}:
\begin{align*}
\n T_h^I \Psi - \Psi \n_{L^{q}(0,T_0;L^p(\Omega;X))} 
& \lesssim (n|h|)^{\inv{q}}\n \Psi
\n_{L^{\infty}(0,T_0;L^p(\Omega;X))}\\
& \lesssim |h|^{\inv{\tau}-\inv{2}+\frac12\eps }n^{-\frac{3}{2}+\inv{\tau}-\theta_F+\eps}
(1+\n x_0\n_{L^p(\Omega;X)}).
\end{align*}
On the other hand, if $|h|\geq \frac{T}{n}$ then we have:
\begin{align*}
\n T_h^I\Psi-\Psi\n_{L^q(0,T_0;L^p(\Omega;X))} 
& \lesssim \n \Psi \n_{L^{\infty}(0,T_0;L^p(\Omega;X))} 
\\ & \lesssim
n^{-1-\theta_F+\frac12\e} (1 + \n x_0\n_{L^p(\Omega;X)}) 
\\ & = |h|^{\inv{\tau}-\inv{2}+\frac12\eps }n^{-\frac{3}{2}+\inv{\tau}-\theta_F+\eps}(1 + \n
x_0\n_{L^p(\Omega;X)}).
\end{align*}\par

Combining the two cases and recalling that $\eta<\frac32-\inv{\tau}+\theta_F-\eps$ 
by \eqref{eq:cond-eta-eps} we obtain:
\begin{align*}
\sup_{0<|h|\leq \rho}\n T_h^I \Psi(s) - \Psi(s)
\n_{L^{q}(0,T_0;L^p(\Omega;X))} & \le 
\rho^{\inv{\tau}-\inv{2}+\frac12\eps }n^{-\eta}
(1+ \n x_0\n_{L^p(\Omega;X)}).
\end{align*}
By the definition of $B_{q,\tau}^{\inv{\tau}-\inv{2}}$ and equation \eqref{Linftyest1c}, 
this gives:
\begin{align*}
& \n  \Psi \n_{B_{q,\tau}^{\inv{\tau}-\inv{2}}([0,T_0];L^p(\Omega;X))} \\
& \qquad \lesssim \n \Psi
\n_{L^q(0,T_0;L^p(\Omega;X))} + n^{-\eta}\int_0^{1}\rho^{\inv2\eps\tau -1}d\rho(1+\n
x_0\n_{L^p(\Omega,X)})\\
& \qquad \lesssim \n \Psi \n_{L^{\infty}(0,T_0;L^p(\Omega;X))} + n^{-\eta}(1+\n
x_0\n_{L^p(\Omega,X)})\\
&\qquad  \lesssim n^{-\eta}(1 + \n x_0\n_{L^p(\Omega;X)}),
\end{align*}
with implied constant independent of $n$, $T_0$, and $x_0$.\par
Inserting the above and \eqref{Linftyest1c} into \eqref{wGammaEst2c} we obtain:
\begin{equation}
\begin{aligned}\label{BesovEst1c}
& \sup_{t\in [0,T_0]}
\n  s\mapsto (t-s)^{-\alpha}\Psi(s)
\n_{\gamma(0,t;L^p(\Omega;X))} \lesssim n^{-\eta}(1 + \n x_0\n_{L^p(\Omega;X)}).
\end{aligned}
\end{equation}\par

Finally, by combining \eqref{Linftyest1c} and \eqref{BesovEst1c} we obtain that
\begin{equation}\label{Eu3}
\begin{aligned}
 \n \Psi \n_{\Winfp{\alpha}([0,T_0]\times\Omega;X)} 
\lesssim n^{-\eta}(1 + \n x_0\n_{L^p(\Omega;X)}),
\end{aligned}
\end{equation}
with implied constants independent of $n$, $T_0$, and $x_0$.\par
\addtocontents{toc}{\SkipTocEntry}\subsection*{Part 2d.}
In this part we provide estimates for the fourth and fifth term in
\eqref{eulersplit}. In order to do so, we shall prove
that there exists an $\eps_1>0$ such that for any $\Phi\in
L^{\infty}(0,T;L^p(\Omega;X_{\theta_F}))$ we have:
\begin{equation}\label{EnConvEst}
\begin{aligned}
\Big\n  s\mapsto \int_{0}^{\ov{s}}\En{\ov{s}-\un{u}}\Phi(u) \,du
\Big\n_{\Winfp{\alpha}([0,T_0]\times\Omega;X)}
& \lesssim \ov{T_0}^{\eps_1} \n \Phi
\n_{L^{\infty}(0,\ov{T_0};L^p(\Omega;X_{\theta_F}))},
\end{aligned}
\end{equation}
with implied constants independent of $n$, $T_0$, and $\Phi$.
\par

Once \eqref{EnConvEst} is obtained, we immediately get: 
\begin{equation}\label{Eu4a}
\begin{aligned}
 & \Big\n  s\mapsto
\int_{0}^{\ov{s}}\En{\ov{s}-\un{u}}[F(u,U^{(n)}(u))-F(u,V^{(n)}(u))] \,du
\Big\n_{\Winfp{\alpha}([0,T_0]\times\Omega;X)}\\
& \qquad \lesssim \ov{T_0}^{\eps_1} \n V^{(n)} -U^{(n)}
\n_{L^{\infty}(0,\ov{T_0};L^p(\Omega;X))},
\end{aligned}
\end{equation}
by \MF{}. Moreover, by \MFII{} we get:
\begin{equation}\label{Eu4b}
\begin{aligned}
 & \Big\n  s\mapsto \int_{0}^{\ov{s}}\En{\ov{s}-\un{u}}[F(u,V^{(n)}(u)) -
F(\un{u},V^{(n)}(u))] \,du
\Big\n_{\Winfp{\alpha}([0,T_0]\times\Omega;X)}\\
& \lesssim \ov{T_0}^{\e_1} n^{-\zeta_{\max}} \big(1+\n
V^{(n)} \n_{L^{\infty}(0,\ov{T_0};L^p(\Omega;X))}\big)\\
& \lesssim \ov{T_0}^{\e_1} \big[
\n V^{(n)} -U^{(n)}\n_{L^{\infty}(0,\ov{T_0};L^p(\Omega;X))} 
+ n^{-\eta}\big(1+\n U^{(n)}\n_{L^{\infty}(0,\ov{T_0};L^p(\Omega;X))}\big)\big]\\
& \lesssim \ov{T_0}^{\e_1} \big[
\n V^{(n)} -U^{(n)}\n_{L^{\infty}(0,\ov{T_0};L^p(\Omega;X))} 
+ n^{-\eta}\big(1+\n x_0\n_{L^p(\O;X)}\big)\big],
\end{aligned}
\end{equation}
the last step being again a consequence of Corollary \ref{cor:Lpest} 
with $\delta = 0$. 
\par

It remains to prove \eqref{EnConvEst}. We fix $\Phi \in
L^{\infty}(0,T;L^p(\Omega;X_{\theta_F}))$.
By the uniform boundedness estimate of Corollary \ref{c:Eulergbdd} with
$\delta=\theta_F$ we obtain:
\begin{equation}\label{Linftyest1d}
\begin{aligned}
& \Big\n  s\mapsto \int_{0}^{\ov{s}}\En{\ov{s}-\un{u}}\Phi(u)\,du
\Big\n_{L^{\infty}(0,T_0;L^p(\Omega;X))}\\
&\qquad  \leq \sup_{0\leq s\leq T_0} \int_{0}^{\ov{s}}\n
\En{\ov{s}-\un{u}}\Phi(u)\n_{L^p(\Omega;X)} \,du \\
&\qquad  \lesssim \sup_{0\leq s\leq T_0}\int_{0}^{\ov{s}}
(\ov{s}-\un{u})^{\theta_F} \,du \n
\Phi\n_{L^{\infty}(0,\ov{T_0};L^p(\Omega;X_{\theta_F}))}\\
&\qquad  \lesssim \ov{T_0}^{1+\theta_F}\n
\Phi\n_{L^{\infty}(0,\ov{T_0};L^p(\Omega;X_{\theta_F}))},
\end{aligned}
\end{equation}
where the last step follows from a similar calculation as in
\eqref{Linftyest1c},
the difference being that now we consider the terms `up to $T_0$'. 
The implied constants are independent of $n$, $T_0$, and $\Phi$.\par

For the estimate in the weighted $\gamma$-space we shall again use Lemma
\ref{lem:wBesovEst}. Define
$\Psi:[0,T_0]\rightarrow L^p(\Omega;X)$ by 
\begin{equation}\label{def-Psi}\Psi(s):=
\int_{0}^{\ov{s}}\En{\ov{s}-\un{u}}\Phi(u)\,du.
\end{equation} Let $t\in [0,T_0]$ and let
$q=(\inv{\tau}-\inv{2}+\frac12\eps )^{-1}$ (so
$\inv{\tau}-\inv{2}<\inv{q}<\inv{\tau}-\alpha$). 
Combining Lemma \ref{lem:wBesovEst} and \eqref{Linftyest1d} we obtain, 
for some $\eps_0>0$:
\begin{equation}\label{wBesovEst2d}
\begin{aligned}
& \sup_{t\in [0,T_0]}\n  s\mapsto (t-s)^{-\alpha}
\Psi(s)\n_{\gamma(0,t;L^p(\Omega;X))}\\
& \qquad\qquad \lesssim T_0^{\eps_0} \big(\n  \Psi
\n_{B_{q,\tau}^{\inv{\tau}-\inv{2}}([0,T_0];L^p(\Omega;X))} + \n \Psi
\n_{L^{\infty}(0,T_0;L^p(\Omega;X))}\big) \\
& \qquad\qquad \lesssim T_0^{\eps_0} \big(\n  \Psi
\n_{B_{q,\tau}^{\inv{\tau}-\inv{2}}([0,T_0];L^p(\Omega;X))} + 
\ov{T_0}^{1+\theta_F}\n \Phi
\n_{L^{\infty}(0,\ov{T_0};L^p(\Omega;X_{\theta_F}))}\big),
\end{aligned}
\end{equation}
with implied constant independent of $T_0$ and $\Psi$.\par

In order to estimate the Besov norm in the right-hand side, let us first fix
$s\in [0,T_0]$ and $k\in
\{1,\ldots,n\}$ such that $s+t_{k}^{(n)}\leq T_0$. We have, using \eqref{defEn},
\begin{equation}\label{BesovEst2d}
\begin{aligned}
\Psi(s+t_{k}^{(n)}) -\Psi(s)
& = \int_{0}^{\ov{s}}(\En{t_{k}^{(n)}}-I)\En{\ov{s}-\un{u}}\Phi(u)\,du\\
& \qquad  + \int_{\ov{s}}^{\ov{s}+t_{k}^{(n)}}
\En{\ov{s}+t_{k}^{(n)}-\un{u}}\Phi(u)\,du.
\end{aligned}
\end{equation}

By Lemma \ref{lem:analyticRbound} (3) and the uniform boundedness result of
Proposition 
\ref{p:Eulergbdd} with $\delta=1+\theta_F-\frac12\eps $ (note that $\d>0$ by
\eqref{eq:cond-eta-eps}) we have:
\begin{align*}\label{EIcomp}
\big\n \En{t_{k}^{(n)}}-I \big\n_{\calL(X_{1+\theta_F-\frac12\eps };X)} & \lesssim
(t_{k}^{(n)})^{1+\theta_F-\frac12\eps }.
\end{align*}\par

Moreover, by the uniform boundedness result in Corollary \ref{c:Eulergbdd} with
$\delta= -1+\frac12\eps $
we have:
\begin{align*}
\big\n \En{t_{k}^{(n)}} \big\n_{\calL(X_{\theta_F};X_{1+\theta_F-\frac12\eps })} &
\lesssim
(t_{k}^{(n)})^{-1+\frac12\eps },
\end{align*}
and with $\delta=\theta_F$:
\begin{align*}
\big\n \En{t_{k}^{(n)}} \big\n_{\calL(X_{\theta_F};X)} & \lesssim
(t_{k}^{(n)})^{\theta_F}.
\end{align*}\par

Thus for the first term in \eqref{BesovEst2d} we have:
\begin{align*}
& \Big\n\int_{0}^{\ov{s}} (\En{t_{k}^{(n)}}-I)\En{\ov{s}-\un{u}}\Phi(u)
 \,du\Big\n_{L^p(\Omega;X)}\\
& \qquad \leq \int_{0}^{\ov{s}} \big\n \En{t_{k}^{(n)}}-I
\big\n_{\calL(X_{1+\theta_F-\frac12\eps };X)} \big\n
\En{\ov{s}-\un{u}}\big\n_{\calL(X_{\theta_F};X_{1+\theta_F-\frac12\eps })}\\
& \qquad \hskip6cm \times \big\n \Phi
\big\n_{L^\infty(0,\ov{T_0};L^p(\Omega;X_{\theta_F}))} \,du\\
& \qquad \lesssim (t_{k}^{(n)})^{1+\theta_F-\frac12\eps } \int_{0}^{\ov{s}}
(\ov{s}-\un{u})^{-1+\frac12\eps } \,du
\big\n
\Phi \big\n_{L^{\infty}(0,\ov{T_0};L^p(\Omega;X_{\theta_{F}}))}\\
& \qquad \lesssim (t_{k}^{(n)})^{1+\theta_F-\frac12\eps } \big\n \Phi
\big\n_{L^{\infty}(0,\ov{T_0};L^p(\Omega;X_{\theta_{F}}))}.
\end{align*}\par
For the second term in \eqref{BesovEst2d} we have:
\begin{align*}
& \Big\n \int_{\ov{s}}^{\ov{s}+t_{k}^{(n)}} 
\En{\ov{s}+t_{k}^{(n)}-\un{u}}\Phi(u)\,du\Big\n_{L^p(\Omega;X)}\\
& \qquad \lesssim \int_{0}^{t_{k}^{(n)}} (t_{k}^{(n)}-\un{u})^{\theta_F} \,du
\big\n
\Phi \big\n_{L^{\infty}(0,\ov{T_0};L^p(\Omega;X_{\theta_F}))}\\
& \qquad \lesssim (t_{k}^{(n)})^{1+\theta_F}\n \Phi
\n_{L^{\infty}(0,\ov{T_0};L^p(\Omega;X_{\theta_F}))}.
\end{align*}\par
Combining the two estimates above we obtain:
\begin{equation}\label{BesovEst2d_2}
\begin{aligned}
 \n \Psi(s+t_{k}^{(n)}) -\Psi(s)\n_{L^p(\Omega;X)} 
\lesssim (t_{k}^{(n)})^{1+\theta_F} \n \Phi
\n_{L^{\infty}(0,\ov{T_0};L^p(\Omega;X_{\theta_F}))}.
\end{aligned}
\end{equation}

This enables us to find the right estimate for the Besov norm in
\eqref{wBesovEst2d}. 
Fix $\rho \in [0,1]$ and
$0<|h|< \rho$. Set $I= [0,T_0]$. Suppose first that $|h|\leq \frac{T}{n}$. In
that case we have, by \eqref{BesovEst2d_2}:
\begin{align*}
& \n T_h^I \Psi(s) - \Psi(s) \n_{L^p(\Omega;X)} \\
& \qquad \leq \left\{ 
\begin{array}{ll} 
0, &  \ov{s+h}=\ov{s}\textrm{ and } s+h\in [0, T_0];\\
\n \Psi(s+\tfrac{T}{n})-\Psi(s) \n_{L^p(\Omega;X)},
& \ov{s+h}\neq \ov{s} \textrm{ and } s+h\in [0, T_0];\\
\n \Psi(s) \n_{L^p(\Omega;X)}, & s+h\notin [0,T_0]
\end{array}\right.\\
& \qquad \lesssim \left\{ 
\begin{array}{ll} 
0, &  \ov{s+h}=\ov{s}\textrm{ and } s+h\in [0, T_0];\\
n^{-1-\theta_F} \n \Phi
\n_{L^{\infty}(0,\ov{T_0};L^p(\Omega;X_{\theta_F}))},
& \ov{s+h}\neq \ov{s} \textrm{ and } s+h\in [0, T_0];\\
\n \Phi
\n_{L^{\infty}(0,\ov{T_0};L^p(\Omega;X_{\theta_F}))}, & s+h\notin [0,T_0].
\end{array}\right.
\end{align*}
In the above we used $\n \Psi(s) \n_{L^p(\Omega;X)} \leq \n
\Psi\n_{L^{\infty}(0,T_0;L^p(\Omega;X))}$ and
\eqref{Linftyest1d}.\par
Define $I_h=\{ s\in [0,T_0]\,:\, \ov{s+h}\neq \ov{s}\}$ and observe that
$|I_h|\leq n|h|$. 
Moreover $|\{s\in [0,T_0]\,:\,
s+h\notin[0,T_0]\}|\leq |h|$. Thus by the definition of $q$ and by 
\eqref{BesovEst2d_2},
\begin{align*}
& \n T_h^I \Psi - \Psi \n_{L^{q}(0,T_0;L^p(\Omega;X))} \\
& \quad \lesssim \big[(n|h|)^{\inv{q}} n^{-1-\theta_F} + |h|^\frac1q\big]\n \Phi
\n_{L^{\infty}(0,\ov{T_0};L^p(\Omega;X_{\theta_F}))}\\
& \quad \lesssim |h|^{\inv{\tau}-\inv{2}+\e/2}\n \Phi
\n_{L^{\infty}(0,\ov{T_0};L^p(\Omega;X_{\theta_F}))}.
\end{align*}

Next let $|h|>\frac{T}{n}$. Then, $|h|/2< \un{|h|}\leq |h|$ and
$|h|<\ov{|h|}\leq 2|h|$.
Let us deal with the case $h>\frac{T}{n}$; the case $-h>\frac{T}{n}$ is dealt
with entirely analogously.
 It follows from the definition of $\Phi$ in \eqref{def-Psi}
that for each $s\in [0,T_0]$ we either have 
$\Psi(\ov{s+h}) = \Psi(\ov{s}+\ov{h})$ or   
$\Psi(\ov{s+h}) = \Psi(\ov{s}+\un{h})$. Hence, by \eqref{BesovEst2d_2}:
\begin{align*}
& \n T_h^I \Psi - \Psi \n_{L^{q}(0,T_0;L^p(\Omega;X))} \\
& \qquad \leq \n 1_{\{s+h\not\in [0,T_0]\}}\Psi\n_{L^{q}(0,T_0;L^p(\Omega;X))}
\\
& \qquad\qquad +
\n T_{\ov{h}}^I \Psi - \Psi \n_{L^{q}(0,T_0;L^p(\Omega;X))} + \n T_{\un{h}}^I
\Psi - \Psi
\n_{L^{q}(0,T_0;L^p(\Omega;X))}
 \\
& \qquad \lesssim \big( h^\frac1q + \un{h}^{1+\theta_F} + \ov{h}^{1+\theta_F}
\big) \n \Phi
\n_{L^{\infty}(0,\ov{T_0};L^p(\Omega;X_{\theta_F}))}\\
& \qquad \lesssim h^{\inv{\tau}-\inv{2}+\frac12\eps }\n \Phi
\n_{L^{\infty}(0,\ov{T_0};L^p(\Omega;X_{\theta_F}))},
\end{align*}
where we use that $1+\theta_F\ge \inv{\tau}-\inv{2}+\frac12\eps $ (by
\eqref{eq:cond-eta-eps}).\par
Thus we have:
\begin{align*}
\sup_{|h|\leq \rho}\n T_h \Psi - \Psi \n_{L^{q}(0,t;L^p(\Omega;X))} 
& \lesssim \rho^{\inv{\tau}-\inv{2}+\frac12\eps }\n \Phi
\n_{L^{\infty}(0,\ov{T_0};L^p(\Omega;X_{\theta_F}))}.
\end{align*}
With \eqref{Linftyest1d} it follows that 
\begin{align*}
\n \Psi \n_{B_{q,\tau}^{\inv{\tau}-\inv{2}}([0,t];L^p(\Omega;X))} & \lesssim 
\n \Psi \n_{{L^{q}(0,T_0;L^p(\Omega;X))}} +
\n \Phi \n_{L^{\infty}(0,\ov{T_0};L^p(\Omega;X_{\theta_F}))} 
\\ & \lesssim \n \Psi \n_{{L^{\infty}(0,T_0;L^p(\Omega;X))}} +
\n \Phi \n_{L^{\infty}(0,\ov{T_0};L^p(\Omega;X_{\theta_F}))}
\\ & \lesssim 
\n \Phi \n_{L^{\infty}(0,\ov{T_0};L^p(\Omega;X_{\theta_F}))},
\end{align*}
and thus, by \eqref{wBesovEst2d},
\begin{equation}\label{BesovEst1d}
\begin{aligned}
\sup_{t\in [0,T_0]}\n  s\mapsto (t-s)^{-\alpha} \Psi(s)
\n_{\gamma(0,t;L^p(\Omega;X))}
& \lesssim T_0^{\eps_0} \n \Phi \n_{L^{\infty}(0,\ov{T_0};L^p(\Omega;X_{\theta_F}))}.
\end{aligned}
\end{equation}
This completes the proof of \eqref{EnConvEst}.

\addtocontents{toc}{\SkipTocEntry}\subsection*{Part 2e}
For the sixth term we again use Besov embeddings. First of all observe that by
\eqref{analyticDiff}, the linear growth condition on $F$ in \MF{} and Corollary
\ref{cor:Lpest}, for all $s\in [0,T_0]$ we have:
\begin{align*}
\Big\n \int_{s}^{\ov{s}} 
S(\tfrac{T}{n})F(u,U^{(n)}(u))\,du \Big\n_{L^{p}(\Omega;X)} 
& \lesssim (\tfrac{T}{n})^{\theta_F}  \int_{s}^{\ov{s}} 
\n F(u,U^{(n)}(u))\n_{L^{p}(\Omega;X_{\theta_F})} \,du 
\\ & \lesssim 
(\tfrac{T}{n})^{\theta_F} \int_{s}^{\ov{s}}  \n U^{(n)}(u)\n_{L^{p}(\Omega;X)}
\,du 
\\ & \lesssim (\ov{s}-s) 
n^{-\theta_F}(1+\n x_0\n_{L^p(\Omega;X)})
\\ & \le n^{-1-\theta_F}(1+\n x_0\n_{L^p(\Omega;X)})
\end{align*}
and therefore
\begin{equation}\label{Linfty1e}
\Big\n s\mapsto \int_{s}^{\ov{s}} 
S(\tfrac{T}{n})F(u,U^{(n)}(u))\,du \Big\n_{L^{\infty}(0,T_0;L^{p}(\Omega;X))} 
\lesssim
n^{-1-\theta_F}(1+\n x_0\n_{L^p(\Omega;X)}).
\end{equation}
Similarly, for $h\in (0,\frac{T}{n}]$:
\begin{equation}\label{Linftyh1e}
\begin{aligned}
& \Big\n s\mapsto \int_{s}^{s+h} S(\tfrac{T}{n})F(u,U^{(n)}(u))\,du
\Big\n_{L^{\infty}(0,T_0;L^{p}(\Omega;X))} 
\\ & \qquad \lesssim hn^{-\theta_F}(1+\n x_0\n_{L^p(\Omega;X)}) \\
& \qquad \leq
h^{\inv{\tau}-\inv{2}+\frac12\eps }n^{-\frac{3}{2}+\inv{\tau}-\theta_F+\frac12\eps }(1+\n
x_0\n_{L^p(\Omega;X)}).
\end{aligned}
\end{equation}
Define $\Psi:[0,T_0]\rightarrow L^p(\Omega;X)$ by 
$$\Psi(s):=  \int_{s}^{\ov{s}} S(\tfrac{T}{n})F(u,U^{(n)}(u))\,du.$$ Fix
$\rho
\in [0,1]$ and let $0\le h<\rho$ (the case that $-\rho < h \le 0$ is entirely
analogous). Suppose 
first that $h\leq \frac{T}{n}$.
Then, for $s\in I:= [0,T_0]$:
\begin{align*}
& \n T_h^I \Psi(s) - \Psi(s) \n_{L^p(\Omega;X)} \\
& \qquad \lesssim \left\{ 
\begin{aligned} 
& \Big\n s\mapsto \int_{s}^{s+h} S(\tfrac{T}{n})F(u,U^{(n)}(u))\,du
\Big\n_{L^{p}(\Omega;X)}, 
&&  \ov{s+h}=\ov{s}, \ s+h\in [0,T_0];\\
& 2\n \Psi \n_{L^{\infty}(0,T_0;L^p(\Omega;X))},
&& \hbox{otherwise}. 
\end{aligned}
\right.
\end{align*}
Recall that 
$|\{s\in [0,T_0]: \ov{s}\neq\ov{s+h}\}|\leq nh$
and 
$|\{s\in [0,T_0]: {s+h}\not\in [0,T_0]\}|\leq h$. 
Let $q=(\inv{\tau}-\inv{2}+\frac12\eps )^{-1}$. By \eqref{Linfty1e}
and \eqref{Linftyh1e} we have:
\begin{align*}
&\n T_h^I \Psi - \Psi \n_{L^q(0,T_0;L^p(\Omega;X))} \\
& \qquad \lesssim
(h^{\inv{\tau}-\inv{2}+\frac12\eps }n^{-\frac{3}{2}+\inv{\tau}-\theta_F+\frac12\eps }
+((n+1)h)^{\inv{q}}n^{-1-\theta_F}
)(1+\n x_0\n_{L^{p}(\Omega;X)}) \\
& \qquad \lesssim h^{\inv{\tau}-\inv{2}+\frac12\eps }n^{-1-\theta_F+\inv{q}} (1+\n
x_0\n_{L^{p}(\Omega;X)}).
\end{align*}
On the other hand, if $h>\frac{T}{n}$, then by \eqref{Linfty1e}:
\begin{align*}
\n T_h^I \Psi - \Psi \n_{L^q(0,T_0;L^p(\Omega;X))} & \lesssim 2\n \Psi
\n_{L^q(0,T_0;L^p(\Omega;X))} \\
& \lesssim n^{-1-\theta_F}(1+\n x_0\n_{L^p(\Omega;X)})
\\
& \lesssim h^{\inv{\tau}-\inv{2}+\frac12\eps } n^{-1-\theta_F+\inv{q}} (1+\n
x_0\n_{L^p(\Omega;X)}).
\end{align*}
As in Part 2c, using that by \eqref{eq:cond-eta-eps} we have
$\eta + \eps< \frac32-\frac1\tau+\theta_F < 1 + \theta_F - \frac1q + \eps$, this
implies 
\begin{align*}
\n \Psi \n_{B^{\inv{\tau}-\inv{2}}_{q,\tau}([0,T_0];L^p(\Omega;X))} & \lesssim
n^{-\eta}(1+\n x_0\n_{L^{p}(\Omega;X)}),
\end{align*}
with implied constants independent of $n$, $x_0$ and $T_0$. By Lemma
\ref{lem:wBesovEst} it now follows that
\begin{align*}
& \sup_{t\in [0,T_0]} \Big\n s\mapsto (t-s)^{-\alpha}\int_{s}^{\ov{s}}
S(\tfrac{T}{n})F(u,U^{(n)}(u))\,du
\Big\n_{\gamma(0,t;L^p(\Omega;X))} \\
&\qquad \qquad  \lesssim n^{-\eta}(1+\n x_0\n_{L^{p}(\Omega;X)}),
\end{align*}
with implied constants independent of $n$, $x_0$ and $T_0$. Combining this with
\eqref{Linfty1e} we obtain:
\begin{equation}\label{Eu5}
\Big\n s\mapsto \int_{s}^{\ov{s}} S(\tfrac{T}{n})F(u,U^{(n)}(u))\,du 
\Big\n_{\Winfp{\alpha}([0,T_0]\times\Omega;X)} 
\lesssim
n^{-\eta}(1+\n x_0\n_{L^{p}(\Omega;X)}),
\end{equation}
with implied constants independent of $n$, $x_0$ and $T_0$.

\addtocontents{toc}{\SkipTocEntry}\subsection*{Part 2f.}
By Theorem \ref{t:stochint} we have, for any $s\in [0,T_0]$:
\begin{align*}
& \Big\n 
\int_{0}^{\ov{s}}[S(\ov{s}-\un{u})-\En{\ov{s}-\un{u}}]G(u,U^{(n)}(u))\,dW_H(u)
\Big\n_{L^p(\Omega;X)}\\
& \qquad \qquad \lesssim \big\n u\mapsto
[S(\ov{s}-\un{u})-\En{\ov{s}-\un{u}}]G(u,U^{(n)}(u))\big\n_{L^p(\Omega;\gamma(0,
\ov{s};H,X))}.
\end{align*}
By the second part of Proposition \ref{p:Eulergbdd} with $\delta=\theta_G$,
$\epsilon= \frac13\eps$, 
$\beta=\inv{2}-\frac23\eps$, and Theorem \ref{t:KW} and \eqref{eq:cond-eta-eps} we have: 
\begin{equation*}
\begin{aligned}
& \big\n u\mapsto
[S(\ov{s}-\un{u})-\En{\ov{s}-\un{u}}]G(u,U^{(n)}(u))\big\n_{L^p(\Omega;\gamma(0,
\ov{s};H,X))} \\
& \qquad \lesssim n^{-\inv{2}-\theta_G+\eps} \big\n u\mapsto
(\ov{s}-\un{u})^{-\inv{2}+\frac23\eps}
G(u,U^{(n)}(u))\big\n_{L^p(\Omega;\gamma(0,\ov{s};H,X_{\theta_G}))}\\
& \qquad \lesssim n^{-\eta} \big\n
U^{(n)}\big\n_{\Winfp{\frac12-\frac23\eps}([0,\ov{T_0}]\times\Omega;X)}\\
& \qquad \lesssim n^{-\eta} \big\n
U^{(n)}\big\n_{\Winfp{\alpha}([0,\ov{T_0}]\times\Omega;X)}\\
& \qquad \lesssim n^{-\eta} (1+\n x_0 \n_{L^p(\Omega;X)}),
\end{aligned}
\end{equation*}
where we also used \MG{} in the sense of \eqref{GLipschitzV1}, the fact that
$\alpha>\inv{2}-\frac23\eps$ (whence $\Winfp{\alpha}([0,\ov{T_0}]\times\Omega;X)
\hookrightarrow
\Winfp{\inv{2}-\frac23\eps }([0,\ov{T_0}]\times\Omega;X)$), and we used Corollary
\ref{cor:Lpest}. Note that the implied
constants are independent of $n$, $T_0$ and
$x_0$. As $s\in [0,T_0]$ was arbitrary, it follows that
\begin{equation}\label{eu1fLinfty}
\begin{aligned}
& \Big\n  s\mapsto
\int_{0}^{\ov{s}}[S(\ov{s}-\un{u})-\En{\ov{s}-\un{u}}]G(u,U^{(n)}(u))\,dW_H(u)
\Big\n_{L^{\infty}(0,T_0;L^p(\Omega;X))}\\
& \qquad \lesssim n^{-\eta} (1+\n x_0 \n_{L^p(\Omega;X)}),
\end{aligned}
\end{equation}\par
Next we estimate the part concerning the weighted $\gamma$-radonifying norm. We
begin by recalling that, since $X$ is a
\textsc{umd} Banach space, $\gamma(0,t;H,X)$ is a \textsc{umd} Banach space for
any
$t>0$ (by noting that this space embeds into $L^2(\tilde\O;X)$ isometrically
whenever $(\tilde \O,\tilde \P)$
is a probability space supporting a Gaussian sequence; see, e.g.,
\cite{Nee-survey}).
Thus, by Theorem \ref{t:stochint} (applied with state space $\gamma(0,\ov{t};X)$)
and isomorphism
\eqref{ggiso}, for all $t\in [0,T_0]$ we obtain
\begin{align*}
& \Big\n  s\mapsto (t-s)^{-\alpha}
\int_{0}^{\ov{s}}[S(\ov{s}-\un{u})-\En{\ov{s}-\un{u}}]G(u,U^{(n)}(u))\,dW_H(u)
\Big\n_{L^p(\Omega;\gamma(0,t;X))}\\
& \quad = \Big\n  \int_{0}^{\ov{t}} \big[s\mapsto 1_{\{0\leq u\leq
\ov{s}\}}(t-s)^{-\alpha}  \\
& \quad \qquad \qquad \times
[S(\ov{s}-\un{u})-\En{\ov{s}-\un{u}}]G(u,U^{(n)}(u))\big] \,dW_H(u)
\Big\n_{L^p(\Omega;\gamma(0,t;X))}\\
& \quad \eqsim \big\n  u\mapsto \big( s\mapsto 1_{\{0\leq u\leq
\ov{s}\}}(t-s)^{-\alpha}\\
& \quad \qquad \qquad \times
[S(\ov{s}-\un{u})-\En{\ov{s}-\un{u}}]G(u,U^{(n)}(u))\big)\big\n_{
L^p(\Omega;\gamma(0,\ov{t};H,\gamma(0,{t};X)))}\\
& \quad \eqsim \big\n  (s,u)\mapsto 1_{\{0\leq u\leq \ov{s}\}} (t-s)^{-\alpha}\\
& \quad \qquad \qquad \times 
[S(\ov{s}-\un{u})-\En{\ov{s}-\un{u}}]G(u,U^{(n)}(u))\big\n_{L^p(\Omega;\gamma([0
,t]\times [0,\ov{t}];H,X))}.
\end{align*}
By the second part of Proposition \ref{p:Eulergbdd} with $\delta=\theta_G$,
$\epsilon=\frac12\eps$, $\beta=\inv{2}-\frac12\eps$, Theorem
\ref{t:KW}, isomorphism \eqref{ggiso}, once again Theorem \ref{t:KW} combined with Theorem
\ref{t.KaiWei}, 
Lemma \ref{lem:L2unif}, Corollary
\ref{cor:Lpest} and \eqref{eq:cond-eta-eps} we have: 
\begin{align*}
&\big\n  (s,u)\mapsto 1_{\{0\leq u\leq \ov{s}\}}(t-s)^{-\alpha} \\
&\quad \qquad \qquad \times
[S(\ov{s}-\un{u})-\En{\ov{s}-\un{u}}]G(u,U^{(n)}(u))\big\n_{L^p(\Omega;\gamma([0
,t]\times
[0,\ov{t}];H,X))}\\
&\quad \lesssim n^{-\inv{2}-\theta_G+\eps}\big\n  (s,u)\mapsto 1_{\{0\leq u\leq
\ov{s}\}} (t-s)^{-\alpha} \\
&\quad \qquad \qquad \times
(\ov{s}-\un{u})^{-\inv{2}+\frac12\eps}G(u,U^{(n)}(u)))\big\n_{L^p(\Omega;\gamma(
[0,t]\times
[0,\ov{t}];H,X_{\theta_G}))}\\
&\quad \lesssim n^{-\eta}\big\n  u\mapsto \big( s\mapsto 1_{\{0\leq u\leq
\ov{s}\}}(t-s)^{-\alpha} \\
&\quad \qquad \qquad \times (\ov{s}-\un{u})^{-\inv{2}+\frac12\eps}
G(u,U^{(n)}(u))\big)\big\n_{L^p(\Omega;\gamma(0,\ov{t};H,\gamma(0,t;X_{\theta_G}
)))}\\
&\quad \lesssim
n^{-\eta}\sup_{u\in[0,\ov{t}]}\left\{(t+\tfrac{T}{n}-u)^{\alpha}\n s\mapsto
(t-s)^{-\alpha}(\ov{s}-\un{u})^{-\inv{2}+\frac12\eps} \n_{L^2(\un{u},t)}\right\}
\\
& \quad \qquad \qquad \times \big\n u\mapsto (t+\tfrac{T}{n}-u)^{-\alpha}
G(u,U^{(n)}(u))\big\n_{L^p(\Omega;\gamma(0,\ov{t};H,X_{\theta_G}))}\\
&\quad \lesssim n^{-\eta}(1+ \n U^{(n)}
\n_{\Winfp{\alpha}([0,\ov{t}]\times\Omega;X)}) \\
& \quad \lesssim n^{-\eta}(1 + \n x_0\n_{L^p(\Omega;X)}).
\end{align*}
The implied constants above are independent of $x_0$, $n$, $t$ and $T_0$.\par
Combining the above with \eqref{eu1fLinfty} above one obtains:
\begin{equation}
\begin{aligned}\label{Eu6}
&\Big\n  s\mapsto
\int_{0}^{\ov{s}}[S(\ov{s}-\un{u})-\En{\ov{s}-\un{u}}]G(u,U^{(n)}(u))\,dW_H(u)
\Big\n_{\Winfp{\alpha}([0,T_0]\times \Omega;X)}\\
& \qquad \qquad \lesssim n^{-\eta}(1 + \n x_0\n_{L^p(\Omega;X)}),
\end{aligned}
\end{equation}
with implied constant independent of $n$, $T_0$ and $x_0$.
\addtocontents{toc}{\SkipTocEntry}\subsection*{Part 2g.}
The estimate for the eighth and ninth term in \eqref{eulersplit} is similar to
Part 2f, except that one needs to keep
track of dependence on $T_0$.\par

We shall prove that for any $\Phi\in L^p(\Omega;\gamma(0,T;H,X_{\theta_G}))$
we have:
\begin{equation}\label{EnStConvEst}
\begin{aligned}
& \Big\n  \int_{0}^{\ov{s}}\En{\ov{s}-\un{u}}\Phi(u)\,dW_H(u)
\Big\n_{\Winfp{\alpha}([0,\ov{T_0}]\times
\Omega;X_{\theta_{G}})}\\
& \qquad \lesssim \ov{T_0}^{\inv{2}-\theta_G-\eps}\sup_{0\leq t\leq
\ov{T_0}}\big\n s\mapsto (t-s)^{-\alpha}  \Phi(s)
\big\n_{L^p(\Omega;\gamma(0,t;H,X_{\theta_{G}}))},
\end{aligned}
\end{equation}
with implied constant independent of $n$ and $T_0$, provided the right-hand side above is
finite.\par

The estimate for the eighth term in \eqref{eulersplit} follows immediately
from \eqref{EnStConvEst} and \eqref{GLipschitzV1} (i.e., \MG{}): 
\begin{equation}\label{Eu7a}
\begin{aligned}
&\Big\n  s\mapsto
\int_{0}^{\ov{s}}\En{\ov{s}-\un{u}}[G(u,U^{(n)}(u))-G(u,V^{(n)}(u))]\,dW_H(u)
\Big\n_{\Winfp{\alpha}([0,\ov{T_0}]\times \Omega;X)}\\
& \qquad \lesssim \ov{T_0}^{\inv{2}-\theta_G-\eps}\n
U^{(n)}-V^{(n)}\n_{\Winfp{\alpha}(
[0,\ov{T_0}]\times \Omega;X)}.
\end{aligned}
\end{equation}\par

The estimate for the ninth term in \eqref{eulersplit} follows immediately 
from \eqref{EnStConvEst}
in combination with \eqref{eq:cond-eta-eps} and Lemma \ref{l:GgammaEst} 
(i.e., \MGII{}), with $B_j=V^{(n)}_j$ noting that $V^{(n)}(u)=
V^{(n)}(\un{u})=V^{(n)}_{\un{u}n/T}$:
\begin{equation}\label{Eu7b}
\begin{aligned}
&\Big\n  s\mapsto
\int_{0}^{\ov{s}}\En{\ov{s}-\un{u}}[G(u,V^{(n)}(u))-G(\un{u},V^{(n)}(u))]\,
dW_H(u)
\Big\n_{\Winfp{\alpha}([0,\ov{T_0}]\times \Omega;X)}\\
& \ \lesssim \ov{T_0}^{\inv{2}-\theta_G-\eps}n^{-\eta}\big( 1 +\n
V^{(n)}\n_{\Winfp{\alpha}([0,\ov{T_0}]\times
\Omega;X)}\big)\\
& \ \lesssim \ov{T_0}^{\inv{2}-\theta_G-\eps}\n U^{(n)} -
V^{(n)}\n_{\Winfp{\alpha}([0,\ov{T_0}]\times \Omega;X)} +
n^{-\eta} \big(1+\n U^{(n)}\n_{\Winfp{\alpha}([0,\ov{T_0}]\times
\Omega;X)}\big).
\end{aligned}
\end{equation}\par

It remains to prove \eqref{EnStConvEst}. By Theorem \ref{t:stochint} we have,
for $s\in [0,T_0]$:
\begin{align*}
\Big\n  \int_{0}^{\ov{s}}\En{\ov{s}-\un{u}}\Phi(u)\,dW_H(u)
\Big\n_{L^p(\Omega;X)}
& \lesssim \big\n  u\mapsto
\En{\ov{s}-\un{u}}\Phi(u)\big\n_{L^p(\Omega;\gamma(0,\ov{s};X))}.
\end{align*}\par

By the second part of Corollary \ref{c:Eulergbdd} with $\delta=\theta_G$,
$\epsilon=\frac13\eps$, $\beta=\inv{2}-\frac23\eps$, and Theorem
\ref{t:KW} we have:
\begin{equation*}
\begin{aligned}
& \big\n u\mapsto
\En{\ov{s}-\un{u}}\Phi(u)\big\n_{L^p(\Omega;\gamma(0,\ov{s};X))} \\
& \qquad \qquad  \lesssim \ov{T_0}^{\inv{2}+\theta_G-\eps} \big\n  u\mapsto
(\ov{s}-\un{u})^{-\alpha}\Phi(u)\big\n_{L^p(\Omega;\gamma(0,\ov{s};X_{\theta_G}
))},
\end{aligned}
\end{equation*}
where we used that
$\alpha>\inv{2}-\frac23\eps$. Note that the implied constants are independent of
$n$ and $T_0$. As $s\in [0,T_0]$
was arbitrary, it follows that:
\begin{equation}\label{eu1gLinfty}
\begin{aligned}
& \Big\n  s\mapsto \int_{0}^{\ov{s}}\En{\ov{s}-\un{u}}\Phi(u) \,dW_H(u)
\Big\n_{L^{\infty}(0,T_0;L^p(\Omega;X))}\\
& \quad \lesssim \ov{T_0}^{\inv{2}+\theta_G-\eps} \sup_{0\leq t\leq \ov{T_0}}
\big\n s\mapsto (t-s)^{-\alpha} \Phi(s)
\big\n_{L^p(\Omega;\gamma(0,\ov{T_0};H,X_{\theta_{G}}))}.
\end{aligned}
\end{equation}\par

As for the part concerning the weighted $\gamma$-radonifying norm, as before we
have, for $t\in [0,T_0]$:
\begin{align*}
& \Big\n  s\mapsto (t-s)^{-\alpha} \int_{0}^{\ov{s}}\En{\ov{s}-\un{u}}
\Phi(u)\,dW_H(u)
\Big\n_{L^p(\Omega;\gamma(0,t;X))}\\
& \quad \lesssim \big\n (s,u) \mapsto 1_{\{0\leq u\leq
\ov{s}\}}(t-s)^{-\alpha}\En{\ov{s}-\un{u}}\Phi(u)
\big\n_{L^p(\Omega;\gamma([0,t]\times
[0,\ov{t}];H,X))}.
\end{align*}\par

By the second part of Corollary \ref{c:Eulergbdd} with $\delta=\theta_G$,
$\epsilon= \frac12 \eps$, 
$\beta=\inv{2}-\frac12\eps$, Theorem
\ref{t:KW}, isomorphism \eqref{ggiso}, once again Theorem \ref{t:KW} combined
with Theorem \ref{t.KaiWei} and Lemma \ref{lem:L2unif} we have:
\begin{align*}
&\big\n  (s,u) \mapsto 1_{\{0\leq u\leq \ov{s}\}}(t-s)^{-\alpha}\En{\ov{s}-\un{u}}
\Phi(u) \big\n_{L^p(\Omega;\gamma([0,t]\times [0,\ov{t}];H,X))}\\
&\quad  \lesssim \ov{T_0}^{\inv{2}+\theta_G-\eps}\big\n  (s,u) \mapsto
1_{\{0\leq u\leq
\ov{s}\}}\\
& \qquad\qquad \qquad \qquad \times
(t-s)^{-\alpha}(\ov{s}-\un{u})^{-\inv{2}+\frac12\eps}
\Phi(u)\big)\big\n_{L^p(\Omega;\gamma([0,t]\times
[0,\ov{t}];H,X_{\theta_G}))}\\
&\quad  \lesssim \ov{T_0}^{\inv{2}+\theta_G-\eps} \big\n
u\mapsto (t+\tfrac{T}{n}-u)^{-\alpha}\Phi(u)
\big\n_{L^p(\Omega;\g(0,\ov{t};X))}\\
&\quad  \lesssim \ov{T_0}^{\inv{2}+\theta_G-\eps} \big\n
u\mapsto (t-u)^{-\alpha}\Phi(u) \big\n_{L^p(\Omega;\g(0,\ov{t};X))}.
\end{align*}
Taking the supremum over $t\in [0,T_0]$ and combining the above with \eqref{eu1gLinfty} one
obtains
\eqref{EnStConvEst}.
\addtocontents{toc}{\SkipTocEntry}\subsection*{Part 2h.}
As for the final term in \eqref{eulersplit}, first observe that because 
$\theta_G\leq 0$ we have, by
\eqref{analyticDiff}:
\begin{equation}\label{eq:2h-begin}\begin{aligned}
& \Big\n  s\mapsto \int_{s}^{\ov{s}} S(\tfrac{T}{n})G(u,U^{(n)}(u))\,dW_H(u)
\Big\n_{\Winfp{\alpha}([0,T_0]\times
\Omega;X)}
\\
&\qquad \lesssim n^{-\theta_G} \Big\n s\mapsto
\int_{s}^{\ov{s}}G(u,U^{(n)}(u))\,dW_H(u)
\Big\n_{\Winfp{\alpha}([0,T_0]\times \Omega;X_{\theta_G})},
\end{aligned}
\end{equation}
with implied constants independent of $n$ and $x_0$.\par

By \eqref{stochIntCont} (take $\tilde{\alpha}=\inv{2}-\eps/2$ and $\tilde{\eps}=\eps/2$), 
\eqref{GLipschitzV1}, and Corollary
\ref{cor:Lpest} we have, for $s\in [0,T_0]$:
\begin{equation}\label{eq:2h}
\begin{aligned}
&\Big\n \int_{s}^{\ov{s}}G(u,U^{(n)}(u))\,dW_H(u)
\Big\n_{L^{p}(\Omega;X_{\theta_G})}\\
&\qquad \leq (\ov{s}-s)^{-\inv{2}+\eps}  \Big\n s\mapsto
\int_{0}^{s}G(u,U^{(n)}(u))\,dW_H(u)
\Big\n_{C^{\inv{2}-\eps/2}(0,T_0;L^p(\Omega;X_{\theta_G}))}\\
&\qquad \lesssim n^{-\inv{2}+\eps}\sup_{0\le t\le T_0}\n u\mapsto
(t-u)^{-\frac12+\frac12\e} 
G(u,U^{(n)}(u))\n_{L^p(\O;\g(0,t;H,X_{\theta_G}))}\\
&\qquad \lesssim n^{-\inv{2}+\eps}\big(1+\n
U^{(n)}\n_{\Vpinf{\inv{2}-\frac12\eps}([0,T_0]\times \Omega;X)}\big)\\
& \qquad \lesssim n^{-\inv{2}+\eps}\big(1+\n x_0 \n_{L^p(\Omega;X)}\big),
\end{aligned}\end{equation}
with implied constants independent of $n$, $T_0$, and $x_0$. We have shown that
\begin{equation}\label{eq:2h-1}\Big\n s\mapsto
\int_{s}^{\ov{s}}G(u,U^{(n)}(u))\,dW_H(u)
\Big\n_{L^\infty(0,T_0;L^p(\Omega;X_{\theta_G}))} \lesssim
n^{-\inv{2}+\eps}\big(1+\n x_0 \n_{L^p(\Omega;X)}\big).
\end{equation}\par

Next fix $t\in [0,T_0]$. By Lemma \ref{lem:h1} 
(with $R=(0,1)$ and $S = (0,t)$ with the Lebesgue measure, 
$f(r,u)(s) = (t-s)^{-\alpha}(t-u)^{\alpha}1_{\{s \leq u\leq \ov{s} \}}$, 
$\Phi_2\equiv I$  and $\Phi_1(u)=(t-u)^{-\a}G(u,U^{(n)}(u))$) 
we obtain 
\begin{align*}
& \Big\n s\mapsto (t-s)^{-\alpha}\int_{s}^{\ov{s}} G(u,U^{(n)}(u))\,dW_H(u)
\Big\n_{L^p(\Omega;\gamma(0,t;X_{\theta_G}))}\\
& \qquad \lesssim \sup_{u\in [0,t]}(t-u)^{\alpha}\n s\mapsto
(t-s)^{-\alpha}1_{\{ \un{u}\leq s \leq u\}}
\n_{L^2(0,t)}\\
& \qquad \hskip3cm \times \n u\mapsto
(t-u)^{-\alpha}G(u,U^{(n)}(u))\n_{L^p(\Omega;\gamma(0,t;H,X_{\theta_G}))}\\
& \qquad \lesssim n^{-\inv{2}} \n u\mapsto
(t-u)^{-\alpha}G(u,U^{(n)}(u))\n_{L^p(\Omega;\gamma(0,t;H,X_{\theta_G}))},
\end{align*}
with implied constants independent of $x_0$, $n$ and $T_0$.\par
From here we proceed as in \eqref{eq:2h} and take the supremum over $t\in
[0,T_0]$ 
to arrive at the estimate
\begin{equation}\label{eq:2h-2}
\begin{aligned}
&  \sup_{t\in [0,T_0]}
 \Big\n s\mapsto (t-s)^{-\alpha}\int_{s}^{\ov{s}} G(u,U^{(n)}(u))\,dW_H(u)
\Big\n_{L^p(\Omega;\gamma(0,t;X_{\theta_G}))}
\\ & \qquad \qquad \lesssim n^{-\inv{2}} \big(1+\n x_0\n_{L^p(\O;X)}\big). 
\end{aligned}\end{equation}\par

Combining \eqref{eq:2h-1} and \eqref{eq:2h-2} with \eqref{eq:2h-begin} and recalling 
\eqref{eq:cond-eta-eps}
we obtain:
\begin{equation}\begin{aligned}\label{Eu8}
& \Big\n  s\mapsto \int_{s}^{\ov{s}} S(\tfrac{T}{n})G(u,U^{(n)}(u))\,dW_H(u)
\Big\n_{\Winfp{\alpha}([0,T_0]\times
\Omega;X)}
\\
&\qquad \lesssim n^{-\inv{2}-\theta_G+\inv{p}+\eps} \big(1+\n x_0
\n_{L^p(\Omega;X)}\big) 
\\ & \qquad \leq n^{-\eta}\big(1+\n x_0
\n_{L^p(\Omega;X)}\big).
\end{aligned}\end{equation}
\addtocontents{toc}{\SkipTocEntry}\subsection*{Part 3.}
By combining equations \eqref{eulersplit}, \eqref{Eu1}, \eqref{Eu2},
\eqref{Eu3}, \eqref{Eu4a}, \eqref{Eu4b}, \eqref{Eu5}, \eqref{Eu6}, \eqref{Eu7a},
\eqref{Eu7b} and
\eqref{Eu8}, we obtain that there exist constants $C>0$ and $\epsilon>0$,
independent of $n$,
$x_0$ and $y_0$, such that for all $n\geq N$ and $T_0\in (0,T]$:
\begin{equation}\label{EulerComb}
\begin{aligned}
& \n U^{(n)}-V^{(n)}\n_{\Winfp{\alpha}([0,\ov{T_0}]\times\Omega;X)}  \leq C \n
x_0-y_0\n_{L^p(\Omega;X)} \\
& \quad + C n^{-\eta}(1+\n x_0\n_{L^p(\Omega;X_{\eta})}) + C
\ov{T_0}^{\epsilon}\n
U^{(n)}-V^{(n)} \n_{\Winfp{\alpha}([0,\ov{T_0}]\times \Omega;X)}.
\end{aligned}
\end{equation}\par

Define $c_0=\inv{2}(2C)^{-\inv{\epsilon}}$
and let $N_0\in \N$ be such that $N_0>\max\{N,T/c_0\}$, this implies that for $n\geq N_0$ we
have $c_0\leq \ov{c_0}\leq
2c_0$, and thus $\ov{c_0}^{\epsilon}\leq (2c_0)^{\epsilon} = (2C)^{-1}$. For
$n\geq N_0$ we obtain by taking $T_0=\ov{c_0}$ in \eqref{EulerComb}:
\begin{equation*}
\begin{aligned}
\n U^{(n)}-V^{(n)}\n_{\Winfp{\alpha}([0,\ov{c_0}]\times\Omega;X)}
&\leq 2C\big(\n x_0-y_0\n_{L^p(\Omega;X)} + n^{-\eta}(1+\n
x_0\n_{L^p(\Omega;X_{\eta})})\big),
\end{aligned}
\end{equation*}
and thus there exists a constant $\tilde{C}$ such that for all $n\geq N$ we
have:
\begin{equation*}
\begin{aligned}
\n U^{(n)}-V^{(n)}\n_{\Winfp{\alpha}([0,c_0]\times\Omega;X)} \leq \tilde{C}\big(\n
x_0-y_0\n_{L^p(\Omega;X)} + n^{-\eta}(1+\n
x_0\n_{L^p(\Omega;X_{\eta})})\big),
\end{aligned}
\end{equation*}
which is precisely estimate \eqref{Euler-Split}.
\end{proof} 
\section{Proof of Theorems \ref{t:euler_concrete_intro} 
and \ref{t:split_intro}}\label{sec:pathwise}
We shall present the proof of Theorem \ref{t:euler_concrete_intro};
the proof of Theorem \ref{t:split_intro} and of the analogues of 
Theorem \ref{t:split_intro}
for classical splitting scheme and the splitting scheme with discretized noise of 
Example \ref{ex:discr-splitting} is entirely analogous. 

Set $u:=(U(t_j^{(n)}))_{j=0}^{n}$, where $U$ is the solution to \eqref{SDE} 
with initial datum $x_0\in L^p(\Omega,\F_{0},X_\eta)$ for some $p>2$ and $\eta\ge 0$ 
such that $\inv{p}<\eta<\zeta_{\max}$, with $\zeta_{\max}$ as defined in \eqref{eq:zeta-max}.\par
In order to prove Theorem
\ref{t:euler_concrete_intro}, we shall in fact consider $(V^{(n)}_j)_{j=0}^{n}$ being defined by
the abstract scheme of Section \ref{sec:absEuler}, where $\Enj{}$ is defined in terms of a family of
measures $(\mu_n)_{n\geq \N}$ satisfying \MII{}, \MI{}, and \MIII{}. We shall also assume that
$V_0=y_0\in L^p(\Omega,\calF_0;X)$. This more general case does not require extra arguments and
Theorem \ref{t:euler_concrete_intro} follows immediately by Example \ref{ex:Euler} and by taking
$x_0=y_0$.\par
The proofs of Theorems \ref{t:euler_concrete_intro} and \ref{t:split_intro}
are based on the following version of Kolmogorov's continuity criterion 
(see, e.g., \cite[Theorem I.2.1]{RevYor}):
\begin{proposition}[Kolmogorov's continuity criterion] Let $Y$ be a Banach space.
For all $\alpha>0$, $q\in (\inv{\alpha},\infty)$ and $0\le\gamma<\alpha-\inv{q}$ 
there exists a constant $K$ such that for all $T>0$, $k\in \N,$ and 
$u,v\in c_{\alpha}^{(2^k)}([0,T];L^q(\Omega;Y))$ we have:
\begin{align*}
\big(\E  \n u - v \n_{c_{\gamma}^{(2^k)}([0,T];Y)}^q\big)^{\inv{q}} 
& \leq K \n u-v \n_{c_{\alpha}^{(2^k)}([0,T];L^q(\Omega;y))}. 
\end{align*}
\end{proposition}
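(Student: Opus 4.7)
The plan is to reduce the statement to a dyadic chaining estimate in the style of Kolmogorov--Chentsov, adapted to the finite grid $\{t_j^{(2^k)}\}_{j=0}^{2^k}$. Set $w := u-v$ and write $K_\alpha := \sup_{0\le i<j\le 2^k} \|w_j-w_i\|_{L^q(\Omega;Y)}/|t_j^{(2^k)}-t_i^{(2^k)}|^\alpha$ for the discrete $\alpha$-H\"older seminorm, and $[w]_{c_\gamma}:=\sup_{i<j}\|w_j-w_i\|_Y/|t_j^{(2^k)}-t_i^{(2^k)}|^\gamma$ for the pathwise $\gamma$-seminorm. Since
\[
\sup_{0\le j\le 2^k}\|w_j\|_Y \le \|w_0\|_Y + T^\gamma [w]_{c_\gamma},
\]
the whole problem reduces to estimating $\|[w]_{c_\gamma}\|_{L^q(\Omega)}$ by a constant times $K_\alpha$, with $\|w_0\|_{L^q(\Omega;Y)}$ trivially controlled by $\|w\|_{c_\alpha^{(2^k)}([0,T];L^q(\Omega;Y))}$.

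For $m=0,1,\dots,k$ introduce the dyadic subgrid of size $2^m$, namely the points $s_{m,j}^{(k)}:=j\cdot 2^{k-m}$ for $j=0,\dots,2^m$, which have spacing $h_m:=T/2^m$ inside $[0,T]$. Set
\[
\Delta_m := \max_{0\le j<2^m}\|w_{s_{m,j+1}^{(k)}}-w_{s_{m,j}^{(k)}}\|_Y.
\]
A bare union bound gives $\E\,\Delta_m^q \le 2^m K_\alpha^q h_m^{q\alpha} = K_\alpha^q T^{q\alpha}2^{m(1-q\alpha)}$, hence $\|\Delta_m\|_{L^q(\Omega)}\le K_\alpha T^\alpha 2^{m(1/q-\alpha)}$. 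The first step is thus routine.

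The main work is a chaining lemma: for every pair $0\le i<j\le 2^k$ with $t_j^{(2^k)}-t_i^{(2^k)}\in(h_{m+1},h_m]$, I claim
\[
\|w_j-w_i\|_Y \le 2\sum_{\ell=m}^k \Delta_\ell.
\]
This is proved by approximating $i$ from above and $j$ from below by the nearest dyadic points of successively finer levels $\ell=m,m+1,\dots,k$, and telescoping; at each refinement one picks up at most two jumps of type $\Delta_\ell$. Dividing by $(t_j-t_i)^\gamma \ge (h_{m+1})^\gamma = T^\gamma 2^{-(m+1)\gamma}$ and taking suprema over $i,j$,
\[
[w]_{c_\gamma}\le 2^{1+\gamma}T^{-\gamma}\sup_{0\le m\le k}2^{m\gamma}\!\!\sum_{\ell=m}^k\Delta_\ell \le 2^{1+\gamma}T^{-\gamma}\sum_{\ell=0}^k 2^{\ell\gamma}\Delta_\ell.
\]
The triangle inequality in $L^q(\Omega)$ and the bound on $\|\Delta_\ell\|_{L^q(\Omega)}$ then give
\[
\|[w]_{c_\gamma}\|_{L^q(\Omega)}\le 2^{1+\gamma}T^{\alpha-\gamma}K_\alpha\sum_{\ell=0}^k 2^{\ell(\gamma+1/q-\alpha)},
\]
and the geometric sum converges uniformly in $k$ precisely because $\gamma<\alpha-1/q$. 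Combining with the reduction of the first paragraph yields the desired estimate.

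The main obstacle I expect is the chaining lemma: although morally standard, writing down a dyadic approximation of an arbitrary pair $(i,j)$ in a grid of $2^k+1$ points and justifying the telescoping bound $2\sum_{\ell=m}^k\Delta_\ell$ (with the factor $2$ needed since one approximates both endpoints) requires some careful bookkeeping to make sure each telescoping step lands in $D_\ell$ and involves only a single $\Delta_\ell$-type increment. Once this is in place, everything else is the classical computation and the convergence of the geometric series.
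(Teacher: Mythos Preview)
The paper does not supply its own proof of this proposition; it merely cites \cite[Theorem I.2.1]{RevYor}. Your dyadic chaining argument is exactly the standard Kolmogorov--Chentsov method (adapted to a finite grid of $2^k+1$ points), and all steps --- the union bound for $\|\Delta_\ell\|_{L^q}$, the telescoping bound $\|w_j-w_i\|_Y\le 2\sum_{\ell=m}^k\Delta_\ell$, and the geometric series --- are correct.

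One small point worth flagging: your final bound carries a factor $T^{\alpha-\gamma}$ in front of $K_\alpha$, so the constant you actually produce depends on $T$. In fact the proposition's claim of a $T$-independent $K$ is not literally correct: take $w_j$ i.i.d.\ standard Gaussians on the grid with $T=2^k$, so that $\sup_j\|w_j\|_{L^q}$ and $K_\alpha$ stay bounded while $\|\sup_j|w_j|\|_{L^q}\sim\sqrt{k}$. This is harmless for the paper, since in the application (proof of Theorem~\ref{t:euler_concrete_intro}) the parameter playing the role of $T$ ranges over $[T,2T)$ with $T$ fixed, and only uniformity in $k$ is needed --- which your argument delivers.
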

\begin{proof}[Proof of Theorem \ref{t:euler_concrete_intro}]
Let $T>0$ and $n\in \N$ be fixed.
Upon replacing $\eta$ by a smaller value, we may assume that 
$\gamma+\delta+\tinv{p} < \eta< \zeta_{\max}$ with $\zeta_{\max}$ as in \eqref{eq:zeta-max}. 

Let $k\in \N$ be such that $2^{k-1}< n \leq 2^k$. Then $T \leq \tfrac{2^k T}{n} < 2T$. 
For $j\in \{0,\ldots,2n\}$ set $$d^{(n)}_j:=U(t_j^{(n)})-V^{(n)}_j,$$ 
using that there exists a 
unique mild solution $U$ to \eqref{SEE} on $[0,2T]$; the definition of 
$V^{(n)}_j$ for $j\in \{n+1,\ldots,2n\}$ is straightforward.\par

By Theorem \ref{t:euler} applied to the interval
$[0,2T]$ with $2n$ time steps and with $\eta$ as fixed above we have, because 
$|t_j^{(n)}-t_i^{(n)}|\geq \frac{T}{n}$,
\begin{align*}
 \sup_{0\le i < j\le 2n} \frac{\n d^{(n)}_j- d^{(n)}_i\n_{L^p(\Omega;X)}}{|t_j^{(n)}
-t_i^{(n)}|^{\eta-\delta}}  
& \leq \big(\tfrac{n}{T}\big)^{\eta-\delta}\!\!\sup_{0\le i < j\le n}\! 
\big(\n d^{(n)}_j\n_{L^p(\Omega;X)} 
+ \n d^{(n)}_i \n_{L^p(\Omega;X)}\big)\\
& \lesssim n^{\eta-\delta}n^{-\eta}(1+\n
x_0\n_{L^p(\Omega;X_{\eta})})\\
& = n^{-\delta}(1+\n x_0\n_{L^p(\Omega;X_{\eta})}),
\end{align*}
with implied constant independent of $n$ and $x_0$. In particular:
\begin{align*}
\sup_{0\le j\le 2n} \n d^{(n)}_j \n_{L^p(\Omega;X)} & \lesssim \n d^{(n)}_0\n_{L^p(\Omega;X)} 
+ n^{-\delta}(1+\n x_0\n_{L^p(\Omega;X_{\eta})})\\
& = \n x_0 - y_0 \n_{L^p(\Omega;X)} + n^{-\delta}(1+\n x_0\n_{L^p(\Omega;X_{\eta})}).
\end{align*}\par
It follows that
\begin{align*}
\big\n \big(d^{(n)}_j\big)_{j=0}^{2^k}
\n_{c_{\eta-\delta}^{(2^k)}([0,t_{2^k}^{(n)}];L^p(\Omega;X))}
 & \lesssim \n x_0 - y_0 \n_{L^p(\Omega;X)} +n^{-\delta}(1+\n x_0\n_{L^p(\Omega;X_{\eta_{\max}})}).
\end{align*}
Thus, by Kolmogorov's criterion, using that $\eta-\delta>\gamma+\inv{p}$, and the fact 
that $T \leq 
\tfrac{2^kT}{n} < 2T$;
\begin{align*}
\big(\E\n u -v^{(n)} \n_{c_{\gamma}^{(n)}([0,T];X)}^p\big)^{\inv{p}} & 
= \big(\E\big\n \big(d^{(n)}_j\big)_{j=0}^{n} \n_{c_{\gamma}^{(n)}([0,T];X)}^p\big)^{\inv{p}}\\
& \leq \big(\E\big\n \big(d^{(n)}_j\big)_{j=0}^{2^k} 
\n_{c_{\gamma}^{(2^k)}([0,t_{2^k}^{(n)}];X)}^p\big)^{\inv{p}}\\
& \lesssim \big\n \big(d^{(n)}_j\big)_{j=0}^{2^k} 
\n_{c_{\eta-\delta}^{(2^k)}([0,t_{2^k}^{(n)}];L^p(\Omega;X))}\\
& \lesssim \n x_0 - y_0 \n_{L^p(\Omega;X)} + n^{-\delta}(1+\n x_0\n_{L^p(\Omega;X_{\eta})}).
\end{align*}
Theorem \ref{t:euler_concrete_intro} now follows from the fact that in there we assumed $y_0=x_0$.
\end{proof}

The corollary below is obtained from Theorem \ref{t:euler_concrete_intro} by a Borel-Cantelli
argument. An analogous corollary may be derived for the general abstract discretization schemes of
Section \ref{sec:absEuler}, as well as for the modified and classical splitting schemes
(under the condition $\gamma+\delta
+\frac{2}{p} < \min\{\eta_{\max},\eta,1\}$), assuming that $\theta_F,\theta_G\ge 0$ 
in the case of the latter.
\begin{corollary}\label{cor:pathwiseAS}
Let $\g,\d\ge 0$, $\eta>0$, and $p \in [2,\infty)$ 
be such that $\gamma+\delta + \frac{2}{p} 
< \min\{\zeta_{\max}, \eta\}$.
Suppose that $x_0=y_0\in L^p(\Omega,\F_0;X_{\eta})$. 
Then there exists a random variable $\chi\in L^{0}(\Omega)$, 
independent of $x_0$ and $n$ such that:
\begin{align*}
\n u - v^{(n)} \n_{c_{\gamma}^{(n)}([0,T];X)} & \leq \chi 
n^{-\delta}(1+\n x_0\n_{L^p(\Omega;X_{\eta})}).
\end{align*}
\end{corollary}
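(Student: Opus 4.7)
The plan is a direct Borel–Cantelli argument that exploits the gap between $\gamma+\delta+\tfrac{2}{p}$ in the hypothesis of the corollary and the $\gamma+\delta+\tfrac{1}{p}$ allowed in Theorem \ref{t:euler_concrete_intro}. This gap of $\tfrac{1}{p}$ is precisely what one needs to convert the $p^{\textrm{th}}$-moment bound into an almost sure pointwise bound.

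First, since $\gamma+\delta+\tfrac{2}{p}<\min\{\zeta_{\max},\eta\}$, I can choose $\eps>0$ so small that the exponent $\delta':=\delta+\tfrac{1}{p}+\eps$ still satisfies $\gamma+\delta'+\tfrac{1}{p}<\min\{\zeta_{\max},\eta\}$. Theorem \ref{t:euler_concrete_intro} (or, in the splitting-scheme analogue, Theorem \ref{t:split_intro}) then applies with this $\delta'$ in place of $\delta$ and yields, for all sufficiently large $n\in\N$,
\begin{equation*}
\bigl(\E\,\n u-v^{(n)}\n_{c_\gamma^{(n)}([0,T];X)}^p\bigr)^{\inv{p}}
\le C\,n^{-\delta'}\bigl(1+\n x_0\n_{L^p(\Omega;X_\eta)}\bigr),
\end{equation*}
with a constant $C$ independent of $n$ and $x_0$.

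Next, define the non-negative random variables
\begin{equation*}
Y_n:=\frac{n^{\delta}\,\n u-v^{(n)}\n_{c_\gamma^{(n)}([0,T];X)}}{1+\n x_0\n_{L^p(\Omega;X_\eta)}},\qquad n\in\N.
\end{equation*}
Raising the above moment estimate to the $p^{\textrm{th}}$ power and summing gives
\begin{equation*}
\sum_{n=1}^{\infty}\E\,Y_n^p\le C^p\sum_{n=1}^{\infty}n^{p\delta-p\delta'}=C^p\sum_{n=1}^{\infty}n^{-1-p\eps}<\infty,
\end{equation*}
so by monotone convergence $\sum_n Y_n^p<\infty$ almost surely, and in particular $\chi:=\sup_{n\in\N}Y_n\in L^0(\Omega)$ is almost surely finite. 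Unwinding the definition of $Y_n$ gives exactly the pointwise bound claimed in the corollary; the finitely many $n$ for which Theorem \ref{t:euler_concrete_intro} does not directly apply can be absorbed into $\chi$ by redefining it to be the supremum over all $n$ of a trivial deterministic bound (e.g.\ the triangle inequality combined with the a priori estimate \eqref{UinVestimate}), which contributes at most a random constant.

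No obstacles should arise beyond bookkeeping: the only non-trivial input is Theorem \ref{t:euler_concrete_intro} itself, and the Borel–Cantelli step is standard. The same argument—with Theorem \ref{t:split_intro} replacing Theorem \ref{t:euler_concrete_intro}—yields the analogue for the (modified or classical) splitting scheme under $\theta_F,\theta_G\ge 0$ and the corresponding condition $\gamma+\delta+\tfrac{2}{p}<\min\{\eta_{\max},\eta,1\}$.
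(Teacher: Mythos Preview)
Your proposal is correct and follows essentially the same approach as the paper: exploit the extra $\tfrac{1}{p}$ in the hypothesis by applying Theorem~\ref{t:euler_concrete_intro} with a slightly larger rate $\delta'>\delta+\tfrac{1}{p}$, then run a Borel--Cantelli argument. The paper uses Chebyshev to bound $\P(\Omega_n)\lesssim n^{-(\bar\delta-\delta)p}$ and sums the probabilities, while you sum $\E Y_n^p$ directly; these are equivalent phrasings of the same standard argument, and your version even yields $\chi\in L^p(\Omega)$ rather than merely $L^0(\Omega)$.
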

\begin{proof}
We may assume that $\gamma+\delta + \frac{2}{p} 
< \eta<\min\{\zeta_{\max},1\}$. 
Pick $\overline{\delta}>0$ such that $\delta+\inv{p}<\ov{\delta} < \zeta_{\max}-\gamma-\inv{p}$. 
By Chebychev's inequality and Theorem \ref{t:euler_concrete_intro} (with $\ov{\delta}$ 
instead of $\delta$)
we have $\P(\Omega_n)\lesssim n^{-(\ov{\delta}-\delta)p}$, where
\begin{align*}
\Omega_{n}& :=\Big\{\omega\in\Omega\, :\, \n u(\omega) - v^{(n)}(\omega) 
\n_{c_{\gamma}^{(n)}([0,T];X)} >  n^{-\delta }(1+\n x_0\n_{L^p(\Omega;X_{\eta})}) \Big\}
\end{align*}
By assumption we have $\ov{\delta}-\d >\frac1p$, and therefore
$\sum_n \P(\Omega_n) < \infty$. 
The corollary now follows from the  Borel-Cantelli lemma.
\end{proof}\par
In particular, if  $\g,\d\ge 0$ satisfy $\gamma+\delta < \min\{\zeta_{\max}, \eta\}$ for the
discretization method of Section \ref{sec:absEuler} ($\gamma+\delta < \min\{\eta_{\max},\eta,1\}$ 
for the splitting methods), then for initial values $x_0 = y_0\in L^{\infty}(\Omega,\calF_0;X_\eta)$
we obtain:
\begin{align*}
\n u - v^{(n)} \n_{c_{\gamma}^{(n)}([0,T];X)} & \leq \chi 
n^{-\delta}(1+\n x_0\n_{L^{\infty}(\Omega;X_\eta)}).
\end{align*}

\begin{remark}
It is clear from the proof of Theorem \ref{t:euler_concrete_intro} and Corollary
\ref{cor:pathwiseAS} that the assertions remain valid if $V^{(n)}$ starts
from an initial value $y_0^{(n)}\in L^p(\O,\F_0;X)$, provided that for all $n\in\N$ we have 
$\n x_0 - y_0^{(n)} \n_{L^p(\Omega;X)} \lesssim n^{-\delta}
$.  
\end{remark}

\begin{remark}\label{r:pathwise}
At the cost of some extra work, for the approximations obtained 
by either the modified or the classical splitting scheme 
it is possible to obtain pathwise convergence over $[0,T]$ instead of over the grid points, 
i.e.,\ convergence in $L^p(\Omega;C([0,T];X))$ and in $C([0,T];X)$ almost surely. 
The details are presented in \cite{cox:thesis}.
\end{remark}
\section{The local case}\label{sec:local}
The pathwise convergence result of Corollary \ref{cor:pathwiseAS} remains valid 
if $F$ and $G$ are merely locally Lipschitz and satisfy linear growth conditions. 
We shall demonstrate how this extension is obtained for the Euler scheme (or, more 
generally, the abstract time discretizations of Section \ref{sec:absEuler}). Analogous 
results may be obtained for modified and classical splitting scheme, but in order to 
do so one needs the extra regularity results mentioned in Remark \ref{r:pathwise}.\par
Consider \eqref{SEE} in a \textsc{umd} Banach space $X$ with property $(\alpha)$,
 where $A$ satisfies \MA{} and $F$ and $G$ satisfy:
\begin{itemize}
\item[\MFloc{}] For some $\theta_F>  -1 + (\inv{\tau}-\frac12)$, where $\tau$ is
the type of $X$, the function 
$F:[0,T]\times X\rightarrow X_{\theta_F}$ is measurable in the sense that for
all $x\in X$ the mapping $F(\cdot,x):[0,T]\rightarrow X_{\theta_F}$ is strongly
measurable. Moreover, $F$ is locally Lipschitz continuous and uniformly of
linear growth in its second variable.
That is to say, 
there exist constants $C_{0,m}$, $m\in \N$, and $C_1$ 
such that for all $t\in [0,T]$, all $m\in \N$ and all $x_1,x_2,x\in X$ such that 
$\n x_1\n,\n x_2\n
\leq m$:
\begin{align*}
\n F(t,x_1) - F(t,x_2) \n_{X_{\theta_F}} & \leq C_{0,m} \n x_1-x_2\n_{X}, \\ 
\n F(t,x)\n_{X_{\theta_F}} &\leq 
C_1(1+\n x\n_{X}).
\end{align*}

\item[\MGloc{}] For some $\theta_G>-\inv{2}$, the function $G : [0,T]\times
X\rightarrow \calL(H,X_{\theta_G})$ 
is measurable in the sense that for all $h\in H$ and $x\in X$ the mapping
$G(\cdot,x)h:[0,T]\rightarrow X_{\theta_G}$ is strongly measurable. Moreover,
$G$ is locally $L^2_{\gamma}$-Lipschitz continuous and uniformly of linear
growth in its second variable. That is to say, 
there exist constants  
$K_{G,m}$, $m\in 
N$, and $C_1$ such that for 
all $\alpha\in [0,\inv{2})$, all $t\in [0,T]$, all $m\in \N$ and all simple functions
$\phi_1$, $\phi_2$, $\phi: [0,T]\to X$ such that $\n \phi_1 \n_{L^{\infty}(0,T;X)}, \n \phi_2
\n_{L^{\infty}(0,T;X)}, \n \phi \n_{L^{\infty}(0,T;X)}\leq m$ one has:
\begin{align*}
& \qquad \n s\mapsto (t-s)^{-\alpha}[G(s,\phi_1(s)) -G(s,\phi_2(s))]
\n_{\gamma(0,t;H,X_{\theta_G})} \\
& \qquad \qquad \qquad \le  C_{0,m} \n s\mapsto (t-s)^{-\alpha}[\phi_1 -\phi_2] 
\n_{L^2(0,t;X)\,\cap \,\gamma(0,t;X)}; \\
& \qquad \n s\mapsto (t-s)^{-\alpha}
G(s,\phi(s))\n_{\gamma(0,t;H,X_{\theta_G})}\\
& \qquad \qquad \qquad  \leq C_1\big(1+ \sup_{0\leq t\leq T} \n s\mapsto
(t-s)^{-\alpha}\phi(s)\n_{L^2(0,t;X)\,\cap\, \gamma(0,t;X)}\big).
\end{align*}
\end{itemize}

Following \cite[Section 7]{NVW08}, for $T>0$, $p\in [1,\infty)$ and $\alpha \in [0,\inv{2})$ we
define $V^{\alpha,0}_{p}([0,T]\times\Omega; X)$ to be the space of continuous adapted
processes $\Phi:[0,T]\times\Omega\rightarrow X$ such that almost surely,
\begin{align*}
\n \Phi\n_{C([0,T];X} + \Big( \int_{0}^{T} \n s\mapsto
(t-s)^{-\alpha}\Phi(s)\n_{\gamma(0,t;X)}^p\,dt\Big)^{\inv{p}} < \infty. 
\end{align*}\par

It has been proven in \cite{NVW08} that if one assumes \MFloc{} and \MGloc{} 
instead of \MF{} and \MG{}, and moreover assumes that $x_0\in L^{0}(\Omega,\mathcal{F}_0,X)$, then
for every $p>2$ and $\alpha\in [0,\inv{2})$ satisfying $\inv{p}<\alpha+\theta_G$ equation
\eqref{SDE} has a unique mild solution on in $V^{\alpha,0}_{p}([0,T]\times \Omega;X)$ for all $T>0$.
The solution is constructed by approximation; uniqueness is proven separately.\par
The approximations are obtained as follows. For $m\in \N$ we define 
$F_{m}(t,x):=F(t,(1\minsym \frac{m}{\n x\n})x)$ and $G_{m}(t,x):=G(t,(1\minsym \frac{m}{\n x\n})x)$. 
Clearly $F_{m}$ and $G_{m}$ satisfy \MF{} and \MG{}. By \cite[Theorem 6.2]{NVW08}, 
for all $p\in (2,\infty)$ and $\alpha\in [0,\inv{2})$ satisfying $\inv{p}<\alpha+\theta_G$ there
exists a $U_{m}\in V_{c}^{\alpha,p}([0,T]\times\Omega;X)$ that is a mild solution to:
\begin{equation}\label{SEEm}
\left\{ \begin{aligned} dU_{m}(t) & = AU_{m}(t)\,dt + F_{m}(t,U_{m}(t))\,dt \\
& \qquad + G_{m}(t,U_{m}(t))\,dW_H(t);\quad t\in
[0,T],\\
U_{m}(0)& = 1_{\{\n x_0\n \le m\}} x_0  \end{aligned}\right.
\end{equation}
Note that by uniqueness this solution corresponds to the solution given by Theorem
\eqref{thm:NVW08}.

Fix $T>0$ and set 
\begin{align*}
\tau_{m}^T(\omega) := \inf\{t\geq 0\, :\, \n U_{m}(t,\omega)\n_X\geq m\},
\end{align*}
with the convention that $\inf(\emptyset)=T$.
By \cite[Section 8]{NVW08} we have, due to the linear growth conditions on $F$ and $G$, that 
$$\lim_{m\rightarrow \infty} \tau_{m}^T = T \quad\textrm{almost surely.}$$ 
In fact, due to the fact that this holds for arbitrary $T>0$, there exists a set 
$\Omega_0\subseteq \Omega$ of measure one such that for all $\omega\in\Omega_0$ there exists an 
$m_{\omega}$ such that $\tau_{m}^T(\omega)=T$ for all $m\geq m_{\omega}$. Moreover, by a 
uniqueness argument one may show that for $m_1\leq m_2$ one has $U_{m_1}(t)=U_{m_2}(t)$ 
on $[0,\tau_{m_1}^T]$.\par

The mild solution $U$ to \eqref{SEE} with $F$ and $G$ satisfying \MFloc{} and \MGloc{} is 
defined by setting:
\begin{align*}
U(t,\omega) := \lim_{m\rightarrow \infty} U_{m}(t,\omega), \quad t\in [0,T], \ \omega\in \Omega_0,
\end{align*}
and $U(t,\omega):=0$ for $t\in [0,T]$ and $\omega \in \Omega\setminus \Omega_0$.
\par
Fix $x_0\in L^{0}(\Omega;X_\eta)$ for some $\eta>0$. Let $\g,\d\ge 0$ be such that 
$\gamma+\delta< \min\{\zeta_{\max}, \eta\}$, with $\zeta_{\max}$ as defined by \eqref{eq:zeta-max}. 
Let $R>0$ be such that $[R/T,\infty)\subseteq \rho(A)$. Fix $n\in \N$, $n\geq R$. 
Let $(V^{(n)})_{j=0}^{n}$, 
$(V^{(n)}_{m,j})_{j=0}^{n}$ be defined by Euler approximations to the equations \eqref{SEE} and 
\eqref{SEEm} with initial datum $x_0$ and $1_{\{\n x_0\n \le m\}} x_0$, 
both with step size $\frac{T}{n}$.\par
Set $u:=(U(t_j^{(n)}))_{j=0}^{n}$ and $u_{m}=(U_{m}(t_j^{(n)}))_{j=0}^{n}$,  
$v^{(n)}:=(V^{(n)}_j)_{j=0}^{n}$ and $v^{(n)}_{m}:=(V^{(n)}_{m,j})_{j=0}^{n}$. 
Fix $\omega\in \Omega_0$. Let $m_{\omega}$ be such that $\tau_{m}^T(\omega)=T$ for all 
$m\geq m_{\omega}$. Note that for all $m\geq m_{\omega}$: $$\n u_{m}(\omega)\n_{\ell_n^{\infty}(X)} 
= \n u(\omega)\n_{\ell_n^{\infty}(X)}\leq m_{\omega}.$$ 
However, a priori this does not guarantee that $v^{(n)}(\omega)=v_{m}^{(n)}(\omega)$ 
for $m\geq m_{\omega}$.\par

By Corollary \ref{cor:pathwiseAS} (with
$p>2$ such that $\gamma+\delta+\frac{2}{p}< 
\min\{\zeta_{\max}, \eta\}$) there exists a constant $C_{\omega}$ depending on $\omega$ 
and $m_{\omega}$, but independent of $n$, such that:
\begin{align*}
\n u_{2m_{\omega}}(\omega) - v_{2m_{\omega}}^{(n)}(\omega) \n_{c^{(n)}_{\gamma}([0,T];X)} 
& \leq C_{\omega}
n^{-\delta}(1+\n 1_{\{\n x_0\n \le 2m_{\omega}\}} x_0\n_{L^p(\Omega,X)})\\
& \leq
C_{\omega}n^{-\delta}(1+2m_{\omega}).
\end{align*}
In particular, 
for large enough $n\ge R$, say  $n\ge N_{\omega}$,
we have:
\begin{align*}
\n u_{2m_{\omega}}(\omega) - v_{2m_{\omega}}^{(n)}(\omega) \n_{c^{(n)}_{\gamma}([0,T];X)} 
& \leq m_{\omega}.
\end{align*}
Thus for $n\geq N_{\omega}$ we have $\n v_{2m_{\omega}}^{(n)}(\omega) \n_{\ell^{\infty}(X)} 
\leq 2m_{\omega}$, whence by definition of $F_{2m_{\omega}}$ and $G_{2m_{\omega}}$ we have
$v_{2m_{\omega}}^{(n)}(\omega)=v^{(n)}(\omega)$.\par

In conclusion we have, for $n\geq N_{\omega}$:
\begin{align*}
\n u(\omega) - v^{(n)}(\omega) \n_{c^{(n)}_{\gamma}([0,T];X)} & 
= \n u_{2m_{\omega}}(\omega) - v_{2m_{\omega}}^{(n)}(\omega) \n_{c^{(n)}_{\gamma}([0,T];X)} \\
& \leq C_{\omega}
n^{-\delta}.
\end{align*} 
It follows that there exists a random variable $\xi$ such that for all $n\geq R$ we have:
\begin{align*}
\n u(\omega) - v^{(n)}(\omega) \n_{c^{(n)}_{\gamma}([0,T];X)} & \leq \tilde{C}_{\omega}
n^{-\delta}.
\end{align*}
\begin{remark}\label{r:split_local}
If we desire results for the splitting scheme then we are faced with the difficulty that 
$\n u_{2m_{\omega}}^{(n)}(\omega) \n_{\ell^{\infty}(X)} \leq 2m_{\omega}$ does not imply 
$u_{2m_{\omega}}^{(n)}(\omega)=u^{(n)}(\omega)$. Instead, we need that $\n t\mapsto  
U^{(n)}_{2m_{\omega}}(t,\omega)\n_{C([0,T];X} \leq 2m_{\omega}$, which may be obtained 
from the result announced in Remark \ref{r:pathwise}.
\end{remark}
\section{Example: heat equation in one space dimension}\label{sec:example}
On the unit interval $[0,1]$ we consider the following stochastic heat equation:  
\begin{equation}\label{heateq}
\begin{aligned}
\left\{ 
\begin{array}{rcll}
\displaystyle{\frac{\partial u}{\partial t}(\xi,t)} & \!\!\!\!=\!\!\!\! & 
\displaystyle{a_2(\xi)\frac{\partial^2 u}{\partial \xi^2}(\xi,t)
+ a_1(\xi)\frac{\partial u}{\partial \xi}(\xi,t)}
\\
&&\displaystyle{+f(t,\xi,u(\xi,t))+g(t,\xi,u(\xi,t))\frac{\partial w}{\partial t}(\xi,t)}; 
& \!\!\! \xi\in (0,1), \ t\in (0,T],\\
\displaystyle{b_{1,\xi}\frac{\partial u}{\partial \xi}(\xi,t)} & \!\!\!\!
=\!\!\!\! &- b_{0,\xi}u(\xi,t); & \!\!\! \xi\in \{0,1\},\  t\in (0,T],\\
\\
u(0,\xi)  & \!\!\!\!=\!\!\!\ &u_0(\xi); & \!\!\! \xi\in [0,1],
\end{array}
\right.
\end{aligned}
\end{equation}
Here $w$ denotes a space-time white noise on $[0,T]\times[0,1]$
and $b_{i,\xi}$ are real numbers $(i,\xi\in \{0,1\})$.
It is assumed that 
$a_2\in C[0,1]$ is bounded away from $0$ and $a_1\in C[0,1]$. 
Moreover, $f:[0,T]\times[0,1]\times\R \rightarrow \R$ and 
$g:[0,T]\times[0,1]\times\R \rightarrow \R$ 
are jointly measurable and globally Lipschitz in
the second variable, uniformly in the first variable. More precisely, 
there exist constants $L_f$ 
and $L_g$ such that for all
$t\in[0,T]$, $\xi\in[0,1]$, and $x,y\in \R$ we have:
\begin{align*}
|f(t,\xi,x)-f(t,\xi,y)|& \leq L_f|x-y|, & |g(t,\xi,x)-g(t,\xi,y)|& \leq L_g|x-y|.
\end{align*}
We also impose the linear growth conditions
\begin{align*}
|f(t,\xi,x)| \le C(1+|x|), \qquad  |g(t,\xi,x)| \le C(1+|x|),
\end{align*}
with constant $C$ independent of $\xi\in [0,1]$,  $t\in[0,T]$, $x\in \R$.

Following the approach of \cite[Section 10]{NVW08} we may rewrite this equation to fit in 
the functional-analytic
framework of Section \ref{ss:setting}. 
For $\theta>0$ and $1<q<\infty$ we define
$ H^{\theta,q}_{B} := H^{\theta,q}(0,1)$
for $0<\theta<1+\tfrac1{q},$
and
$$ H^{\theta,q}_{B}:= 
\big\{ u\in H^{\theta,q}(0,1)\colon b_{1,\xi}\tfrac{\partial u}{\partial
\xi}(\xi,t)+b_{0,\xi}u(\xi,t)=0; \ \xi\in \{0,1\}\big\}$$
for $ 1+\tinv{q}<\theta<\infty.$

The operator $A:H_B^{2,q}\rightarrow L^q(0,1)$ defined by 
$$A
u:=a_2\frac{\partial^2 u}{\partial
\xi^2}+a_1\frac{\partial u}{\partial \xi}$$ 
generates an analytic $C_0$-semigroup on $L^q(0,1)$, see \cite[Section 3.1]{Lun}, which is based on \cite{AgDougNi:59}.\par

From now on we take $(2,\infty)$ and fix $\beta\in (\frac1{2q},\inv{4})$. The part of $A$ in the space
$$X:= H_B^{2\beta,q} = H^{2\beta,q}(0,1)$$ 
generates an analytic $C_0$-semigroup in $X$ and $-A$ has bounded imaginary
powers in $X$ by \cite[Example 4.2.3]{Lun99}. By abuse of notation we shall denote the
operator by $A$ again. As a consequence of 
\cite[Theorem 4.2.6]{Lun99} and reiteration of the complex interpolation method,
for $\theta\in (0,1)$, $2\b+2\theta\not= 1+\frac1q$, we have, for $0<\theta<1$,
$$ X_\theta = [X,D(A)]_\theta = \big[H_B^{2\beta,q}, H_B^{2\beta+2,q}\big]_\theta
= H_B^{2\beta+2\theta,q}.$$\par 

The reason for picking the space $X$ as our state space is two-fold. Firstly, we need a certain
amount of space-regularity ($\beta>\frac1{2q}$) for
proving that the Nemytskii operators $F$ and $G$ induced by $f$ and $g$ satisfy \MF{} and \MG{}. 
Secondly, as we shall see in Theorem
\ref{t:example}, there is a trade-off between the space regularity in which we 
consider convergence and 
the convergence rate: as $\beta$ increases to $\inv{4}$, the convergence 
rate decreases. 
Beyond this critical value we are no longer able to prove convergence.\par
Observe that $X$ is a \textsc{umd} space, and since we assume $q>2$ the type 
of $X$ equals $\tau=2$. 
Set $H:=L^2:= L^2(0,1)$. For $t\in [0,T]$, $u\in X$, and $h\in H$ we define the Nemytskii operators
\begin{align*}
F(t,u)(\xi)&:=f(t,\xi,u(\xi)); \\ (G(t,u)h)(\xi)&:=g(t,\xi,u(\xi))h(\xi).
\end{align*}
Set $\theta_{F}:=-\beta$ and pick $\eps>0$ sufficiently
small such that $\theta_G:=-\inv{4}-\beta-\eps>-\inv{2}$.
Under the above assumptions on $f$ and $g$, it was shown in the proof 
of \cite[Theorem 10.2]{NVW08} (here we use that $\b> \frac1{2q}$) that  
$F$ defines a mapping from $[0,T]\times X$ to $X_{\theta_F} = L^q(0,1)$ that satisfies \MF{}
and $G$ defines a mapping from $[0,T]\times X$ to $\gamma(H,X_{\theta_G}) = \g(L^2,H_B^{-\frac12-2\e,q})$ 
that satisfies \MG{}; the measurability conditions 
are satisfied due to the measurability of $f$ and $g$ 
(in the notation of \cite{NVW08} we take $E=L^q(0,1)$ and 
$\eta=\beta$, so that $E_{\eta}=X$).\par

Furthermore, the part of the $A$ in $X$ satisfies \MA{}. 
Modeling the space-time white noise as an $H$-cylindrical Brownian motion $W_H$, 
we may rewrite \eqref{heateq} as follows:
\begin{equation}\label{heatACP}
\left\{ \begin{aligned} dU(t) & = AU(t)\,dt + F(t,U(t))\,dt + G(t,U(t))\,dW_H(t);\quad
t\in [0,T],\\
U(0)&=u_0. \end{aligned}\right.
\end{equation}\par

In order to obtain convergence of the Euler scheme for $U$, we must ensure that 
\MFII{} and \MGII{} are
satisfied. This requires extra assumptions on $f$ and $g$.
Noting that $\eta_{\max} = \zeta_{\max}= \frac14-\beta-\e$, we assume that there
exists a constant $C$ such that for all
$s\in [0,1]$ and all $x\in \R$ we have:
\begin{align*}
\n t\mapsto f(t,s,x)\n_{C^{\frac14-\beta}([0,T])} & \leq C(1+|x|) ;
\end{align*}
and
\begin{align*}
\n t\mapsto g(t,s,x)\n_{C^{\frac14-\beta}([0,T])} & \leq C(1+|x|).
\end{align*}
By similar arguments that were used in \cite[Section 10]{NVW08} to prove that $F$ and $G$ 
satisfy \MF{} and \MG{}, one can use the above to prove that $F$ and $G$ satisfy 
\MFII{} and \MGII{}. 

Fix $T>0$ and $n\in \N$, let $U$ be the mild solution of 
\eqref{heatACP} on $[0,T]$ and set $u:=(U(t_j^{(n)})_{j=0}^{n}$ with $t_j^{(n)} = jT/n$.

\begin{theorem}\label{t:example}  
Let $p>4$, $q>2$, $\a>0$, $\beta\in (\inv{2q},\inv{4})$, 
 and $\g,\,\d \ge 0$
satisfy $$\beta+\g+\d+\tinv{p}< \min\{\tfrac14,\a\}.$$ 
Fix $T>0$. Let $U^{(n)}$ be defined by the modified splitting scheme 
with initial value $u_0\in
L^p(\Omega,\calF_0;H^{2\a,q}(0,1))$, and let $V^{(n)}$ be defined by the implicit Euler
scheme. Let $u^{(n)} :=(U^{(n)}(t_j^{(n)})_{j=0}^{n}$ and $v^{(n)}=(V^{(n)}_j)_{j=0}^{n}$.  
Then: 
\begin{align*}
\begin{aligned}
\big(\E \n u-u^{(n)} \n_{c_{\gamma}([0,T];H^{2\beta,q}(0,1))}^p\big)^{\inv{p}}
&\lesssim n^{-\delta}(1+\n
u_0\n_{L^p(\Omega;H^{2\a,q}(0,1))});\\
\big(\E \n u-v^{(n)} \n_{c_{\gamma}([0,T];H^{2\beta,q}(0,1))}^p\big)^{\inv{p}}
&\lesssim n^{-\delta}(1+\n
u_0\n_{L^p(\Omega;H^{2\a,q}(0,1))});
\end{aligned}
\end{align*}
with implied constant independent of $n$. 
\end{theorem}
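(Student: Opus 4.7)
The strategy is to observe that Theorem \ref{t:example} is an immediate consequence of the abstract Theorems \ref{t:split_intro} and \ref{t:euler_concrete_intro}, applied to the functional-analytic reformulation already constructed in the preamble. Specifically, equation \eqref{heateq} is recast as the abstract Cauchy problem \eqref{heatACP} in $X = H_B^{2\beta,q}(0,1)$ with $H = L^2(0,1)$, and the preamble has verified that $A$ satisfies \MA{} while the Nemytskii operators $F$ and $G$ satisfy \MF{}, \MG{}, \MFII{}, \MGII{} with $\theta_F = -\beta$ and $\theta_G = -\tfrac14 - \beta - \eps_0$ for arbitrarily small $\eps_0 > 0$. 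Since $q > 2$, $X$ has type $\tau = 2$, so $\zeta_{\max} = \min\{1-\beta,\,\tfrac14-\beta-\eps_0\} = \tfrac14-\beta-\eps_0$, which can be made arbitrarily close to $\tfrac14-\beta$ from below.

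The main bookkeeping step is to identify the regularity exponent $\eta$ to feed into the abstract theorems. Recalling that $X_\theta = H_B^{2\beta+2\theta,q}(0,1)$ and that the boundary condition defining $H_B^{s,q}$ is vacuous when $s < 1 + 1/q$, I would choose $\eps_0,\eps_1 > 0$ small enough so that
\[ \widetilde\eta := \min\{\alpha-\beta,\,\tfrac14-\beta-\eps_0\} - \eps_1 \]
is strictly positive and still satisfies $\gamma + \delta + 1/p < \widetilde\eta$; this is possible thanks to the hypothesis $\beta + \gamma + \delta + 1/p < \min\{\tfrac14,\alpha\}$. Since $2\beta + 2\widetilde\eta < \tfrac12 < 1 + 1/q$, the space $X_{\widetilde\eta}$ coincides with $H^{2\beta+2\widetilde\eta,q}(0,1)$ (no boundary condition). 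Moreover $2\beta + 2\widetilde\eta \le 2\alpha$, so there is a continuous inclusion $H^{2\alpha,q}(0,1) \hookrightarrow X_{\widetilde\eta}$ and hence $u_0 \in L^p(\Omega,\F_0;X_{\widetilde\eta})$ with $\n u_0\n_{L^p(\Omega;X_{\widetilde\eta})} \lesssim \n u_0\n_{L^p(\Omega;H^{2\alpha,q}(0,1))}$.

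With these identifications in place, the first claimed estimate follows directly from Theorem \ref{t:split_intro} with $\eta = \widetilde\eta$, since then $\gamma + \delta + 1/p < \widetilde\eta = \min\{1-\beta,\,\tfrac14-\beta-\eps_0,\,\widetilde\eta,\,1\}$. For the second claim, I need the additional fact that $X$ has Pisier's property $(\alpha)$; this follows because the Bessel potential $(I-\partial_\xi^2)^{-\beta}$ realises $X$ as (isomorphic to) a closed subspace of $L^q(0,1)$, property $(\alpha)$ descends to closed subspaces, and $L^q(0,1)$ has property $(\alpha)$ for $1 < q < \infty$. Then Theorem \ref{t:euler_concrete_intro} applies with $\eta = \widetilde\eta$ and yields the second bound. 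The only genuine obstacle is the identification of the fractional domains $X_\theta$ as standard Bessel potential spaces in the low-smoothness regime (to absorb the initial data) and the property-$(\alpha)$ check for $X$; all analytic work has been done inside the abstract convergence theorems.
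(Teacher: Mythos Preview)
Your proposal is correct and follows exactly the approach the paper takes: both proofs reduce Theorem \ref{t:example} to a direct application of Theorems \ref{t:split_intro} and \ref{t:euler_concrete_intro} in the state space $X=H_B^{2\beta,q}(0,1)$, using the preamble's verification of \MA{}, \MF{}, \MG{}, \MFII{}, \MGII{}. The only difference is cosmetic: the paper records the single choice $\eta=\alpha-\beta-\tfrac{1}{2q}$, whereas you work with $\widetilde\eta=\min\{\alpha-\beta,\tfrac14-\beta-\eps_0\}-\eps_1$; your bookkeeping is in fact slightly more careful, since it transparently guarantees both $\gamma+\delta+\tfrac1p<\widetilde\eta$ and $2\beta+2\widetilde\eta<1+\tfrac1q$ (so that $X_{\widetilde\eta}$ carries no boundary condition) without an implicit reduction of $\alpha$.
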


\begin{proof}
This follows from Theorems \ref{t:split_intro} and \ref{t:euler_concrete_intro} with $X=H^{2\beta,q}$
and $\eta =\a-\beta-\inv{2q}$.
\end{proof}

By the Borel-Cantelli argument of the previous section,
almost sure convergence in $c_{\gamma}([0,T];H^{2\beta,q}(0,1))$ with rate $n^{-\d}$
holds for the same initial values under the stronger assumption 
\begin{align}\label{ass-coeff}
 \beta+\g+\d+\tfrac2p< \min\{\tfrac14,\a\}.
\end{align}\par

The Sobolev embedding theorem provides a continuous embedding 
$H^{2\b, q}(0,1)\embed C^\lambda[0,1]$ whenever $2\b>\lambda+\frac1q$. Hence, 
under assumption \eqref{ass-coeff} we
obtain that for all $\l\ge 0$ such that $\lambda + 2\g + 2\d +\frac4p+\inv{q} <
\min\{\tfrac12,2\a\}$,
almost surely we have:
\begin{align*}
\n u-u^{(n)} \n_{c_{\gamma}([0,T];C^\lambda[0,1])}
&\lesssim n^{-\delta}(1+\n u_0\n_{L^p(\Omega;H^{2\a,q}(0,1))}).
\end{align*}

Let us now take $\g=0$ and suppose that 
\begin{align}\label{ld}
\lambda + 2\d < \tfrac12.
\end{align}
Suppose $u_0\in L^{p}(\Omega,\calF_0;H^{\frac12,q}(0,1))$, i.e. we take  
$\a=\inv{4}$. 
By picking $p$ and $q$ large enough, we have $\lambda + 2\g + 2\d +\frac4p+\inv{q}
<\tfrac12=\min\{\inv{2},2\a\}$. By the above we then obtain almost sure uniform convergence (with
respect to the grid points $t_j^{(n)}$) in the space $C^\lambda[0,1]$ with rate $\delta$:
\begin{align*}
\sup_{0\le j\le n}\n u(t_j^{(n)})-u_j^{(n)}\n_{C^\lambda[0,1]}
&\lesssim n^{-\delta}(1+\n u_0\n_{L^p(\Omega;H^{\inv{2},q}(0,1))})\quad \textrm{almost surely.}
\end{align*}

\begin{remark}\label{r:optimalDG}
It is proven in \cite{DaGai:00} that the optimal convergence rate of a time discretization 
for the heat equation in one
dimension with additive space-time white noise based on $n$ equidistant time 
steps is $n^{-\inv{4}}$. 
This is under the
assumption that the noise approximation of the $n^{\textrm{th}}$ approximation is based 
only on linear combinations of
$(W_H(t_j^{(n)}))_{j=0}^{n}$. In the theorem above we obtain convergence rate 
$n^{-\inv{4}+\eps}$ for $\eps>0$
arbitrarily small by taking $\gamma=0$, $\beta$ sufficiently small and $p$, $q$ sufficiently
large.\par
In \cite{DaGai:00} the authors also provide optimal convergence rates for the heat equation 
in one
dimension with multiplicative space-time white noise, but these results concern simultaneous 
discretizations of time
and space and are therefore not applicable to our situation.
\end{remark}
\appendix
\section{Technical lemmas}\label{app:1}
Here we state and prove with two lemmas which give estimates for the $\gamma$-radonifying
norm of stochastic and deterministic integral processes.

\begin{lemma}\label{lem:gDetInt}
Let $q\in [1,\infty]$, $\inv{q}+\inv{q'}=1$, and let 
$(R,\mathcal{R},\mu)$ be a finite measure space and $(S,\mathcal{S},\nu)$ a
$\sigma$-finite measure space. Let $Y_1$ and $Y_2$ be Banach spaces,
 and suppose $\Psi_1\in L^q(R,\gamma(S;Y_1))$ and $\Psi_2\in
L^{q'}(R,\calL(Y_1,Y_2))$ such that $(r,s)\mapsto \Psi_2(r)\Psi_1(r,s)$ defines an
element of $L^1(R\times S;Y_2)$.
Then:
\begin{align*}
\Big\n s\mapsto \int_R \Psi_2(r)\Psi_1(r,s)\,d\mu(r)\Big\n_{\gamma(S;Y_2)}&\leq \n
\Psi_2 \n_{L^{q'}(S;\calL(Y_1,Y_2))}
\n \Psi_1\n_{L^q(R,\gamma(S;Y_1))} .
\end{align*}
\end{lemma}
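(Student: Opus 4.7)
The plan is to prove the estimate directly from the Gaussian definition of the $\gamma$-radonifying norm, using Minkowski's integral inequality, the ideal property pointwise in $r\in R$, and finally H\"older's inequality. First I would reduce to the case where $\Psi_1$ is a simple function in $r$ taking values in $\gamma(S;Y_1)$ and $\Psi_2$ is a simple $\calL(Y_1,Y_2)$-valued function, say
\[
\Psi_1(r,\cdot) = \sum_{k=1}^{N} 1_{A_k}(r)\, \phi_k, \qquad \Psi_2(r) = \sum_{k=1}^{N} 1_{A_k}(r)\, B_k,
\]
with $\phi_k \in \gamma(S;Y_1)$, $B_k\in \calL(Y_1,Y_2)$, and pairwise disjoint $A_k$ of finite $\mu$-measure (after a common refinement one can arrange this). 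For such step functions the integral
\[
T := \int_R \Psi_2(r)\Psi_1(r,\cdot)\,d\mu(r) = \sum_{k=1}^{N} \mu(A_k)\, B_k \phi_k
\]
lies in $\gamma(S;Y_2)$ by the ideal property \eqref{eq:ideal}, so the question reduces to estimating its $\gamma$-norm.

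The core estimate goes as follows. Let $(e_n)$ be an orthonormal basis of $L^2(S)$ and $(\gamma_n)$ a standard Gaussian sequence. For any $r$, the ideal property gives
\[
\Big( \E \Big\| \Psi_2(r) \sum_n \gamma_n \Psi_1(r,\cdot) e_n \Big\|_{Y_2}^2 \Big)^{1/2} \le \n\Psi_2(r)\n_{\calL(Y_1,Y_2)} \n\Psi_1(r,\cdot)\n_{\gamma(S;Y_1)}.
\]
Since $T e_n = \int_R \Psi_2(r)\,\Psi_1(r,\cdot) e_n \,d\mu(r)$, Minkowski's integral inequality in $L^2(\O;Y_2)$ yields
\[
\Big( \E \Big\| \sum_n \gamma_n T e_n\Big\|_{Y_2}^2\Big)^{1/2}
\le \int_R \Big(\E \Big\| \Psi_2(r)\sum_n \gamma_n \Psi_1(r,\cdot) e_n\Big\|_{Y_2}^2 \Big)^{1/2}\! d\mu(r),
\]
and a single application of H\"older's inequality in $r$ with exponents $q,q'$ produces the asserted bound
\[
\n T\n_{\gamma(S;Y_2)} \le \n\Psi_2\n_{L^{q'}(R;\calL(Y_1,Y_2))} \n\Psi_1\n_{L^{q}(R;\gamma(S;Y_1))}.
\]

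For the general case I would approximate $\Psi_1$ by step functions in $L^q(R;\gamma(S;Y_1))$ and $\Psi_2$ by step functions in $L^{q'}(R;\calL(Y_1,Y_2))$ in the strong operator sense (taking the exponent at $\infty$ with a standard weak-$\ast$/density variant if $q'=\infty$); the bilinear estimate just derived is uniform, so the integrals converge in $\gamma(S;Y_2)$ and the inequality passes to the limit. The integrability hypothesis $(r,s)\mapsto \Psi_2(r)\Psi_1(r,s)\in L^1(R\times S;Y_2)$ ensures that no measurability issues arise when identifying the limit with $s\mapsto \int_R \Psi_2(r)\Psi_1(r,s)\,d\mu(r)$.

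The only step that requires genuine care is Minkowski's inequality in the Gaussian $L^2$-space: the argument uses that the finite sums $\sum_{n\le N} \gamma_n \Psi_1(r,\cdot) e_n$ form a Cauchy net in $L^2(\O;Y_1)$ uniformly in $r$ on compact subsets where $\n\Psi_1(r,\cdot)\n_{\gamma(S;Y_1)}$ is bounded, which lets one interchange the $r$-integration with the limit defining the $\gamma$-norm. This is the main---and essentially the only---technical obstacle; everything else is an application of H\"older and the ideal property.
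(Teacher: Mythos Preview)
Your approach is correct but genuinely different from the paper's. You work directly with the Gaussian series: Minkowski's inequality in $L^2(\Omega;Y_2)$ pulls the $R$-integral outside, the ideal property is applied pointwise in $r$, and H\"older finishes. The paper instead exploits the $\gamma$-Fubini isomorphism \eqref{eq:gFub}: for $q<\infty$ it identifies $\Psi_1\in L^q(R;\gamma(S;Y_1))$ with an element of $\gamma(S;L^q(R;Y_1))$, views $\Psi_2$ as a bounded operator $L^q(R;Y_1)\to Y_2$ of norm at most $\|\Psi_2\|_{L^{q'}}$ via H\"older, and applies the ideal property \eqref{eq:ideal} once to the composition. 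The case $q=\infty$ is then recovered by letting $q\uparrow\infty$ on $L^1\cap L^\infty$ (using that $\mu(R)<\infty$) and approximating $\Psi_2\in L^1$.

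What each buys: the paper's route is essentially a one-liner once \eqref{eq:gFub} is available and makes the structure transparent; yours is more self-contained, avoids $\gamma$-Fubini entirely, and treats all $q$ uniformly at the step-function level. Two remarks on your write-up. First, the ``technical obstacle'' is milder than you suggest: no uniformity in $r$ is needed, since for each finite $N$ one has $\big\|\sum_{n\le N}\gamma_n\Psi_1(r)e_n\big\|_{L^2(\Omega;Y_1)}\le \|\Psi_1(r)\|_{\gamma(S;Y_1)}$ by the ideal property applied to the orthogonal projection onto $\mathrm{span}(e_1,\dots,e_N)$, so the bound already holds uniformly in $N$ before any limit. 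Second, your Minkowski argument a priori only controls $\|T\|_{\gamma_\infty(S;Y_2)}$; the step-function reduction is precisely what guarantees $T\in\gamma(S;Y_2)$. For $q=\infty$ step functions are not norm-dense in $L^\infty(R;\gamma(S;Y_1))$, but since $\mu(R)<\infty$ you can choose step approximants $\Psi_1^{(k)}$ bounded in $L^\infty$ and converging in $L^1$ (or a.e.), then apply dominated convergence to $\int_R\|\Psi_2(r)\|\,\|\Psi_1(r)-\Psi_1^{(k)}(r)\|_{\gamma}\,d\mu(r)$; this is the analogue of the paper's final approximation step.
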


\begin{proof}
We first consider the case $q\in [1,\infty)$. The $L^1$-assumption guarantees
that the integral on the left-hand side
exists as a Bochner integral
in $Y_2$ for $\nu$-almost all $s\in S$. By \eqref{eq:gFub} 
and the fact that $q<\infty$ we may identify $\Psi_1$ with an
element in $\gamma(S;L^q(R;Y_1))$,
 and by the H\"older inequality $\Psi_2$ induces a bounded operator from
$L^q(R;Y_1)$ to $Y_2$. 
Under these identifications, the expression inside the norm at 
left-hand side equals the operator $\Psi_2\circ \Psi_1 \in \gamma(S;Y_2)$ and the
desired estimate
is noting but the right ideal property for the $\g$-radonifying norm.

The case $q=\infty$ now follows by an approximation argument. Suppose first
$\Psi_1\in
L^1\cap L^{\infty}(R,\gamma(S;Y_1))$ and $\Psi_2\in L^1\cap
L^{\infty}(R,\calL(Y_1,Y_2))$. 
By the above we have:
\begin{align*}
& \Big\n s\mapsto \int_R \Psi_2(r)\Psi_1(r,s)\,d\mu(r)\Big\n_{\gamma(S;Y_2)}\\
& \qquad \leq \lim_{q\uparrow \infty,q'\downarrow 1}\n \Psi_2
\n_{L^{q'}(S;\calL(Y_1,Y_2))} \n
\Psi_1\n_{L^q(R,\gamma(S;Y_1))} \\
& \qquad = \n \Psi_2 \n_{L^{\infty}(S;\calL(Y_1,Y_2))} \n
\Psi_1\n_{L^1(R,\gamma(S;Y_1))}.
\end{align*}\par

The result for general $\Psi_1\in
L^{\infty}(R,\gamma(S;Y_1))$ and $\Psi_2\in L^{1}(R,\calL(Y_1,Y_2))$ follows by 
approximation.
\end{proof}

The above lemma can be applied to prove the following generalization of
\cite[Proposition 4.5]{NVW08}.
\begin{lemma}\label{lem:h1}
Let $X_1$ and $X_2$ be \textsc{umd} Banach spaces. Let 
$(R,\mathcal{R},\mu)$ be a finite measure space and $(S,\mathcal{S},\nu)$ a
$\sigma$-finite measure space.   
Let $\Phi_1:[0,T]\times \Omega \rightarrow \calL(H,X_1)$, let $\Phi_2\in
L^1(R;\calL(X_1,X_2))$, 
and let  $f\in L^{\infty}(R\times [0,T];L^2(S))$. If $\Phi_1$ is
$L^p$-stochastically integrable for some $p\in (1,\infty)$, then
\begin{align*}
& \Big\n  s\mapsto \int_{0}^{T} \int_R f(r,u)(s)\Phi_2(r)\Phi_1(u)\, d\mu(r) 
\,dW_H(u)\Big\n_{L^p(\Omega;\gamma(S;X_2))}\\
& \quad \lesssim  \esssup_{(r,u)\in R\times
[0,T]} \n f(r,u)\n_{L^2(S)} \n \Phi_2 \n_{L^1(R,\calL(X_1,X_2))}  \n \Phi_1
\n_{L^p(\Omega;\gamma(0,T;H,X_1))},
\end{align*}
with implied depending only on  $p$, $X_1$, $X_2$, provided the right-hand side
is finite.
\end{lemma}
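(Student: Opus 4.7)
The strategy is to apply the Burkholder--Davis--Gundy estimate first in the \textsc{umd} space $\gamma(S;X_2)$, absorbing the stochastic integration, and then to handle the remaining deterministic integral over $R$ via the $\gamma$-multiplier theorem together with Kaiser--Weis applied to suitable rank-one operators.

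\emph{Step 1 (BDG in $\gamma(S;X_2)$).} Since $X_2$ is \textsc{umd} the space $\gamma(S;X_2)$ is \textsc{umd} as well, because it embeds isometrically into $L^2(\tilde\Omega;X_2)$. Introduce the operator-valued kernel $W(u)\in\calL(X_1,\gamma(S;X_2))$ defined by
$$W(u)x := \int_R f(r,u)(\cdot)\,\Phi_2(r)\,x\,d\mu(r),$$
so that the integrand in the statement equals $W(u)\Phi_1(u)\in\calL(H,\gamma(S;X_2))$. Applying Theorem~\ref{t:stochint} in $\gamma(S;X_2)$ yields
$$\n \Psi \n_{L^p(\Omega;\gamma(S;X_2))} \eqsim_p \n u\mapsto W(u)\Phi_1(u)\n_{L^p(\Omega;\gamma(0,T;H,\gamma(S;X_2)))}.$$

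\emph{Step 2 ($\gamma$-multiplier theorem).} I next apply the Kalton--Weis theorem (Theorem~\ref{t:KW}) to the deterministic family $\mathcal{M}:=\{W(u):u\in(0,T)\}$, which reduces matters to proving the $\gamma$-bound
$$\gamma_{[X_1,\gamma(S;X_2)]}(\mathcal{M}) \lesssim \n \Phi_2 \n_{L^1(R;\calL(X_1,X_2))} \cdot \esssup_{(r,u)\in R\times(0,T)} \n f(r,u) \n_{L^2(S)}.$$
Combined with $\n \Phi_1 \n_{L^p(\Omega;\gamma(0,T;H,X_1))}$ this gives exactly the claimed bound.

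\emph{Step 3 (Bounding $\gamma(\mathcal{M})$).} Factor $W(u)x = \int_R V_{r,u}\Phi_2(r)x\,d\mu(r)$, where $V_{r,u}\in\calL(X_2,\gamma(S;X_2))$ is the rank-one map $x_2\mapsto f(r,u)(\cdot)\otimes x_2$. By Theorem~\ref{t.KaiWei} (valid since $X_2$ is \textsc{umd} and therefore has finite cotype), the family $\{V_{r,u}:(r,u)\in R\times(0,T)\}$ is $\gamma$-bounded in $\calL(X_2,\gamma(S;X_2))$ with bound $\lesssim \esssup \n f(r,u) \n_{L^2(S)}$. For any finite $u_1,\dots,u_n\in(0,T)$ and $x^{(1)},\dots,x^{(n)}\in X_1$, Minkowski's integral inequality inside the Gaussian $L^2$-norm gives
$$\Bigl(\E\Bigl\|\sum_j\gamma_j W(u_j)x^{(j)}\Bigr\|_{\gamma(S;X_2)}^2\Bigr)^{1/2}\leq \int_R \Bigl(\E\Bigl\|\sum_j\gamma_j V_{r,u_j}\Phi_2(r)x^{(j)}\Bigr\|_{\gamma(S;X_2)}^2\Bigr)^{1/2}d\mu(r).$$
Applying the Kaiser--Weis bound on $\{V_{r,u}\}$ followed by the operator bound $\bigl\|\sum_j\gamma_j\Phi_2(r)x^{(j)}\bigr\|_{L^2(\Omega;X_2)}\leq \n \Phi_2(r) \n_{\calL(X_1,X_2)} \cdot \bigl\|\sum_j\gamma_j x^{(j)}\bigr\|_{L^2(\Omega;X_1)}$ yields the required $\gamma$-bound, and combining Steps 1--3 completes the proof.

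The main technical obstacle lies in Step~3: recognizing the correct factorization $W(u)=\int_R V_{r,u}\Phi_2(r)\,d\mu(r)$ and upgrading the pointwise Kaiser--Weis $\gamma$-bound on the rank-one family $\{V_{r,u}\}$ to a $\gamma$-bound on the $\mu$-averaged operators $W(u)$ via a Minkowski-inside-the-Gaussian-$L^2$-norm manipulation. Minor issues concerning the measurability of $(r,u)\mapsto V_{r,u}$ and the well-definedness of $W(u)$ as a Bochner integral into $\gamma(S;X_2)$ follow routinely from the integrability hypotheses on $f$ and $\Phi_2$.
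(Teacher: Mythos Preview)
Your proof is correct, but it follows a genuinely different route from the paper's.

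The paper proceeds in the opposite order: it first invokes the stochastic Fubini theorem to interchange $\int_R\ldots\,d\mu(r)$ and $\int_0^T\ldots\,dW_H(u)$, then passes via the isomorphism $L^p(\Omega;\gamma(S;X_2))\simeq\gamma(S;L^p(\Omega;X_2))$ and applies the auxiliary deterministic Lemma~\ref{lem:gDetInt} (with $q=\infty$, $Y_i=L^p(\Omega;X_i)$) to pull out the factor $\n\Phi_2\n_{L^1(R;\calL(X_1,X_2))}$. Only afterwards, for each fixed $r$, does it apply the Burkholder--Davis--Gundy estimate together with Kaiser--Weis to bound the remaining $\gamma$-norm of the stochastic integral by $\esssup\n f(r,u)\n_{L^2(S)}\n\Phi_1\n_{L^p(\Omega;\gamma(0,T;H,X_1))}$.

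By contrast, you apply BDG in $\gamma(S;X_2)$ at the outset and reduce everything to a single $\gamma$-boundedness estimate for the operators $W(u)=\int_R V_{r,u}\Phi_2(r)\,d\mu(r)$, which you then establish directly via Minkowski inside the Gaussian $L^2$-norm plus Kaiser--Weis. Your argument thus bypasses both the stochastic Fubini theorem and the auxiliary Lemma~\ref{lem:gDetInt}, at the cost of the (admittedly short) $\gamma$-bound computation in Step~3. The paper's route is more modular---Lemma~\ref{lem:gDetInt} is stated separately and could be reused---while yours is more self-contained and arguably more direct for this particular statement.
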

\begin{proof}
By \cite[Corollary 2.17]{KunWei}, for almost all $s\in S$ the family
$\{T_{s,u}: \ u\in [0,T]\}$ is $\gamma$-bounded 
in $\calL(X_1,X_2)$, where
$$ T_{s,u}x = \int_R f(r,u)(s)\Phi_2(r)x\, d\mu(r). $$ 
Hence, by the $\g$-multiplier theorem (Theorem \ref{t:KW}),  for almost
all $s\in S$
the function
$u\mapsto \int_R f(r,u)(s)\Phi_2(r)\Phi_1(u)\, d\mu(r)$ belongs to
$L^p_{\calF}(\Omega;\gamma(0,T;H,X_2))$.\par

Moreover, by Theorem \ref{t:KW} in combination
with Theorem \ref{t.KaiWei} (note that \textsc{umd}
Banach spaces have non-trivial cotype) we have, for almost all $r\in R$; 
\begin{align*}
u\mapsto (s\mapsto f(r,u)(s)\Phi_1(u))\in L^p_{\calF}(\Omega;\gamma(0,T;\gamma(S,X_1))).
\end{align*}
By the stochastic Fubini theorem, the isomorphism \eqref{eq:gFub} and Lemma
\ref{lem:gDetInt} (with $q=\infty$, $Y_1= L^p(\Omega;X_1)$ and $Y_2=L^p(\Omega,X_2)$, to
$\Psi(r,s)=\int_{0}^{T} f(r,u)(s)\Phi_1(u)\,dW_H(u)$
and $\Psi_2=\Phi_2$) we have:
\begin{equation}\label{lem:h1_h1}
\begin{aligned}
& \Big\n  s\mapsto \int_{0}^{T} \int_R f(r,u)(s)\Phi_2(r)\Phi_1(u)\, d\mu(r) 
\,dW_H(u)\Big\n_{L^p(\Omega;\gamma(S;X_2))}\\
& \qquad \eqsim \Big\n  s\mapsto \int_R \Phi_2(r) \int_{0}^{T} 
f(r,u)(s)\Phi_1(u)\, 
\,dW_H(u)d\mu(r) \Big\n_{\gamma(S;L^p(\Omega;X_2))}\\
& \qquad \lesssim   \n \Phi_2 \n_{L^1(R,\calL(X_1,X_2))}  \Big\n s\mapsto
\int_{0}^{T}  f(r,u)(s)\Phi_1(u)\,dW_H(u)
\Big\n_{L^{\infty}(R,\gamma(S;L^p(\Omega;X_1)))}.
\end{aligned}
\end{equation}
By isomorphism \eqref{eq:gFub}, Theorem \ref{t:stochint}, and Theorem \ref{t:KW} in combination
with Theorem \ref{t.KaiWei} we have, for almost all $r\in R$ with
implicit constants independent of $r$:
\begin{align*}
& \Big\n s\mapsto \int_{0}^{T}  f(r,u)(s)\Phi_1(u)\,dW_H(u)
\Big\n_{\gamma(S;L^p(\Omega;X_1))} \\
& \qquad \eqsim \n u \mapsto (s\mapsto  f(r,u)(s)\Phi_1(u))
\n_{L^p(\Omega;\gamma(0,T;\gamma(S;X_1)))}\\
& \qquad \leq \esssup_{u\in [0,T]} \n f(r,u)\n_{L^2(0,T)} \n \Phi_1
\n_{L^p(\Omega;\gamma(0,T;X_1))}.
\end{align*}
The result now follows by inserting the above estimate into \eqref{lem:h1_h1}.
\end{proof}

We proceed with two lemmas on Besov embeddings. 
The proof of the first lemma is closely related to the proof of \cite[Lemma
3.1]{NVW08}.
\begin{lemma}\label{lem:wBesovEst}
Suppose $Y$ is a Banach space with type $\tau\in [1,2)$, and let $\alpha\in [0,\inv{2})$ and $q\in
(2,\infty)$ satisfy
$\inv{q}<\inv{\tau}-\alpha$. Let $\Phi\in
B^{\inv{\tau}-\inv{2}}_{q,\tau}(0,T;Y)\cap L^{\infty}(0,T;Y)$ 
and, for $t\in [0,T]$, 
define $\Phi_{\alpha,t}:(0,t) \rightarrow Y$ by
$$\Phi_{\alpha,t}(s)=(t-s)^{-\alpha}\Phi(s).$$ Then there
exists an $\eps_0>0$
such that for all $T_0\in [0,T]$: 
\begin{align}\label{wBesovEmbed}
\sup_{0\leq t\leq T_0} \n \Phi_{\alpha,t}
\n_{B_{\tau,\tau}^{\inv{\tau}-\inv{2}}(0,t;Y)} & \lesssim 
T_0^{\eps_0}\n \Phi
\n_{L^{\infty}(0,T_0;Y)\cap B^{\inv{\tau}-\inv{2}}_{q,\tau}(0,T_0;Y)}.
\end{align}
\end{lemma}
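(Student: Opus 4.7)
The plan is to exploit the strict inequality $1/q < 1/\tau - \alpha$ by setting $1/r := 1/\tau - 1/q$, so that $\alpha < 1/r$ and $1/\tau = 1/r + 1/q$. Then $s\mapsto(t-s)^{-\alpha}$ lies in $L^r(0,t)$ with norm $\lesssim t^{1/r - \alpha}$, and H\"older's inequality with exponents $(r,q)$ transfers $L^q$- and $B^{1/\tau-1/2}_{q,\tau}$-type estimates for $\Phi$ into $L^\tau$- and $B^{1/\tau-1/2}_{\tau,\tau}$-type estimates for $\Phi_{\alpha,t}$, at the cost of a positive power of $t \le T_0$.

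First, I would bound the $L^\tau$-part of the Besov norm of $\Phi_{\alpha,t}$ directly by H\"older:
$$\n \Phi_{\alpha,t}\n_{L^\tau(0,t;Y)} \le \n (t-\cdot)^{-\alpha}\n_{L^r(0,t)}\,\n \Phi\n_{L^q(0,t;Y)} \lesssim T_0^{1/r - \alpha}\n \Phi\n_{L^q(0,T_0;Y)}.$$
For the seminorm, I would split, when $s,\,s+h\in(0,t)$,
$$\Phi_{\alpha,t}(s+h) - \Phi_{\alpha,t}(s) = (t-s-h)^{-\alpha}[\Phi(s+h) - \Phi(s)] + \big[(t-s-h)^{-\alpha} - (t-s)^{-\alpha}\big]\Phi(s).$$
Applying H\"older $(r,q)$ to the first summand and inserting the bound into the defining integral of the Besov seminorm, combined with the contractive restriction $B^{1/\tau-1/2}_{q,\tau}(0,T_0;Y) \embed B^{1/\tau-1/2}_{q,\tau}(0,t;Y)$ noted in Section \ref{sec:besov}, yields a contribution $\lesssim T_0^{1/r-\alpha}\n \Phi\n_{B^{1/\tau-1/2}_{q,\tau}(0,T_0;Y)}$. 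For the second summand, and for the boundary contribution coming from $s\in(0,t)$ with $s+h\notin(0,t)$ (where $T_h^{(0,t)}\Phi_{\alpha,t}(s)=0$), I would pull out $\n \Phi\n_{L^\infty(0,T_0;Y)}$ and prove the weight estimate $\lesssim |h|^{1/\tau-\alpha}$ in $L^\tau$: the mean value theorem controls the difference away from the singularity $s=t$, while a direct integration handles the singular piece, both valid because $\alpha\tau<1$ (which follows from $\alpha<\tfrac12\le 1/\tau$).

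The main obstacle is the final bookkeeping: converting the pointwise bound $|h|^{1/\tau-\alpha}$ on the last two pieces, when inserted into the defining integral $\int_0^1 \rho^{\tau/2-2}\sup_{|h|<\rho}(\cdot)^\tau\,d\rho$ of the Besov seminorm, into a factor of $T_0^{\epsilon_0}$ for some $\epsilon_0>0$. Using that $|h|\le t\le T_0$ and splitting the $\rho$-integral at $\rho=t$, the inner piece gives $\int_0^t \rho^{\tau(1/2-\alpha)-1}\,d\rho \lesssim t^{\tau(1/2-\alpha)}$ (convergent because $\tfrac12-\alpha>0$), while the outer piece gives $t^{1-\alpha\tau}\int_t^1 \rho^{\tau/2-2}\,d\rho \lesssim t^{\tau(1/2-\alpha)}$ (where $\tau<2$ is crucial, to avoid a logarithmic factor). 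Taking $\tau$-th roots yields a contribution $\lesssim t^{1/2-\alpha}\n \Phi\n_{L^\infty(0,T_0;Y)}$ from these pieces, and combining all the estimates proves \eqref{wBesovEmbed} with $\epsilon_0 := \min\{1/r-\alpha,\,1/2-\alpha\}>0$.
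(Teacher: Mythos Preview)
Your proof is correct and follows essentially the same route as the paper: both set $1/r = 1/q' = 1/\tau - 1/q$, decompose $T_h\Phi_{\alpha,t}-\Phi_{\alpha,t}$ into a weight-difference term (controlled via $\|\Phi\|_{L^\infty}$ with the bound $\lesssim |h|^{1/\tau-\alpha}$) and a translation-of-$\Phi$ term (controlled via H\"older $(r,q)$ and the $B^{1/\tau-1/2}_{q,\tau}$-seminorm of $\Phi$), and then integrate over $\rho$. The only cosmetic differences are that the paper works on $\R$ with zero extension and extracts the $T_0$-power by writing $|h|^{1/\tau-\alpha}\le |h|^{1/\tau-\alpha-\eps}T_0^{\eps}$ upfront (so the $\rho$-integral converges directly over $(0,1)$), whereas you work on $(0,t)$ and instead split the $\rho$-integral at $\rho=t$, using $\tau<2$ to handle the tail; also, the paper obtains the weight-difference bound via the elementary inequality $(a-b)^\tau\le a^\tau-b^\tau$ rather than the mean value theorem.
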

\begin{proof}
We shall in fact prove the following stronger
result, namely that there exists an $\eps_0>0$ such that for all $T_0\in [0,T]$:
\begin{align}\label{wBesovEmbedR}
\sup_{0\leq t\leq T_0} \n \Phi_{\alpha,t}
\n_{B_{\tau,\tau}^{\inv{\tau}-\inv{2}}(\R;Y)} 
& \lesssim 
T_0^{\eps_0}
\n \Phi
\n_{L^{\infty}(0,T_0;Y)\cap B^{\inv{\tau}-\inv{2}}_{q,\tau}(0,T_0;Y)}.
\end{align}
On the left-hand side above, we think of $\Phi_{\a,t}$ as being extended identically
zero outside the interval $(0,t)$.\par

Let $q'\in (1,\infty)$ be such that $\inv{q}+\inv{q'}=\inv{\tau}$. As we
assumed 
$\inv{q}<\inv{\tau}-\alpha$ it follows
that $\alpha q'<1$. Thus we can pick $\eps>0$ such that
$\eps<\min\{\frac12-\alpha,1-\alpha q'\}$.

Fix $t\in [0,T_0]$. Let $\rho\in (0,1]$ and let $0<h<\rho$ (we only consider the
case $h>0$; 
the case $h<0$ can be dealt with by observing that 
$\n T_h^\R f - f\n_{L^p(\R,Y)}=\n T_{-h}^\R f - f\n_{L^p(\R,Y)}$). First we
consider the case
that $h\leq t$. In that case we have:
\begin{align*}
&\n T_h^\R(\Phi_{\alpha,t})-\Phi_{\alpha,t} \n_{L^{\tau}(\R,Y)} \\
&\qquad \leq \n s\mapsto [(t-s-h)^{-\alpha}1_{[-h,t-h]}(s) -
(t-s)^{-\alpha}1_{[0,t-h]}(s)]\Phi(s+h)\n_{L^{\tau}(\R,Y)}\\
&\qquad \quad + \n s\mapsto
(t-s)^{-\alpha}1_{[0,t]}(s)[\Phi(s+h)1_{[0,t-h]}(s)-\Phi(s)]\n_{L^{\tau}(\R,Y)}
\\
&\qquad \leq \n s\mapsto [(t-s-h)^{-\alpha}1_{[-h,t-h]}(s) -
(t-s)^{-\alpha}1_{[0,t-h]}(s)]\n_{L^{\tau}(\R)} \n \Phi
\n_{L^{\infty}(0,T_0;Y)}\\
&\qquad \quad + \n s\mapsto (t-s)^{-\alpha}1_{[0,t]}(s)\n_{L^{q'}(\R,Y)}\n
T_h^\R(\Phi)1_{[0,t-h]}-\Phi \n_{L^q(0,t;Y)}.
\end{align*}
As $\alpha q' < 1$ we have:
\begin{align*}
\n s\mapsto (t-s)^{-\alpha}1_{[0,t]}(s)\n_{L^{q'}(\R,Y)}& \lesssim
T^{\inv{q'}-\alpha}.
\end{align*}
For $p\geq 1$ and $0\leq b\leq a$ one has $(a-b)^p\leq a^p-b^p$ and thus:
\begin{align*}
&\n s\mapsto [(t-s-h)^{-\alpha}1_{[-h,t-h]}(s) -
(t-s)^{-\alpha}1_{[0,t-h]}(s)]\n_{L^{\tau}(\R)}\\
&\qquad =\Big( \int_{-h}^{t-h} \big|(t-s-h)^{-\alpha}
- (t-s)^{-\alpha}1_{[0,t-h]}(s)\big|^{\tau}\,ds
\Big)^{\inv{\tau}} \\
&\qquad \leq \Big( \int_{-h}^{t-h} [(t-s-h)^{-\alpha\tau}
-(t-s)^{-\alpha\tau}1_{[0,t-h]}(s)] \,ds
\Big)^{\inv{\tau}}\\
&\qquad = (1-\alpha\tau)^{-\inv{\tau}} h^{\inv{\tau}-\alpha} 
\lesssim h^{\inv{\tau}-\alpha-\eps}T^{\eps},
\end{align*}
where the last inequality uses $ h\le t\le T_0$.\par

Putting together these estimates, 
\begin{align*}
& \n T_h^\R(\Phi_{\alpha,t})-\Phi_{\alpha,t} \n_{L^{\tau}(\R,Y)} 
\\ & \qquad  \le  h^{\inv{\tau}-\alpha-\eps}T_0^{\eps} \n \Phi
\n_{L^{\infty}(0,T_0;Y)} + T_0^{\inv{q'}-\alpha}\n T_h^\R(\Phi)1_{[0,t-h]}-\Phi
\n_{L^q(0,t;Y)}.
\\ & \qquad  =  h^{\inv{\tau}-\alpha-\eps}T_0^{\eps} \n \Phi
\n_{L^{\infty}(0,T_0;Y)} + T_0^{\inv{q'}-\alpha}\n T_h^I(\Phi)-\Phi \n_{L^q(0,t;Y)}.
\end{align*}

Next suppose $h>t$. In that case: 
\begin{align*}
\n T_h^\R(\Phi_{\alpha,t})-\Phi_{\alpha,t} \n_{L^{\tau}(\R,Y)} 
& = 2\n \Phi_{\alpha,t}
\n_{L^{\tau}(\R,Y)}\\
& \lesssim t^{\inv{\tau}-\alpha}\n \Phi \n_{L^{\infty}(0,T_0;Y)}
\leq h^{\inv{\tau}-\alpha-\eps}T_0^{\eps}\n \Phi \n_{L^{\infty}(0,T_0;Y)},
\end{align*}
this time using $t^{\inv{\tau}-\alpha}\le t^{\inv{\tau}-\alpha-\e}T_0^\e$ and
$t\le h$.

It follows that 
\begin{align*}
& \n \Phi_{\alpha,t} \n_{B_{\tau,\tau}^{\inv{\tau}-\inv{2}}(\R,Y)} \\
& \quad = \n \Phi_{\alpha,t}\n_{L^{\tau}(\R,Y)} + \Big(\int_{0}^{1} 
\rho^{-1+\frac{\tau}{2}}\sup_{|h|<\rho}\n
T_h^\R(\Phi_{\alpha,t})-\Phi_{\alpha,t} \n_{L^{\tau}(\R,Y)}^{\tau}
\frac{d\rho}{\rho}\Big)^{\inv{\tau}}\\
& \quad\lesssim T_0^{\inv{\tau}-\alpha} \n \Phi\n_{L^{\infty}(0,T_0;Y)} + T_0^{\frac{1}{q'}-\alpha}
\Big(\int_{0}^{1}\rho^{-1+\frac{\tau}{2}}\sup_{|h|<\rho}\n T_h^I(\Phi)-\Phi
\n_{L^{q}(0,t;Y)}^{\tau}
\frac{d\rho}{\rho}\Big)^{\inv{\tau}}\\
&\quad \qquad + T_0^{\eps} \Big(\int_{0}^{1} \rho^{\tau(\frac{1}{2} -\alpha -\e)}
\frac{d\rho}{\rho}\Big)^{\inv{\tau}}\n \Phi
\n_{L^{\infty}(0,T_0;Y)}\\
&\quad \lesssim (T_0^{\inv{\tau}-\alpha}\vee T_0^{\frac{1}{q'}-\alpha} \vee
T_0^\eps)\big( \n \Phi \n_{L^{\infty}(0,T_0;Y)} + \n \Phi
\n_{B^{\inv{\tau}-\inv{2}}_{q,\tau}(0,t;Y)}\big).
\end{align*}
This gives the result, noting that $T_0^{\alpha}\maxsym T_0^{\beta} \leq 
T_0^{\min\{\alpha,\beta\}}(1\maxsym T^{|\alpha-\beta|})$ for all $T_0\in [0,T]$.
\end{proof}\par
This lemma will be used to deduce the following estimate. 
\begin{lemma}\label{l:GgammaEst}
Let $X$ be a \textsc{umd} Banach space, $H$ a Hilbert space, and suppose $G:[0,T]\times
\Omega\rightarrow \calL(H,X)$ satisfies \MGII{} of Section \ref{sec:absEuler}. 
For all $0\le \alpha<\inv{2}$ and $\eps>0$ there for any $T>0$ there exists a 
constant $C>0$ such that for  
any $n\in \N$, and any sequence $(B_j)_{j=0}^{n}$
in $ L^p(\Omega;X)$, $p\in [2,\infty)$, we have:
\begin{align*}
& \sup_{0\leq t \leq T} \n s\mapsto
(t-s)^{-\alpha}\big[G(s,B_{\un{s}n/T}))-G(\un{s},B_{\un{s}n/T}))\big]\n_{L^p(\Omega;\gamma(0,
t;H,X_{\theta_G}))} \\
& \hskip5cm \leq Cn^{-\zeta_{\max}+\eps}\big(1+\sup_{0\leq j\leq n}\n B_j
\n_{L^p(\Omega;X)}\big).
\end{align*}
\end{lemma}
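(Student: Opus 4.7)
The plan is to use the Besov embedding \eqref{BesovEmbed} in combination with Lemma \ref{lem:wBesovEst}, which together transfer the weighted $\g$-radonifying norm on the left-hand side to a controlled sum of an $L^{\infty}$ norm and an unweighted $B^{\inv{\tau}-\inv{2}}_{q,\tau}$ norm. Set
\[
\Psi(s):=G(s,B_{\un{s}n/T})-G(\un{s},B_{\un{s}n/T})
\]
and $Y:=L^p(\O;\g(H,X_{\theta_G}))$, which has type $\tau$ (inherited from $X$). After using the Fubini-type isomorphism \eqref{eq:gFub} to move $L^p(\O)$ inside the $\g$-space, and recalling that $\theta_G\minsym 0=\theta_G$ under the convention in force in Section \ref{sec:absEuler}, the task reduces to establishing that
\[
\n \Psi\n_{L^{\infty}(0,T;Y)} + \n \Psi\n_{B^{\inv{\tau}-\inv{2}}_{q,\tau}(0,T;Y)} \lesssim n^{-\zeta_{\max}+\eps}\big(1+\sup_{0\le j\le n}\n B_j\n_{L^p(\Omega;X)}\big)
\]
for some $q\in (\tau,(\inv{\tau}-\alpha)^{-1})$ chosen in accordance with Lemma \ref{lem:wBesovEst}.

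The $L^{\infty}$-bound is immediate from \MGII{}: pointwise one has $\n \Psi(s)\n_Y\le C(s-\un{s})^{\zeta_{\max}+\inv{\tau}-\inv{2}}(1+\sup_j\n B_j\n_{L^p(\O;X)})$, and $s-\un{s}\le T/n$. For the Besov seminorm I plan to mimic the strategy used in Part 2d of the proof of Theorem \ref{t:euler}, splitting into the cases $|h|\le T/n$ and $|h|>T/n$. In the first case, $\Psi(s+h)-\Psi(s)$ reduces to $G(s+h,B_j)-G(s,B_j)$ whenever $s$ and $s+h$ lie in the same grid subinterval, so \MGII{} supplies a H\"older control of order $h^{\zeta_{\max}+\inv{\tau}-\inv{2}}$; on the exceptional set of measure at most $n|h|$ where $s$ and $s+h$ lie in different subintervals, the $L^{\infty}$ estimate already obtained yields the crude control $(T/n)^{\zeta_{\max}+\inv{\tau}-\inv{2}}$. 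In the second case $|h|>T/n$, the bound $\n T_h^I\Psi-\Psi\n_Y\le 2\n \Psi\n_{L^{\infty}(0,T;Y)}$ suffices. Combining these contributions in the $L^q$ norm and integrating against $\rho^{-\tau(\inv{\tau}-\inv{2})}\tfrac{d\rho}{\rho}$ produces the stated rate, with the factor $\eps$ absorbing the slack between $\inv{q}$ and $\inv{\tau}-\inv{2}$.

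The main obstacle will be to handle the interplay between the H\"older regularity of $G(\cdot,B_j)$ on each grid subinterval (of size $T/n$) and the jumps of $\Psi$ of magnitude $(T/n)^{\zeta_{\max}+\inv{\tau}-\inv{2}}$ at the grid points $t_j^{(n)}$: the naive $L^\tau$-based Besov seminorm is too crude to absorb the jump contribution, which is concentrated on a set of measure only $n|h|$. The use of $L^q$ with $q>\tau$ in Lemma \ref{lem:wBesovEst} is exactly tailored to this purpose, and it is this strict inequality $q>\tau$ that is responsible for the loss of $\eps$ in the final convergence rate.
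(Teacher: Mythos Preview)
Your proposal is correct and follows essentially the same route as the paper's own proof: reduce to an $L^{\infty}$ bound plus a $B^{\inv{\tau}-\inv{2}}_{q,\tau}$ seminorm via \eqref{eq:gFub}, the Besov embedding \eqref{BesovEmbed}, and Lemma \ref{lem:wBesovEst}; then split the seminorm estimate according to whether $|h|\le T/n$ or $|h|>T/n$, using \MGII{} on the good set and the $L^{\infty}$ bound on the exceptional set of measure $\le n|h|$. The only small inaccuracy is the range you give for $q$: the paper takes $q=(\inv{\tau}-\inv{2}+\eps)^{-1}$, i.e.\ $\inv{\tau}-\inv{2}<\inv{q}<\inv{\tau}-\alpha$ (and $q>2$ as required by Lemma \ref{lem:wBesovEst}), rather than $q\in(\tau,(\inv{\tau}-\alpha)^{-1})$.
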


\begin{proof} Without loss of generality we may assume $\eps < \inv{2}-\alpha$. Set
$q=(\inv{\tau}-\inv{2}+\eps)^{-1}$, so that
$\inv{\tau}-\inv{2}<\inv{q}<\inv{\tau}-\alpha$. For $s\in [0,T)$ define
$$\Phi(s):=
G(s,B_{\un{s}n/T}))-G(\un{s},B_{\un{s}n/T}).$$ Note that as $p\geq 2$, the type of
$L^p(\Omega,X_{\theta_G})$ is the same as the type of $X$. By embedding \eqref{BesovEmbed}, Lemma
\ref{lem:wBesovEst} and isomorphism \eqref{eq:gFub} we have:
\begin{equation}\label{GBesov}
\begin{aligned}
& \sup_{0\leq t \leq T} \n s\mapsto
(t-s)^{-\alpha}\Phi(s)\n_{\gamma(0,t;H,L^p(\Omega;X_{\theta_G}))} \\
& \qquad \qquad \lesssim \n
\Phi\n_{B^{\inv{\tau}-\inv{2}}_{q,\tau}(0,T;L^p(\Omega;\gamma(H,X_{\theta_G})))} + \n
\Phi\n_{L^{\infty}(0,T;L^p(\Omega;\gamma(H,X_{\theta_G})))}.
\end{aligned}
\end{equation}\par

By the H\"older assumption of \MGII{} we have:
\begin{equation}\label{GinftyEst}
\begin{aligned}
\n \Phi\n_{L^{\infty}(0,T;L^p(\Omega;\gamma(H,X_{\theta_G})))} & \lesssim
n^{-\zeta_{\max}-\inv{\tau}+\inv{2}}
\big( 1 + \sup_{0\leq j\leq n}\n B_j\n_{L^{p}(\Omega;X)}\big)\\
& = n^{-\zeta_{\max}-\inv{q}+\eps}\big( 1 + \sup_{0\leq j\leq n}\n B_j\n_{L^{p}(\Omega;X)}\big).
\end{aligned}
\end{equation}\par

In order to estimate the Besov norm on the right-hand side of \eqref{GBesov} 
we fix $\rho\in (0,1)$, and let $|h|< \rho$. We have, with $I =
[0,T]$, 
\begin{equation*}
\begin{aligned}
\ & \n T_h^I \Phi(s) - \Phi(s) \n_{L^p(\Omega;\gamma(H,\theta_G))} 
\\ & \leq \left\{ 
\begin{aligned}
& \n G(s+h,B_{\un{s}n/T}) - G(s,B_{\un{s}n/T})\n_{L^p(\Omega;\g(H,X_{\theta_G}))}, &&
\un{s+h}=\un{s}, \ s+h\in [0,T],\\
& 2\n \Phi\n_{L^{\infty}(0,T;L^p(\Omega;\gamma(H,X_{\theta_G})))}, && \textrm{otherwise.}
\end{aligned}
\right.
\end{aligned}
\end{equation*}\par

For $|h|\geq \frac{T}{n}$ one never has $\un{s+h}=\un{s}$ and thus it follows
from the above and \eqref{GinftyEst} that
\begin{align*}
\n T_h^I\Phi -\Phi\n_{L^q(0,T;L^p(\Omega;\gamma(H,X_{\theta_G})))} & \lesssim
n^{-\zeta_{\max}-\inv{q}+\eps}\big( 1 + \sup_{0\leq j\leq n}\n B_j\n_{L^p(\Omega;X)}\big)\\
& \lesssim |h|^{\inv{q}}n^{-\zeta_{\max}+\eps}\big( 1 + \sup_{0\leq j\leq n}\n
B_j\n_{L^p(\Omega;X)}\big).
\end{align*}
On the other hand, for $h< \frac{T}{n}$ and $\un{s+h} = \un{s}$ we obtain, by
\MGII{}:
\begin{align*}
&  \n G(s+h,B_{\un{s+h}n/T}) - G(s,B_{\un{s}n/T})\n_{L^p(\Omega;\g(H,X_{\theta_G}))} \\
& \qquad \qquad \lesssim  |h|^{\zeta_{\rm max} + \frac1\tau-\frac12} 
(1+ \n B_{\un{s}n/T}\n_{L^p(\Omega;X)}) \\
&  \qquad \qquad \le |h|^{\inv{q}}(\tfrac{T}{n})^{\zeta_{\rm max} +
\frac1\tau-\frac12-\inv{q}} \big(1+ \sup_{0\leq j\leq n}\n B_j\n_{L^p(\Omega;X)}\big)\\
&  \qquad \qquad \lesssim |h|^{\inv{q}} n^{-\zeta_{\rm max} + \eps}\big(1+ 
\sup_{0\leq j\leq n}\n B_j\n_{L^p(\Omega;X)}\big).
\end{align*}
For $|h|<\frac{T}{n}$ observe that $|\{ s\in [0,T]: \un{s+h} \neq
\un{s}\}|=n|h|$. Thus for $|h|<\frac{T}{n}$ we have, by the above estimate and
\eqref{GinftyEst}:
\begin{align*}
& \n T_h^I\Phi -\Phi\n_{L^q(0,T;L^p(\Omega;\gamma(H,X_{\theta_G}))} \\
& \qquad \lesssim
(T-n|h|)^{\inv{q}}|h|^{\inv{q}} n^{-z\eta_{\rm max} + \eps} \big(1+ 
\sup_{0\leq j\leq n}\n B_j\n_{L^p(\Omega;X)}\big)\\
& \qquad \quad +(n|h|)^{\inv{q}}n^{-\zeta_{\max}-\inv{q}+\eps}
\big(1+ \sup_{0\leq j\leq n}\n B_j\n_{L^p(\Omega;X)}\big)\\
& \qquad \lesssim |h|^{\inv{q}} n^{-\zeta_{\rm max} + \eps} 
\big(1+ \sup_{0\leq j\leq n}\n B_j\n_{L^p(\Omega;X)}\big). 
\end{align*}\par

Collecting these estimates we find:
\begin{align*}
\sup_{|h|< \rho} \n T_h^I\Phi -\Phi\n_{L^q(0,T;L^p(\Omega;\gamma(H,X_{\theta_G}))} &
\lesssim
\rho^{\inv{q}}n^{-\zeta_{\max}+\eps}
\big( 1 + \sup_{0\leq j\leq n}\n B_j \n_{L^p(\Omega;X)}\big).
\end{align*}
Because $\inv{q}>\inv{\tau}-\inv{2}$ it follows that
\begin{align*}
& \n \Phi\n_{B^{\inv{\tau}-\inv{2}}_{q,\tau}(0,T;L^p(\Omega;\gamma(H,X_{\theta_G})))}\\
&\qquad  \lesssim \n \Phi\n_{L^q(0,T;\gamma(H,L^p(\Omega;X_{\theta_G})))} + n^{-z\eta_{\max}+\eps}
\big(1+ \sup_{0\leq j\leq n}\n B_j\n_{L^p(\Omega;X)}\big)\\
& \qquad \lesssim n^{-z\eta_{\max}+\eps}\big(1+ \sup_{0\leq j\leq n}\n B_j\n_{L^p(\Omega;X)}\big).
\end{align*}
Inserting the above and \eqref{GinftyEst} into \eqref{GBesov} gives the required
result.
\end{proof}\par

The final lemma is an elementary calculus fact.
\begin{lemma}\label{lem:L2unif}
For all $0\leq \delta,\theta<\inv{2}$ there exists a constant $C$, depending
only on $\delta$ and 
$\theta$, such that for all  $0\leq u \leq t$, all $T>0$ and all $n\in \N$:
\begin{align*}
\int_{\un{u}}^{t} (t-s)^{-2\theta}(\ov{s}-\un{u})^{-2\delta} \,ds \leq C^2
(t+\tfrac{T}{n}-u)^{1-2\delta-2\theta}.
\end{align*}
\end{lemma}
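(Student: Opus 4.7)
My plan is to reduce the integral to a standard Beta-function integral by means of a uniform pointwise lower bound on $\ov{s}-\un{u}$. The two natural lower bounds available are $\ov{s}-\un{u}\ge \tfrac{T}{n}$ (because $\ov{s}\ge \un{u}+\tfrac{T}{n}$ whenever $s\ge\un{u}$, by the grid structure) and $\ov{s}-\un{u}\ge s-u$ (because $\ov{s}\ge s$ and $\un{u}\le u$). Averaging these two inequalities gives the single bound
\[
\ov{s}-\un{u}\ \ge\ \tfrac12\!\left(s-u+\tfrac{T}{n}\right)\qquad(s\in[\un{u},t]),
\]
which is the key technical input. Note that $s-u+\tfrac{T}{n}>0$ on this range since $\un{u}>u-\tfrac{T}{n}$.

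Using this bound and enlarging the interval of integration to $[u-\tfrac{T}{n},t]$ by positivity, the first step yields
\[
\int_{\un{u}}^{t}(t-s)^{-2\theta}(\ov{s}-\un{u})^{-2\delta}\,ds\ \le\ 2^{2\delta}\!\int_{u-\frac{T}{n}}^{t}(t-s)^{-2\theta}\!\left(s-u+\tfrac{T}{n}\right)^{-2\delta}\!ds.
\]
The second step is then the substitution $\sigma = s-u+\tfrac{T}{n}$, which, with $A:=t+\tfrac{T}{n}-u$, transforms the right-hand side into $2^{2\delta}\int_{0}^{A}(A-\sigma)^{-2\theta}\sigma^{-2\delta}\,d\sigma$. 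Rescaling $\sigma=A\rho$ and recognizing the Beta integral gives the value $A^{1-2\theta-2\delta}B(1-2\delta,1-2\theta)$, where $B(1-2\delta,1-2\theta)$ is finite precisely because $\delta,\theta<\tfrac12$. Setting $C^{2}=2^{2\delta}B(1-2\delta,1-2\theta)$, which depends only on $\delta$ and $\theta$, completes the proof.

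The only non-routine point is the pointwise inequality in the first step: one must verify it both when $s\in[\un{u},\ov{u}]$ (where $\ov{s}-\un{u}=\tfrac{T}{n}$ and $s-u\in[-\tfrac{T}{n},\tfrac{T}{n})$) and when $s\in[\ov{u},t]$ (where $s-u$ may be much larger than $\tfrac{T}{n}$). The averaging trick handles both regimes uniformly, and the remaining computation is entirely routine.
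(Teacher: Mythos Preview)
Your proof is correct. The averaging inequality $\ov{s}-\un{u}\ge\tfrac12(s-u+\tfrac{T}{n})$ holds on $[\un{u},t]$ exactly as you say, the enlargement of the domain of integration to $[u-\tfrac{T}{n},t]$ is legitimate since the bounding integrand is nonnegative there, and the Beta-integral computation is routine.

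Your argument differs from the paper's in that the paper splits into two cases according to whether $t-\un{u}\le\tfrac{T}{n}$ or $t-\un{u}>\tfrac{T}{n}$: in the first case $\ov{s}-\un{u}=\tfrac{T}{n}$ is constant and the integral is evaluated directly, while in the second case the paper uses the single bound $\ov{s}-\un{u}\ge s-\un{u}$ and then a substitution $v=(s-\un{u})/(t-\un{u})$ to reach the Beta integral. Your averaging trick replaces this case distinction by a single uniform pointwise bound, at the minor cost of an extra factor $2^{2\delta}$ in the constant. The result is a shorter and arguably cleaner argument; the paper's version has the modest advantage that in each case the bound used is sharp (no factor $\tfrac12$), but this is irrelevant for the lemma as stated.
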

\begin{proof}
If $t-\un{u}\le\tfrac{T}{n}$, then for $s\in [\un{u},t)$ one has
$\ov{s}-\un{u}=\ov{u}-\un{u}=\tfrac{T}{n}$ so
\begin{align*}
\int_{\un{u}}^{t} (t-s)^{-2\theta}(\ov{s}-\un{u})^{-2\delta}\,ds &=
(1-2\theta)^{-1}
(\tfrac{T}{n})^{-2\delta}(t-\un{u})^{1-2\theta}.
\end{align*}
Note that $\tfrac{T}{n}\ge \inv{2}(t+\tfrac{T}{n}-u)$
 and 
$(t-\un{u})^{1-2\theta}\leq (t+\tfrac{T}{n}-u)^{1-2\theta}$. Thus: 
\begin{align*}
\int_{\un{u}}^{t} (t-s)^{-2\theta}(\ov{s}-\un{u})^{-2\delta}\,ds &\leq
2^{2\delta} (1-2\theta)^{-1}
(t+\tfrac{T}{n}-u)^{1-2\delta-2\theta}.
\end{align*}\par

On the other hand if $t-\un{u}>\frac{T}{n}$ then ${t-\un{u}} < {t+T/n-u} <
2({t-\un{u}})$.
 Moreover, 
$\ov{s}-\un{u} \geq s-\un{u},$ and the substitution
$v={(s-\un{u})}/{(t-\un{u})}$ gives:
\begin{align*}
\int_{\un{u}}^{t} (t-s)^{-2\theta}(\ov{s}-\un{u})^{-2\delta}\,ds &\leq
\int_{\un{u}}^{t}
(t-s)^{-2\theta}(s-\un{u})^{-2\delta}\,ds\\
&  \leq (t-\un{u})^{1-2\delta-2\theta} \int_{0}^{1}
(1-v)^{-2\theta}v^{-2\delta}\,dv \\
& \leq
2^{(2\delta+2\theta-1)^+}(t+\tfrac{T}{n}-u)^{1-2\delta-2\theta}\int_{0}^{1}
(1-v)^{-2\theta}v^{-2\delta}\,dv.
\end{align*}
\end{proof}

\section{Estimates for stochastic convolutions}\label{app:1b}
We shall present two estimates for stochastic convolutions.
Throughout this section, $Y$ is a \textsc{umd} Banach space and $\tau\in (1,2]$ denotes its type.
Moreover, $S$ is an analytic semigroup on $Y$.
 
Roughly speaking, Lemma \ref{lem:detConv} is contained in Step 2 of the
proof of \cite[Proposition 6.1]{NVW08}, but there the space $\Vapc([0,T]\times
\Omega;X)$ is considered (see \eqref{eq:Vapc}). For completeness we give the proof below.

\begin{lemma}\label{lem:detConv}
Let $\delta\in (-\frac{3}{2}+\inv{\tau},\infty)$, 
$\alpha\in [0,\inv{2})$, and $p\in [2,\infty)$. For all $\Phi\in
L^{\infty}(0,T;L^p(\Omega;Y_{\delta}))$, the convolution $S*\Phi$ belongs 
to $\Winfp{\alpha}([0,T]\times \Omega;Y)$, and for all
$T_0\in [0,T]$ we have:
\begin{align*}
\n S*\Phi \n_{\Winfp{\alpha}([0,T_0]\times \Omega;Y)} & \lesssim
(T_0^{1+(\delta\minsym 0)}+T_0^{\inv{2}-\alpha})\n \Phi
\n_{L^{\infty}(0,T_0;L^p(\Omega;Y_{\delta}))}.
\end{align*}
\end{lemma}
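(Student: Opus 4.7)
The plan is to estimate separately the two summands that define the $\Winfp{\alpha}([0,T_0]\times\Omega;Y)$ norm of $S\ast\Phi$, namely $\n S\ast\Phi\n_{L^\infty(0,T_0;L^p(\Omega;Y))}$ and $\sup_{t\in[0,T_0]}\n s\mapsto (t-s)^{-\alpha}(S\ast\Phi)(s)\n_{L^p(\Omega;\gamma(0,t;Y))}$. For the first, I would apply Minkowski's inequality in conjunction with \eqref{analyticDiff} (when $\delta<0$) and the continuous inclusion $Y_\delta\hookrightarrow Y$ (when $\delta\geq 0$) to get $\n S(t-u)\n_{\calL(Y_\delta,Y)}\lesssim (t-u)^{\delta\wedge 0}$. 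The hypothesis $\delta>-\tfrac32+\tinv{\tau}$ forces $\delta\wedge 0>-1$, so this kernel is integrable at $u=t$, and integration in $u$ produces the desired factor $T_0^{1+(\delta\wedge 0)}\n\Phi\n_{L^\infty(0,T_0;L^p(\Omega;Y_\delta))}$.

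For the weighted $\gamma$-summand the plan is to reduce matters to a H\"older continuity statement for $\Psi:=S\ast\Phi$. Since $p\geq 2$ and $Y$ has type $\tau$, the space $L^p(\Omega;Y)$ also has type $\tau$; together with the isomorphism \eqref{eq:gFub} this allows the Besov embedding \eqref{BesovEmbed} to be used in the mixed setting. Combining it with Lemma \ref{lem:wBesovEst} (with this $\Psi$ and a parameter $q\in(\tau,\infty)$ satisfying $\tinv{q}<\tinv{\tau}-\alpha$) yields a bound of the form
\begin{align*}
\sup_{t\in[0,T_0]}\bigl\n s\mapsto (t-s)^{-\alpha}\Psi(s)\bigr\n_{\gamma(0,t;L^p(\Omega;Y))}
\lesssim T_0^{\frac12-\alpha-\epsilon}\bigl(\n\Psi\n_{L^\infty(0,T_0;L^p(\Omega;Y))}
+ \n\Psi\n_{B^{1/\tau-1/2}_{q,\tau}(0,T_0;L^p(\Omega;Y))}\bigr).
\end{align*}
The $L^\infty$-term is already controlled by Step~1, so the real task is the Besov norm.

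The main obstacle is accordingly this Besov estimate, which I would derive from H\"older continuity of $\Psi$. Writing, for $0\leq s<s+h\leq T_0$,
\begin{align*}
\Psi(s+h)-\Psi(s) = (S(h)-I)\int_0^s S(s-u)\Phi(u)\,du + \int_s^{s+h} S(s+h-u)\Phi(u)\,du,
\end{align*}
and using an interpolated version of \eqref{analyticSGIdiff} on the first term (losing a fractional power of $s-u$ to gain $h^\theta$), coupled with the integrability argument of Step~1, and \eqref{analyticDiff} on the second term as in Step~1, one arrives at
\begin{align*}
\n\Psi(s+h)-\Psi(s)\n_{L^p(\Omega;Y)} \lesssim h^\theta\n\Phi\n_{L^\infty(0,T_0;L^p(\Omega;Y_\delta))}
\end{align*}
for any $\theta<1+(\delta\wedge 0)$. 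The hypothesis $\delta>-\tfrac32+\tinv{\tau}$ is precisely what makes the interval $(\tinv{\tau}-\tfrac12,\,1+(\delta\wedge 0))$ non-empty, so one can pick $\theta$ strictly above $\tinv{\tau}-\tfrac12$; this yields a continuous embedding $C^\theta([0,T_0];L^p(\Omega;Y))\hookrightarrow B^{1/\tau-1/2}_{q,\tau}([0,T_0];L^p(\Omega;Y))$ whose constant carries an additional positive power of $T_0$. Feeding this back into the previous display and adding the $L^\infty$-bound from Step~1 produces the two-term bound $T_0^{1+(\delta\wedge 0)}+T_0^{1/2-\alpha}$ and completes the proof.
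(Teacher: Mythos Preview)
Your approach is correct but takes a genuinely different route from the paper's for the weighted $\gamma$-summand. The paper disposes of that part in one line by invoking \cite[Proposition~3.5]{NVW08} with state space $L^p(\Omega;Y)$, $\eta=0$, $\theta=-\delta$, which directly yields
\[
\sup_{0\le t\le T_0}\bigl\|s\mapsto (t-s)^{-\alpha}(S\ast\Phi)(s)\bigr\|_{\gamma(0,t;L^p(\Omega;Y))}\lesssim T_0^{\,1/2-\alpha}\,\|\Phi\|_{L^\infty(0,T_0;L^p(\Omega;Y_\delta))}.
\]
Your argument instead stays internal to the paper: Besov embedding \eqref{BesovEmbed} plus Lemma~\ref{lem:wBesovEst} reduce matters to a H\"older estimate for $\Psi=S\ast\Phi$, which you then obtain from the standard splitting and \eqref{analyticDiff}--\eqref{analyticSGIdiff}. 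This is exactly the machinery the paper deploys elsewhere (e.g.\ Parts~2c--2e of Theorem~\ref{t:euler}) for integrands that are \emph{not} genuine convolutions; here it is more laborious than necessary because the convolution structure already gives a one-step answer.

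One point to tighten: Lemma~\ref{lem:wBesovEst} as stated only furnishes \emph{some} $T_0^{\eps_0}$, not $T_0^{\,1/2-\alpha-\eps}$; to recover the precise exponent $T_0^{\,1/2-\alpha}$ in the final bound you must track the powers of $T_0$ coming from the H\"older seminorm of $\Psi$ (which carries a factor $T_0^{\,1+(\delta\wedge 0)-\theta}$) and combine it with $\eps_0$. The arithmetic does close --- the hypothesis $\delta>-\tfrac32+\tfrac1\tau$ is exactly what makes $\eps_0+1+(\delta\wedge 0)-\theta>\tfrac12-\alpha$ achievable for $\theta$ just above $\tfrac1\tau-\tfrac12$ --- but your sketch asserts this without doing the bookkeeping. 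The paper's route sidesteps this entirely by citing the ready-made convolution estimate.
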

\begin{proof}
By analyticity of the semigroup (equation \eqref{analyticDiff}) we have, for $t\in [0,T_0]$:
\begin{align*}
\n (S*\Phi)(t) \n_{L^p(\Omega;Y)} & \lesssim \int_{0}^{t} (t-s)^{\delta\minsym 0}\,ds \n \Phi
\n_{L^{\infty}(0,T_0;L^p(\Omega;Y_{\delta}))}\\
& \leq T_0^{1+(\delta\minsym 0)}\n \Phi
\n_{L^{\infty}(0,T_0;L^p(\Omega;Y_{\delta}))}.
\end{align*}
Taking the supremum over $t\in [0,T_0]$ gives the estimate in $L^{\infty}(0,T_0;L^p(\Omega,Y))$.\par
It remains to prove the estimate in the weighted $\gamma$-norm. Fix $t\in [0,T_0]$. As $p\geq 2$, 
it follows that $L^p(\Omega,Y)$ has type $\tau\in [1,2]$ whenever $Y$ has type $\tau$. Moreover, 
if we interpret $A$ as an operator on $L^p(\Omega,Y)$ acting pointwise, 
then $(L^p(\Omega,Y))_{\delta}=L^p(\Omega,Y_{\delta})$. Thus by \cite[Proposition 3.5]{NVW08} 
with $E=L^p(\Omega,Y)$, $\eta=0$, and $\theta=-\delta$ we have, as $\delta>-\frac{3}{2}+\inv{\tau}$;
\begin{align*}
\n s\mapsto (t-s)^{-\alpha}(S*\Phi)(s)\n_{\gamma(0,t;L^p(\Omega,Y))} 
& \lesssim T_0^{\inv{2}-\alpha}\n \Phi\n_{L^\infty(0,T_0;L^p(\Omega;Y_{\delta}))}.
\end{align*}
Taking the supremum over $t\in [0,T_0]$ gives the desired estimate.
\end{proof}\par

We proceed with the second Lemma.
\begin{lemma}\label{lem:stochConv}
Let $\delta\in (-\inv{2},\infty)$ and $\alpha\in [0,\inv{2})$. Suppose 
$\Phi: [0,T]\times \O\to \calL(H,Y_\d)$
is strongly measurable and adapted and satisfies 
\begin{equation}\label{lem_stochConv_cond}
\sup_{0\leq t\leq T}\n s\mapsto (t-s)^{-\alpha} \Phi(s)
\n_{L^p(\Omega;\gamma(0,T;H,Y_{\delta}))}<\infty,
\end{equation}
for some $p\in (1,\infty)$.
\begin{itemize}
 \item [(i)]
If $0\le \beta < \min\{\inv{2}-\alpha,\inv{2}+\delta\}$, then there exists an $\epsilon>0$ such that
for all $T_0\in [0,T]$:
\begin{align*}
& \sup_{0\leq t\leq T_0}\n s\mapsto (t-s)^{-\alpha-\beta} 
\int_0^{s} S(s-u)\Phi(u)\, dW_H(s) \Big\n_{L^p(\Omega;\gamma(0,t;Y))}\\
& \qquad \qquad \lesssim T_0^{\epsilon}\sup_{0\leq t\leq T_0}\n s\mapsto (t-s)^{-\alpha} \Phi(s)
\n_{L^p(\Omega;\gamma(0,t;H,Y_{\delta}))}.
\end{align*}\par
\item[(ii)] If, moreover, $\alpha>-\delta$, then 
there exists an $\epsilon>0$ such that for all $T_0\in [0,T]$:
\begin{align*}
&\Big\n s\mapsto \int_{0}^{s} S(s-u)\Phi(u)\,dW_H(u)
\Big\n_{\Winfp{\alpha+\beta}([0,T_0]\times \Omega;Y)} \\
& \qquad\qquad \lesssim T_0^{\epsilon}\sup_{0\leq t\leq T_0}\n s\mapsto
(t-s)^{-\alpha} \Phi(s)
\n_{L^p(\Omega;\gamma(0,t;H,Y_{\delta}))}.
\end{align*}
\end{itemize}
\end{lemma}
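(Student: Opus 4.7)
The natural approach is the Da Prato-Kwapień-Zabczyk factorization method. Fix $\rho\in(0,1)$ (to be chosen below) and write
\begin{align*}
\int_0^s S(s-u)\Phi(u)\,dW_H(u) = \frac{\sin(\pi\rho)}{\pi}\int_0^s (s-r)^{\rho-1}S(s-r)Z(r)\,dr,
\end{align*}
where
\begin{align*}
Z(r) := \int_0^r (r-u)^{-\rho}S(r-u)\Phi(u)\,dW_H(u).
\end{align*}
This splits the problem into a purely stochastic piece $Z$, which can be handled by the BDG estimate of Theorem \ref{t:stochint} together with the $\gamma$-multiplier theorem (Theorem \ref{t:KW}) and Lemma \ref{lem:analyticRbound}, and a singular deterministic convolution, which can be handled by an analogue of Lemma \ref{lem:detConv}.

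For part (i), I would first estimate $Z(r)$ in $L^p(\Omega;Y_\theta)$ for an auxiliary index $\theta\geq\delta$ slightly below $\alpha+\beta$. By Theorem \ref{t:stochint}, the $\gamma$-multiplier theorem, and Lemma \ref{lem:analyticRbound}(1) applied to the $\gamma$-bounded family $\{(r-u)^{\mu}S(r-u):u\in(0,r)\}\subset\calL(Y_\delta,Y_\theta)$ (valid for $\mu\geq(\theta-\delta)\vee 0$, with $\gamma$-bound $\lesssim r^{\mu-(\theta-\delta)}$), one obtains
\begin{align*}
\n Z(r)\n_{L^p(\Omega;Y_\theta)} \lesssim r^{\mu-(\theta-\delta)}\n u\mapsto (r-u)^{-\rho-\mu}\Phi(u)\n_{L^p(\Omega;\gamma(0,r;H,Y_\delta))}.
\end{align*}
Choosing $\mu = \alpha-\rho$ reveals exactly the quantity controlled by the hypothesis on $\Phi$. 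Then I would estimate the outer deterministic singular convolution, weighted by $(t-s)^{-\alpha-\beta}$, in the weighted $\gamma$-space along the lines of Lemma \ref{lem:detConv} (with state space $Y_\theta$ and smoothing exponent $\theta$). A small slack in the choice of exponents produces the $T_0^\varepsilon$ factor, analogously to Parts 1c and 1e of the proof of Theorem \ref{thm:convV}.

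Part (ii) follows from part (i) once one additionally bounds the $L^\infty(0,T_0;L^p(\Omega;Y))$-component of the $\Winfp{\alpha+\beta}$-norm. For this, I would apply Theorem \ref{t:stochint} pointwise in $s$ to get
\begin{align*}
\Big\n\int_0^s S(s-u)\Phi(u)\,dW_H(u)\Big\n_{L^p(\Omega;Y)} \lesssim \n u\mapsto S(s-u)\Phi(u)\n_{L^p(\Omega;\gamma(0,s;H,Y))},
\end{align*}
and then invoke Theorem \ref{t:KW} together with Lemma \ref{lem:analyticRbound}(1) for the family $\{(s-u)^{\nu}S(s-u):u\in(0,s)\}\subset\calL(Y_\delta,Y)$ with $\nu\in(-\delta,\alpha)$. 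The interval $(-\delta,\alpha)$ is nonempty precisely under the extra hypothesis $\alpha>-\delta$, which is why this condition appears in (ii) but not in (i).

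The main obstacle will be the bookkeeping of the several exponents $\alpha,\beta,\delta,\rho,\theta,\mu$ together with the slack $\varepsilon$. Concretely, one needs $\rho\in(0,1)$ together with $\rho<\beta+\tfrac12$ (so that the outer convolution absorbs the weight $(t-s)^{-\alpha-\beta}$ into an integrable singularity), $\mu+\rho=\alpha$ with $\mu\geq(\theta-\delta)\vee 0$ (so that the $\gamma$-multiplier step for $Z$ closes), and $\theta<\alpha+\beta+\tfrac12$ (so that the outer convolution defines a bounded map into the target weighted $\gamma$-space via an analogue of Lemma \ref{lem:detConv}). Verifying that these constraints are simultaneously satisfiable is exactly where the hypothesis $\beta<\min\{\tfrac12-\alpha,\tfrac12+\delta\}$ gets used.
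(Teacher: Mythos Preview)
Your route via the Da Prato--Kwapie\'n--Zabczyk factorization is genuinely different from the paper's, and for part~(i) it has a gap that the paper's argument avoids.

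The paper does not factorize. For part~(i) it writes, with $\nu:=\delta^{-}+\epsilon$ for small $\epsilon>0$,
\[
S(s-u)=(s-u)^{-\nu}\int_0^{s-u}\tfrac{d}{dr}\bigl[r^{\nu}S(r)\bigr]\,dr,
\]
inserts this into the stochastic convolution, and applies Lemma~\ref{lem:h1} in a single step with $R=S=[0,t]$, $\Phi_1(u)=(t-u)^{-\alpha}\Phi(u)$, $\Phi_2(r)=\tfrac{d}{dr}[r^{\nu}S(r)]$, and $f(r,u)(s)=(t-s)^{-\alpha-\beta}(s-u)^{-\nu}(t-u)^{\alpha}\,1_{\{0\le r\le s-u\}}$. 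A Beta integral gives $\sup_{r,u}\n f(r,u)\n_{L^2(0,t)}\lesssim t^{\frac12-\beta-\delta^{-}-\epsilon}$, while analyticity gives $\n\Phi_2(r)\n_{\calL(Y_\delta,Y)}\lesssim r^{-1+\epsilon}$, so $\n\Phi_2\n_{L^1}\lesssim t^{\epsilon}$; the product yields the bound $t^{\frac12-\beta-\delta^-}$ for every $\alpha\in[0,\tfrac12)$. For part~(ii) the remaining $L^{\infty}(0,T_0;L^p(\Omega;Y))$-estimate is obtained exactly as you propose, via Theorem~\ref{t:stochint} and the $\gamma$-multiplier theorem applied to $\{r^{\alpha}S(r):r\in[0,t]\}\subset\calL(Y_\delta,Y)$, whose $\gamma$-bound is $\lesssim t^{\alpha+\delta}$ by Lemma~\ref{lem:analyticRbound}; this is precisely where $\alpha>-\delta$ enters.

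Your factorization approach, as written, has two weak points. First, the balance $\rho+\mu=\alpha$ together with $\rho\in(0,1)$ and $\mu>(\theta-\delta)\vee 0$ forces $\alpha>0$; but the lemma is stated for all $\alpha\in[0,\tfrac12)$, and the hypothesis at $\alpha=0$ does not imply the hypothesis at any positive~$\alpha'$, so this case cannot be recovered by monotonicity. Second, the outer singular convolution with kernel $(s-r)^{\rho-1}S(s-r)$ is not covered by Lemma~\ref{lem:detConv}, which treats only the nonsingular convolution $S*\Phi$; you would need a separate weighted-$\gamma$ estimate for such singular deterministic convolutions before your bookkeeping can close. The paper's one-shot use of Lemma~\ref{lem:h1} introduces no splitting parameter~$\rho$, hence no hidden lower bound on~$\alpha$, and requires only the elementary $L^2$-computation for~$f$ as its auxiliary ingredient.
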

\begin{proof}
Fix $t\in [0,T_0]$. Let $\epsilon>0$ be such that $\epsilon<\inv{2}-\delta^- -\beta$. Here
$\delta^-= (-\d)\vee 0$.
We
apply Lemma \ref{lem:h1} with $X_1=Y_\delta$, $X_2=Y$, $R=S= [0,t]$,
and the functions
$\Phi_1(u)=(t-u)^{-\alpha}\Phi(u)$, $\Phi_2(r) =
\frac{d}{dr}[r^{\delta^{-}+\epsilon}S(r)]$, 
and $f(r,u)(s)=
(t-s)^{-\alpha-\beta}(s-u)^{-\delta^- -\epsilon}(t-u)^{\alpha}1_{0\leq r\leq s-u}$.
By \eqref{analyticDiff} we have $\n \Phi_2(r)\n_{\calL(X_{\delta},X)} \lesssim r^{-1+\eps}$ for
$r\in [0,T]$. From the lemma it follows that:
\begin{align*}
& \Big\n s\mapsto
(t-s)^{-\alpha-\beta}\int_{0}^{s} S(s-u)\Phi(u)\,dW_H(u)\Big\n_{L^p(\Omega;\gamma(0,
t;Y))} \\
&\qquad  \lesssim t^{\frac12-\b-\delta^-}\n s\mapsto
(t-s)^{-\alpha}\Phi(s)\n_{L^p(\Omega;\gamma(0,t;H,Y_\delta))}.
\end{align*}
Taking the supremum over $t\in[0,T_0]$ we obtain (i).\par

For the estimate in $\Winfp{\alpha+\beta}$-norm it remains, by part (i), to prove the estimate in
$L^{\infty}(0,T_0;L^p(\Omega,Y_{\delta}))$. Let $\epsilon <
\min\{\alpha+\delta,\inv{2}-\delta^{-}-\beta\}$.
By Lemma \ref{lem:analyticRbound} (apply part (1) if $\d\in (-\frac12, 0]$ and part (2) 
if $\d\in [0,\infty)$) the operators $r^\a S(r)$, $r\in [0,t]$, are $\g$-bounded from $Y_\d$ to $Y$, 
with $\g$-bound at most $C t^{\a+\delta}$ with $C$ independent of $t\in [0,T]$. 
Hence, by the $\g$-multiplier theorem, for all $t\in [0,T]$,
\begin{align*}
\Big\n \int_{0}^{t} S(t-s)\Phi(s)\,dW_H(s)
\Big\n_{L^p(\Omega;Y)} \lesssim t^{\a+\delta}\n s\mapsto (t-s)^{-\alpha}
\Phi(s)
\n_{L^p(\Omega;\gamma(0,t;H,Y_{\delta}))}.
\end{align*}
The norm estimate in
$L^{\infty}(0,T;L^p(\Omega;Y_{\delta}))$ is obtained by
taking the supremum over $t\in [0,T]$.\par
\end{proof}
\section{Existence and uniqueness}\label{app:2}
The aim of this section is to outline the proof of Theorem \ref{thm:NVW08}.
The setting is always that of Section \ref{sec:prelim}.

\begin{proof}[Proof of Theorem \ref{thm:NVW08}]
Assume first that $\alpha\in [0,\inv{2})$ is so large that 
 $\alpha+\theta_G>\eta$. Let $p\in [2,\infty)$ and $T_0\in [0,T]$ be fixed. 
For $\Phi\in \Winfp{\alpha}([0,T_0]\times \Omega;X_\eta)$ define
\begin{align*}
L(\Phi)(t)&:= S(t)x_0 + \int_{0}^{t}S(t-s)F(s,\Phi(s))\,ds +
\int_{0}^{t}S(t-s)G(s,\Phi(s))\,dW_H(s).
\end{align*}\par
Copying Step 1 of the proof of \cite[Proposition 6.1]{NVW08}
without changes, and substituting Steps 2 and 3 by the Lemmas \ref{lem:detConv}
and \ref{lem:stochConv} above, we find that there exists an
$\eps_0>0$ and a $C>0$ such that $L:\Winfp{\alpha}([0,T_0]\times\Omega;X_\eta) \rightarrow
\Winfp{\alpha}([0,T_0]\times\Omega;X_\eta)$
and
\begin{align*}
\n L(\Phi)\n_{\Winfp{\alpha}([0,T_0]\times \Omega;X_\eta)}
& \leq C\n x_0 \n_{X} + CT^{\eps_0}\n
F(\cdot,\Phi(\cdot))\n_{L^{\infty}(0,T_0;L^p(\Omega;X_{\theta_F}))}
\\
& \quad+ CT^{\eps_0}\sup_{0\leq t\leq T_0}\n s\mapsto
(t-s)^{-\alpha} G(s,\Phi(s))\n_{L^p(\Omega;\gamma(0,t;X_{\theta_G}))}\\
&\leq C\n x_0 \n_{X} \\
& \quad + C (M(F) + M(G)) T_0^{\eps_0}(1+\n\Phi
\n_{\Winfp{\alpha}([0,T_0]\times\Omega;X_\eta)}),
\end{align*}
where in the last line we used \MF{} and \eqref{GLipschitzV1}. Moreover,
\begin{align*}
& \n L(\Phi_1)-L(\Phi_2)\n_{\Winfp{\alpha}([0,T_0]\times \Omega;X_\eta)}\\
& \qquad \leq C (\textrm{Lip}(F) + \textrm{Lip}_{\gamma}(G))
T_0^{\eps_0}\n \Phi_1 - \Phi_2 \n_{\Winfp{\alpha}([0,T_0]\times\Omega;X_\eta)},
\end{align*}
where in the last line we used \eqref{GLipschitzV2}.

Thus by a fixed-point argument, for sufficiently small $T_0$ 
there exists a unique process $\Phi\in \Winfp{\alpha}([0,T_0]\times\Omega;X_\eta)$
satisfying \eqref{SDE_sol} on the interval $[0,T_0]$.
By repeating this construction a finite number of times, each time
taking the final value of the previous step as the initial value of the next,  
we obtain a solution on $[0,T]$.\par

So far, we have proved existence and uniqueness under the additional assumption 
$\alpha+\theta_G>\eta$. Existence in
$\Winfp{\alpha}([0,T]\times \Omega;X_\eta)$ for arbitrary $\alpha\in [0,\inv{2})$ 
follows by from \eqref{Vchange-of-alpha}. It remains to prove uniqueness 
for arbitrary $\alpha\in [0,\inv{2})$.\par 

Let $\a\in [0,\inv{2})$ be arbitrary and let $\Phi\in \Winfp{\a}([0,T]\times \Omega;X_{\eta})$. 
Viewing $F$ as a mapping from $[0,T]\times X_\eta$ to $X_{\theta_F}$ (as $\eta\ge 0$),
we have $F(\cdot,\Phi(\cdot)) \in \Winfp{\a}([0,T]\times \Omega;X_{\theta_F})$.
Then, by Lemma \ref{lem:detConv} with $\delta=\theta_F-\eta$ and $Y = X_\eta$ 
(and $\tilde \a = \a+\b$ ),
we find $S*F(\cdot,\Phi(\cdot))\in \Winfp{\a+\beta}([0,T]\times \Omega;X_{\eta})$
for all $\beta\in [0,\inv{2}-\delta)$.\par
By Lemma \ref{lem:stochConv} (with $Y=X_{\eta}$ 
and $\delta = \theta_G-\eta$) and \eqref{GLipschitzV1} we have, 
for all $\beta\in [0,\inv{2}-\a)$ such that $\beta<\inv{2}+\theta_G-\eta$:
\begin{align*}
&\sup_{0\leq t\leq T} \Big\n s\mapsto (t-s)^{-\a-\beta}
\int_{0}^{s} S(s-u)G(u,\Phi(u))dW_H(u)\Big\n_{L^p(\Omega,\gamma(0,t;X_\eta))} \\
&\qquad \qquad \qquad \qquad \leq 
\sup _{0\leq t\leq T} \Big\n s\mapsto (t-s)^{-\a} 
G(s,\Phi(s))\n_{L^p(\Omega;\gamma(0,t;H,X_{\theta_G}))}\\
&\qquad \qquad \qquad \qquad \leq  \n \Phi\n_{\Winfp{\a}([0,T]\times \Omega;X_\eta)}.
\end{align*}
Since also $S(\cdot)x_0 \in \Winfp{\a+\b}([0,T]\times \Omega;X_\eta)$ for all $\beta\in
[0,\inv2-\alpha)$, we see that 
if $\a\in [0,\frac12)$ and $\Phi\in \Winfp{\a}([0,T]\times \Omega;X_{\eta})$ satisfies
\eqref{SDE_sol}, then 
$\Phi\in \Winfp{\a+\b}([0,T]\times \Omega;X)$ for all $\beta\in [0,\inv{2}-\a)$ 
such that $\beta<\inv{2}+\theta_G-\eta$. Repeating this argument a finite number of steps 
if necessary, 
we obtain 
that $\Phi\in \Winfp{\a+\beta}([0,T]\times \Omega;X)$ for all $\beta\in [0,\inv{2}-\a)$. 
As uniqueness of a process in $\Winfp{\a}([0,T]\times \Omega;X)$ satisfying \eqref{SDE_sol} 
has been established for $\alpha>\eta-\theta_G$ this completes the proof.
\end{proof}
\begin{remark}
Inspection of the proofs of the main theorems reveals that uniqueness is only used 
for large $\a\in [0,\frac12)$.
As a consequence, the last part of the above proof is not needed for our purposes. It has been
included for completeness reasons. 
\end{remark}

\end{document}